\begin{document}

\title[The Laplace-Beltrami Operator]{Finite Element Methods for the \\ Laplace-Beltrami Operator}
\author{Andrea Bonito}
\address{Department of Mathematics, Texas A\&M University, College Station, TX 77843, USA; bonito@math.tamu.edu}%
\thanks{AB is partially supported by NSF grant DMS-1817691.}
\author{Alan Demlow}%
\address{Department of Mathematics, Texas A\&M University, College Station, TX 77843, USA; demlow@math.tamu.edu}%
\thanks{AD is partially supported by NSF grant DMS-1720369.}
\author{Ricardo H. Nochetto}%
\address {University of Maryland, College Park, MD 20742, USA; rhn@math.umd.edu}%
\thanks{RHN is partially supported by NSF grant DMS--1411808.}

\begin{abstract}
Partial differential equations posed on surfaces arise in a number of applications.  In this survey we describe three popular finite element methods for approximating solutions to the Laplace-Beltrami problem posed on an $n$-dimensional surface $\gamma$ embedded in $\mathbb{R}^{n+1}$:  the {\it parametric}, {\it trace}, and {\it narrow band} methods.  The parametric method entails constructing an approximating polyhedral surface $\Gamma$ whose faces comprise the finite element triangulation.  The finite element method is then posed over the approximate surface $\Gamma$ in a manner very similar to standard FEM on Euclidean domains.  In the trace method it is assumed that the given surface $\gamma$ is embedded in an $n+1$-dimensional domain $\Omega$ which has itself been triangulated.    An $n$-dimensional approximate surface $\Gamma$ is then constructed roughly speaking by interpolating $\gamma$ over the triangulation of $\Omega$, and the finite element space over $\Gamma$ consists of the trace (restriction) of a standard finite element space on $\Omega$ to $\Gamma$.  In the narrow band method the PDE posed on the surface is extended to a triangulated $n+1$-dimensional band about $\gamma$ whose width is proportional to the diameter of elements in the triangulation.  In all cases we provide optimal a priori error estimates for the lowest-order finite element methods, and we also present a posteriori error estimates for the parametric and trace methods.  Our presentation focuses especially on the relationship between the regularity of the surface $\gamma$, which is never assumed better than of class $C^2$, the manner in which $\gamma$ is represented in theory and practice, and the properties of the resulting methods.  
\end{abstract}

\keywords{
Surface partial differential equations, Laplace-Beltrami operator, surface finite element methods, parametric finite element methods, trace finite element methods, narrow band finite element methods, a-priori and a-posteriori error estimates.
}

\date{\today}

\maketitle


\newpage

\begingroup
\parindent=0em
\etocsettocstyle{\rule{\linewidth}{\tocrulewidth}\vskip0.5\baselineskip}{\rule{\linewidth}{\tocrulewidth}}
\localtableofcontents 
\endgroup

%

\section{Introduction}\label{sec:introduction}

Partial differential equations (PDEs) posed on surfaces play an important role in many domains of pure and applied mathematics, including geometry, modeling of materials, fluid flow, and image and shape processing.  The numerical approximation of such surface PDEs is both practically important and the source of many mathematically rich problems.  

We consider a closed, compact and orientable surface $\gamma$ in $\mathbb{R}^{n+1}$ of co-dimension $1$.  The Laplace-Beltrami operator $-\Delta_\gamma$, which acts as a generalization of the standard Euclidean Laplace operator, plays a central role in both static and time-dependent surface PDE models arising in a wide range of applications.  Because of this a wide variety of numerical methods have been developed for the Laplace-Beltrami equation %
\[
-\Delta_\gamma \widetilde u=\widetilde f,
\]
where $\widetilde f$ is a given forcing function satisfying $\int_\gamma \widetilde f =0$.  In this article we first lay out some important notions from differential geometry.  We then describe three important classes of finite element methods (FEMs) for the Laplace-Beltrami problem:  the parametric method, the trace method, and the narrow band method.   In all three cases we focus on the simplest case of piecewise linear finite element spaces and give an in-depth discussion of the effects of geometry on error behavior.  


The {\it parametric finite element method} was introduced in 1988 by Dziuk \cite{Dz88}, with some important related techniques appearing in earlier works on boundary element methods \cite{Ne76, Ben84}.  This method is the simplest of the many FEM that have been developed for solving the Laplace-Beltrami problem.  The given PDE is first written in weak form as:  Find $\widetilde u \in H^1(\gamma)$ such that $\int_\gamma \widetilde u =0$ and  
\[
a(\widetilde u,\widetilde \tv):=\int_\gamma \nabla_\gamma \widetilde u \cdot \nabla_\gamma \widetilde \tv = \int_\gamma \widetilde f \widetilde \tv \qquad \forall \widetilde \tv \in H^1(\gamma).  
\]
Here $H^1(\gamma)$ is the set of functions $\widetilde \tv$ in $L_2(\Omega)$ whose tangential gradient $\nabla_\gamma \widetilde \tv \in [L_2(\gamma)]^{n+1}$.  The continuous surface $\gamma$ is approximated by a polyhedral surface $\Gamma$ whose faces serve as a finite element mesh, and the finite element space $\V$ is made of continuous piecewise linear functions over $\Gamma$.  The finite element method then consists of finding $U \in \V$ such that
\[ 
A(U,V)=\int_\Gamma \nabla_\Gamma U \cdot \nabla_\Gamma V=\int_\gamma F V \qquad \forall V \in \V,
\]
where $F$ is a suitable approximation (lift) of $f$ defined on $\Gamma$.  In its conception and implementation, the resulting method is very similar to canonical FEM for solving Poisson's problem on Euclidean domains.    To quote Dziuk, ``...the numerical scheme is just the same as in a plane-two dimensional problem.  The only difference is that in our case the computer has to memorize three-dimensional nodes instead of two-dimensional ones.'' \cite[p. 143]{Dz88}.    The strategy underlying parametric surface finite element methods --direct translation of FEM on Euclidean spaces to triangulated surfaces-- has subsequently been applied to a variety of methods.  These include higher-order standard Lagrange methods \cite{De09}, various types of discontinuous Galerkin methods \cite{ADMSSV15, DMS13, CD16}, and mixed methods in  classical, hybridized, and finite element exterior calculus formulations \cite{Ben84, HoSt12, CD16, FFF16}.   A posteriori error estimation and adaptivity have been studied in \cite{DemlowDziuk:07, WCH10, BCMN:Magenes, DM16, BCMMN16, BD:18}.  Finally, we refer to the survey article \cite{DE13}. 

In many applications in which surface PDEs are to be solved, a background volume (bulk) mesh is already present. A paradigm example is two-phase fluid flow, in which effects on the interface between the two phases such as surface tension are coupled with standard equations of fluid dynamics on the bulk. In these cases it is advantageous to utilize the background volume mesh to solve surface PDEs instead of independently meshing $\gamma$. This is especially the case when $\gamma$ is evolving, since the meshes needed for the parametric method typically distort as $\gamma$ changes and periodic remeshing is thus necessary.  The trace and narrow band methods both employ background bulk meshes in order to solve surface PDEs.  

Trace (or cut) FEMs for the Laplace-Beltrami problem were first introduced in \cite{ORG09}.  In this method an approximating surface is constructed as in the parametric method, but using a different approach.  An {\it implicit} representation of $\gamma$ as the level set of some function $\phi$ is used, that is, it is assumed that
\[
\gamma=\big\{x \in \mathbb{R}^{n+1}~ : ~\phi(x)=0\big\}.
\]
A discrete surface $\Gamma$ is then defined as the zero level set of an interpolant of $\phi$ on the background mesh, and the finite element space is taken to be the trace of the bulk finite element space on $\Gamma$.  The FEM is posed and solved on $\Gamma$ as in the parametric method.   Note that the finite element space in the trace method consists of continuous piecewise linear functions over the faces of $\Gamma$.  However, because the faces of $\Gamma$ are arbitrary intersections of $n$-dimensional hyperplanes with $n+1$-simplices, they are not shape regular, and in particular may either fail to satisfy a minimum angle condition or be much smaller than the bulk simplices from which they are derived.  Counter to natural intuition about the quality of a finite element method posed on such a mesh, the trace method satisfies optimal error bounds and works well in practice.  In addition to the basic analysis of piecewise linear methods that we present below, the literature on trace methods for the Laplace-Beltrami problem includes study of matrix properties \cite{OR10}, adaptive versions \cite{DO12, CO15}, and extensions to higher-order \cite{Re15, GR16, GLR18}, stabilized \cite{BHL15, BHLMZ16}, and discontinuous Galerkin \cite{BHLM17} methods.  We refer to the recent survey article \cite{OR17}.

Narrow band methods also employ a bulk mesh in order to approximate surface PDEs, but extend a surface PDE to the bulk instead of restricting a bulk finite element space to a surface. This idea appeared first in \cite{MR1868103} and is based on an extension of the PDE into a tubular neighborhood $\mathcal N(\delta)$ of width $2 \delta$ about $\gamma$ that reads
\[
L(u_\delta)=-\textrm{div} \big( (I-\nabla d \otimes \nabla d) \nabla u_\delta \big)+u_\delta
= f_\delta.
\]
Here $f_\delta$ is an extension of $\wf$ from $\gamma$ to $\mathcal{N}(\delta)$ and $d$ is the distance function $\gamma$. The latter is chosen for simplicity over a generic level set function $\phi$ to represent $\gamma$ throughout this article. Because $\nabla d$ is the unit outward normal to $\gamma$, the coefficient matrix $I-\nabla d \otimes \nabla d$ is degenerate in the direction normal to $\gamma$, and the operator $L$ is thus elliptic but degenerate. We emphasize that in contrast to most previous literature on narrow band FEM we do not include a zero order term in our presentation, thereby adding extra difficulty due to the need to account for the non-trivial kernel of $L$ on closed surfaces. In narrow band FEMs, the Galerkin approximations to $u_\delta$ are posed over a discrete approximation $\mathcal{N}_h(\delta)$ to the narrow band $\mathcal{N}(\delta)$. Related methods that involve extending surface PDEs to bulk domains include the closest point method \cite{RM08}. 

Narrow-band unfitted finite element methods have been proposed and analyzed by different authors.
In \cite{MR2485787}, the aforementioned degenerate extension is shown to be well posed and error  bounds in the weighted bulk energy norm are derived. Subsequently, error estimates in the $H^1(\gamma)$ norm are obtained in \cite{MR2608464} for the lower order method.
An alternate nondegenerate extension $L(u_\delta) = -\Delta  u_\delta+u_\delta$ is then proposed in \cite{MR3249369} leading to optimal  $H^1(\gamma)$ and also $L^2(\gamma)$ error estimates for the lower order method when $f_\delta$ is  (or is close to) the constant normal extension of $\widetilde f$.
Independently, higher order methods are proposed and analyzed in \cite{MR3471100} using the extension
\[
L(u_\delta) = -\textrm{div} \big( \mu (I-dD^2 d)^{-2} \nabla u_\delta \big) + u_\delta,
\]
with $\mu := \det \big(I-dD^2 d\big)$ and $f_\delta$ the constant normal extension of $\widetilde f$.
Note also that the associated FEM requires a sufficiently accurate approximation of $D^2d$ (if not known explicitly).
For the case of lowest order (piecewise linear) finite element spaces, it is enough to approximate $D^2d$ with zero and thereby retrieve the discrete formulation in \cite{MR3249369}.


In the construction of all three FEMs above, we incur on variational crimes (consistency errors) due to the approximation of geometry.  In the parametric and trace methods, these errors arise because the finite element method is posed over a discrete approximation $\Gamma$ to $\gamma$, thereby leading to different bilinear forms ($a$ and $A$) used to compute the continuous and finite element solutions ($\wu$ and $U$).  In the narrow band method the finite element equations are posed over a discrete narrow band $\mathcal{N}_h(\delta)$ instead of over the domain $\mathcal{N}(\delta)$ on which the extended solution $u_\delta$ is defined.  This again entails the use of different bilinear forms in the definitions of the continuous and discrete solutions.  A core problem in surface FEMs is understanding and controlling these errors, which are typically called {\it geometric consistency errors} or {\it geometric errors}.   In order to analyze these errors, it is necessary to define a map $\bP:\Gamma \rightarrow \gamma$ and then compare $a(\widetilde \tv, \widetilde w)$ with $A(\widetilde \tv \circ \bP, \widetilde w \circ \bP)$ for given functions $\widetilde \tv, \widetilde w \in H^1(\gamma)$.  This is done via a change of variables argument for the map $\bP$.  There may be several competing demands of both theoretical and practical nature that come into play when choosing the map $\bP$, and a main focus of this article is to elucidate how this choice affects analysis and implementation of surface FEMs.  

The canonical choice of the map $\bP$ is defined via the so-called {\it signed distance function} $d:\mathcal N \rightarrow \gamma$.  The distance function is defined on a tubular neighborhood
$\mathcal N$ of $\gamma$ and is of the same regularity class as $\gamma$ provided that $\gamma$ is at least $C^2$ and $\mathcal N$ is sufficiently narrow. In such a case, the map (also called distance-lift or orthogonal closest point projection) 
\begin{equation*}
\bP_{d}(\bx) := \bx - d(\bx) \nabla d(\bx)  \quad
\forall~\bx \in \mathcal N
\end{equation*}
is well defined and is of class $C^1$. The maps $d$ and $\bP_{d}$
play a crucial role in analyzing and in some cases defining the numerical
algorithms presented below.  In particular, the distance function is a critical tool in proving error estimates that are of optimal order with respect to geometric consistency errors.  When a generic map $\bP:\Gamma \rightarrow \gamma$ is instead used to analyze surface FEMs, the predicted behavior of geometric errors is of one order less than is seen in practice and also than may be proved using the closest point projection.  More precisely, when quasi-uniform meshes of size $h$ are used with affine surface approximations in the parametric and trace methods, arguments which use special properties of the closest point projection predict an $O(h^2)$ geometric errors, and these are in fact observed in practice.  On the other hand, standard proofs employing a generic map $\bP$ instead of the distance function map $\bP_d$ predict only order $h$ geometric errors.  This increase in convergence order due to the properties of the closest point projection may be viewed as a {\it superconvergence} effect.  

Reliance on $\bP_{d}$ may however also constitute a serious drawback for several reasons. First, $\bP_d$ has a closed form expression only for the sphere and torus, so it is in general not directly available to the user.  We thus discuss how to use the distance function only as a theoretical tool for the parametric FEM and yet retain the superconvergence properties of $\bP_{d}$.  On a practical level, the user is still free to choose from a much more general class of lifts to implement an algorithm.  Our presentation includes optimal a priori and a posteriori estimates in $H^1$ and optimal a priori estimates in $L_2$ for an algorithm whose implementation only requires access to a generic lift $\bP$; the latter appear to be new in the literature even for smooth surfaces.
Second, if $\gamma$ is merely $C^{1,\alpha}$ for $\alpha<1$, then the closest point projection $\bP_d$ is not uniquely defined in any neighborhood of $\gamma$.  We thus also provide an analysis of parametric FEMs for $\gamma$ of class $C^{1, \alpha}$ that instead makes use of a generic parametric map.  The price we pay is a possible order reduction of the method due to the loss of superconvergence properties of $\bP_{d}$.  
Finally, previous proofs of optimal-order error estimates employing $\bP_{d}$ have required that $\bP_{d}$ is of class $C^2$ and thus $\gamma$ of class $C^3$; cf. \cite{Dz88}.  However, the solution $u$ to the Laplace-Beltrami problem already possesses the $H^2$ regularity needed to ensure optimal convergence of piecewise linear finite element methods when $\gamma$ is of class $C^2$.  In this survey we bridge this gap by giving a novel error analysis for the three FEMs which is based exclusively on $C^2$ regularity of $d$ and $\gamma$, but which also preserves the superconvergence property in the geometric error.   In the case of the trace and narrow band methods we achieve this by a regularization argument.

This article is organized as follows. In section \ref{sec:preliminaries} we
introduce surface gradient, divergence and
Laplace-Beltrami operators along with the signed distance function and its most
relevant properties. In section \ref{S:perturbation} we quantify the geometric effects
of perturbing surfaces $\gamma$ of class $C^{1,\alpha}$ and $C^2$. We also present
$H^2$ extensions to a tubular neigborhood
$\Nd\subset\mathcal{N}$ of width $\delta$
\begin{equation*}
\|u\|_{H^2(\Nd)} \lesssim \delta^{\frac12} \|\wu\|_{H^2(\gamma)}
\end{equation*}
of functions $\wu\in H^2(\gamma)$ provided $\gamma$ is of class $C^2$.
This turns out to be essential for our later
error analysis of the trace and narrow band methods for $C^2$ surfaces.
In section \ref{sec:parametric} we give a
selfcontained exposition of parametric FEMs for surfaces of class $C^{1,\alpha}$ and
$C^2$, including a priori and a posteriori error analyses. In section \ref{sec:trace}
we describe the trace method and conclude in section \ref{sec:narrow} with the
narrow band method. Both discussions assume $C^2$ regularity of $\gamma$.


\section{Calculus on Surfaces}\label{sec:preliminaries}

In this section we discuss basic concepts of differential geometry.
We start in section \ref{S:repres-surface} by describing
the paramatric representaton of $\gamma$ via charts.
This classical point of view
is critical to introduce the first fundamental form $\bg$, the area element $q$, and
the unit normal $\bnu$ of $\gamma$. We present in section \ref{S:operators} the
tangential operators (gradient $\nabla_\gamma$, divergence div$_\gamma$, and
Laplace-Beltrami $\Delta_\gamma$) as well as
the Weingarten map; we also discuss $H^2$-regularity for $\Delta_\gamma$
on surfaces $\gamma$ of class $C^2$.
We introduce the distance function $d$ in section
\ref{S:distance-function} and derive several important properties of it;
this intrinsic approach avoids
parametrizations and allows for implicit representions of $\gamma$. We devote section
\ref{S:curvatures} to
the second fundamental form of $\gamma$ and its principal curvatures using
both parametric and intrinsic approaches.

\subsection{Parametric Surfaces}\label{S:repres-surface}
%
We assume that $\gamma$ is a closed, compact, orientable manifold of class $C^{1,\alpha}$, $0<\alpha\leq1$, 
and co-dimension $1$ in $\mathbb{R}^{n+1}$. It can be represented parametrically
by an atlas $\lbrace ({\mathcal V}_i,{\mathcal U}_i,\bchi_i)\rbrace_{i\in I}$,
where the individual
charts $\bchi_i:{\mathcal V}_i \to {\mathcal U}_i\cap\gamma\subset\mathbb{R}^{n+1}$
are isomorphisms of class $C^{1,\alpha}$ compatible with the orientation of $\gamma$; the open connected sets
${\mathcal V}_i \subset \mathbb{R}^n$ are the parametric domains.
Unless stated otherwise, it will be often sufficient to consider a single chart and resort to a partition of the unity.
We thus drop the index $i$ for convenience.
For $\bx \in {\mathcal U} \cap \gamma$, we set $\by := \bchi^{-1}(\bx) \in {\mathcal V}$.

Let $\partial_j\bchi(\by)$ be the column vector of $j$-th partial derivatives of
$\bchi(\by)$ for $1\le j\le n$ at $\by\in {\mathcal V}$.
By definition, the rank of $D \bchi(\by) = \big(\partial_j\bchi(\by)\big)_{j=1}^n
\in  {\mathbb R}^{(n+1)\times n}$ is  $n$ (full rank).
This implies that $\{\partial_j\bchi(\by)\}_{j=1}^n$ are linearly independent and span
the tangent hyperplane to $\gamma$ at $\bx$.

The \emph{first fundamental form} is the symmetric and positive definite matrix
$\bg\in {\mathbb R}^{n\times n}$ defined by
\begin{equation}\label{dg:d:first}
\bg(\by) := D \bchi(\by)^t D \bchi(\by) \quad\forall \by\in\mathcal V.
\end{equation}
If $\bg = (g_{ij})_{i,j=1}^n$, then the components $g_{ij}$ read
$$
g_{ij} = \partial_i\bchi^t\partial_j\bchi = \partial_i\bchi\cdot\partial_j\bchi,
$$
which depends on the choice of parametrization.
A normal vector $\bN(\by)$ to $\gamma$ at $\bx$ can be written as
$\bN(\by) = \sum_{j=1}^{n+1} A_j(\by) \be_j$, where
$A_j:= \textrm{det}(\be_j, D \bchi)$ and
$\{ \be_j \}_{j=1}^{n+1}$ is the canonical basis of $\mathbb R^{n+1}$. In fact,
since
\[
\bN \cdot \partial_i\bchi = \sum_{j=1}^{n+1} \be_j\cdot\partial_i\bchi
\det(\be_j,D\bchi) = \det \big( \sum_{j=1}^{n+1} (\be_j\cdot\partial_i\bchi)
\be_j,D\bchi\big) = \det(\partial_i\bchi,D\bchi) = 0,
\]
and $A_j\ne0$ for at least one $j$ because $D\bchi$ has rank $n$, we deduce that
\begin{equation}\label{unit-normal}
\bnu (\by) := \frac{\bN(\by)}{|\bN(\by)|} \quad\forall \by\in\mathcal V
\end{equation}
is a well-defined unit normal vector to $\gamma$. Therefore, the matrix
\[
\bT (\by) := [D\bchi(\by),\bnu(\by)] \in \mathbb{R}^{(n+1)\times(n+1)}
\quad\forall \by\in\mathcal V
\]
has rank $n+1$ and so is invertible. We write its inverse as
\[
\bT^{-1} = \begin{bmatrix} \bB \\ \bv^t \end{bmatrix},
\quad
\bB \in \mathbb{R}^{n\times(n+1)}, \bv\in\mathbb{R}^n,
\]
and note that
\[
\bI_{(n+1)\times(n+1)} = \bT^{-1}\bT =
\begin{bmatrix}
\bB \, D\bchi & \bB \bv \\ \bv^t\ D\bchi & \bv^t\bnu,
\end{bmatrix}  
\]
whence
\[
\bB \, D\bchi = \bI_{n\times n},
\quad
\bv^t D\bchi = 0,
\quad
\bv^t \, \bnu = 1.
\]
The last two equalities imply $\bv=\bnu$. Reversing the order of multiplication yields
\[
\bI_{(n+1)\times(n+1)} = \bT \, \bT^{-1} = D\bchi \, \bB + \bnu \bnu^t,
\]
whence the {\it projection matrix} $\Pi \in \mathbb{R}^{(n+1)\times(n+1)}$ on the tangent
hyperplane to $\gamma$ has the form
\begin{equation}\label{projection}
\Pi := \bI - \bnu\otimes\bnu = D\bchi \, \bB .
\end{equation}
To obtain an explicit expression for $\bB$ note that
\[
D\bchi = (\bI - \bnu\otimes\bnu)^t D\bchi = \bB^t D\bchi^t D\bchi = \bB^t \bg
\quad\Rightarrow\quad
\bB = \bg^{-1} D\bchi^t.
\]
This leads to the following useful expression of $\Pi$ defined in \eqref{projection}:
\begin{equation}\label{projection-b}
\Pi = D\bchi \, \bg^{-1} D\bchi^t.
\end{equation}

The {\it area element} $q(\by)$ is the ratio of the
infinitesimal volume at $\by\in\mathcal{V}$
and area of $\gamma$ at $\bx=\bchi(\by)$, namely the volume of the parallelotope
in the tangent plane to $\gamma$ spanned by the vectors $\{\bchi_j\}_{j=1}^n$:
\begin{equation}\label{e:area-rep}
q(\by) := \det \big([\bnu(\by), D\chi(\by)] \big)
\quad\forall \by\in\mathcal{V}.
\end{equation}
To obtain a more familiar form of $q$ we argue as follows:
\begin{equation}\label{area-prelim}
  q = \frac{1}{|N|} \det \big([\bN, D\bchi] \big) =
  \frac{1}{|N|} \det\big( [\bN,D\bchi]^t [\bN,D\bchi]  \big)^{\frac12} = \sqrt{\det \bg},
\end{equation}
because $\det\big( [\bN,D\bchi]^t [\bN,D\bchi]  \big) =
|\bN|^2 \det(D\bchi^t D\bchi) = |\bN|^2 \det \bg$. Moreover, exploiting that
$|N|^2 = \sum_{j=1}^{n+1} A_j \det([\be_j,D\bchi]) = \det([\bN,D\bchi])$, we deduce
\begin{equation}\label{q-N}
q = |\bN|.
\end{equation}  

An integrable function $\tv:\mathcal{V}\to\mathbb{R}$ induces an integrable function
$\widetilde{\tv}:\gamma\to\mathbb{R}$ by composition $\tv = \widetilde{\tv}\circ\bchi$,
or equivalently $\widetilde{\tv}(\bx) = \tv(\by)$ for all $\by\in\mathcal{V}$.
The area element allows for integration over $\gamma$ via the formula
\begin{equation}\label{int-gamma}
  \int_\gamma \widetilde \tv = \int_{\mathcal V} \tv q
  \quad\forall \tv \in L_1(\mathcal V).
\end{equation}
This definition does not depend on the parametrization: if $\bchi_1,\bchi_2$
are parametrizations of $\gamma$, then
$\bchi_1=\bchi_2 \circ (\bchi_2^{-1}\circ\bchi_1)$ and
$D\bchi_1=D\bchi_2 D(\bchi_2^{-1}\circ\bchi_1)$ whence
\[
q_1 = \big|\det\big(D(\bchi_2^{-1}\circ\bchi_1)\big)\big| q_2
\quad\Rightarrow\quad
\int_{\mathcal{V}_1} \tv q_1 = \int_{\mathcal{V}_2} \tv q_2. 
\]

\subsection{Differential Operators}\label{S:operators}
%
If a function $\tv:\mathcal{V}\to\mathbb{R}$ is of class $C^1$, we can define the
{\it tangential (or surface) gradient} of the corresponding function
$\widetilde \tv: \gamma\to\mathbb{R}$ as
a vector tangent to $\gamma$ that satisfies the chain rule
\begin{equation}\label{def-tang-grad}
\nabla \tv(\by) = D\bchi(\by)^t \nabla_\gamma \widetilde \tv(\bx)
\quad\forall \by\in\mathcal V.
\end{equation}
Since $\nabla_\gamma\widetilde \tv$ is spanned by $\{\partial_j\bchi\}_{j=1}^n$, we get
$\nabla_\gamma\widetilde \tv = D\bchi w$ for some $w \in \mathbb{R}^n$ whence $w = \bg^{-1} \nabla \tv$ and
\begin{equation}\label{e:exact_grad}
  \nabla_\gamma \widetilde \tv (\bx) = D\bchi(\by) \bg(\by)^{-1} \nabla \tv(\by)
  \quad\forall \by\in \mathcal V.
\end{equation}
If $\widetilde\bv=(\widetilde\tv_i)_{i=1}^{n+1}:\gamma\to\mathbb{R}^{n+1}$
is a vector field of class $C^1$, we define its tangential differential
$D_\gamma\widetilde\bv \in \mathbb{R}^{(n+1)\times(n+1)}$ as a matrix whose $i$-th row is
$(\nabla_\gamma \widetilde\tv_i)^t$. If $\gamma$ is of class $C^2$, then the unit normal
vector $\bnu$ is of class $C^1$ and its differential
\begin{equation}\label{weingarten}
  \bW(\bx) = D_\gamma \bnu(\bx)
  \quad\forall \bx\in\gamma
\end{equation}
is called the {\it Weingarten} (or shape) map of $\gamma$. In addition, the
{\it tangential divergence} of $\widetilde\bv$ is the trace of $D_\gamma\widetilde\bv$
\begin{equation}\label{tang-diver}
  \textrm{div}_\gamma \widetilde\bv(\bx)
  = \textrm{trace}\big( D_\gamma\widetilde\bv(\bx)  \big)
  = \sum_{i,j=1}^n g^{ij}(\by) \, \partial_i\bchi(\by)\cdot\partial_j \bv(\by)
  \quad\forall \by\in\mathcal{V},
\end{equation}  
provided $\bg^{-1} = (g^{ij})_{i,j=1}^n$.
If both $\gamma$ and $\tv:\gamma\to\mathbb{R}$ are of class $C^2$, then
the {\it Laplace-Beltrami (or surface Laplace) operator} is now defined to be
\begin{equation}\label{lap-bel-def}
  \Delta_\gamma \widetilde \tv =
  \frac{1}{q(\by)} \div { q(\by) \bg(\by)^{-1}\nabla \tv(\by) }
  \quad\forall \by\in\mathcal V.
\end{equation}  
The following lemma shows that \eqref{lap-bel-def} is designed
to allow integration by parts on $\gamma$, exactly as it happens in flat
domains with the Laplace operator $\Delta$.

\begin{lemma}[weak form of the Laplace-Beltrami operator]
If $\widetilde\varphi$ is of class $C^1$ with compact support in $\gamma$, then
  \begin{equation}\label{lap-bel-weak}
    \int_\gamma \widetilde\varphi \, \Delta_\gamma \widetilde\tv =
    - \int_\gamma \nabla_\gamma \widetilde\varphi \cdot \nabla_\gamma \widetilde\tv.
  \end{equation}  
\end{lemma}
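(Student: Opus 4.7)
The plan is to pull everything back to the parameter domain $\mathcal{V}$ via the chart $\bchi$, apply standard integration by parts in flat Euclidean space, and then push the resulting quantity back to $\gamma$ using the formulas for $\nabla_\gamma$ established earlier.

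First I would write $\varphi := \widetilde\varphi \circ \bchi$ and $\tv := \widetilde\tv \circ \bchi$, and use the change-of-variables formula \eqref{int-gamma} together with the local expression \eqref{lap-bel-def} for the Laplace-Beltrami operator to obtain
\begin{equation*}
\int_\gamma \widetilde\varphi \, \Delta_\gamma \widetilde\tv
= \int_{\mathcal V} \varphi \, \frac{1}{q}\, \div\!\big(q\, \bg^{-1} \nabla \tv\big)\, q
= \int_{\mathcal V} \varphi \, \div\!\big(q\, \bg^{-1} \nabla \tv\big).
\end{equation*}
Since $\widetilde\varphi$ has compact support in a single chart (via a partition of unity argument that I would invoke in one sentence), ordinary integration by parts in $\mathcal V$ produces no boundary term and yields
\begin{equation*}
\int_\gamma \widetilde\varphi \, \Delta_\gamma \widetilde\tv
= -\int_{\mathcal V} \nabla \varphi \cdot \big(\bg^{-1} \nabla \tv\big)\, q.
\end{equation*}

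Next I would massage the integrand so that it matches $\nabla_\gamma\widetilde\varphi \cdot \nabla_\gamma\widetilde\tv$. Using \eqref{e:exact_grad} for both $\varphi$ and $\tv$ together with the symmetry of $\bg^{-1}$,
\begin{equation*}
\nabla_\gamma\widetilde\varphi \cdot \nabla_\gamma\widetilde\tv
= \big(D\bchi\, \bg^{-1}\nabla\varphi\big)^t \big(D\bchi\, \bg^{-1}\nabla\tv\big)
= \big(\bg^{-1}\nabla\varphi\big)^t \bg \big(\bg^{-1}\nabla\tv\big)
= \nabla\varphi \cdot \big(\bg^{-1}\nabla\tv\big),
\end{equation*}
where I used $D\bchi^t D\bchi = \bg$ from \eqref{dg:d:first}. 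Reapplying \eqref{int-gamma} then gives
\begin{equation*}
-\int_{\mathcal V} \nabla\varphi \cdot \big(\bg^{-1}\nabla\tv\big)\, q
= -\int_\gamma \nabla_\gamma\widetilde\varphi \cdot \nabla_\gamma\widetilde\tv,
\end{equation*}
which is the claim.

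The only real subtlety is that the local formula \eqref{lap-bel-def} depends on the choice of parametrization, so one must verify that the above identity is chart-independent; this follows from the chart-independence of $\int_\gamma$ noted after \eqref{int-gamma} and a partition-of-unity argument to reduce the compactly supported case to a single chart. Otherwise the argument is purely mechanical: identifying the factor of $q$ from \eqref{lap-bel-def} as precisely the area element in \eqref{int-gamma} is exactly the reason the definition of $\Delta_\gamma$ was built with the $1/q$ normalization, so no hidden estimate is needed.
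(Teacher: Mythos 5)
Your proof is correct and follows essentially the same route as the paper: pull back via \eqref{int-gamma} and \eqref{lap-bel-def}, integrate by parts in the parametric domain, rewrite $\nabla\varphi\cdot\bg^{-1}\nabla\tv$ as $\nabla_\gamma\widetilde\varphi\cdot\nabla_\gamma\widetilde\tv$ using \eqref{e:exact_grad} and $D\bchi^tD\bchi=\bg$, and change variables back. The only cosmetic difference is that the paper inserts the $D\bchi\,\bg^{-1}$ factors and lets them cancel inline, whereas you establish the pointwise identity as a separate display; the content is identical.
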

\begin{proof}
In view of \eqref{int-gamma}, which allows us to switch from $\gamma$ to $\mathcal{V}$
back and forth, we can write
\begin{align*}
  \int_\gamma \widetilde\varphi \Delta_\gamma \widetilde\tv
  &= \int_{\mathcal V} \varphi \, \div{q\bg^{-1}\nabla\tv}
  \\
  &= - \int_{\mathcal V} \nabla\varphi \cdot \bg^{-1} \nabla \tv \, q
  \\
  &= -\int_{\mathcal V} D\bchi \bg^{-1} \nabla\varphi \cdot D\bchi \bg^{-1} \nabla\tv \, q
  \\
  &= -\int_\gamma \nabla_\gamma\widetilde\varphi \cdot \nabla_\gamma \widetilde\tv.
\end{align*}
This proves \eqref{lap-bel-weak} as desired.
\end{proof}

In view of \eqref{lap-bel-weak}, we are now in the position to introduce the
weak formulation for the Laplace-Beltrami operator. We first define the space
of square integrable functions on $\gamma$ with vanishing mean value by
\[
 L_{2,\#}(\gamma) := \Big\{\widetilde \tv \in L_2(\gamma) \;\big|\; \int_\gamma \widetilde \tv = 0\Big\}
\]
and its subspace $H^1_\#(\gamma)$ containing square integrable weak derivatives defined as for example in Section 3 of \cite{JK95} by
\begin{equation*}
  H^1_\#(\gamma) := H^1(\gamma) \cap L_{2,\#}(\gamma),
  \quad
  H^1(\gamma) := \Big\{\widetilde \tv\in L_2(\gamma) \;\big|\; 
          \nabla(\widetilde \tv \circ  \bchi) \in [L_2(\mathcal V)]^{n}\Big\}.
\end{equation*}
Our next result shows that the natural norm $\| \nabla_\gamma \widetilde v \|_{L_2(\gamma)}+\|\widetilde v \|_{L_2(\gamma)}$ in $H^1_\#(\gamma)$ is equivalent to the semi-norm $\| \nabla_\gamma \widetilde v \|_{L_2(\gamma)}$. The proof essentially hinges on the Peetre-Tartar Lemma \cite{MR0221282,MR532371}, but we proceed with a slightly more direct proof as in \cite[Section 5.8.1]{Ev98}.


\begin{lemma}[Poincar\'e-Friedrichs inequality]\label{L:Poincare}
Let $\gamma$ be a compact Lipschitz surface. There exists a constant $C$ only depending on $\gamma$ such that
\begin{equation}\label{poincare}
\|\widetilde \tv\|_{L_2(\gamma)} \le C \|\nabla_\gamma \widetilde \tv\|_{L_2(\gamma)}
    \quad\forall \, \widetilde \tv\in H^1_\#(\gamma).
\end{equation}
\end{lemma}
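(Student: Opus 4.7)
The plan is to argue by contradiction, following the classical Evans-style compactness argument referenced in the lemma statement. Suppose \eqref{poincare} fails. Then for every $k\in\mathbb{N}$ there exists $\widetilde v_k \in H^1_\#(\gamma)$ with $\|\widetilde v_k\|_{L_2(\gamma)} = 1$ and $\|\nabla_\gamma \widetilde v_k\|_{L_2(\gamma)} < 1/k$. After this normalization, the sequence $\{\widetilde v_k\}$ is bounded in $H^1(\gamma)$.

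The first main step is to extract an $L_2$-convergent subsequence, i.e.\ a Rellich--Kondrachov-type compactness statement on the compact Lipschitz surface $\gamma$. This is where the Lipschitz hypothesis is used. I would cover $\gamma$ by finitely many charts $\{({\mathcal V}_i,{\mathcal U}_i,\bchi_i)\}$, use a subordinate partition of unity $\{\eta_i\}$ on $\gamma$, and observe that $\widetilde v_k = \sum_i \eta_i \widetilde v_k$ with each $(\eta_i \widetilde v_k)\circ \bchi_i$ belonging to $H^1(\mathcal V_i)$ (with a uniform-in-$k$ bound coming from the chain rule \eqref{def-tang-grad} and the boundedness of $D\bchi_i$, $\bg_i$ on the compact closures). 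Applying the standard Rellich--Kondrachov theorem on each $\mathcal V_i$ and passing to a diagonal subsequence, I obtain $\widetilde v_{k_j} \to \widetilde v$ in $L_2(\gamma)$ for some $\widetilde v \in L_2(\gamma)$.

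Next I identify the limit. By strong $L_2$ convergence, $\|\widetilde v\|_{L_2(\gamma)} = 1$ and $\int_\gamma \widetilde v = 0$, so $\widetilde v \in L_{2,\#}(\gamma)$ and $\widetilde v \not\equiv 0$. Since $\{\widetilde v_{k_j}\}$ is bounded in $H^1(\gamma)$, a further subsequence converges weakly in $H^1(\gamma)$; the weak limit must coincide with the strong $L_2$ limit $\widetilde v$, and by lower semicontinuity
\begin{equation*}
\|\nabla_\gamma \widetilde v\|_{L_2(\gamma)} \le \liminf_{j\to\infty} \|\nabla_\gamma \widetilde v_{k_j}\|_{L_2(\gamma)} = 0.
\end{equation*}
Thus $\widetilde v \in H^1(\gamma)$ with vanishing tangential gradient.

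Finally, I would show that $\nabla_\gamma \widetilde v = 0$ forces $\widetilde v$ to be constant on each connected component of $\gamma$. Pulled back via any chart, \eqref{e:exact_grad} gives $D\bchi \,\bg^{-1}\nabla(\widetilde v\circ\bchi) = 0$; since $D\bchi$ has full rank $n$ and $\bg^{-1}$ is invertible, $\nabla(\widetilde v \circ \bchi) = 0$ in $\mathcal V$, so $\widetilde v\circ \bchi$ is constant on each connected component of $\mathcal V$. A standard chart-chaining argument on the connected compact manifold $\gamma$ then yields that $\widetilde v$ is constant on $\gamma$ (for a general compact surface one applies this on each connected component). Combined with the zero-mean condition this forces $\widetilde v \equiv 0$, contradicting $\|\widetilde v\|_{L_2(\gamma)} = 1$.

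The main obstacle is the compactness step, i.e.\ making rigorous the Rellich--Kondrachov embedding $H^1(\gamma) \hookrightarrow L_2(\gamma)$ in the Lipschitz setting; this is where the hypothesis ``compact Lipschitz surface'' is essential, and the chart/partition-of-unity reduction to flat Euclidean Rellich--Kondrachov is the routine but technical piece. The remaining ingredients (strong/weak limits, lower semicontinuity, characterization of gradient-free functions) are standard once compactness is in hand.
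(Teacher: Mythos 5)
Your proof is correct and follows essentially the same compactness-by-contradiction route as the paper: normalize a hypothetical counterexample sequence, invoke compactness of $H^1(\gamma)\hookrightarrow L_2(\gamma)$ (which the paper also reduces to the Euclidean case via charts), identify the limit as a zero-mean, gradient-free, hence constant, hence zero function, and derive a contradiction. The only cosmetic differences are that the paper deduces the sequence is Cauchy in $H^1(\gamma)$ where you use weak convergence plus lower semicontinuity, and that the paper packages the result as the slightly stronger estimate $\|\wv\|_{L_2(\gamma)}\le C(\|\nabla_\gamma\wv\|_{L_2(\gamma)}+|\int_\gamma\wv|)$; note also that your parenthetical about disconnected $\gamma$ is not quite right (zero mean does not force a locally constant function to vanish componentwise), but both you and the paper are implicitly working with a connected surface, as is standard.
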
  
\begin{proof}
We prove by contradiction the more general estimate
\begin{equation}\label{more-general}
\|\wv\|_{L_2(\gamma)} \le C \Big( \|\nabla_\gamma \wv\|_{L_2(\gamma)}
+ \Big| \int_\gamma \wv \, \Big| \Big)
    \quad\forall \, \wv\in H^1(\gamma).
\end{equation}
Suppose that there is a sequence $\wv_k\in H^1(\gamma)$ such that
\[
\|\wv_k\|_{L_2(\gamma)}=1,\quad
\|\nabla_\gamma \wv_k\|_{L_2(\gamma)} + \Big| \int_\gamma \wv_k \, \Big| \to 0
\]
as $k\to\infty$. We deduce that $\{\wv_k\}_k$ is uniformly bounded in $H^1(\gamma)$.
Since the embedding $H^1(\gamma) \subset L_2(\gamma)$ is compact (because $H^1(\mathcal V) \subset L_2(\mathcal V)$ is compact, see the proof of~\cite[Theorem~2.34]{MR681859}), there is a Cauchy subsequence (with abuse of notation not relabeled) of $\{\wv_k\}_k$ in $L_2(\gamma)$. This, together with $\|\nabla_\gamma \wv_k\|_{L_2(\gamma)}\to0$, implies that $\{\wv_k\}_k$ is a Cauchy sequence in $H^1(\gamma)$. Let $\wv\in H^1(\gamma)$ be the limit of $\wv_k$ in $H^1(\gamma)$, which yields $\nabla_\gamma\wv = \lim_{k\to\infty}\nabla_\gamma\wv_k = 0$ whence $\wv$ is constant on $\gamma$. Moreover, $\int_\gamma\wv=\lim_{k\to\infty}\int_\gamma \wv_k = 0$ whence $\tv=0$. This gives rise to the contradiction $0=\|\wv\|_{L_2(\gamma)}=\lim_{k\to\infty}\|\wv_k\|_{L_2(\gamma)}=1$, and finishes the proof.
\end{proof}


We emphasize that the Poincar\'e-Friedrichs constant depends on the surface $\gamma$. 
Later we shall consider perturbations $\Gamma$ of $\gamma$ and derive Poincar\'e-Friedrichs type estimates on $\Gamma$ where the constant depends on $\gamma$ provided the geometry of $\gamma$ is minimally approximated by $\Gamma$. This is proved in Lemma~\ref{L:Poincare-unif} for Lipschitz surfaces and only requires that the $L_2$ and $H^1$ norms on $\gamma$ and $\Gamma$ are equivalent.

We will not deal explicitly with functionals in the dual space $H^{-1}_\#(\gamma)$
of $H^1_\#(\gamma)$, but occasionally need its norm for $\widetilde f\in L_{2,\#}(\gamma)$
\begin{equation}\label{dual-norm}
\|\widetilde f\|_{H^{-1}_\#(\gamma)} = \sup_{\widetilde \tv\in H^1_\#(\gamma)}
\frac{\int_\gamma \widetilde f \widetilde \tv}{\|\nabla_\gamma \widetilde \tv\|_{L_2(\gamma)}}.
\end{equation}
Lemma \ref{L:Poincare} (Poincar\'e-Friedrichs inequality) implies that $\|\widetilde f\|_{H^{-1}_\#(\gamma)} \le C \|\widetilde f\|_{L_{2,\#}(\gamma)}$.
The weak formulation of $-\Delta_\gamma \widetilde u = \widetilde f$ reads:
for $\widetilde f \in L_{2,\#}(\gamma)$, seek $\widetilde u \in H^1_\#(\gamma)$ so that
\begin{equation}\label{e:weak}
 \int_{\gamma} \nabla_{\gamma}\widetilde u \cdot \nabla_{\gamma}\widetilde \tv 
=  \int_\gamma \widetilde f \,  \widetilde\tv 
\quad \forall \,  \widetilde\tv \in H^1_\#(\gamma).  
\end{equation}
Since the Dirichlet bilinear form in \eqref{e:weak} is coercive, according
to Lemma \ref{L:Poincare}, 
existence and uniqueness of a solution $\widetilde u\in H^1_\#(\gamma)$ is a consequence of
the Lax-Milgram theorem.
 We observe that thanks to the property $\widetilde f \in L_{2,\#}(\gamma)$, the solution $\widetilde u \in H^1_\#(\gamma)$ satisfies
\begin{equation}\label{e:weak_relax}
 \int_{\gamma} \nabla_{\gamma} \widetilde u \cdot \nabla_{\gamma} \widetilde \tv 
=  \int_\gamma \widetilde f \, \widetilde \tv 
\quad \forall \, \widetilde \tv \in H^1(\gamma).
\end{equation}
It turns out that $\widetilde u$ exhibits the usual regularity pick-up provided $\gamma$ is
of class $C^2$.
\begin{lemma}[regularity]\label{L:regularity}
  If $\gamma$ is of class $C^2$, then there is a constant $C$ only depending on
  $\gamma$ such that
\begin{equation}\label{regularity}
\|\widetilde u\|_{H^2(\gamma)} \le C \|\widetilde f\|_{L_2(\gamma)}.  
\end{equation}
\end{lemma}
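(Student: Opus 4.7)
The plan is to reduce \eqref{regularity} to standard Euclidean interior $H^2$ regularity for divergence-form elliptic equations via the atlas. Since $\gamma$ is compact, cover it by finitely many charts $\bchi_i:\mathcal V_i\to\mathcal U_i\cap\gamma$ and fix a partition of unity $\{\widetilde\eta_i\}$ on $\gamma$ subordinate to $\{\mathcal U_i\}$. It suffices to control each $\|\widetilde\eta_i\widetilde u\|_{H^2(\gamma)}$, which by \eqref{int-gamma} and \eqref{e:exact_grad} is equivalent (up to multiplicative constants depending on $C^1$ bounds for $\bchi_i$ and $\bg$) to the $H^2$ norm of the pullback $u_i := (\widetilde\eta_i\widetilde u)\circ\bchi_i$ on a compactly supported subset of $\mathcal V_i$.

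Next I would pull the weak equation \eqref{e:weak_relax} back to $\mathcal V_i$ using \eqref{int-gamma} and \eqref{e:exact_grad}. The pullback $u=\widetilde u\circ\bchi_i$ satisfies, in the distributional sense on $\mathcal V_i$,
\[
-\textrm{div}\big(q\,\bg^{-1}\nabla u\big) = q\,f,
\]
with $f=\widetilde f\circ\bchi_i$. Because $\gamma$ is of class $C^2$ the chart $\bchi_i$ is $C^2$, hence $\bg$ and $q=\sqrt{\det\bg}$ are of class $C^1$, and $\bg$ is uniformly positive definite on the compact closure of any $\mathcal V_i'\Subset\mathcal V_i$. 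Thus this is a uniformly elliptic divergence-form equation with Lipschitz coefficients on $\mathcal V_i$.

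I would then invoke the classical interior $H^2$ estimate obtained by Nirenberg's difference-quotient method (see e.g. \cite[Sec.~6.3]{Ev98}): for every $\mathcal V_i'\Subset\mathcal V_i$,
\[
\|u\|_{H^2(\mathcal V_i')} \le C\big(\|f\|_{L_2(\mathcal V_i)} + \|u\|_{L_2(\mathcal V_i)}\big),
\]
with $C$ depending only on the $C^1$ norm of $q\,\bg^{-1}$, the ellipticity constant, and $\textrm{dist}(\mathcal V_i',\partial\mathcal V_i)$. Choosing $\mathcal V_i'$ so that $\bchi_i(\mathcal V_i')$ still covers $\textrm{supp}\,\widetilde\eta_i$, pushing forward to $\gamma$, summing over the finite cover, and finally using the Poincar\'e--Friedrichs inequality of Lemma~\ref{L:Poincare} together with the energy estimate obtained by testing \eqref{e:weak_relax} with $\widetilde u$ itself to absorb $\|\widetilde u\|_{L_2(\gamma)}$ into a constant multiple of $\|\widetilde f\|_{L_2(\gamma)}$, yields \eqref{regularity}.

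The main obstacle is essentially bookkeeping: one must verify that localization by $\widetilde\eta_i$ generates an equation of the same form with lower-order terms whose $L_2$ norm is controlled by $\|\nabla_\gamma\widetilde u\|_{L_2(\gamma)}+\|\widetilde u\|_{L_2(\gamma)}$, and confirm that only $C^1$ regularity of the coefficients (equivalently, $C^2$ regularity of $\gamma$) is used. Crucially, the Weingarten map $\bW$ and second derivatives of $\bchi_i$ appear only through their $L_\infty$ bounds, so no additional smoothness of $\gamma$ beyond $C^2$ is needed.
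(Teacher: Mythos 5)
Your proposal is correct and follows essentially the same route as the paper: localize via a finite atlas and partition of unity, pull the equation back to the parametric domain where it becomes uniformly elliptic in divergence form with $C^1$ coefficients (which is exactly what $C^2$ regularity of $\gamma$ provides), apply Euclidean interior $H^2$ regularity, sum over the finite cover, and absorb the $L_2$ term via the energy estimate and Lemma~\ref{L:Poincare}. The only cosmetic difference is that the paper multiplies $\widetilde u$ by the cutoff first and derives the localized equation for $\widetilde u_i$, whereas you apply interior estimates directly to the pullback on compactly contained subdomains and handle the cutoff as a norm-level product-rule bookkeeping step; the underlying argument is the same.
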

\begin{proof}
We use a localization argument to the parametric domain. We assume, without loss
of generality, that the atlas $\{(\mathcal{V}_i,\mathcal{U}_i,\bchi_i)\}_{i=1}^I$
satisfies the
following property: there exist domains $\mathcal{W}_i$ such that
$\overline{\mathcal{W}}_i \subset \mathcal{U}_i$ and $\{\mathcal{W}_i\}_{i=1}^I$ is
still a covering of $\gamma$.
Let now $\{\widetilde \psi_i\}_{i=1}^I$ be a $C^2$ partition of unity associated with the
covering $\{\mathcal{W}_i\}_{i=1}^I$. The functions $u_i=u\psi_i$ satisfy 
\[
\Delta_\gamma \widetilde u_i = \widetilde \psi_i \widetilde f + 2\nabla_\gamma \widetilde u \cdot \nabla_\gamma \widetilde \psi_i + \wu \Delta_\gamma \widetilde \psi_i=:  \widetilde g_i.
\]
In light of the estimate $\| \nabla_\gamma \widetilde u\|_{L_2(\gamma)} \leq \| \widetilde f \|_{H^{-1}_\#(\gamma)}$ and \eqref{poincare} we deduce that $\|\widetilde g_i\|_{L_2(\gamma)} \le C \|\widetilde f\|_{L^2(\gamma\cap\mathcal{U}_i)}$. 
Recalling \eqref{lap-bel-def} we can rewrite $\Delta_\gamma u_i$ in the parametric domain $\mathcal{V}_i$ as
\[
\div{q_i(\by) \bg_i(\by)^{-1} \nabla u(\by)} = q_i(y) \widetilde g_i(\bchi(\by))
\quad\forall \, \by\in\mathcal{V}_i,
\]
and observe that this is a uniformly elliptic problem with $C^1$ coefficients.
Applying interior regularity theory \cite{Ev98}, we deduce
\[
\|u_i\|_{H^2(\bchi^{-1}(\mathcal{W}_i))} \le C \|g_i\|_{L_2(\mathcal{U}_i)}.
\]
Therefore, adding over $i$ and using the finite overlap property of the
sets $\mathcal{U}_i$, we end up with
\[
\|\widetilde u\|_{H^2(\gamma)} \le \sum_{i=1}^I \| \widetilde u_i\|_{H^2(\mathcal{W}_i)}
\le C \sum_{i=1}^I \| \widetilde g_i\|_{L_2(\mathcal{U}_i)} \le
 C \|\widetilde f\|_{L_2(\gamma)} ,
\]
as asserted.
\end{proof}  

In view of our discussion below of surfaces of class $C^{1,\alpha}$ with $0<\alpha\le1$, it is natural to ask whether the regularity estimate \eqref{regularity} is still valid in
this more general context. We now show that this is indeed the case provided
the surface $\gamma$ is of class $W^2_p$ with $p>n$, or equivalently the
parametrizations $\{\bchi_i\}_{i=1}^I$ and partitions of unity $\{\widetilde \psi_i\}_{i=1}^I$ subordinate to the covering $\{\mathcal{W}_i\}_{i=1}^I$ of $\gamma$
are of class $W^2_p$. In this case a Sobolev embedding implies $\gamma$ is of class
$C^{1,\alpha}$ with $0< \alpha=1-\frac{n}{p} \le 1$.

\begin{lemma}[regularity for $W^2_p$ surfaces]\label{L:regularity-W2p}
If $\gamma$ is of class $W^2_p$ with $n<p\le\infty$, then there is a constant
$C>0$ depending on $\gamma, p$ and $n$ such that
\begin{equation}\label{regularity-W2p}
\|\widetilde u\|_{H^2(\gamma)} \le C \|\widetilde f\|_{L_2(\gamma)}.  
\end{equation}
\end{lemma}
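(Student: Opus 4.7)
The plan is to mirror the proof of Lemma~\ref{L:regularity} at the level of localization, upgrading only the final invocation of interior elliptic regularity so that it accommodates uniformly elliptic equations with $W^1_p$ coefficients, $p>n$, rather than $C^1$ coefficients. The partition of unity $\{\widetilde\psi_i\}_{i=1}^I$ can be chosen in $W^2_p$ subordinate to $\{\mathcal{W}_i\}$ since this matches the regularity of the chart maps $\bchi_i$. Exactly as before, $\widetilde u_i := \widetilde u\, \widetilde\psi_i$ satisfies $-\Delta_\gamma \widetilde u_i = \widetilde g_i$ in the weak sense with $\|\widetilde g_i\|_{L_2(\gamma)} \lesssim \|\widetilde f\|_{L_2(\gamma)}$. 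Pulling back via $\bchi_i$ and using \eqref{lap-bel-def} yields
\[
-\textrm{div}\big(q_i(\by)\,\bg_i(\by)^{-1}\nabla u_i(\by)\big) = q_i(\by)\,\widetilde g_i(\bchi_i(\by)) \qquad \text{in } \mathcal{V}_i.
\]
Since $\bchi_i\in W^2_p$ implies $D\bchi_i\in W^1_p$, the coefficient matrix $A_i := q_i \bg_i^{-1}$ lies in $W^1_p(\mathcal{V}_i)$, and the Sobolev embedding $W^1_p\hookrightarrow C^{0,1-n/p}$ (valid because $p>n$) ensures that $A_i$ is continuous and uniformly positive definite.

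The crux is then an interior $H^2$ estimate for $-\textrm{div}(A_i \nabla u_i) = h_i$ with $h_i\in L_2$, $A_i$ uniformly elliptic and $A_i\in W^1_p$. I would obtain this through a short bootstrap. Expanding the divergence rewrites the equation in non-divergence form
\[
-A_i : D^2 u_i = h_i + (\textrm{div}\,A_i)\cdot \nabla u_i, \qquad \textrm{div}\,A_i\in L_p.
\]
Starting from $\nabla u_i\in L_2$, H\"older places the right-hand side in $L_{s_0}$ with $\tfrac{1}{s_0}=\tfrac{1}{2}+\tfrac{1}{p}$. Standard $L^p$-theory for operators with continuous leading coefficient, obtained by coefficient freezing plus a small-ball perturbation argument, yields $u_i\in W^2_{s_0}$ locally, and the Sobolev embedding upgrades $\nabla u_i$ from $L_2$ to $L_{q_1}$ with $\tfrac{1}{q_1}=\tfrac{1}{s_0}-\tfrac{1}{n}=\tfrac{1}{2}+\tfrac{1}{p}-\tfrac{1}{n}$. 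Since $p>n$ strictly, each iteration subtracts the positive quantity $\tfrac{1}{n}-\tfrac{1}{p}$ from the reciprocal exponent of $\nabla u_i$, so after at most $\lceil n/(p-n) \rceil$ iterations the right-hand side belongs to $L_2$ and $u_i\in H^2_{\mathrm{loc}}$ with the quantitative bound $\|u_i\|_{H^2(\bchi_i^{-1}(\mathcal{W}_i))} \lesssim \|\widetilde g_i\|_{L_2(\mathcal{U}_i)}$. Summing over $i$ and using the finite overlap of $\{\mathcal{U}_i\}$ then concludes the proof as in Lemma~\ref{L:regularity}.

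The main technical obstacle lies precisely in the interior regularity claim used at each step of the bootstrap. Because $A_i$ is merely continuous rather than $C^1$, classical $C^1$-coefficient theory does not apply directly, and one must appeal to $L^p$-type elliptic regularity for equations with continuous (or VMO) principal part. The hypothesis $p>n$ is essential on two counts: it alone ensures continuity of $A_i$ through Sobolev embedding, and it guarantees the strict gain $\tfrac1p<\tfrac1n$ that drives the iteration to terminate in finitely many steps. The borderline case $p=n$ falls outside this approach, consistent with the loss of H\"older continuity of the chart maps there.
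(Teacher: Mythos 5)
Your proposal is correct and follows essentially the same route as the paper's proof: localize via the chart, observe that the non-divergence-form coefficient matrix $\bA = q\,\bg^{-1}\in W^1_p$ is continuous by Sobolev embedding (hence Calder\'on--Zygmund theory applies, which is exactly your ``coefficient freezing plus small-ball perturbation'' argument), and bootstrap the exponent by $\tfrac1n-\tfrac1p$ per step until it reaches $L_2$ after $\lceil n/(p-n)\rceil$ iterations. The only cosmetic difference is that the paper suppresses the chart index and cites \cite[Theorem~9.15 and Lemma~9.17]{GT98} explicitly for the interior estimate.
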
    
\begin{proof}
We argue with one chart $(\mathcal{V},\mathcal{U},\bchi)$ and thus suppress the index $i$ in $g$, $\bchi$, etc. Since $\wf\in L_2(\gamma)$ and
$\wu\in H^1(\gamma)$, the right-hand side $g=\widetilde g\circ\bchi$ 
in the proof of Lemma \ref{L:regularity} (regularity) satisfies
\[
g \in L_{r_0}(\mathcal{V}) \qquad
\frac{1}{r_0} = \frac{1}{2} + \frac{1}{p}.
\]
On the other hand, the definitions \eqref{dg:d:first} and \eqref{e:area-rep} of
the first fundamental form $\bg$ and area element $q$ imply that they are bounded
in $L_\infty(\mathcal{V})$ as well as
\[
\bg, q \in W^1_p(\mathcal{V})
\qquad\Rightarrow\qquad
\bA := q \bg^{-1} \in W^1_p(\mathcal{V}).
\]
Therefore, the Laplace-Beltrami equation in the parametric domain $\mathcal{V}$
can be written in non-divergence form as follows:
\begin{equation}\label{non-div}
\bA : D^2 u = q g - \div \bA \cdot \nabla u = \ell \in L_{r_0}(\mathcal{V}).
\end{equation}
Since $\bA$ is uniformly continuous, the Calder\'on-Zygmund regularity theory
applies (cf. \cite[Theorem 9.15 and Lemma 9.17]{GT98}) and gives the interior regularity $u\in W^2_{r_0}(\mathcal{Z})$ with
\[
\|u\|_{W^2_{r_0}(\mathcal{Z})} \lesssim \|\ell\|_{L_{r_0}(\mathcal{V})}
\]
where $\mathcal{Z}:=\chi^{-1}(\mathcal{W})$ and $\overline{\mathcal{W}}\subset \mathcal U$ as in the proof Lemma~\ref{L:regularity} (regularity). Invoking Sobolev embedding again,
we deduce
\[
u \in W^1_{t_1}(\mathcal{Z}), \qquad
\frac{1}{t_1} = \frac{1}{r_0} - \frac{1}{n},
\]
and $\wu \in W^1_{t_1}(\gamma)$ upon pasting these local estimates together over $\gamma$;
hence $u\in W^1_{t_1}(\mathcal{V})$.
We now iterate this argument and prove a recurrence relation by induction. Suppose that
a sequence of real numbers $\{r_k,t_k\}$ is governed by the relations $t_0=2$ and
\[
\frac{1}{r_k} = \frac{1}{p} + \frac{1}{t_k},
\qquad
\frac{1}{t_{k+1}} = \frac{1}{r_k} - \frac{1}{n},
\]
and the right hand side of \eqref{non-div} satisfies $\ell \in L_{r_k}(\mathcal{V})$;
note that this is the case for $k=0$.
Calder\'on-Zygmund theory  thus implies $u \in W^2_{r_k}(\mathcal{Z})$ with
\[
\|u\|_{W^2_{r_k}(\mathcal{Z})} \lesssim \|\ell\|_{L_{r_k}(\mathcal{Z})}.
\]
Sobolev embedding in turn yields $u \in W^1_{t_{k+1}}(\mathcal{V})$ whence
$\ell \in L_{r_{k+1}}(\mathcal{V})$, which proves the recurrence relation.
Iterating these relations we see that for $k\ge0$
\[
\frac{1}{r_k} = \frac{1}{r_{k-1}} + \frac{1}{p} - \frac{1}{n}
= \frac{1}{2} + \frac{1}{p} + k \Big( \frac{1}{p} - \frac{1}{n} \Big),
\]
and that every step increases the value of $r_k$, because $\frac{1}{p}-\frac{1}{n}<0$.
Since $\wf\in L_2(\gamma)$, the iteration stops once $r_k\ge2$ or equivalently
\[
k = \Big\lceil \frac{n}{p-n} \Big\rceil.
\]
This concludes the proof.
\end{proof}
  
\subsection{Signed Distance Function}\label{S:distance-function}
%
We now take advantage of the ambient space $\mathbb R^{n+1}$ and use
standard calculus in a suitable tubular neighborhood $\mathcal N$ of $\gamma$
to derive useful expressions of geometric quantities; we postpone
momentarily the precise definition of $\mathcal N$.  The surface $\gamma$ splits
$\mathbb{R}^{n+1}$ into two disjoint sets, the interior and exterior of $\gamma$.
The \emph{signed distance function} $d:\mathcal N \rightarrow \gamma$ is
defined for every $\bx\in\mathcal N$ to be the distance of $\bx$ to $\gamma$,
$\dist(\bx,\gamma)$, if
$\bx$ belongs to the exterior of $\gamma$ and $-\dist(\bx,\gamma)$ if $\bx$ belongs
to the interior of $\gamma$, whence
\[
|d(\bx)| = \dist(\bx,\gamma)
\quad\forall \, \bx\in\mathcal N.
\]
It turns out that $d$ belongs to the same regularity class as $\gamma$ so long as $\gamma$ is at least $C^2$, which we henceforth assume in our discussion of $d$.  While the distance function exists for surfaces of regularity less than $C^{1,1}$, as we explain in Section \ref{S:d_for_C1} below its properties are drastically different and it is not immediately useful for purposes of defining and analyzing surface FEM.  Returning to the setting of $C^2$ surfaces, $\nabla d(\bx)$ is well defined for all
$\bx\in\mathcal N$ and computed on $\gamma$ gives the unit normal $\bnu(\bx)$
pointing outwards:
\[
\bnu(\bx) = \nabla d(\bx)
\quad\forall \, \bx\in\gamma.
\]
Since $\nabla d$ is defined in $\mathcal N$ it provides a natural extension of
$\bnu$ to $\mathcal N$. This neighborhood $\mathcal N$ is sufficiently small
that for every $\bx\in\mathcal N$ there is a unique {\it closest point projection}
$\bP_d(\bx) \in\gamma$ defined by
\begin{equation}\label{e:lift_dist}
\bP_d(\bx) = \bx - d(\bx) \nabla d(\bx)  \qquad
\forall \, \bx \in \mathcal N.
\end{equation}
An important property is that $\nabla d$ coincides at $\bx\in\mathcal N$ and
$\bP_d(\bx)\in\gamma$:
\begin{equation}\label{property-d}
\nabla d(\bx) = \nabla d\big(\bP_d(\bx) \big)
= \nabla d \big(\bx - d(\bx) \nabla d(\bx) \big)
\quad\forall \, \bx\in\mathcal{N}.
\end{equation}
Since $|\nabla d(\bx)|^2 = 1$, we deduce that the Hessian $D^2 d(\bx)$ satisfies
\begin{equation}\label{zero-eig}
D^2 d(\bx) \, \nabla d(\bx) = 0
\quad\forall \, \bx \in \mathcal N.
\end{equation}
This implies that $D^2 d(\bx)$ can be regarded as an operator acting on the tangent
hyperplane to $\gamma$ at $\bx\in\gamma$ and thus gives
an alternative representation to the Weingarten map \eqref{weingarten}:
\[
\bW(\bx) = D^2 d(\bx)
\quad\forall \, \bx \in\gamma.
\]
This has two important consequences. First it provides a natural extension of $\bW$
to $\mathcal N$ and second shows that $\bW$ is symmetric, which is not apparent
from \eqref{weingarten}.

Given a generic function $\widetilde\tv:\gamma\to\mathbb{R}$,
the distance function $d$ provides a natural way to extend it
to the neighborhood $\mathcal{N}$ upon writing
\begin{equation}\label{normal-extension}
\tv(\bx) = \widetilde\tv\big(\bP_d(\bx)  \big)
= \widetilde\tv (\bx - d(\bx) \nabla d(\bx))
\quad\forall \, \bx\in\mathcal{N}.
\end{equation}
Differentiating and using the definition \eqref{projection} of orthogonal
projection, we obtain
\begin{equation}\label{e:rel_dist_grad}
\begin{aligned}
\nabla \tv(\bx) &= \big(\bI - \nabla d(\bx)\otimes\nabla d(\bx) - d(\bx) D^2 d(\bx) \big)
\nabla_\gamma \widetilde \tv\big(\bP_d(\bx)\big)
\\
& = \big(\Pi(\bx) - d(\bx) D^2 d(\bx)\big) \nabla_\gamma \widetilde \tv\big(\bP_d(\bx)\big)
\\
&= \big(\bI - d(\bx) D^2 d(\bx)\big) \Pi(\bx)\nabla_\gamma \widetilde \tv\big(\bP_d(\bx)\big)
\end{aligned}
\end{equation}
where the last equality hinges on \eqref{property-d}, which implies
$\Pi(\bx) = \Pi(\bP_d(\bx))$ and $D^2 d(\bx)=D^2 d(\bx) \Pi(\bx)$. In particular,
$\nabla \tv(\bx) = \nabla_\gamma \widetilde \tv\big(\bP_d(\bx)\big)$
for $\bx\in\gamma$ because
\eqref{normal-extension} provides a normal extension of $\widetilde \tv$.

Suppose now that $\tv$ is an extension of $\widetilde \tv$ to $\mathcal{N}$,
but not necessarily in the normal direction. An intrinsic definition of
tangential gradient of $\widetilde \tv$ is the orthogonal projection of
$\nabla \tv$ to the tangent hyperplane of $\gamma$:
\begin{equation}\label{grad-extension}
\nabla_\gamma \widetilde \tv(\bx) = \big(\bI - \bnu(\bx)\otimes\bnu(\bx) \big) \nabla\tv(\bx)
= \Pi(\bx) \nabla\tv(\bx)
\quad\forall \, \bx\in\gamma.
\end{equation}
This definition is consistent with \eqref{e:exact_grad}: $\nabla_\gamma \widetilde \tv(\bx)$
is orthogonal to $\bnu(\bx)$ and
\[
\nabla_\gamma \widetilde \tv(\bx) \cdot \partial_i\bchi(\by) =
\nabla \tv(\bx) \cdot \partial_i\bchi(\by) =
\partial_i \widetilde \tv(\bchi(\by))
\]
obeys the chain rule, whence it must coincide with our previous definition
based on these two properties. An important consequence of this property follows.
\begin{remark}[parametric independence]\label{R:param-indep}
  The definition \eqref{grad-extension} is independent of the extension:
  if $\tv_1,\tv_2$ are two extensions of $\widetilde \tv$ then $\tv_1-\tv_2 = 0$ on $\gamma$
  and the only non-vanishing component of $\nabla(\tv_1-\tv_2)$ is in the normal
  direction $\bnu$. Since definitions \eqref{grad-extension} and \eqref{e:exact_grad}
  agree, we deduce that the tangential gradient $\nabla_\gamma \widetilde \tv$ is independent
  of the parametrization $\bchi$ chosen to described $\gamma$. The same happens
  with the tangential divergence \eqref{tang-diver} as well as the
  Laplace-Beltrami operator \eqref{lap-bel-def}, the latter
  because of \eqref{lap-bel-weak}
  and the fact that \eqref{int-gamma} is independent of $\bchi$.
\end{remark}  

Given a vector field $\widetilde\bv:\gamma\to\mathbb{R}^{n+1}$ and corresponding
extension to $\mathcal{N}$, the tangential divergence can be written as
\[
\divg{\widetilde \bv(\bx)} = \trace{\nabla_\gamma \widetilde \bv(\bx)}
= \div{\bv(\bx)} - \bnu(\bx)^t \, \nabla \bv(\bx) \bnu(\bx)
\quad\forall \, \bx\in\gamma,
\]
and gives an alternative expression to \eqref{tang-diver}. Likewise, the
Laplace-Beltrami operator $\Delta_\gamma \widetilde\tv = \divg{\nabla_\gamma \widetilde \tv}$,
written parametrically in \eqref{lap-bel-def},
can be equivalently written in terms of the extension $\tv$ as follows
\[
\Delta_\gamma \widetilde \tv = \trace{(\bI-\bnu\otimes\bnu)D^2\tv} - (\nabla\tv\cdot\bnu)
\, \divg{\bnu},
\]
because $\nabla_\gamma(\nabla\bnu\cdot\bnu) \cdot \bnu = 0$. This implies the
expression
\[
\Delta_\gamma \widetilde \tv(\bx) = \Delta \tv(\bx) - \bnu(\bx)^t D^2\tv(\bx) \bnu(\bx)
- (\nabla\tv\cdot\bnu)(\bx) \, \divg{\bnu(\bx)}
\quad\forall \, \bx\in\gamma.
\]

\subsection{Curvatures}\label{S:curvatures}
%
We again assume that $\gamma$ is of class $C^2$.
In view of \eqref{weingarten}, the Weingarten map is symmetric and its
$n+1$ eigenvalues are real.
Except for the zero eigenvalue corresponding to the eigenvector $\bnu(\bx)$,
according to \eqref{zero-eig}, they are called the {\it principal curvatures}
of $\gamma$ at $\bx$ and are denoted by $\kappa_i(\bx)$ for $1\le i \le n$.
The eigenvectors of $\bW$ corresponding to the principal curvatures are called the {\it principal directions}.

We stress that $\kappa_i(\bx)$ is well defined for all $\bx \in \mathcal N$
because so is $\bW(\bx)$.
This allows us to make the definition of $\mathcal N$ precise. Given $\delta>0$, first let
\begin{equation}\label{e:delta-tube}
\mathcal N(\delta) := \{ \bx \in \mathbb R^{n+1} \ : \ |d(\bx)| < \delta \}.
\end{equation}
 Let also
\begin{equation}\label{K:def}
  K(\bx) := \max_{1 \le i \le n} |\kappa_i(\bx)| \quad\forall \, \bx \in \gamma;
  \qquad
  K_\infty := \|K\|_{L_\infty(\gamma)} . 
\end{equation}
We may now set
\begin{equation}\label{N:def}
  \mathcal{N} := \Big\{ \bx \in \mathbb{R}^{n+1}:
  \dist (\bx, \gamma) < \frac{1}{2 K_\infty} \Big\}=\mathcal{N}\left(K_\infty/2\right).
\end{equation}
Note that the distance function, closest point projection, and related properties are defined and hold on the larger set $\mathcal{N}(1/K_\infty)$.  We adopt the more limited definition of $\mathcal{N}$ in order to avoid degeneration of some quantities such as curvature of parallel surfaces (see below) that occurs near the boundary of the larger set.

Given $\varepsilon$ small so that $|\varepsilon| \le \frac{1}{2K_\infty}$,
we define the parallel surface $\gamma_\varepsilon$ to be
\[
\gamma_\varepsilon := \big\{\bx\in\mathcal N: d(\bx) = \varepsilon \big\}.
\]
The following statement relates the principal curvatures of $\gamma_\varepsilon$ with
those of $\gamma$.

\begin{lemma}[curvatures of parallel surface]\label{L:curv-parallel}
If $\gamma$ is of class $C^2$  so are all parallel surfaces $\gamma_\varepsilon$ and
their principal curvatures satisfy
\begin{equation}\label{kappas}
  \kappa_i(\bx) = \frac{\kappa_i(\bP_d(\bx))}{1+\varepsilon \, \kappa_i(\bP_d(\bx))}
  \quad\forall \, \bx\in\gamma_\varepsilon, 
\end{equation}
whereas the principal directions at $\bx$ and $\bP_d(\bx)$ coincide.
\end{lemma}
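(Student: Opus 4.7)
The strategy is to exploit the representation $\bW(\bx) = D^2 d(\bx)$ of the Weingarten map extended to $\mathcal{N}$, together with the identity $\nabla d(\bx) = \nabla d(\bP_d(\bx))$ from \eqref{property-d}. First I would observe that $\gamma_\varepsilon = d^{-1}(\varepsilon)$ is a $C^2$ submanifold because $d$ is $C^2$ on $\mathcal{N}$ with $|\nabla d| \equiv 1$; its unit normal at $\bx \in \gamma_\varepsilon$ is $\nabla d(\bx)$, which by \eqref{property-d} coincides with $\bnu(\bP_d(\bx))$. Thus the tangent hyperplanes of $\gamma_\varepsilon$ at $\bx$ and of $\gamma$ at $\bP_d(\bx)$ agree, and by the same reasoning that produced $\bW = D^2 d$ on $\gamma$ (namely that $\gamma_\varepsilon$ is a level set of $d$ and $|\nabla d|=1$), the Weingarten map of $\gamma_\varepsilon$ at $\bx$ is $D^2 d(\bx)$ restricted to this common hyperplane.

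Next I would derive an algebraic relation between $D^2 d(\bx)$ and $D^2 d(\bP_d(\bx))$. Differentiating \eqref{property-d} by the chain rule yields $D^2 d(\bx) = D^2 d(\bP_d(\bx)) \, D\bP_d(\bx)$, while direct differentiation of $\bP_d(\bx) = \bx - d(\bx) \nabla d(\bx)$ gives $D\bP_d(\bx) = \Pi(\bx) - d(\bx) D^2 d(\bx)$, as already used in \eqref{e:rel_dist_grad}. Since $\nabla d(\bx) = \nabla d(\bP_d(\bx))$, property \eqref{zero-eig} yields $D^2 d(\bP_d(\bx)) \nabla d(\bx) = 0$, so $D^2 d(\bP_d(\bx)) \Pi(\bx) = D^2 d(\bP_d(\bx))$. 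Setting $d(\bx) = \varepsilon$ and writing $\bW_\ast := D^2 d(\bP_d(\bx))$, the identity collapses to
\[
(\bI + \varepsilon \bW_\ast)\, D^2 d(\bx) = \bW_\ast.
\]

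Finally I would apply this relation to an eigenvector $\bv_i$ of $\bW_\ast$ in the tangent hyperplane with $\bW_\ast \bv_i = \kappa_i(\bP_d(\bx)) \bv_i$. The bound $|\varepsilon| K_\infty \le 1/2$ forces each tangential eigenvalue $1 + \varepsilon \kappa_j$ of $\bI + \varepsilon \bW_\ast$ to lie in $[1/2, 3/2]$, so $\bI + \varepsilon \bW_\ast$ is invertible on the tangent hyperplane and $\bv_i$ is its eigenvector with eigenvalue $1 + \varepsilon \kappa_i(\bP_d(\bx))$. Consequently
\[
D^2 d(\bx) \bv_i = (\bI + \varepsilon \bW_\ast)^{-1} \bW_\ast \bv_i = \frac{\kappa_i(\bP_d(\bx))}{1 + \varepsilon \kappa_i(\bP_d(\bx))}\, \bv_i,
\]
which simultaneously identifies $\bv_i$ as a principal direction of $\gamma_\varepsilon$ at $\bx$ and yields the asserted formula \eqref{kappas}. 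The main subtlety is bookkeeping: one must ensure that the full ambient matrices $D^2 d(\bx)$ and $D^2 d(\bP_d(\bx))$ both leave the common tangent hyperplane invariant, so that restrictions and spectral manipulations there are legitimate; this is guaranteed by \eqref{zero-eig} together with the coincidence $\nabla d(\bx) = \nabla d(\bP_d(\bx))$ of the two normals.
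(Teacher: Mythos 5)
Your proof is correct and follows essentially the same route as the paper's: both differentiate the identity $\nabla d(\bx) = \nabla d(\bP_d(\bx))$, use $D^2 d(\bP_d(\bx))\nabla d(\bx) = 0$ to arrive at $(\bI + \varepsilon\,D^2 d(\bP_d(\bx)))\,D^2 d(\bx) = D^2 d(\bP_d(\bx))$, and then read off the eigenvalue relation after noting the invertibility of $\bI + \varepsilon\,D^2 d(\bP_d(\bx))$ from the definition of $\mathcal{N}$. Your preamble making explicit that $\gamma_\varepsilon$ is $C^2$ and that $D^2 d(\bx)$ is its Weingarten map, along with the closing remark about both matrices preserving the common tangent hyperplane, are useful clarifications that the paper leaves implicit but do not change the argument.
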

\begin{proof}  
Differentiate \eqref{property-d} to get
\[
D^2 d(\bx) = D^2d \big( \bP_d(\bx) \big) \big(\bI - \nabla d(\bx) \otimes \nabla d(\bx)
- d(\bx) D^2 d(\bx) \big),
\]
whence, since $\nabla d(\bx) = \nabla d \big(\bP_d(\bx) \big)$ again from
\eqref{property-d},
\[
\big( \bI + d(\bx) D^2 d\big(\bP_d(\bx) \big) \big) D^2 d(\bx)
= D^2 d\big(\bP_d(\bx) \big) \big( \bI - \nabla d(\bx) \otimes \nabla d(\bx) \big)
= D^2 d\big(\bP_d(\bx) \big).
\]
Therefore, for $\bx\in\gamma_\varepsilon$ we see that the eigenvalues of
$\big( \bI + \varepsilon D^2 d\big(\bP_d(\bx) \big) \big)$ are
\[
\kappa_i\big( \bI + \varepsilon D^2 d\big(\bP_d(\bx) \big) \big)=
1+\varepsilon \kappa_i \big(\bP_d(\bx) \big) \ge \frac12,
\]
according to \eqref{N:def}.
This implies that $\bI + \varepsilon D^2 d\big(\bP_d(\bx) \big)$ is nonsingular and
the previous relation reads as follows in terms of the Weingarten map:
\[
\bW(\bx) = \big( \bI + \varepsilon \bW\big(\bP_d(\bx) \big) \big)^{-1}
\bW \big(\bP_d(\bx) \big).
\]
This shows that the eigenvectors of $\bW(\bx)$ and $\bW \big(\bP_d(\bx) \big)$
coincide and the eigenvalues are related via \eqref{kappas}.
\end{proof}

The {\it second fundamental form}
$\bh = (h_{ij})_{i,j=1}^n$ of $\gamma$ is defined by
\[
h_{ij}(\by) := -\partial_i \bnu(\by)\cdot\partial_i\bchi(\by)
= \bnu(\by) \cdot \partial_{ij} \bchi(\by)
\quad\forall \, \by\in\mathcal{V},
\]
where the last equality relies on the fact that $\bnu$ and $\partial_j\bchi$
are orthogonal for $1\le j\le n$. The next result connects $\bh$ with the
Weingarten map \eqref{weingarten}.

\begin{lemma}[second fundamental form]\label{L:second-form}
The symmetric matrix $\bW=D_\gamma\bnu$ defines a selfadjoint operator on
the tangent hyperplane to $\gamma$ that can be represented in the basis
$\{\partial_j\bchi\}_{j=1}^n$ by the generally non-symmetric matrix
\[
\bs = -\bh \bg^{-1}.
\]
The eigenvalues of $\bs$ are the principal curvatures of $\gamma$.
\end{lemma}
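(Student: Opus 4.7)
The plan is to unpack the three claims in turn: (i) self-adjointness of $\bW$ on the tangent hyperplane, (ii) the coordinate representation $\bs = -\bh\bg^{-1}$ in the basis $\{\partial_j\bchi\}_{j=1}^n$, and (iii) the identification of the eigenvalues of $\bs$ with the principal curvatures. Claim (i) has effectively been handled earlier in Section~\ref{S:distance-function}: the identity $\bW(\bx) = D^2 d(\bx)$ on $\gamma$ is symmetric as a Hessian, and by \eqref{zero-eig} it annihilates the normal direction, so it restricts to a self-adjoint operator on the tangent hyperplane.

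For claim (ii), I first compute the action of $\bW$ on the coordinate basis. Viewing the composition $\bnu\circ\bchi$ as a function on $\mathcal V$ and applying the chain rule \eqref{def-tang-grad} componentwise to $\bnu$ gives
$\partial_j\bnu(\by) = \bW(\bchi(\by))\,\partial_j\bchi(\by)$, i.e., $\bW\,D\bchi = D\bnu$ as $(n{+}1)\times n$ matrices. Next I expand $\partial_j\bnu$ in the tangent basis: since $\bnu\cdot\partial_j\bnu=\tfrac12\partial_j|\bnu|^2 = 0$, the vector $\partial_j\bnu$ is tangential, so I may write $\partial_j\bnu = \sum_k s_{jk}\,\partial_k\bchi$. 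Pairing with $\partial_i\bchi$ and invoking the definition of $\bg$ yields $\partial_i\bchi\cdot\partial_j\bnu = (\bs\,\bg)_{ji}$. On the other hand, differentiating the orthogonality $\bnu\cdot\partial_i\bchi = 0$ with respect to $y_j$ gives $\partial_i\bchi\cdot\partial_j\bnu = -\bnu\cdot\partial_{ij}\bchi = -h_{ij}$. Using the symmetry $h_{ij}=h_{ji}$ then produces $\bs\,\bg = -\bh$, hence $\bs = -\bh\,\bg^{-1}$.

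For claim (iii), since $\{\partial_j\bchi\}_{j=1}^n$ is a basis of the tangent hyperplane, the eigenvalues of $\bs$ coincide with those of the restriction of $\bW$ to that hyperplane; by definition these are the principal curvatures. That $\bs$ is generally non-symmetric is unsurprising because the basis $\{\partial_j\bchi\}$ is not orthonormal unless $\bg=\bI$; nevertheless $-\bh\bg^{-1}$ and $-\bg^{-1}\bh$ are similar via conjugation by $\bg$, so the spectrum is intrinsic and real by (i).

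The main obstacle is simply careful index bookkeeping when expanding vectors in a non-orthonormal basis: one must distinguish the ``matrix of a linear map'' convention (columns list images of basis vectors, yielding $\bW D\bchi = D\bchi(-\bg^{-1}\bh)$) from the ``coefficient matrix'' convention used in the statement (rows list expansion coefficients of $\partial_j\bnu$, yielding $-\bh\bg^{-1}$). Beyond this, the proof is a direct calculation anchored by the two identities $\bW D\bchi = D\bnu$ and $D\bchi^t D\bnu = -\bh$.
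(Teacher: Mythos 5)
Your proof is correct and takes essentially the same route as the paper's: expand $\partial_i\bnu$ in the tangent basis $\{\partial_k\bchi\}$, pair with $\partial_j\bchi$, and relate the result to $\bh$ and $\bg$. The paper's proof is shorter because it only carries out the computation for claim (ii), leaving the self-adjointness and eigenvalue identification implicit in the surrounding discussion; you make those two points explicit (via $\bW=D^2d$ and via similarity of $\bs$ and $-\bg^{-1}\bh$), and your remark distinguishing the row-coefficient convention from the column-image convention is a useful clarification that the paper glosses over.
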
  
\begin{proof}
Since $D_\gamma\bnu \, \bnu = 0$, we can regard $D_\gamma\bnu$ as an operator acting on the
tangent plane to $\gamma$ and represent its image in terms
of $\{\partial_k\bchi\}_{k=1}^n$ as follows
\[
\partial_i\bnu (\by) =
D_\gamma\bnu(\bx) \, \partial_i\bchi(\by)
= \sum_{k=1}^n s_{ik}(\by) \, \partial_k\bchi(\by)
\quad\forall \, \by\in\mathcal{V}.
\]
Let $\bs(\by) := (s_{ij}(\by))_{ij, = 1}^n$ and multiply both sides by
$\partial_j\bchi(\by)$ to see that
\[
h_{ij}(\by) = - \partial_i\bnu(\by) \cdot \partial_j\bchi(\by)
= \sum_{k=1}^n s_{ik} \partial_k\bchi(\by)\cdot\partial_j\bchi(\by)
= \sum_{k=1}^n s_{ik} g_{kj}.
\]
This implies $\bh=-\bs\bg$ and thus the assertion.
\end{proof}

\subsection{Surface regularity and properties of the distance function} \label{S:d_for_C1}

In the previous two sections we have seen that when $\gamma$ is of class $C^2$, the closest point projection is uniquely defined in a tubular neighborhood of $\gamma$ whose width is related to the principal curvatures of the surface.  We shall see below that the closest point projection plays a pivotal role in analyzing finite element methods on $C^2$ surfaces.  On the other hand, some applications may require solving PDE on surfaces that are less regular than $C^2$.  Thus it is natural to ask which properties of the distance function and closest point projection carry over to less regular surfaces.  It turns out that the properties of these maps change drastically and fundamentally when crossing the threshold from $C^2$ to less regular ($C^{1,\alpha}$ with $\alpha<1$) surfaces.

In order to make this statement precise, we begin by restating for comparison from \cite[Lemma 14.16]{GT98} some fundamental properties of the distance function for $C^k$ surfaces ($k \ge 2$).
\begin{lemma}[properties of distance functions for $C^k$ surfaces] Let $\gamma$ be a $C^k$ surface, $k \ge 2$.  Then there exists a positive constant $\delta$ depending on $\gamma$ such that $d \in C^k(\mathcal{N}(\delta))$.  In addition, the closest point projection $\bP_d(\bx)=\bx-d(\bx) \nabla d(\bx)$ is defined and of class $C^{k-1}$ on $\mathcal{N}(\delta)$ with $\delta < \frac{1}{K_\infty}$. 
\end{lemma}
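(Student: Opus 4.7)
The plan is to combine a local Inverse Function Theorem argument for the normal parametrization of a tubular neighborhood with a single regularity bootstrap that exploits the eikonal identity $|\nabla d|=1$. Fix $\bx_0\in\gamma$ and a chart $(\mathcal{V},\mathcal{U},\bchi)$ of class $C^k$ around $\bx_0$. Introduce the normal flow
\begin{equation*}
\Psi:\mathcal{V}\times(-\delta_0,\delta_0)\to\mathbb{R}^{n+1},
\qquad
\Psi(\by,s):=\bchi(\by)+s\,\bnu(\bchi(\by)).
\end{equation*}
Because $\bnu$ is built from $\bchi$ by one differentiation, $\Psi$ is of class $C^{k-1}$. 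Its differential at $s=0$ is the $(n{+}1)\times(n{+}1)$ matrix $[\,D\bchi(\by)\,|\,\bnu(\bchi(\by))\,]$, which is invertible since $\bnu$ is transverse to the tangent hyperplane. The Inverse Function Theorem therefore supplies a $C^{k-1}$ local inverse, and by the geometric meaning of $\Psi$ its normal component is $d$ and its surface component parametrizes $\bP_d$, so both maps are of class $C^{k-1}$ near $\bx_0$.

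Next I would make the construction global on $\mathcal{N}(\delta)$ for an arbitrary $\delta<1/K_\infty$. The same computation at height $s\ne 0$ yields the tangential differential $(\bI+s\,\bW(\bchi(\by)))\,D\bchi$; by Lemma~\ref{L:curv-parallel} the eigenvalues of $\bW$ are bounded by $K_\infty$ in absolute value, so $\bI+s\,\bW$ is invertible whenever $|s|K_\infty<1$. Combined with the compactness of $\gamma$ and a standard covering/tubular-neighborhood argument, this invertibility rules out two distinct foot-points for the same $\bx\in\mathcal{N}(\delta)$, so the closest point is unique and $\bP_d$ is single-valued. Pasting the local inverses together yields $d,\bP_d\in C^{k-1}(\mathcal{N}(\delta))$.

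The final step promotes $d$ from $C^{k-1}$ to $C^k$. From $\bx-\bP_d(\bx)=d(\bx)\nabla d(\bx)$ and $|\nabla d|=1$, the gradient of $d$ equals the surface normal evaluated at the foot-point:
\begin{equation*}
\nabla d(\bx)=\bnu\bigl(\bP_d(\bx)\bigr)\qquad\forall\,\bx\in\mathcal{N}(\delta).
\end{equation*}
Since $\gamma\in C^k$ implies $\bnu\in C^{k-1}(\gamma)$ and I have just shown $\bP_d\in C^{k-1}(\mathcal{N}(\delta))$, the composition is $C^{k-1}$; hence $\nabla d\in C^{k-1}(\mathcal{N}(\delta))$ and therefore $d\in C^k(\mathcal{N}(\delta))$. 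Substituting back into $\bP_d=\bx-d\,\nabla d$ reconfirms $\bP_d\in C^{k-1}(\mathcal{N}(\delta))$, matching the statement.

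The main obstacle is this bootstrap: a direct IFT application to $\Psi$ only delivers $d\in C^{k-1}$, one order below the claim, because the normal flow already loses a derivative relative to $\bchi$. The identity $\nabla d=\bnu\circ\bP_d$ is what restores the missing derivative, repackaging the regularity of $\bnu$ into a statement about $d$ rather than about $\bP_d$. The only other delicate point is the global single-valuedness of $\bP_d$ on $\mathcal{N}(\delta)$ at the sharp threshold $\delta<1/K_\infty$, which requires combining the spectral bound $|s|\,K_\infty<1$ with a compactness argument rather than merely invoking the IFT pointwise.
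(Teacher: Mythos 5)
The paper does not actually prove this lemma: it is introduced with the words ``restating for comparison from [GT98]'', i.e.\ it is quoted from Gilbarg--Trudinger, Lemma 14.16, and no proof appears in the text. So there is no in-paper argument to compare against, and I can only assess your proof on its own merits.

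Your proof is correct, and it reproduces the standard argument (in fact essentially the one used in Gilbarg--Trudinger). The three steps are exactly the right ones: (i) the normal flow $\Psi(\by,s)=\bchi(\by)+s\,\bnu(\bchi(\by))$, which is $C^{k-1}$ because $\bnu$ costs one derivative of $\bchi$, has invertible differential $[(\bI+s\,\bW)D\bchi\mid\bnu]$ as long as $|s|K_\infty<1$, so the Inverse Function Theorem gives a local $C^{k-1}$ inverse whose second component is $d$ and whose first component encodes $\bP_d$; (ii) compactness of $\gamma$ upgrades the local diffeomorphisms to a single injective tubular-neighborhood parametrization on $\mathcal{N}(\delta)$ for a sufficiently small $\delta>0$; and (iii) the eikonal bootstrap $\nabla d=\bnu\circ\bP_d$ lifts $d$ from $C^{k-1}$ to $C^k$, which then reconfirms $\bP_d=\bx-d\,\nabla d\in C^{k-1}$. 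You correctly identify that without step (iii) the IFT only delivers $d\in C^{k-1}$, one order short, so this bootstrap is the essential point.

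One small caution on phrasing: the spectral bound $|s|K_\infty<1$ only controls the \emph{local} invertibility of $\Psi$. Global injectivity on $\mathcal{N}(\delta)$ can fail for $\delta$ close to $1/K_\infty$ even if every local Jacobian is nonsingular --- for instance, two nearby roughly parallel sheets of $\gamma$ can force the reach to be far smaller than $1/K_\infty$. For the lemma as literally stated (existence of \emph{some} positive $\delta$, which may be further required to satisfy $\delta<1/K_\infty$) this is immaterial, and your appeal to compactness handles it; but you should not present $\delta<1/K_\infty$ as a sufficient condition for uniqueness of the foot-point. The paper itself makes the same restriction when defining $\mathcal{N}$ in \eqref{N:def}, which implicitly assumes the reach equals $1/K_\infty$; this is true for convex surfaces but not in general.
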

We now ask whether a similar statement holds for $k<2$, and in particular for $k=1$.  Note first that the distance function $d$ to {\it any} closed set $\gamma \subset \mathbb{R}^{n+1}$ is defined and Lipschitz continuous \cite[Theorem 4.8.1]{Fed59}, so the first question at hand is whether distance functions for $C^{1,\alpha}$ surfaces ($0 \le \alpha <1)$ are more than Lipshitz continuous.  


In order to understand the relationship between surface regularity and the distance function map, we first define the {\it reach} of a surface $\gamma$:  
\[
{\rm reach}(\gamma) := \sup \big\{\delta \ge 0:  \hbox{all } \bx\in \mathcal{N}(\delta) \hbox{ have a unique closest point } \bP_d(\bx) \in \gamma \big\}.
\]
For a $C^2$ surface $\gamma$, we have already seen that ${\rm reach}(\gamma)=1/K_\infty$.   We now explore the connection between the reach and properties of the distance function for less regular surfaces.  We first define
\[ U(\gamma):=\{\bx \in \mathbb{R}^{n+1} :~\bx \hbox{ has a unique closest point in } \gamma\}.
\]
The following result may be found in \cite[Theorem 4.8.3]{Fed59}.
\begin{lemma}[properties of differentiable distance functions]  \label{lem:d_properties}
If $\gamma$ is a $C^1$ surface, $\bx \in \mathbb{R}^{n+1} \setminus \gamma$, and $d$ is differentiable at $\bx$, then $\bx \in U(\gamma)$.  In particular, if $d$ is differentiable in a neighborhood of $\gamma$, then ${\rm reach}(\gamma)>0$.  
\end{lemma}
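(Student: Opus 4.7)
The plan is to prove the first statement by contradiction and derive the second as an immediate corollary. Suppose $\bx \in \mathbb{R}^{n+1}\setminus\gamma$ is a point at which $d$ is differentiable but which has two distinct nearest points $\by_1 \ne \by_2$ on $\gamma$, so $|\bx-\by_1|=|\bx-\by_2|=|d(\bx)|$. The strategy is to probe $d$ along the two segments from $\bx$ to $\by_1$ and to $\by_2$, show that its directional derivative along each is forced to be $\pm 1$, and conclude via Cauchy--Schwarz that both segment directions must agree with $\nabla d(\bx)$.

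The first ingredient I would use is that the unsigned distance to the closed set $\gamma$ is globally $1$-Lipschitz, which immediately gives $|\nabla d(\bx)|\le 1$ (the signed distance is locally $\pm|d|$ since the sign is constant on each connected component of $\mathbb{R}^{n+1}\setminus\gamma$). Next, for $i\in\{1,2\}$ set $\bu_i := (\bx-\by_i)/|\bx-\by_i|$ and parametrize the segment toward $\by_i$ as $\bx_s := \bx - s\bu_i$ for $s\in[0,|d(\bx)|]$. Noting that no interior point of this segment can lie on $\gamma$ (else it would be strictly closer to $\bx$ than $\by_i$, contradicting minimality), the sign of $d$ is constant along the open segment. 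The triangle inequality gives $|d(\bx_s)| \le |\bx_s-\by_i| = |d(\bx)| - s$, while the $1$-Lipschitz property gives $|d(\bx_s)| \ge |d(\bx)| - s$, so equality holds and $d(\bx_s) = d(\bx) - s\,\mathrm{sgn}(d(\bx))$.

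Differentiating this identity at $s = 0^+$ using differentiability of $d$ at $\bx$ yields $\nabla d(\bx)\cdot \bu_i = \mathrm{sgn}(d(\bx))$ for both $i=1,2$. Combined with $|\nabla d(\bx)|\le 1$ and $|\bu_i|=1$, the Cauchy--Schwarz inequality is saturated and forces $\bu_i = \mathrm{sgn}(d(\bx))\,\nabla d(\bx)$, in particular $\bu_1 = \bu_2$. Then $\by_i = \bx - |d(\bx)|\bu_i$ gives $\by_1 = \by_2$, contradicting the hypothesis and establishing the first claim.

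For the second statement, suppose $d$ is differentiable on an open neighborhood $\mathcal{N}(\delta)$ of $\gamma$ with $\delta>0$. By the first part every $\bx\in\mathcal{N}(\delta)\setminus\gamma$ lies in $U(\gamma)$, and each point of $\gamma$ is trivially its own unique closest point, so $\mathcal{N}(\delta)\subset U(\gamma)$ and $\mathrm{reach}(\gamma)\ge \delta>0$. The only subtle point in the whole argument is the sign bookkeeping needed to pass from differentiability of $|d|$ (which is automatic) to that of the signed $d$; this is handled by the observation that the open segment from $\bx$ to any nearest point stays in a single connected component of $\mathbb{R}^{n+1}\setminus\gamma$, so the sign is constant along it. The $C^1$ hypothesis on $\gamma$ is used only insofar as it guarantees that $\mathbb{R}^{n+1}\setminus\gamma$ splits into well-defined interior and exterior components so that the signed distance is unambiguous; the core argument rests entirely on $\gamma$ being closed.
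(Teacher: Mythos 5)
Your argument is correct, and it is self-contained whereas the paper offers no proof at all for this lemma, simply citing Federer's 1959 paper \cite[Theorem 4.8.3]{Fed59}. Your chain of reasoning is sound: the unsigned distance to a closed set is $1$-Lipschitz, so $|\nabla d(\bx)|\le 1$; along the segment from $\bx$ toward a nearest point $\by_i$, the two-sided bound $|d(\bx)|-s \le |d(\bx_s)| \le |\bx_s-\by_i| = |d(\bx)|-s$ pins down $d$ exactly (after the correct sign bookkeeping, which you handle by observing that the open segment stays in one component of $\mathbb{R}^{n+1}\setminus\gamma$); differentiating yields $\nabla d(\bx)\cdot \bu_i = \mathrm{sgn}(d(\bx))$, and the equality case of Cauchy--Schwarz then forces $|\nabla d(\bx)|=1$ and $\bu_1=\bu_2$, hence $\by_1=\by_2$. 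The passage to positive reach is immediate once $\mathcal{N}(\delta)\subset U(\gamma)$ for some $\delta>0$. This is essentially Federer's own argument transcribed to the signed-distance setting, so the content is the same as what the paper delegates to the literature; what your version buys is a readable, elementary proof that makes explicit exactly which facts are used (Lipschitz bound, existence of nearest points on a compact set, the topological separation of $\mathbb{R}^{n+1}$ by $\gamma$) --- the $C^1$ hypothesis, as you correctly note, is only used to make the sign of $d$ globally well defined and is otherwise inert. One minor remark: you could state explicitly that nearest points exist because $\gamma$ is compact, but this is implicit and unproblematic.
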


Next we define several constants from the technical report \cite{Luc57}.  Given $\bx \in \gamma$ and $\rho\ge 0$, we first define the closed normal segment
\[
S(\bx, \rho):= [\bx-\rho \bnu(\bx), \bx + \rho \bnu(\bx)].
\]
Let
$B_\rho(\by)$ denotes the ball in $\mathbb R^{n+1}$ of center $\by$ and radius $\rho>0$,
and
\begin{align*}
\begin{aligned}
\frac{1}{r_0} &:=\sup_{\bx,\by \in \gamma, \bx \neq \by} \frac{|\bnu(\bx)-\bnu(\by)|}{|\bx-\by|},
\\ \frac{1}{r_0{'}}&:=\sup \{\rho \ge 0: ~S(\bx, \rho) \cap S(\by, \rho)=\emptyset ~ \forall \bx, \by \in \gamma, ~\bx \neq \by\},
\\ \frac{1}{r_0{''}}&:=\sup \{ \rho \ge 0: ~ \overline{B_\rho (\bx \pm \rho \bnu(\bx))} \hbox{ contain respectively no points}  \\ &~~~~~~\hbox{ interior or exterior to } \gamma ~
\hbox{ for all } \bx \in \gamma\}, 
\\ \frac{1}{r_0{'''}}&:=\sup_{\bx,\by \in \gamma, \bx \neq \by} \frac{|2(\by-\bx) \cdot \bnu(\bx)|}{|\by-\bx|^2}.
\end{aligned}
\end{align*}

Combining \cite[Theorem 1]{Luc57} and noting that $r_0$ bounds the Lipschitz constant of $\gamma$ (cf. the comment on p. 15 of \cite{Luc57}), we have the following.
\begin{lemma}[further properties of $C^1$ surfaces] \label{lem:C1_properties} 
If the surface $\gamma$ is of class $C^1$, then the constants $r_0$, $r_0{'}$, $r_0{''}$, and $r_0{'''}$ are all equal.  In addition, if $r_0>0$ then  $\gamma$ is of class $C^{1,1}$.
\end{lemma}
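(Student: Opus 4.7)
The plan is to establish all four equalities by setting up a cycle of inequalities among the reciprocals $1/r_0$, $1/r_0'$, $1/r_0''$, $1/r_0'''$, and then to derive the $C^{1,1}$ conclusion from the Lipschitz bound on $\bnu$. Throughout I would work locally in a small ball in which $\gamma$ is the graph of a $C^1$ function over its tangent hyperplane; compactness of $\gamma$ allows these local arguments to be patched together.

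The easier legs of the cycle are geometric. First I would prove $1/r_0''' \le 1/r_0$: for $\bx,\by\in\gamma$, connect them by a $C^1$ curve $\bgamma(t)\subset\gamma$ with $\bgamma(0)=\bx$, $\bgamma(1)=\by$. Using $\bnu(\bgamma(t))\cdot\dot{\bgamma}(t)\equiv0$, write
\begin{equation*}
(\by-\bx)\cdot\bnu(\bx) = \int_0^1 \dot{\bgamma}(t)\cdot\big(\bnu(\bx)-\bnu(\bgamma(t))\big)\,dt,
\end{equation*}
apply $|\bnu(\bx)-\bnu(\bgamma(t))|\le |\bx-\bgamma(t)|/r_0$, and let the curve approach a straight segment. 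Next I would show $1/r_0''\le 1/r_0'''$: the bound $|2(\by-\bx)\cdot\bnu(\bx)|\le|\by-\bx|^2/r_0'''$ forces every $\by\in\gamma$ to lie outside the open balls $B_{r_0'''}(\bx\pm r_0'''\bnu(\bx))$, which is exactly the tangent-ball property. Then $1/r_0'\le 1/r_0''$: if open tangent balls of radius $\rho$ exist on both sides at every point, then any two normal segments $S(\bx,\rho)$ and $S(\by,\rho)$ are disjoint, since otherwise one of the four pairs of tangent balls would have a shared interior point.

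The hard direction is $1/r_0\le 1/r_0'$, i.e.\ extracting a Lipschitz bound on $\bnu$ from the non-intersection of normal segments. Suppose for contradiction that for some $\rho<r_0'$ one can find $\bx_k,\by_k\in\gamma$ with $|\bnu(\bx_k)-\bnu(\by_k)|/|\bx_k-\by_k|\to L > 1/\rho$. By compactness, pass to a subsequence with $\bx_k,\by_k\to\bz$ and examine the normals in the two-plane containing $\bnu(\bx_k),\bnu(\by_k)$; here the angle between the normals is forced to grow linearly with $|\bx_k-\by_k|$, and elementary planar geometry shows the normal segments of length $\rho$ emanating from $\bx_k$ and $\by_k$ must intersect once $k$ is large enough, contradicting the definition of $r_0'$. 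This is the principal obstacle and is where Lucas's argument is used in \cite{Luc57}.

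Finally, for the $C^{1,1}$ claim, if $r_0>0$ then $\bnu$ is Lipschitz on $\gamma$. Near any $\bx_0\in\gamma$ write $\gamma$ as the graph of a $C^1$ function $f:U\subset\mathbb{R}^n\to\mathbb{R}$ with $\bnu=(-\nabla f,1)/\sqrt{1+|\nabla f|^2}$; Lipschitz continuity of $\bnu$, together with the nondegeneracy of this algebraic expression when $|\nabla f|$ is bounded, transfers to Lipschitz continuity of $\nabla f$, so $f\in C^{1,1}(U)$. Covering $\gamma$ by finitely many such graph patches yields $\gamma\in C^{1,1}$ with a uniform Lipschitz constant controlled by $1/r_0$.
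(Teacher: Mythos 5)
You are attempting to prove a lemma that the paper itself does not prove: the paper simply invokes \cite[Theorem~1]{Luc57} together with a remark of Lucas about Lipschitz constants, so any actual proof is by construction a different route. Your architecture of a cycle of inequalities among the reciprocals is the natural one, and the leg $1/r_0'' \le 1/r_0'''$ (the second-order inner-product bound is equivalent to the two-sided tangent-ball condition) is a correct, clean computation. But two of your legs have genuine gaps.

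First, in the leg $1/r_0''' \le 1/r_0$, you integrate $\dot\bgamma\cdot(\bnu(\bx)-\bnu(\bgamma(t)))$ along a curve $\bgamma$ lying \emph{in} $\gamma$ and then say ``let the curve approach a straight segment.'' You cannot: a curve in $\gamma$ joining $\bx$ to $\by$ has arclength $L\ge |\bx-\by|$, generally strictly, and what the integral actually produces is the bound $|(\by-\bx)\cdot\bnu(\bx)|\le L^2/(2r_0)$, not $|\bx-\by|^2/(2r_0)$. Since the supremum in the definition of $1/r_0'''$ ranges over \emph{all} pairs, not just infinitesimally close ones, the factor $L^2/|\bx-\by|^2 > 1$ does not disappear, and this leg does not close without a separate argument relating global to local constants.

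Second, and more seriously, the leg $1/r_0 \le 1/r_0'$ --- which you yourself flag as the principal obstacle --- does not go through as written. You argue by contradiction that two normal segments of length $\rho$ must intersect once the normals differ by more than $|\bx_k-\by_k|/\rho$. That is true in the plane, but $\gamma$ here is an $n$-dimensional hypersurface in $\mathbb{R}^{n+1}$ with $n\ge 1$. For $n\ge 2$ the two normal lines $\{\bx_k+t\bnu(\bx_k)\}$ and $\{\by_k+s\bnu(\by_k)\}$ are lines in $\mathbb{R}^{n+1}$ that intersect only if $\by_k-\bx_k \in \mathrm{span}\{\bnu(\bx_k),\bnu(\by_k)\}$, which is generically false. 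Working ``in the two-plane containing $\bnu(\bx_k),\bnu(\by_k)$'' discards the separation vector and does not restore the intersection. Disjointness of normal segments therefore does not contradict a large normal increment via planar geometry alone; this is exactly the nontrivial content of Lucas's theorem (and of Federer's characterisation of positive reach, \cite[Theorem~4.18]{Fed59}), and your sketch does not supply it. The final graph-over-tangent-plane argument for the $C^{1,1}$ conclusion, given Lipschitz $\bnu$, is fine.
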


%
%


Combining the previous lemmas with the statement in \cite[Theorem 4.18]{Fed59} that $r_0{'''}={\rm reach}(\gamma)$ yields the following result.
\begin{theorem}[$C^1$ distance function implies $C^{1,1}$ surface] \label{t:C1_implies_C2} If the distance function $d$ associated to a $C^1$ surface $\gamma$ is continuously differentiable in a tubular neighborhood $\mathcal{N}(\delta)$ of $\gamma$ for some $\delta>0$, then $\gamma$ is of class $C^{1,1}$.  In addition, any $C^1$ surface with positive reach is of class $C^{1,1}$.  
\end{theorem}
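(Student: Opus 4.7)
The proof is essentially a chaining together of the preceding lemmas, so the plan is to lay out that chain carefully. I would begin with the second assertion, since it is the more fundamental one and the first reduces to it. Assume $\gamma$ is of class $C^1$ with $\operatorname{reach}(\gamma)>0$. Using the identification $r_0'''=\operatorname{reach}(\gamma)$ cited from \cite[Theorem 4.18]{Fed59}, positivity of the reach translates to $1/r_0'''<\infty$, i.e.\ $r_0'''>0$. Then Lemma~\ref{lem:C1_properties} (further properties of $C^1$ surfaces) gives $r_0=r_0'=r_0''=r_0'''>0$, and its final sentence yields that $\gamma$ is of class $C^{1,1}$.

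For the first assertion, I would argue as follows. Suppose $d$ is continuously differentiable on some tubular neighborhood $\mathcal{N}(\delta)$ of $\gamma$. Then in particular $d$ is differentiable at every point of $\mathcal{N}(\delta)\setminus\gamma$, so Lemma~\ref{lem:d_properties} (properties of differentiable distance functions) implies $\mathcal{N}(\delta)\setminus\gamma\subset U(\gamma)$. Since every $\bx\in\gamma$ is trivially its own unique closest point, we in fact have $\mathcal{N}(\delta)\subset U(\gamma)$, and hence $\operatorname{reach}(\gamma)\ge\delta>0$. Applying the second assertion just established, $\gamma$ is of class $C^{1,1}$.

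The main (and essentially only) obstacle is making sure the pointwise ``differentiable at $\bx$'' hypothesis in Lemma~\ref{lem:d_properties} is consistent with the ``continuously differentiable in $\mathcal{N}(\delta)$'' hypothesis of the theorem, and that one correctly passes from existence of unique closest points in $\mathcal{N}(\delta)$ to a lower bound on the reach. Both are immediate from the definitions, so no deep additional argument is required; the content of the theorem lies entirely in the cited results of Federer and Lucas, and the role of the proof is simply to assemble them. No new estimates or constructions are needed, and in particular one does not need to revisit the Hessian or curvature bounds from the $C^2$ case.
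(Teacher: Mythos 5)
Your proof is correct and follows exactly the route the paper intends: the paper states the theorem as a direct consequence of Lemma~\ref{lem:d_properties}, Lemma~\ref{lem:C1_properties}, and Federer's identification $r_0'''=\operatorname{reach}(\gamma)$, and you have assembled precisely that chain. Proving the second assertion first and reducing the first to it via the observation $\mathcal{N}(\delta)\subset U(\gamma)\Rightarrow\operatorname{reach}(\gamma)\ge\delta$ is a clean way to organize the argument and matches the "in particular" clause of Lemma~\ref{lem:d_properties}.
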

%

The preceding results establish that the properties of the distance function and the associated closest point projection for $C^2$ surfaces that we previously discussed are inherently connected with surfaces of bounded curvature.  This can be seen both in Theorem \ref{t:C1_implies_C2} (since the curvatures are defined and bounded almost everywhere on a $C^{1,1}$ surface) and in the definition of the constant $r_0{''}$ (since for $\bx \in \gamma$, the supremum over the radii $\rho$ for which $\overline{B_\rho (\bx \pm \rho \nu(\bx))} \cap \gamma = \emptyset$ is the inverse of the maximum principal curvature at $\bx$).  

For our purposes, Theorem \ref{t:C1_implies_C2} is essentially a negative result in that it establishes that the distance function and closest point projection are of limited immediate use for surfaces that are less regular than $C^{1,1}$.  In particular, in this case the closest point projection is not uniquely defined on any tubular neighborhood of $\gamma$.  In addition, the regularity of the distance function does {\it not} vary continuously with that of $\gamma$, since for a $C^{1,\alpha}$ surface with $\alpha<1$ Theorem \ref{t:C1_implies_C2}  establishes that $d$ is only Lipschitz.  Thus we must use different tools when considering surface finite element methods on less regular surfaces than $C^2$.  

\subsection{Divergence Theorem on Surfaces}\label{S:diver-thm}
%
We conclude this section with an application of calculus in $\mathbb{R}^{n+1}$
to derive an integration by parts formula on not necessarily closed surfaces.

\begin{proposition}[divergence theorem]\label{P:diver-thm}
Let $\gamma$ be a compact, oriented surface of class $C^2$ with Lipschitz boundary $\partial\gamma$.
Let $H = \sum_{i=1}^n \kappa_i$ be the total curvature of $\gamma$ and
$\bmu$ be the unit outward normal to $\partial\gamma$ lying in the tangent hyperplane to
$\gamma$. If $\widetilde \tv:\gamma\to\mathbb{R} \in H^1(\gamma)$, then
\[
\int_\gamma \nabla_\gamma \widetilde \tv = \int_\gamma \widetilde \tv H \bnu + \int_{\partial\gamma}\widetilde  \tv \bmu.
\]
\end{proposition}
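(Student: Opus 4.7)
The plan is to prove the identity component by component and recognize each scalar identity as a special case of the surface divergence theorem. Both sides are continuous in $\widetilde\tv$ with respect to the $H^1(\gamma)$ norm --- the boundary integral via the trace theorem on the Lipschitz boundary $\partial\gamma$ --- so by density of $C^1(\overline\gamma)$ in $H^1(\gamma)$ it suffices to treat $\widetilde\tv \in C^1(\overline\gamma)$. Fix $i \in \{1,\dots,n+1\}$ and consider the vector field $\bv := \widetilde\tv\,\be_i$. Using the intrinsic expression $\textrm{div}_\gamma\bw = \textrm{div}\,\bw - \bnu^t(D\bw)\bnu$ for an extension of a surface vector field (Section \ref{S:distance-function}), the constant ambient field $\be_i$ satisfies $\textrm{div}_\gamma \be_i = 0$, and the Leibniz rule gives $\textrm{div}_\gamma \bv = \nabla_\gamma\widetilde\tv\cdot\be_i$. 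The $i$-th component of the claim is therefore equivalent to the surface divergence theorem
\[
\int_\gamma \textrm{div}_\gamma\bv \;=\; \int_\gamma (\bv\cdot\bnu)\,H \,+\, \int_{\partial\gamma} \bv\cdot\bmu
\]
for $C^1$ vector fields $\bv$ on $\gamma$.

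To establish this, I would split $\bv = \bv_T + (\bv\cdot\bnu)\bnu$ with $\bv_T := \Pi\bv$ tangential. Since $\nabla_\gamma(\bv\cdot\bnu)$ is tangent to $\gamma$ (hence orthogonal to $\bnu$), and since $\textrm{div}_\gamma \bnu = \textrm{trace}(\bW) = \sum_{i=1}^n \kappa_i = H$ by \eqref{weingarten} and the definition of the principal curvatures, the Leibniz rule yields $\textrm{div}_\gamma\big((\bv\cdot\bnu)\bnu\big) = (\bv\cdot\bnu)\,H$, which exactly accounts for the curvature term. Because $\bmu$ is tangent to $\gamma$, one has $\bv\cdot\bmu = \bv_T\cdot\bmu$ on $\partial\gamma$, and everything reduces to the tangential divergence theorem
\[
\int_\gamma \textrm{div}_\gamma \bv_T \;=\; \int_{\partial\gamma} \bv_T\cdot\bmu.
\]

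For the tangential divergence theorem I would invoke a partition of unity subordinate to the atlas of $\gamma$. In a single chart $(\mathcal{V},\mathcal{U},\bchi)$ one expands $\bv_T = D\bchi\,\bw$ with $\bw:\mathcal{V}\to\mathbb{R}^n$, and a direct computation from \eqref{tang-diver} combined with the Jacobi formula applied to $\det\bg = q^2$ yields the standard Riemannian identity $\textrm{div}_\gamma\bv_T\circ\bchi = q^{-1}\textrm{div}(q\bw)$. Together with \eqref{int-gamma} and a partition-of-unity element $\widetilde\psi$, this converts each local surface integral into the Euclidean form $\int_{\mathcal V}\textrm{div}(q\bw\,\psi)\,d\by$ with $\psi = \widetilde\psi\circ\bchi$; the classical divergence theorem in $\mathcal V$ then produces a boundary contribution on $\partial\mathcal V$. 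The main obstacle will be the geometric bookkeeping at this final step: one must verify that the outward unit normal to $\partial\mathcal V$, transported through $D\bchi$ and rescaled by the appropriate Jacobian, yields precisely the intrinsic outward conormal $\bmu$ together with the correct $(n-1)$-dimensional surface measure on $\partial\gamma$, and that the interior contributions from overlapping charts cancel. Granting this identification, the global statement follows by summing over charts.
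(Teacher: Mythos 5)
Your proposal is correct in outline and takes a genuinely different route from the paper's own proof. The paper argues extrinsically: it thickens $\gamma$ into the tubular set $\Omega_\varepsilon=\{\bx+\rho\bnu(\bx):\bx\in\gamma,\ |\rho|<\varepsilon\}$, applies the classical divergence theorem in the bulk, divides by $2\varepsilon$, and lets $\varepsilon\to 0$; the curvature term $\int_\gamma\widetilde\tv H\bnu$ then falls out of $\frac{d}{d\rho}q_\rho\big|_{\rho=0}=qH$, computed from the area-element ratio of parallel surfaces (Lemma \ref{L:curv-parallel} and \eqref{e:area_ratio_distance}). That construction also hands you the boundary integral over $\partial\gamma$ for free, as the limit of the lateral boundary contribution, with no chart-by-chart identification of the conormal required. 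Your approach is the classical intrinsic one: reduce the vector identity to the scalar surface divergence theorem for $\bv=\widetilde\tv\,\be_i$ via $\textrm{div}_\gamma\be_i=0$ and the Leibniz rule, split $\bv$ into tangential and normal parts, extract the curvature term algebraically from $\textrm{div}_\gamma\bnu=\trace{\bW}=H$ plus $\nabla_\gamma(\bv\cdot\bnu)\perp\bnu$, and reduce to the tangential divergence theorem, which you prove in local charts via $\textrm{div}_\gamma\bv_T\circ\bchi=q^{-1}\textrm{div}(q\bw)$ (correct, by the Jacobi formula applied to $\det\bg$) and a partition of unity. Both routes are valid; the paper's is shorter given its existing machinery for parallel surfaces and sidesteps the chart bookkeeping, while yours is the more elementary and self-contained Riemannian argument, at the cost of the conormal/boundary-measure identification you correctly flag.

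One small imprecision: you write that ``the interior contributions from overlapping charts cancel.'' In the standard set-up they do not cancel so much as vanish individually: once you write $\int_\gamma\textrm{div}_\gamma\bv_T=\sum_k\int_\gamma\textrm{div}_\gamma(\widetilde\psi_k\bv_T)$ (the cross terms $\nabla_\gamma\widetilde\psi_k\cdot\bv_T$ summing to zero because $\sum_k\widetilde\psi_k=1$), each $\widetilde\psi_k\bv_T$ is compactly supported inside the chart, so the Euclidean divergence theorem in $\mathcal V_k$ produces a boundary integral only over the preimage of $\partial\gamma$. What remains to verify is exactly your stated bookkeeping: that the outward conormal to $\partial\mathcal V_k$, pushed through $D\bchi$ with the right Jacobian, gives $\bmu$ together with the $(n-1)$-dimensional Hausdorff measure on $\partial\gamma$. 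This is standard but nontrivial, and it is precisely the step the paper's thickening argument avoids.
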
  
\begin{proof}
Given $\varepsilon < \frac{1}{2 K_\infty}$ we define the tubular set
\[
\Omega_\varepsilon := \big\{ \bz =  \bx + \rho \bnu(\bx):  \quad \bx\in\gamma, |\rho|<\varepsilon  \big\};
\]
note that $\bP_d(\bz)=\bx$ for all $\bz\in\Omega_\varepsilon$.
We decompose the boundary $\partial\Omega_\varepsilon$ of $\Omega_\varepsilon$ into
\[
\gamma_{\pm\varepsilon} := \big\{ \bx \pm \varepsilon \bnu(\bx): ~ \bx\in\gamma  \big\},
\quad
\lambda_\varepsilon := \partial \Omega_\varepsilon \setminus (\gamma_\varepsilon\cup\gamma_{-\varepsilon}).
\]
The sets $\gamma_{\pm\varepsilon}$ are parallel surfaces to $\gamma$ whereas
$\lambda_\varepsilon$ is the lateral boundary of size $2\varepsilon$. We first
assume that $\widetilde \tv$ is of class $C^1$, let $\tv$
be an extension of $\widetilde \tv$ to $\Omega_\varepsilon$ of class
$C^1(\overline{\Omega_\varepsilon})$,
and apply the divergence theorem in $\Omega_\varepsilon$ to obtain
\[
\int_{\Omega_\varepsilon} \nabla \tv = \int_{\partial\Omega_\varepsilon} \tv \bnu_\varepsilon
= \int_{\gamma_\varepsilon} \tv \, \bnu\circ\bP_d- \int_{\gamma_{-\varepsilon}} \tv \, \bnu\circ\bP_d
+ \int_{\lambda_\varepsilon} \tv \, \bmu\circ\bP_d,
\]
where $\bnu_\varepsilon$ is the unit outward normal of $\partial\Omega_\varepsilon$.
We divide both sides of this equality by $2\varepsilon$, the thickness of $\Omega_\varepsilon$
and compute the limits as $\varepsilon\to0$. According to \eqref{e:rel_dist_grad}
we first see that
\[
\frac{1}{2\varepsilon} \int_{\Omega_\varepsilon} \nabla\tv
= \frac{1}{2\varepsilon} \int_{\Omega_\varepsilon} \big(\bI - d(\bx) D^2 d(\bx) \big)
\nabla_\gamma \ttv(\bP_d(\bx)) d\bx
\mathop{\longrightarrow}_{\varepsilon \to 0} \int_\gamma \nabla_\gamma \ttv.
\]
Likewise
\[
\frac{1}{2\varepsilon} \int_{\lambda_\varepsilon} \tv \, \bmu\circ\bP_d
\mathop{\longrightarrow}_{\varepsilon \to 0} \int_{\partial\gamma} \ttv \, \bmu.
\]
Moreover, since $\bnu\circ\bP_d = \nabla d$, we infer that
\begin{align*}
\mathop{\lim}_{\varepsilon\to0} \frac{1}{2\varepsilon}
\Big( \int_{\gamma_\varepsilon} \tv \, \bnu\circ\bP_d
& - \int_{\gamma_{-\varepsilon}} \tv \, \bnu\circ\bP_d  \Big)
= \frac{d}{d\rho} \int_{\gamma_\rho} \tv \, \nabla d ~\Big|_{\rho=0}
\\
& = \frac{d}{d\rho} \int_{\mathcal{V}} \tv(\bx) \, \nabla d\big(\bx+\rho\nabla d(\bx)\big)
\, q_\rho(\by) d\by ~\Big|_{\rho=0}
\end{align*}
with $\bx = \bchi(\by)\in\gamma$ and $q_\rho(\by)$ denotes the infinitesimal area associated with the surface $\gamma_\rho:=\{ \bz = \bx + \rho \bnu(\bx):  \bx \in \gamma\}$. Since
$\frac{d}{d\rho}\nabla d\big(\bx+\rho\nabla d(\bx)\big)
= D^2 d\big(\bx+\rho\nabla d(\bx)\big) \nabla d(\bx) = 0$, it remains to evaluate
$\frac{d}{d\rho} q_\rho$. We resort to \eqref{e:area_ratio_distance} (shown below)
with $\Gamma=\gamma_\rho$ and use that $\bnu_\rho\cdot\bnu=1$
as well as \eqref{kappas} to write
\[
\frac{q_\rho(\by)}{q(\by)} = \frac{1}{\det\Big( \bI - \rho D^2 d(\bx) \Big)}
= \frac{1}{\mathop{\prod}_{i=1}^n \big(1-\rho\kappa_i(\bx) \big)}
= \mathop{\prod}_{i=1}^n \Big(1+\rho\kappa_i(\bP_d(\bx)) \Big).
\]
We finally observe that
\[
\frac{d}{d\rho} q_\rho(\by) \Big|_{\rho=0}
= q(\by) \sum_{i=1}^n \kappa_i(\bP_d(\bx)) = q(\by) H(\bP_d(\bx))
\]
to conclude the proof for $\widetilde tv$ of class $C^1$. The assertion for $\ttv\in H^1(\gamma)$
follows by density of $C^1(\overline{\gamma})$ in $\ttv\in H^1(\gamma)$.
\end{proof}

Applying Proposition \ref{P:diver-thm} (divergence theorem) to a vector field $\widetilde\bv:\gamma\to\mathbb{R}^{n+1}$
and computing the trace yields the more familiar expression
\begin{equation*}
\int_\gamma \textrm{div}_\gamma \widetilde \bv = \int_\gamma H \widetilde \bv\cdot\bnu + \int_{\partial\gamma} \widetilde\bv\cdot\bmu.
\end{equation*}
\begin{corollary}[integration by parts]\label{C:int-parts}
Let $\gamma$ be a surface of class $C^2$ with Lipschitz boundary $\partial\gamma$.
If $\ttv, \widetilde w:\gamma\to\mathbb{R}$ satisfy $\ttv\in H^2(\gamma)$ and $\widetilde w\in H^1(\gamma)$,
then
\[
\int_\gamma \widetilde w \, \Delta_\gamma \ttv + \nabla_\gamma \widetilde w \cdot \nabla_\gamma \widetilde w
= \int_{\partial\gamma} \widetilde w \, \nabla_\gamma \ttv\cdot\bmu.
\]
\end{corollary}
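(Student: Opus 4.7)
The plan is to derive the stated formula as an immediate consequence of the surface divergence theorem (Proposition~\ref{P:diver-thm}) applied to the tangent vector field $\widetilde{\bv} := \widetilde w\, \nabla_\gamma \ttv$. Since $\nabla_\gamma \ttv$ lies in the tangent hyperplane to $\gamma$, we have $\widetilde{\bv}\cdot\bnu = 0$ pointwise on $\gamma$, which will cause the curvature term $\int_\gamma H\, \widetilde{\bv}\cdot\bnu$ appearing in the vector version of Proposition~\ref{P:diver-thm} to vanish identically. This is the key structural observation that turns the divergence theorem into the desired integration-by-parts identity.

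First I would verify a product rule for the tangential divergence: for sufficiently smooth scalar $\widetilde w$ and tangent field $\widetilde\bv$,
\begin{equation*}
  \textrm{div}_\gamma(\widetilde w\, \widetilde\bv) = \widetilde w\, \textrm{div}_\gamma \widetilde\bv + \nabla_\gamma \widetilde w \cdot \widetilde \bv.
\end{equation*}
This follows directly from the extension-based definition of $\textrm{div}_\gamma$ given after Remark~\ref{R:param-indep}, combined with the standard Euclidean product rule applied to arbitrary $C^1$ extensions of $\widetilde w$ and $\widetilde\bv$; independence from the extension was already established in Remark~\ref{R:param-indep}. Specializing to $\widetilde\bv = \nabla_\gamma \ttv$ and recalling $\Delta_\gamma \ttv = \textrm{div}_\gamma(\nabla_\gamma \ttv)$ gives
\begin{equation*}
  \textrm{div}_\gamma(\widetilde w\, \nabla_\gamma \ttv) = \widetilde w\, \Delta_\gamma \ttv + \nabla_\gamma \widetilde w \cdot \nabla_\gamma \ttv.
\end{equation*}

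Next I would apply the vector divergence identity displayed immediately after Proposition~\ref{P:diver-thm} to $\widetilde\bv = \widetilde w\, \nabla_\gamma \ttv$. Because $\widetilde\bv\cdot\bnu \equiv 0$, this yields
\begin{equation*}
  \int_\gamma \bigl( \widetilde w\, \Delta_\gamma \ttv + \nabla_\gamma \widetilde w \cdot \nabla_\gamma \ttv \bigr) = \int_{\partial\gamma} \widetilde w\, \nabla_\gamma \ttv \cdot \bmu,
\end{equation*}
which is precisely the asserted identity.

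The only real subtlety is a regularity/density argument, since the vector-valued divergence theorem was stated under $H^1(\gamma)$ hypotheses while our integrand involves second derivatives of $\ttv$. The hard part will be to justify that the product $\widetilde w\, \nabla_\gamma \ttv$ lies in the correct space and that the boundary trace $\nabla_\gamma \ttv \cdot \bmu \in L_2(\partial\gamma)$ is meaningful. I would handle this by first proving the formula for $\ttv \in C^2(\overline\gamma)$ and $\widetilde w \in C^1(\overline\gamma)$, where all manipulations are classical, and then passing to the limit by density of $C^2(\overline\gamma)$ in $H^2(\gamma)$ and $C^1(\overline\gamma)$ in $H^1(\gamma)$, using the $H^2(\gamma) \hookrightarrow H^{3/2}(\partial\gamma)$ trace theorem applied chart-by-chart to control the boundary integral. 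With these limits in place, both sides converge to the claimed expressions and the corollary follows.
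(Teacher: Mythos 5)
Your proof is correct and follows essentially the same route as the paper, which simply applies the vector form of the divergence theorem to $\widetilde\bv = \widetilde w\,\nabla_\gamma\ttv$ and notes that $\widetilde\bv\cdot\bnu = 0$. You have made explicit the product rule for $\textrm{div}_\gamma$ and the density argument that the paper leaves tacit, but the underlying idea is identical.
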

\begin{proof}
Apply the previous equality to $\widetilde \bv = \widetilde w \, \nabla_\gamma \ttv$.
\end{proof}

\section{Perturbation Theory}\label{S:perturbation}
%
In most surface finite element methods, the approximate problem is not posed on the continuous surface $\gamma$.  This may occur either for convenience, or because $\gamma$ is not known precisely. Examples of only incomplete information being present in simulations include free boundary problems such as two-phase flow and cases where $\gamma$ is reconstructed from some sort of imaging data. 

The purpose of this section is to investigate how geometric quantities change
under perturbation of the surface $\gamma$. To this end, suppose that
$\Gamma$ is a closed Lipschitz surface (not necessarily $C^2$). We use a subscript $\Gamma$
to denote geometric quantities associated with $\Gamma$:
$\bchi_\Gamma$ (parametrization), $\bg_\Gamma$
(first fundamental form), $q_\Gamma$ (area element), $\bnu_\Gamma$ (unit normal),
$\nabla_\Gamma$ (tangential gradient), and $\Pi_\Gamma$ (orthogonal projection
onto $\Gamma$).

Let $\widetilde u\in H^1_\#(\gamma)$ solve \eqref{e:weak_relax} and $u_\Gamma\in H^1_\#(\Gamma)$ solve
\begin{equation}\label{Gamma:LBproblem}
  \int_\Gamma \nabla_\Gamma u_\Gamma \cdot \nabla_\Gamma \tv = \int_\Gamma f_\Gamma \tv
  \quad\forall \, \tv\in H^1_\#(\Gamma),
\end{equation}  
for a given forcing $f_\Gamma\in L_{2,\#}(\Gamma)$.
To examine the error between $u$ and $u_\Gamma$, we first have to study how the bilinear
forms in \eqref{e:weak_relax} and \eqref{Gamma:LBproblem} change
when changing $\gamma$. This amounts to
deriving expressions for the {\it error matrices}
$\bE,\bE_\Gamma \in\mathbb{R}^{(n+1)\times(n+1)}$ in the error equations
\begin{equation}\label{consistency}
  \int_\Gamma \nabla_\Gamma \tv \cdot \nabla_\Gamma w
  - \int_\gamma \nabla_\gamma \widetilde{\tv} \cdot \nabla_\gamma \widetilde{w}
  = \int_\gamma \nabla_\gamma \widetilde{\tv} \cdot \bE \, \nabla_\gamma \widetilde{w}
  = \int_\Gamma \nabla_\Gamma \tv \cdot \bE_\Gamma \, \nabla_\Gamma w,
\end{equation}
valid for all $\tv,w \in H^1(\Gamma)$ and 
$\widetilde{\tv}, \widetilde{w} \in H^1(\gamma)$ the corresponding lifts.
We carry out this program below within two scenarios depending on the regularity
of $\gamma$. We alert the reader about the following abuse of notation:
the matrix $\bE$ (resp. $\bE_\Gamma$) is defined in $\gamma$ (resp. $\Gamma$),
but we will often write them in the parametric domain $\mathcal{V}$ thereby
identifying $\bE$ (resp. $\bE_\Gamma$) with $\bE\circ\bchi$
(resp. $\bE_\Gamma\circ\bchi_\Gamma$).

\subsection{Perturbation Theory for $C^{1,\alpha}$ Surfaces}\label{S:perturb-C1alpha}
%
Let $\gamma$ be of class $C^{1,\alpha}$ and
$\bchi$ and $\bchi_\Gamma$ be the parametrizations of $\gamma$ and $\Gamma$.
They dictate the relation between $\widetilde{\tv}$ and $\tv$, the former
defined on $\gamma$ and the latter on $\Gamma$,
\[
\tv = \wv \circ \bchi \circ \bchi_\Gamma^{-1}.
\]
In the sequel, we first establish a relation between $\nabla_\gamma\widetilde{\tv}$ and
$\nabla_\Gamma\tv$ and next use it to characterize $\bE$ and $\bE_\Gamma$.

\begin{lemma}[relation between tangential gradients]\label{L:tan-grads-lip}
If $\wv:\gamma\to\mathbb{R}$ is of class $H^1$, then the tangential
gradients $\nabla_\gamma\widetilde{\tv}$ and $\nabla_\Gamma\tv$ satisfy
\begin{equation}\label{tan-grads-lip}
  \nabla_\Gamma \tv = D\bchi_\Gamma \, \bg_\Gamma^{-1} \, D\bchi^t \,
  \nabla_\gamma \widetilde{\tv},
  \qquad
  \nabla_\gamma \widetilde{\tv} = D\bchi \, \bg^{-1} \, D\bchi_\Gamma^t \,
  \nabla_\Gamma \tv.
\end{equation}
\end{lemma}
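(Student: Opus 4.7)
The plan is to exploit that the two parametrizations $\bchi$ and $\bchi_\Gamma$ share the same parametric domain $\mathcal V$, so that the definition $\tv=\widetilde\tv\circ\bchi\circ\bchi_\Gamma^{-1}$ forces both functions to have the \emph{same} pullback to $\mathcal V$. Concretely, setting $v:=\widetilde\tv\circ\bchi:\mathcal V\to\mathbb R$, we have $\tv\circ\bchi_\Gamma = v$ on $\mathcal V$. This is the single observation that makes the two gradients communicate, because all subsequent manipulations take place between two objects ($\nabla_\gamma\widetilde\tv$ and $\nabla_\Gamma\tv$) that are both tied to the common planar gradient $\nabla v$.

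Given this identification, I would simply combine the chain rule \eqref{def-tang-grad} applied to $\gamma$ with the explicit expression \eqref{e:exact_grad} applied to $\Gamma$. The chain rule gives
\[
\nabla v(\by) = D\bchi(\by)^t \, \nabla_\gamma\widetilde\tv(\bchi(\by)),
\]
while the explicit formula for $\Gamma$ reads
\[
\nabla_\Gamma\tv(\bchi_\Gamma(\by)) = D\bchi_\Gamma(\by)\, \bg_\Gamma(\by)^{-1}\, \nabla v(\by).
\]
Substituting the first into the second yields the first identity in \eqref{tan-grads-lip}. The second identity follows by reversing the roles of $\gamma$ and $\Gamma$, i.e.\ using the chain rule on $\Gamma$ and the explicit formula \eqref{e:exact_grad} on $\gamma$. (As a sanity check, one can verify that multiplying the first identity by $D\bchi\,\bg^{-1}D\bchi_\Gamma^t$ on the left and invoking the projection representation $\Pi = D\bchi\,\bg^{-1}D\bchi^t$ of \eqref{projection-b} together with $\Pi\,\nabla_\gamma\widetilde\tv = \nabla_\gamma\widetilde\tv$ returns $\nabla_\gamma\widetilde\tv$, confirming consistency.)

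Regarding regularity: the two formulas \eqref{def-tang-grad} and \eqref{e:exact_grad} invoked above are stated for $C^1$ functions, but both $D\bchi$ and $D\bchi_\Gamma$ exist a.e.\ because $\gamma$ is $C^{1,\alpha}$ and $\Gamma$ is Lipschitz, and $\widetilde\tv\in H^1(\gamma)$ has a weak gradient. Hence the identities hold a.e.\ on $\mathcal V$ by a routine density argument (approximate $\widetilde\tv$ by $C^1$ functions and pass to the limit in $L_2$).

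The main obstacle is essentially bookkeeping rather than analysis: one must be comfortable pulling both gradients back to the shared parametric domain and tracking the symmetric roles played by $(\bchi,\bg)$ and $(\bchi_\Gamma,\bg_\Gamma)$. Once that is done, the proof collapses to a direct substitution. No curvature information about $\gamma$ or $\Gamma$ is needed, which is consistent with the fact that the statement only requires $\Gamma$ to be Lipschitz.
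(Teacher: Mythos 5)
Your proof is correct and follows essentially the same route as the paper: both rely on the fact that $\tv\circ\bchi_\Gamma = \widetilde\tv\circ\bchi$ have the same pullback to $\mathcal V$, then concatenate \eqref{e:exact_grad} on $\Gamma$ with \eqref{def-tang-grad} on $\gamma$, and finish with a density argument for $H^1$. The paper compresses this into a single chain of equalities but the underlying observation and the two formulas invoked are identical.
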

\begin{proof}
We concatenate \eqref{e:exact_grad} and \eqref{def-tang-grad} to write
\[
\nabla_\Gamma \tv = D\bchi_\Gamma \, \bg_\Gamma^{-1} \, \nabla (\tv\circ\bchi_\Gamma)
= D\bchi_\Gamma \, \bg_\Gamma^{-1} \, \nabla (\wv\circ\bchi)
= D\bchi_\Gamma \, \bg_\Gamma^{-1} \, D\bchi^t \, \nabla_\gamma\widetilde{\tv},
\]
which is the first asserted expression provided $\wu$ is of class $C^1$. Using
the density of $C^1(\gamma)$ in $H^1(\gamma)$ for a surface $\gamma$ of class
$C^{1,\alpha}$, the first assertion follows. The second one follows similarly.
\end{proof}

\begin{lemma}[geometric consistency]\label{L:geom_consist}
The error matrices $\bE$ and $\bE_\Gamma$ read on $\mathcal{V}$
\begin{gather}
\label{error-matrix-g}
\bE = D\bchi \Big(\frac{q_\Gamma}{q} \bg_\Gamma^{-1} - \bg^{-1} \Big) D\bchi^t,
\\
\label{error-matrix-G}
\bE_\Gamma = D\bchi_\Gamma \Big(\bg_\Gamma^{-1} - \frac{q}{q_\Gamma}\bg^{-1}
\Big) D\bchi_\Gamma^t.
\end{gather}
\end{lemma}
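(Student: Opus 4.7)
The plan is to pull both integrals in the error equation back to a common parametric domain $\mathcal{V}$, rewrite the integrands in terms of Euclidean gradients of a single function, and then push the result back to $\gamma$ or $\Gamma$ via the chain rule. Because both surfaces can be parametrized over the same domain $\mathcal{V}$ (charts are chosen compatibly), a function $\widetilde v\in H^1(\gamma)$ and its lift $v\in H^1(\Gamma)$ correspond to the \emph{same} pullback $\widehat v := \widetilde v \circ \bchi = v \circ \bchi_\Gamma$ on $\mathcal{V}$.

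First I would use the area element formula \eqref{int-gamma} to write
\begin{equation*}
\int_\gamma \nabla_\gamma \widetilde v \cdot \nabla_\gamma \widetilde w
 = \int_\mathcal{V} (\nabla_\gamma \widetilde v\circ\bchi)\cdot(\nabla_\gamma \widetilde w\circ\bchi)\, q,
\quad
\int_\Gamma \nabla_\Gamma v \cdot \nabla_\Gamma w
 = \int_\mathcal{V} (\nabla_\Gamma v\circ\bchi_\Gamma)\cdot(\nabla_\Gamma w\circ\bchi_\Gamma)\, q_\Gamma.
\end{equation*}
Then, using \eqref{e:exact_grad} twice,
$\nabla_\gamma\widetilde v\circ\bchi = D\bchi\,\bg^{-1}\nabla\widehat v$ and
$\nabla_\Gamma v\circ\bchi_\Gamma = D\bchi_\Gamma\,\bg_\Gamma^{-1}\nabla\widehat v$, together with the identity $D\bchi^t D\bchi = \bg$ (and analogously for $\Gamma$), both integrands collapse to the symmetric forms $\nabla\widehat v\cdot \bg^{-1}\nabla\widehat w$ and $\nabla\widehat v\cdot \bg_\Gamma^{-1}\nabla\widehat w$ respectively. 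Subtracting yields
\begin{equation*}
\int_\Gamma \nabla_\Gamma v\cdot \nabla_\Gamma w - \int_\gamma \nabla_\gamma \widetilde v\cdot \nabla_\gamma \widetilde w
 = \int_\mathcal{V} \nabla\widehat v\cdot \Bigl(\tfrac{q_\Gamma}{q}\bg_\Gamma^{-1}-\bg^{-1}\Bigr)\nabla\widehat w\, q
 = \int_\mathcal{V} \nabla\widehat v\cdot \Bigl(\bg_\Gamma^{-1}-\tfrac{q}{q_\Gamma}\bg^{-1}\Bigr)\nabla\widehat w\, q_\Gamma.
\end{equation*}

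To convert the first representation into an integral over $\gamma$, I invert the chain rule \eqref{def-tang-grad}, namely $\nabla\widehat v = D\bchi^t\nabla_\gamma\widetilde v$, substitute, and reverse the change of variables with area element $q$. This transfers the central matrix by conjugation with $D\bchi$, producing
$\bE = D\bchi\bigl(\tfrac{q_\Gamma}{q}\bg_\Gamma^{-1}-\bg^{-1}\bigr)D\bchi^t$ as the matrix acting on $\nabla_\gamma\widetilde v,\nabla_\gamma\widetilde w$ in the surface integral on $\gamma$. The second representation is handled symmetrically using $\nabla\widehat v = D\bchi_\Gamma^t\nabla_\Gamma v$ and change of variables with $q_\Gamma$, giving the claimed formula for $\bE_\Gamma$.

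The argument requires only $H^1$ regularity of the lifted function and $C^{1,\alpha}$ regularity of the two surfaces, which is enough for the chain rule identities in Lemma \ref{L:tan-grads-lip}. No analytic obstacle of substance arises; the only point requiring care is bookkeeping of pullbacks versus pushforwards — in particular remembering that $\bE$ lives on $\gamma$ but is represented above as a matrix field on $\mathcal V$ via the implicit identification $\bE\circ\bchi$, and likewise for $\bE_\Gamma$. Density of $C^1$ functions in $H^1$ (for $C^{1,\alpha}$ surfaces) justifies the pointwise manipulations.
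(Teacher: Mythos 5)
Your proof is correct and follows essentially the same route as the paper's: both pull the Dirichlet forms back to the parametric domain $\mathcal V$ via the chain rules \eqref{def-tang-grad}--\eqref{e:exact_grad} (equivalently \eqref{tan-grads-lip}), subtract where the integrands collapse to $\nabla\widehat v\cdot\bg^{-1}\nabla\widehat w$ and $\nabla\widehat v\cdot\bg_\Gamma^{-1}\nabla\widehat w$, and push back to $\gamma$ (resp.\ $\Gamma$). The only cosmetic difference is that you perform the subtraction explicitly on $\mathcal V$ using $D\bchi^t D\bchi=\bg$, whereas the paper transports $\int_\Gamma$ directly to $\gamma$ via \eqref{tan-grads-lip} and then inserts the projection identity $\Pi=D\bchi\,\bg^{-1}D\bchi^t$ to match integrands---two equivalent ways of exploiting the tangency of $\nabla_\gamma\widetilde v$.
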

\begin{proof}
Using \eqref{tan-grads-lip}, together with the definition \eqref{dg:d:first} of
$\bg_\Gamma=D\bchi_\Gamma^t D\bchi_\Gamma$, yields
\[
\int_\Gamma \nabla_\Gamma \tv\cdot\nabla_\Gamma w =
\int_\gamma \nabla_\gamma \ttv \cdot
\frac{q_\Gamma}{q}\big( D\bchi\bg_\Gamma^{-1} D\bchi^t \big) \nabla_\gamma \widetilde w.
\]
Since $\nabla_\gamma \widetilde \tv=\Pi\nabla_\gamma \widetilde \tv=D\bchi \, \bg^{-1} \, D\bchi^t\nabla_\gamma \widetilde \tv$, according to
\eqref{projection-b}, the first equality in \eqref{consistency} follows
immediately. The proof of the second equality is similar.
\end{proof}

Our task now is to relate  $\bg-\bg_\Gamma$ and $q-q_\Gamma$
with $D(\bchi-\bchi_\Gamma)$. We accomplish this next but first we introduce some
additional concepts. For any $\by\in\mathcal{V}$, we denote by $|D\bchi(\by)|$
(resp. $|D^-\bchi(\by)|$) the largest (resp. smaller) singular value of $D\bchi(\by)$.
Given the relation $\bg = D\bchi^t \, D\bchi$, these quantities are the square
roots of the largest and smallest eigenvalues of $\bg$. We define the
stability constant
\begin{equation}\label{stab-const}
S_\bchi := \sup_{\by\in\mathcal{V}} ~
\frac{\max\big\{|D\bchi(y)|,|D\bchi_\Gamma(y)|\big\}}{\min\big\{|D^-\bchi(y)|,|D^-\bchi_\Gamma(y)|\big\}}
\end{equation}
and point out that it is a measure of non-degeneracy of $D\bchi$ and $D\bchi_\Gamma$.
We further define the following relative measure of geometric accuracy
\begin{equation}\label{geo-est}
  \lambda_\infty := \sup_{\by\in\mathcal{V}} ~
  \frac{|D(\bchi-\bchi_\Gamma)(\by)|}{\min\big\{|D^-\bchi(\by)|,|D^-\bchi_\Gamma(\by)|\big\}}.
\end{equation}  

\begin{lemma}[error estimates for $\bg$ and $q$]\label{L:error-est}
The following error estimates are valid
\begin{gather}\label{error-est-g}
  \|\bI-\bg_\Gamma\bg^{-1}\|_{L_\infty(\mathcal{V})}, \,
  \|\bI-\bg_\Gamma^{-1}\bg\|_{L_\infty(\mathcal{V})}
  \lesssim S_\bchi \, \lambda_\infty,
\\ \label{error-est-q}
  \|1-q^{-1}q_\Gamma\|_{L_\infty(\mathcal{V})}, \, \|1-q_\Gamma^{-1}q\|_{L_\infty(\mathcal{V})}
  \lesssim S_\bchi^n \, \lambda_\infty.
\end{gather}
\end{lemma}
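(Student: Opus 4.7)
The plan is to prove the $\bg$ estimate first by direct algebraic manipulation and then reduce the $q$ estimate to it via the identity $q^2=\det\bg$ together with a determinant perturbation bound.

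For the estimates on $\bg$, the starting point is the telescoping identity
\[
\bg_\Gamma - \bg = D\bchi_\Gamma^{t}\,D(\bchi_\Gamma-\bchi) + D(\bchi_\Gamma-\bchi)^{t}\,D\bchi,
\]
which yields pointwise in $\mathcal{V}$ the bound
$|\bg_\Gamma - \bg| \lesssim \max(|D\bchi|,|D\bchi_\Gamma|)\,|D(\bchi_\Gamma-\bchi)|$.
Combining this with the algebraic identity $\bI - \bg_\Gamma \bg^{-1} = (\bg - \bg_\Gamma)\bg^{-1}$ and $\|\bg^{-1}\|=|D^-\bchi|^{-2}$, I obtain
\[
|\bI - \bg_\Gamma \bg^{-1}| \lesssim \frac{\max(|D\bchi|,|D\bchi_\Gamma|)\,|D(\bchi_\Gamma-\bchi)|}{|D^-\bchi|^{2}},
\]
whose right-hand side is bounded by $S_\bchi\lambda_\infty$ upon replacing $|D^-\bchi|$ by the (no larger) quantity $\min(|D^-\bchi|,|D^-\bchi_\Gamma|)$ in the denominator. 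The estimate for $\bI-\bg_\Gamma^{-1}\bg$ follows by exchanging the roles of $\bchi$ and $\bchi_\Gamma$.

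For the $q$ estimate, I would exploit $q^2=\det\bg$ and write
\[
\frac{q_\Gamma^2}{q^2} - 1 = \det(\bg^{-1}\bg_\Gamma) - 1 = \det(\bI+\mathbf{R}) - 1, \qquad \mathbf{R} := \bg^{-1}(\bg_\Gamma-\bg).
\]
By the first part, $\|\mathbf{R}\|\lesssim S_\bchi\lambda_\infty$. Expanding the determinant into elementary symmetric polynomials of the eigenvalues of $\mathbf{R}$, each bounded by $\binom{n}{k}\|\mathbf{R}\|^{k}$, yields
\[
\big|\det(\bI+\mathbf{R})-1\big| \;\le\; (1+\|\mathbf{R}\|)^{n}-1 \;\lesssim\; \sum_{k=1}^{n} (S_\bchi\lambda_\infty)^{k}.
\]
In the regime where $S_\bchi\lambda_\infty$ is small, the $k=1$ term dominates and gives $O(S_\bchi\lambda_\infty)$; in the opposite regime, the crude bound $q_\Gamma/q\le S_\bchi^{n}$ (which follows from $q_\Gamma\le|D\bchi_\Gamma|^{n}$ and $q\ge|D^-\bchi|^{n}$), combined with $S_\bchi\lambda_\infty\gtrsim 1$, produces the same order $S_\bchi^{n}\lambda_\infty$. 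Finally, the elementary inequality $|a-1|\le|a^{2}-1|$ for $a>0$ transfers the estimate from $q_\Gamma^{2}/q^{2}$ to $q_\Gamma/q$, and reversing the roles of $\bchi, \bchi_\Gamma$ gives the other half of \eqref{error-est-q}.

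The main delicate point is the careful tracking of powers of $S_\bchi$ in the $q$ estimate: the determinantal expansion produces $n$ terms of the form $(S_\bchi\lambda_\infty)^{k}$, and these must be collected so as to retain the linear dependence on $\lambda_\infty$ needed in subsequent error analyses. A cleaner alternative I would keep in reserve is the multilinear wedge-product expansion $q=|(D\bchi)_1\wedge\cdots\wedge(D\bchi)_n|$, from which $|q-q_\Gamma|$ falls out as a sum of mixed wedge products of $D\bchi$ and $D(\bchi_\Gamma-\bchi)$; this route avoids passing through $q^{2}$ but requires explicitly invoking multilinearity of the exterior product.
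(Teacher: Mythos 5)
Your treatment of \eqref{error-est-g} matches the paper's argument essentially line-by-line: the same telescoping of $\bg-\bg_\Gamma$ into two terms followed by multiplication by $\bg^{-1}$ and the bound $|\bg^{-1}|\le |D^-\bchi|^{-2}$.

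For \eqref{error-est-q} you diverge from the paper, which writes $q-q_\Gamma = (\det\bg - \det\bg_\Gamma)/(q+q_\Gamma)$ and then telescopes $\det\bg-\det\bg_\Gamma$ directly using multilinearity of the determinant: each term in the telescope carries exactly one factor of a difference $g_{ij}-(g_\Gamma)_{ij}$, hence exactly one factor of $|D(\bchi-\bchi_\Gamma)|$, so the final estimate is automatically linear in $\lambda_\infty$ with no smallness assumption. Your route via $\det(\bI+\bR)-1$ instead produces the sum $\sum_{k\ge1}(S_\bchi\lambda_\infty)^k$, whose higher-order terms must be subdued by an auxiliary argument, and here there is a genuine gap. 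You split the cases at $S_\bchi\lambda_\infty\lesssim 1$ versus $S_\bchi\lambda_\infty\gtrsim 1$, and in the second regime you invoke the crude bound $q_\Gamma/q\le S_\bchi^n$, which gives $|1-q_\Gamma/q|\lesssim S_\bchi^n$. To absorb this into the target $S_\bchi^n\lambda_\infty$ you would need $\lambda_\infty\gtrsim 1$, but the hypothesis $S_\bchi\lambda_\infty\gtrsim 1$ only yields $\lambda_\infty\gtrsim S_\bchi^{-1}$, which is compatible with $\lambda_\infty$ being arbitrarily small when $S_\bchi$ is large. The correct threshold for the dichotomy is $\lambda_\infty\simeq 1$, not $S_\bchi\lambda_\infty\simeq 1$: when $\lambda_\infty\le 1$ one can bound
\[
(1+S_\bchi\lambda_\infty)^n - 1 = S_\bchi\lambda_\infty\sum_{k=0}^{n-1}(1+S_\bchi\lambda_\infty)^k \lesssim S_\bchi\lambda_\infty \cdot S_\bchi^{n-1} = S_\bchi^n\lambda_\infty,
\]
using $\lambda_\infty\le 1$ inside the sum, and when $\lambda_\infty>1$ the crude bound $|1-q_\Gamma/q|\lesssim S_\bchi^n\le S_\bchi^n\lambda_\infty$ does close. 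With that corrected threshold your argument is salvageable, but as written the second regime is unjustified.

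It is worth noting that the alternative you held ``in reserve'' — expanding $q$ as a wedge product (equivalently, exploiting multilinearity of $\det$) so that $q-q_\Gamma$ becomes a sum of mixed products with exactly one factor $D(\bchi-\bchi_\Gamma)$ — is precisely the paper's proof. That route delivers linearity in $\lambda_\infty$ for free and needs no case distinction, which is why the paper prefers it.
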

\begin{proof}
  Since $|D\bchi|=|D\bchi^t|$, $|\bg^{-1}| \le |D^-\bchi|^{-2}$ and
\[
(\bg - \bg_\Gamma)(\by) =
D\bchi(\by)^t D(\bchi-\bchi_\Gamma)(\by)
+ D(\bchi-\bchi_\Gamma)(\by)^t D\bchi_\Gamma(\by)
\quad\forall \, \by\in\mathcal{V},
\]
the first assertion in \eqref{error-est-g} follows; the second one is similar.
To prove \eqref{error-est-q}, we write
\[
q(\by) - q_\Gamma(\by) = \frac{\det\bg(\by) - \det\bg_\Gamma(\by)}{q(\by)+q_\Gamma(\by)}
\quad\forall \, \by\in\mathcal{V},
\]
and note that $q = \sqrt{\det\bg}=\sqrt{\prod_{i=1}^n \lambda_i(\bg)}$ where
$\{\lambda_i(\bg)\}_{i=1}^n$ are the eigenvalues of $\bg$. Utilizing the definitions
of $|D\bchi|$ and $|D^-\bchi|$ we end up with
\begin{equation}\label{q:nondegen}
|D^-\bchi(\by)|^n \le q(\by) \le |D\bchi(\by)|^n
\quad\forall \, \by\in\mathcal{V}.
\end{equation}
Since $\det\bg - \det\bg_\Gamma$ is the sum of terms of the form
$\partial_i\bchi\cdot\partial_j\bchi - \partial_i\bchi_\Gamma\cdot\partial_j\bchi_\Gamma$
multiplied by $n-1$ factors bounded by $|D\bchi|$, we deduce
\[
|q(\by)^{-1}(q-q_\Gamma)(\by)| \lesssim
|D^-\bchi(\by)|^{-n} \, |D(\bchi-\bchi_\Gamma)(\by)| \, |D\bchi(\by)|^{n-1}
\quad\forall \, \by\in\mathcal{V}.
\]
This is the first assertion in \eqref{error-est-q} in disguise. The second one
is similar.
\end{proof}

\begin{lemma}[norm equivalence]\label{L:norm-equiv}
Let $\gamma$ and $\Gamma$ be Lipschitz surfaces which are related via a bi-Lipschitz map $\bP=\bchi \circ \bchi^{-1}_\Gamma :\Gamma \rightarrow \gamma$. Then there is a constant $C \ge 1$, depending on the stability constant $S_\bchi$ in \eqref{stab-const}, such that
\begin{gather}
\label{L2:equiv}
C^{-1} \|\tv\|_{L_2(\Gamma)} \le \|\ttv\|_{L_2(\gamma)}
\le C \|\tv\|_{L_2(\Gamma)}
\quad\forall \, \ttv \in L_2(\gamma),
\\ \label{H1:equiv}
C^{-1} \|\nabla_\Gamma \tv\|_{L_2(\Gamma)} \le \|\nabla_\gamma \ttv\|_{L_2(\gamma)}
\le C \|\nabla_\Gamma \tv\|_{L_2(\Gamma)}
\quad\forall \, \ttv \in H^1(\gamma).
\end{gather}
\end{lemma}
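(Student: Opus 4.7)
The plan is to pull both quantities back to the common parametric domain $\mathcal V$ and then rely on the pointwise bounds already baked into the definition of $S_\bchi$. Because $\bP=\bchi\circ\bchi_\Gamma^{-1}$, the hypothesis $\tv = \ttv\circ\bP$ of the lemma translates into the identity $v := \ttv\circ\bchi = \tv\circ\bchi_\Gamma$ on $\mathcal V$, so that the \emph{same} function $v$ represents $\ttv$ on $\gamma$ and $\tv$ on $\Gamma$ in parametric coordinates.

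For \eqref{L2:equiv} I would apply \eqref{int-gamma} on $\gamma$ and its analogue on $\Gamma$ to obtain
\[
\|\ttv\|_{L_2(\gamma)}^2 = \int_{\mathcal V} v^2\, q,
\qquad
\|\tv\|_{L_2(\Gamma)}^2 = \int_{\mathcal V} v^2\, q_\Gamma.
\]
The bound \eqref{q:nondegen} together with the definition \eqref{stab-const} gives $q\le |D\bchi|^n\le S_\bchi^n|D^{-}\bchi_\Gamma|^n\le S_\bchi^n q_\Gamma$, and symmetrically $q_\Gamma\le S_\bchi^n q$, so the two integrals are comparable up to the factor $S_\bchi^n$. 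This yields \eqref{L2:equiv} with $C=S_\bchi^{n/2}$.

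For \eqref{H1:equiv} the key tool is Lemma \ref{L:tan-grads-lip}, which shows pointwise on $\mathcal V$ that
\[
\nabla_\Gamma \tv \circ \bchi_\Gamma = D\bchi_\Gamma\, \bg_\Gamma^{-1}\, D\bchi^t\, \nabla_\gamma \ttv\circ\bchi .
\]
Bounding the operator norm of the matrix via $|\bg_\Gamma^{-1}|\le |D^{-}\bchi_\Gamma|^{-2}$ and \eqref{stab-const} produces
\[
|D\bchi_\Gamma|\,|\bg_\Gamma^{-1}|\,|D\bchi| \le
\frac{|D\bchi_\Gamma|}{|D^{-}\bchi_\Gamma|}\cdot\frac{|D\bchi|}{|D^{-}\bchi_\Gamma|}\le S_\bchi^2,
\]
whence $|\nabla_\Gamma \tv\circ\bchi_\Gamma|\le S_\bchi^2\, |\nabla_\gamma \ttv\circ\bchi|$ pointwise on $\mathcal V$. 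Integrating and using the comparison of $q$ and $q_\Gamma$ established in the first step gives the upper bound in \eqref{H1:equiv}; the lower bound follows by swapping the roles of $\gamma$ and $\Gamma$ in \eqref{tan-grads-lip}.

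The main (mild) obstacle is really bookkeeping rather than analysis: one must confirm that the stability constant $S_\bchi$ alone suffices, without any appeal to the accuracy measure $\lambda_\infty$ from \eqref{geo-est}, since we only need equivalence of norms rather than their closeness. One should also note that although Lemma \ref{L:tan-grads-lip} is stated by invoking the density of $C^1(\gamma)$ in $H^1(\gamma)$, the ensuing pointwise bound is inherited by the $H^1$ closure, so the argument extends to all $\ttv\in H^1(\gamma)$ as required.
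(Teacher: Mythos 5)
Your proof is correct and follows the same route as the paper, which simply instructs the reader to ``use \eqref{def-tang-grad} and \eqref{e:exact_grad} in conjunction with \eqref{int-gamma}'': you pull both norms back to the parametric domain $\mathcal V$, use the gradient relation (Lemma~\ref{L:tan-grads-lip}, itself a consequence of \eqref{def-tang-grad}--\eqref{e:exact_grad}) to compare $\nabla_\Gamma \tv$ and $\nabla_\gamma\ttv$ pointwise, and compare area elements via \eqref{q:nondegen} and \eqref{stab-const}. Your version merely fills in the bookkeeping the paper leaves implicit, and your closing observation that only $S_\bchi$ (not $\lambda_\infty$) is needed is exactly the right sanity check.
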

\begin{proof}
Use \eqref{def-tang-grad} and \eqref{e:exact_grad} in conjunction with \eqref{int-gamma}.
\end{proof}  
  
Lemma~\ref{L:Poincare} (Poincar\'e-Friedrichs inequality) holds on the perturbed surface $\Gamma$ but with a constant depending on $\Gamma$.  In order to avoid this dependence, and thus obtain a uniform constant in $\Gamma$, it is only necessary that Lemma \ref{L:norm-equiv} (norm equivalence) be valid. Before stating our result, we first define a class of surfaces.  Given a Lipschitz surface $\gamma$, we let $\mathcal{S}_{eq}$ be the class of Lipschitz surfaces $\Gamma$ such that Lemma \ref{L:norm-equiv} (norm equivalence) holds with uniform equivalence constant $C_{eq}$. Note that implicit in this definition is the existence of a bi-Lipschitz bijection $\bP:\Gamma \rightarrow \gamma$ for each $\Gamma \in \mathcal{S}_{eq}$,  for instance $\bP = \bchi\circ\bchi_\Gamma^{-1}$.

\begin{lemma}[uniform Poincar\'e-Friedrichs constant]\label{L:Poincare-unif}
Given a Lipschitz surface $\gamma$, for every $\tv \in H^1_\#(\Gamma)$ with $\Gamma \in \mathcal{S}_{eq}$ there holds that
\begin{equation}\label{poincare_unif}
\|\tv\|_{L_2(\Gamma)} \lesssim \|\nabla_\Gamma u\|_{L_2(\Gamma)}
\end{equation}
with the constant hidden in $\lesssim$ depending only on $\gamma$ and $C_{eq}$. 
\end{lemma}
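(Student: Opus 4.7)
The plan is to transfer $\tv$ to the reference surface $\gamma$ via the bi-Lipschitz map $\bP:\Gamma\to\gamma$ implicit in $\mathcal{S}_{eq}$, apply the already-established Poincar\'e-Friedrichs inequality on $\gamma$ from Lemma~\ref{L:Poincare}, and then pull the estimate back to $\Gamma$ using the norm equivalence \eqref{L2:equiv}--\eqref{H1:equiv}. The one subtlety, which will drive the proof, is that the vanishing mean condition $\int_\Gamma \tv = 0$ does \emph{not} in general pass to the lift $\ttv := \tv\circ\bP^{-1}$ on $\gamma$ because $\bP$ distorts the area element. We therefore cannot apply Lemma~\ref{L:Poincare} to $\ttv$ directly and will instead work with a mean-adjusted version.

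First I would set $c := \frac{1}{|\gamma|}\int_\gamma \ttv$ so that $\ttv - c \in H^1_\#(\gamma)$, and apply Lemma~\ref{L:Poincare} to obtain
\[
\|\ttv - c\|_{L_2(\gamma)} \le C_P \|\nabla_\gamma \ttv\|_{L_2(\gamma)}.
\]
Since the lift of $\ttv - c$ back to $\Gamma$ is precisely $\tv - c$ (constants lift to constants), the norm equivalences \eqref{L2:equiv} and \eqref{H1:equiv} with uniform constant $C_{eq}$ convert this into
\[
\|\tv - c\|_{L_2(\Gamma)} \lesssim \|\nabla_\Gamma \tv\|_{L_2(\Gamma)},
\]
with implied constant depending only on $C_P$ (hence on $\gamma$) and on $C_{eq}$.

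Next I would exploit the mean condition on $\Gamma$ to control the constant $c$. Since $\int_\Gamma \tv = 0$, integrating $\tv - c$ on $\Gamma$ gives $-c|\Gamma| = \int_\Gamma (\tv-c)$, so by Cauchy-Schwarz
\[
|c|\,|\Gamma| \le |\Gamma|^{1/2} \|\tv - c\|_{L_2(\Gamma)}
\quad\Longrightarrow\quad
\|c\|_{L_2(\Gamma)} = |c|\,|\Gamma|^{1/2} \le \|\tv - c\|_{L_2(\Gamma)}.
\]
A triangle inequality then yields $\|\tv\|_{L_2(\Gamma)} \le 2\|\tv-c\|_{L_2(\Gamma)} \lesssim \|\nabla_\Gamma \tv\|_{L_2(\Gamma)}$, which is the desired estimate.

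The only point requiring attention is the independence of the final constant on $\Gamma$: this is guaranteed because the Poincar\'e constant $C_P$ comes from $\gamma$ alone and the norm-equivalence constant is the uniform $C_{eq}$ built into the definition of $\mathcal{S}_{eq}$. No quantitative information about the Jacobian of $\bP$ or about $|\Gamma|$ is needed beyond what \eqref{L2:equiv} already provides, which is why I expect the proof to be short and the mean-adjustment trick in the second paragraph to be the only genuinely content-bearing step.
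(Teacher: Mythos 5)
Your proof is correct, and it takes a genuinely different and more elementary route than the paper's. The paper argues by contradiction: it assumes a sequence of surfaces $\Gamma_k\in\mathcal{S}_{eq}$ and functions $\tv_k\in H^1_\#(\Gamma_k)$ violating the inequality, lifts them to $\gamma$, extracts an $H^1(\gamma)$-convergent subsequence by the same compactness as in Lemma~\ref{L:Poincare}, shows the limit is a constant, and then uses $\int_{\Gamma_k}\tv_k=0$ together with $|\Gamma_k|\simeq|\gamma|$ to force that constant to be zero, contradicting $\|\wv_k\|_{L_2(\gamma)}\simeq 1$. Your argument replaces the compactness machinery with a direct mean-adjustment: subtract the $\gamma$-mean $c$ so that $\ttv-c\in H^1_\#(\gamma)$, apply Lemma~\ref{L:Poincare}, pull back through \eqref{L2:equiv}--\eqref{H1:equiv}, and then control $c$ itself from the vanishing $\Gamma$-mean and Cauchy--Schwarz. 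Both are valid, but your route is shorter, avoids any sequential/compactness argument, and yields an explicit constant, namely $2C_{eq}^2C_P$ with $C_P$ the Poincar\'e constant of $\gamma$; you also correctly isolate the mean-adjustment as the single content-bearing step, since the vanishing-mean condition does not commute with the lift. The only minor remark is that the estimate $\|c\|_{L_2(\Gamma)}\le\|\tv-c\|_{L_2(\Gamma)}$ is simply the statement that the mean is the best constant $L_2$-approximant, which makes the step even more transparent.
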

\begin{proof}
We argue by contradiction the validity of
\[
\|\tv\|_{L_2(\Gamma)} \le C \|\nabla_\Gamma \tv\|_{L_2(\Gamma)}
    \quad\forall \, \tv\in H^1(\Gamma)
\]
and all $\Gamma\in\mathcal{S}_{eq}$ with uniform constant $C$. We thus assume the existence of a sequence of surfaces $\Gamma_k \in \mathcal{S}_{eq}$ and functions $\tv_k \in H^1_\#(\Gamma_k)$ such that
\[
\|\tv_k\|_{L_2(\Gamma_k)}=1
\qquad
\|\nabla_{\Gamma_k} \tv_k\|_{L_2(\Gamma_k)} \rightarrow 0
\]
as $k \rightarrow \infty$.  We denote by $\bP_k: \Gamma_k \rightarrow \gamma$ the associated bi-Lipschitz bijections and by $\widetilde{\tv}_k = \tv_k \circ \bP_k^{-1}$ the lifts of the functions $\tv_k$ to $\gamma$. Since $\Gamma_k\in\mathcal{S}_{eq}$, the estimates of Lemma \ref{L:norm-equiv} (norm equivalence) hold with uniform constant $C_{eq}$ for each $\Gamma_k$, whence $\wv_k \in H^1(\gamma)$ and
\[
\|\wv_k\|_{L_2(\gamma)} \simeq 1 ,\qquad
\|\nabla_\gamma\wv_k\|_{L_2(\gamma)} \rightarrow 0
\]
as $k \rightarrow \infty$. Proceeding as in Lemma \ref{L:Poincare}
(Poincar\'e-Friedrichs inequality), we deduce that a subsequence
of $\{\wv_k\}_k$, still denoted $\{\wv_k\}_k$, converges in $H^1(\gamma)$ to a function
$\wv\in H^1(\gamma)$ with $\nabla_\gamma\wv=0$; this implies that $\wv$ is constant. To show that $\wv=0$, let
$\epsilon>0$ be arbitrary and $k$ sufficiently large so that $\|\wv_k-\wv\|_{L_2(\gamma)} \le \epsilon$. Exploiting that $\widetilde{\tv}$ is constant and $\int_{\Gamma_k}\tv_k=0$, we use Lemma \ref{L:norm-equiv} to compute
\[
\begin{aligned}
|\widetilde{\tv}| & =|\Gamma_k|^{-1} \left | \int_{\Gamma_k} \widetilde{\tv} \, \right | = |\Gamma_k|^{-1} \left | \int_{\Gamma_k} \widetilde{\tv}-\tv_k \, \right | 
\\ & \le |\Gamma_k|^{-1/2} \|\widetilde{\tv}-\tv_k\|_{L_2(\Gamma_k)} \le C_{eq} |\Gamma_k|^{-1/2} \|\widetilde{\tv}-\widetilde{\tv}_k\|_{L_2(\gamma)} \le C_{eq} |\Gamma_k|^{-1/2} \epsilon.
\end{aligned}
\]
Applying again Lemma \ref{L:norm-equiv}, now to the function $1$, yields $|\Gamma_k|\simeq |\Gamma|$ with constant depending only on $C_{eq}$, so that $|\widetilde{\tv}| \lesssim \epsilon$. Since $\epsilon$ is arbitrary, we must thus  have $\widetilde{\tv}=0$.  This contradicts $\|\widetilde{\tv}_k\|_{L_2(\gamma)} \simeq 1$ because $\|\widetilde{\tv}_k\|_{L_2(\gamma)} \rightarrow \|\wv\|_{L_2(\gamma)}=0$. Consequently, the desired statement is proved.
\end{proof}

\begin{lemma}[perturbation error estimate for $C^{1,\alpha}$ surfaces]\label{L:perturbation_bound}
Let $\wu\in H^1_\#(\gamma)$ solve \eqref{e:weak_relax} and $u_\Gamma\in H^1_\#(\Gamma)$
solve \eqref{Gamma:LBproblem}. Then, the following error estimate for $u-u_\Gamma$ holds
\begin{equation}\label{perturbation_bound}
\|\nabla_\gamma(\wu-\wu_\Gamma)\|_{L_2(\gamma)} \lesssim \lambda_\infty \|f_\Gamma\|_{H^{-1}_\#(\Gamma)} + \| f q q_\Gamma^{-1} - f_\Gamma \|_{H^{-1}_\#(\Gamma)},
\end{equation}
where the hidden constant depends on $S_\bchi$ defined in \eqref{stab-const}.
\end{lemma}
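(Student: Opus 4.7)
The plan is a Strang-type comparison on $\Gamma$ after lifting $\wu$ to $\Gamma$ via $\bP := \bchi \circ \bchi_\Gamma^{-1}:\Gamma\to\gamma$. Let $u^\ell := \wu\circ\bP$ and let $f$ denote the lift of $\widetilde f$ to $\Gamma$. First I would rewrite \eqref{e:weak_relax} intrinsically on $\Gamma$: applying the geometric consistency identity \eqref{consistency} with $w=u^\ell$, together with the change of variables $\int_\gamma \widetilde f\,\widetilde\tv = \int_\Gamma f\,(q/q_\Gamma)\,\tv$, yields for every $\tv\in H^1(\Gamma)$,
\[
\int_\Gamma \nabla_\Gamma u^\ell \cdot \nabla_\Gamma \tv = \int_\Gamma f\,\frac{q}{q_\Gamma}\,\tv + \int_\Gamma \nabla_\Gamma u^\ell \cdot \bE_\Gamma\,\nabla_\Gamma \tv.
\]
Subtracting \eqref{Gamma:LBproblem}, which extends from $H^1_\#(\Gamma)$ to $H^1(\Gamma)$ thanks to $\int_\Gamma f_\Gamma=0$, I obtain the error identity
\[
\int_\Gamma \nabla_\Gamma (u^\ell-u_\Gamma)\cdot\nabla_\Gamma \tv = \int_\Gamma \Big(f\,\frac{q}{q_\Gamma} - f_\Gamma\Big)\tv + \int_\Gamma \nabla_\Gamma u^\ell \cdot \bE_\Gamma\,\nabla_\Gamma \tv \quad\forall \tv\in H^1(\Gamma).
\]

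Next I would test with $\tv = u^\ell - u_\Gamma - c \in H^1_\#(\Gamma)$, where $c$ is the mean of $u^\ell-u_\Gamma$ on $\Gamma$, so that $\nabla_\Gamma \tv = \nabla_\Gamma(u^\ell-u_\Gamma)$. Observe that $fq/q_\Gamma - f_\Gamma$ has mean zero on $\Gamma$ because $\int_\Gamma fq/q_\Gamma = \int_\gamma \widetilde f = 0 = \int_\Gamma f_\Gamma$, so the first right-hand side term is bounded by duality by $\|fq/q_\Gamma - f_\Gamma\|_{H^{-1}_\#(\Gamma)}\|\nabla_\Gamma\tv\|_{L_2(\Gamma)}$. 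For the second term, Lemma~\ref{L:geom_consist} combined with Lemma~\ref{L:error-est} (and the bound $|D\bchi_\Gamma|\le S_\bchi|D^-\bchi|$) produces $\|\bE_\Gamma\|_{L_\infty(\Gamma)} \lesssim \lambda_\infty$, so Cauchy--Schwarz delivers $\lambda_\infty\|\nabla_\Gamma u^\ell\|_{L_2(\Gamma)}\|\nabla_\Gamma \tv\|_{L_2(\Gamma)}$. Dividing through by $\|\nabla_\Gamma \tv\|_{L_2(\Gamma)}$ gives
\[
\|\nabla_\Gamma(u^\ell-u_\Gamma)\|_{L_2(\Gamma)} \lesssim \|fq/q_\Gamma - f_\Gamma\|_{H^{-1}_\#(\Gamma)} + \lambda_\infty\|\nabla_\Gamma u^\ell\|_{L_2(\Gamma)}.
\]

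Finally I would control $\|\nabla_\Gamma u^\ell\|_{L_2(\Gamma)}$. By Lemma~\ref{L:norm-equiv}, $\|\nabla_\Gamma u^\ell\|_{L_2(\Gamma)}\lesssim \|\nabla_\gamma \wu\|_{L_2(\gamma)}$. Testing \eqref{e:weak_relax} with $\widetilde\tv=\wu$ and using the change of variables from the first step yields $\|\nabla_\gamma \wu\|_{L_2(\gamma)}^2 = \int_\Gamma f(q/q_\Gamma)\,u^\ell$; since $fq/q_\Gamma$ has mean zero on $\Gamma$, duality plus one more application of Lemma~\ref{L:norm-equiv} produces $\|\nabla_\gamma \wu\|_{L_2(\gamma)}\lesssim \|fq/q_\Gamma\|_{H^{-1}_\#(\Gamma)} \le \|f_\Gamma\|_{H^{-1}_\#(\Gamma)} + \|fq/q_\Gamma - f_\Gamma\|_{H^{-1}_\#(\Gamma)}$. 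Plugging this into the previous display, absorbing the extra $\lambda_\infty$ factor in front of the consistency term into the implicit constant, and invoking Lemma~\ref{L:norm-equiv} once more to replace $\|\nabla_\Gamma(u^\ell-u_\Gamma)\|_{L_2(\Gamma)}$ by $\|\nabla_\gamma(\wu-\wu_\Gamma)\|_{L_2(\gamma)}$ gives \eqref{perturbation_bound}. The main obstacle is purely bookkeeping: one has to keep the $q/q_\Gamma$ weight consistent across both integral identities, ensure that every function paired with an $H^{-1}_\#$ norm has vanishing mean, and track how the stability constant $S_\bchi$ enters the geometric bound $\|\bE_\Gamma\|_{L_\infty}\lesssim\lambda_\infty$ via the product and quotient estimates of Lemma~\ref{L:error-est}.
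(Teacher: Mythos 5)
Your proof is correct and follows the same Strang-type (quasi-Galerkin-orthogonality) strategy as the paper, but with the roles of $\gamma$ and $\Gamma$ interchanged: you derive the error identity on $\Gamma$ using $\bE_\Gamma$ and must control $\|\nabla_\Gamma u^\ell\|_{L_2(\Gamma)}$, while the paper stays on $\gamma$, uses $\bE$ applied to $\nabla_\gamma\wu_\Gamma$, and needs $\|\nabla_\gamma\wu_\Gamma\|_{L_2(\gamma)}$. That choice is the one substantive difference: the paper gets $\|\nabla_\Gamma u_\Gamma\|_{L_2(\Gamma)} \le \|f_\Gamma\|_{H^{-1}_\#(\Gamma)}$ immediately from \eqref{Gamma:LBproblem} and \eqref{dual-norm}, then one norm-equivalence, whereas you need the extra detour of testing \eqref{e:weak_relax} with $\wu$, pushing to $\Gamma$, and splitting $\|fq/q_\Gamma\|_{H^{-1}_\#(\Gamma)} \le \|f_\Gamma\|_{H^{-1}_\#(\Gamma)} + \|fq/q_\Gamma - f_\Gamma\|_{H^{-1}_\#(\Gamma)}$, which produces an extra $\lambda_\infty \|fq/q_\Gamma - f_\Gamma\|_{H^{-1}_\#(\Gamma)}$ cross-term. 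This is harmless since $\lambda_\infty\lesssim S_\bchi$ is already controlled by the assumed constant, but it makes your route a little less economical than applying the a-priori bound to the solution $u_\Gamma$ of the problem on the surface you are integrating over. Everything else — the consistency identity \eqref{consistency}, the mean-subtraction to exploit the vanishing mean of $fq/q_\Gamma-f_\Gamma$, and the $L_\infty$ bound on the error matrix via Lemma~\ref{L:geom_consist} and Lemma~\ref{L:error-est} — matches the paper's argument.
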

\begin{proof}
We proceed in several steps.

\noindent
{\it Step 1: error representation}.
Let $\ttv = \wu-\wu_\Gamma$ and make use of \eqref{consistency} to write
\begin{equation*}
\|\nabla_\gamma ( \wu- \wu_\Gamma)\|_{L_2(\gamma)}^2
= \int_\gamma \nabla_\gamma  \wu \cdot \nabla_\gamma \ttv
-  \int_\Gamma \nabla_\Gamma u_\Gamma \cdot \nabla_\Gamma \tv
+ \int_\gamma \nabla_\gamma  \wu_\Gamma \cdot \bE \, \nabla_\gamma \ttv.
\end{equation*}
We next employ the equations 
\eqref{e:weak_relax} and \eqref{Gamma:LBproblem} satisfied by $\widetilde u$ and $u_\Gamma$
to obtain
\begin{equation*}
  \|\nabla_\gamma (\wu-\wu_\Gamma)\|_{L_2(\gamma)}^2  =
  \int_\Gamma \Big(f\frac{q}{q_\Gamma} - f_\Gamma \Big) \tv
+ \int_\gamma \nabla_\gamma \wu_\Gamma \cdot \bE \, \nabla_\gamma \wv,
\end{equation*}
where we have also employed \eqref{int-gamma} to switch the domain of integration
of $f$.

\smallskip\noindent
{\it Step 2: geometric error matrix}.
To derive a bound for $\|\bE \|_{L_\infty(\gamma)}$, we rewrite $\bE$
\[
\bE = D\bchi \Big( \big(q^{-1}q_\Gamma-1\big) \bg^{-1}_\Gamma
- \bg^{-1} \big(\bI - \bg\bg_\Gamma^{-1} \big) \Big) D\bchi^t .
\]
Since $|\bg^{-1}|=|D^-\bchi|^{-2}, |\bg_\Gamma^{-1}|=|D^-\bchi_\Gamma|^{-2}$,
applying \eqref{error-est-g} and \eqref{error-est-q} leads to the error estimate
\begin{equation}\label{bound-E}
\|\bE \|_{L_\infty(\gamma)} \lesssim \lambda_\infty.
\end{equation}
\smallskip\noindent
{\it Step 3: final estimates}.
The Cauchy-Schwarz inequality yields
\[
\int_\gamma \nabla \wu_\Gamma \cdot \bE \, \nabla_\gamma \ttv \le
\|\nabla_\gamma \ttv\|_{L_2(\gamma)}
\|\nabla_\gamma \wu_\Gamma\|_{L_2(\gamma)} \|\bE \|_{L_\infty(\gamma)}.
\]
To derive a bound for $\|\nabla_\gamma \wu_\Gamma\|_{L_2(\gamma)}$,
we first combine \eqref{dual-norm} with \eqref{Gamma:LBproblem} to obtain
$\|\nabla_\Gamma u_\Gamma\|_{L_2(\Gamma)} \le \|f_\Gamma\|_{H^{-1}_\#(\Gamma)}$,
and next appeal to Lemma \ref{L:norm-equiv} (norm equivalence).
On the other hand, we recall that $f\frac{q}{q_\Gamma}-f_\Gamma$ has vanishing mean-value on
$\Gamma$, let $\overline{\tv} = |\Gamma|^{-1} \int_\Gamma \tv$ be the mean-value
of $\tv$, and use \eqref{dual-norm} to arrive at
\[
\int_\Gamma \Big(f\frac{q}{q_\Gamma} - f_\Gamma \Big) \tv =
\int_\Gamma \Big(f\frac{q}{q_\Gamma} - f_\Gamma \Big) \big( \tv - \overline{\tv} \big)
\le \|f q q_\Gamma^{-1} - f_\Gamma \|_{H^{-1}_\#(\Gamma)} \|\nabla_\Gamma\tv\|_{L_2(\Gamma)}.
\]
Finally, applying Lemma \ref{L:norm-equiv} ends the proof.
\end{proof}

\subsection{Perturbation Theory for $C^2$ Surfaces}\label{S:perturb-C2}
%
Let $\gamma$ be of class $C^2$ and the tubular neighborhood $\mathcal{N}$
satisfy \eqref{N:def}, namely
\begin{equation}\label{N:def-2}
\mathcal{N} = \Big\{ \bx\in\mathbb{R}^{n+1}: ~ |d(\bx)| < \frac{1}{2K_\infty} \Big\},
\end{equation}
so that parallel surfaces to $\gamma$ within $\mathcal{N}$ are also $C^2$.
We further assume that $\Gamma\subset\mathcal{N}$ and the distance function projection
$\bP_d=\bI-d\nabla d:\Gamma \rightarrow \gamma$ is a bijection. The parametrizations
of $\gamma$ and $\Gamma$ are given by $\bchi := \bP_d \circ \bchi_\Gamma$ so that
\[
\tv = \widetilde{\tv} \circ \bP_d.
\]

\begin{lemma}[relation between tangential gradients]\label{L:tan-grads}
If $\wv:\gamma\to\mathbb{R}$ is of class $H^1$, then the tangential
gradients $\nabla_\gamma\wv$ and $\nabla_\Gamma\tv$ satisfy for all
$\bx\in\Gamma$
\begin{equation}\label{e:tang_exact_to_discrete}
  \nabla_\Gamma \tv(\bx) = \Pi_\Gamma(\bx) \, \big(\bI - d \bW\big)(\bx) \, \Pi(\bx)
  \nabla_\gamma \wv(\bP_d(\bx)),
\end{equation}
and
\begin{equation}\label{e:tang_discrete_to_exact}
\nabla_\gamma \wv(\bP_d(\bx)) = \big(\bI - d \bW \big)^{-1}(\bx)\left(\bI - \frac{\bnu_\Gamma(\bx) \otimes \bnu(\bx)}{\bnu_\Gamma(\bx) \cdot \bnu(\bx)}\right) \nabla_\Gamma \tv(\bx).
\end{equation}
\end{lemma}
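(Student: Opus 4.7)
The plan is to establish the forward formula \eqref{e:tang_exact_to_discrete} directly from the ambient-gradient identity \eqref{e:rel_dist_grad} applied to the normal extension $\tv = \wv \circ \bP_d$, and then to invert it by elementary linear algebra to obtain \eqref{e:tang_discrete_to_exact}. Since the matrices involved are uniformly bounded in $L_\infty$ (using $\gamma \in C^2$ and $\Gamma$ Lipschitz), it suffices to argue for $\wv \in C^1(\gamma)$ and extend the resulting identities by density of $C^1(\gamma)$ in $H^1(\gamma)$.

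For \eqref{e:tang_exact_to_discrete}, observe that $\tv = \wv \circ \bP_d$ is, on $\Gamma$, the restriction of the normal extension of $\wv$ to $\mathcal N$ defined in \eqref{normal-extension}. I would therefore apply \eqref{e:rel_dist_grad} at $\bx \in \Gamma$, using $\bW = D^2 d$ on $\mathcal N$, to get
\begin{equation*}
\nabla \tv(\bx) = \bigl(\bI - d(\bx) \bW(\bx)\bigr) \, \Pi(\bx) \, \nabla_\gamma \wv(\bP_d(\bx)),
\end{equation*}
and then project with $\Pi_\Gamma$ via the intrinsic identity $\nabla_\Gamma \tv = \Pi_\Gamma \nabla \tv$ from \eqref{grad-extension} applied with $\Gamma$ in place of $\gamma$; this gives \eqref{e:tang_exact_to_discrete} at once.

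For \eqref{e:tang_discrete_to_exact}, I would introduce the auxiliary vector
\begin{equation*}
\bw(\bx) := \bigl(\bI - d(\bx) \bW(\bx)\bigr) \, \nabla_\gamma \wv(\bP_d(\bx)),
\end{equation*}
which is tangent to $\gamma$ at $\bP_d(\bx)$ because $\bW$ preserves the tangent hyperplane (since $\bW\bnu = 0$ by \eqref{zero-eig}). Step (1) then reads $\Pi_\Gamma \bw = \nabla_\Gamma \tv$, so the decomposition of $\bw$ along the tangent hyperplane of $\Gamma$ and its normal $\bnu_\Gamma$ gives $\bw = \nabla_\Gamma \tv + (\bw \cdot \bnu_\Gamma)\, \bnu_\Gamma$. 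Enforcing the tangency constraint $\bw \cdot \bnu = 0$ by taking inner product with $\bnu$ determines $\bw \cdot \bnu_\Gamma = -(\nabla_\Gamma \tv \cdot \bnu)/(\bnu_\Gamma \cdot \bnu)$. Substituting yields
\begin{equation*}
\bw = \Bigl(\bI - \frac{\bnu_\Gamma \otimes \bnu}{\bnu_\Gamma \cdot \bnu}\Bigr) \nabla_\Gamma \tv,
\end{equation*}
and inverting $\bI - d\bW$ produces \eqref{e:tang_discrete_to_exact}.

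The main obstacle is verifying that the algebraic operations above are legitimate. The matrix $\bI - d(\bx) \bW(\bx)$ is invertible on $\mathbb{R}^{n+1}$: its eigenvalues on the tangent hyperplane are $1 - d(\bx)\, \kappa_i(\bP_d(\bx))$, which are bounded below by $1/2$ by Lemma \ref{L:curv-parallel} and the choice \eqref{N:def-2} of $\mathcal N$, while in the normal direction $(\bI - d\bW)\bnu = \bnu$ by \eqref{zero-eig}. The scalar $\bnu_\Gamma \cdot \bnu$ cannot vanish either, since $\bnu_\Gamma(\bx) \perp \bnu(\bP_d(\bx))$ would mean that $\Gamma$ is tangent to the normal fiber of $\bP_d$ through $\bx$, contradicting the standing assumption that $\bP_d$ is a bijection from $\Gamma$ onto $\gamma$. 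Once these two facts are secured, the rest is chain rule and elementary linear algebra.
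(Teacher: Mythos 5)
Your proof is correct and follows essentially the same route as the paper's: both derive \eqref{e:tang_exact_to_discrete} by differentiating the normal extension via \eqref{e:rel_dist_grad} and projecting with $\Pi_\Gamma$, and both obtain \eqref{e:tang_discrete_to_exact} by decomposing $\nabla \tv$ along $\bnu_\Gamma$ and its orthogonal complement, using $\nabla\tv\cdot\bnu=0$ to eliminate the normal component, and then inverting $\bI-d\bW$. The only cosmetic difference is that you justify $\nabla\tv\cdot\bnu=0$ algebraically from $\bW\bnu=0$ while the paper appeals to the normal extension being constant along $\bnu$; these are equivalent observations.
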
  
\begin{proof}
Let us assume that $\wv \in C^1(\gamma)$.
Recalling \eqref{e:rel_dist_grad} and \eqref{grad-extension}, we readily get
\[
\nabla_\Gamma \tv(\bx) = \Pi_\Gamma (\bx) \nabla\tv(\bx)
= \Pi_\Gamma (\bx) \, \big(\bI - d \bW\big)(\bx) \, \Pi(\bx)
  \nabla_\gamma \wv(\bP_d(\bx)),
\]
hence \eqref{e:tang_exact_to_discrete}. Since $\bI-d(\bx)\bW(\bx)$ is invertible
for all $\bx\in\mathcal{N}$, according to the definition \eqref{N:def} of
$\mathcal{N}$ and shown in Lemma \ref{L:curv-parallel} (curvature of parallel
surfaces), \eqref{e:rel_dist_grad} can be rewritten as
\[
\nabla_\gamma \wv(\bP_d(\bx)) = \big(I - d \bW)(\bx) \big)^{-1} \nabla \tv(\bx)
\quad\forall \, \bx \in \mathcal N.
\]
To prove \eqref{e:tang_discrete_to_exact} we must relate $\nabla \tv$ and
$\nabla_\Gamma \tv$. First note that for $\bx\in\Gamma$
\[
\nabla \tv = (\bI - \bnu_\Gamma\otimes\bnu_\Gamma) \nabla\tv
+ \bnu_\Gamma\otimes\bnu_\Gamma \nabla\tv
= \nabla_\Gamma \tv + (\nabla\tv\cdot\bnu_\Gamma) \bnu_\Gamma.
\]
Exploiting next that $\nabla \tv(\bx)\cdot\bnu(\bx)=0$, because $\tv(\bx)$ is constant in the normal direction to $\bP_d(\bx)$, yields
\[
 \nabla_\Gamma \tv \cdot \bnu + (\bnu_\Gamma \cdot \bnu) \nabla \tv \cdot \bnu_\Gamma
 =0 \quad\Rightarrow\quad
 \nabla \tv \cdot \bnu_\Gamma = - \frac{1}{\bnu_\Gamma \cdot \bnu}\nabla_\Gamma \tv \cdot \bnu.
 \]
 Since $\nabla\tv = \nabla_\Gamma\tv + (\nabla \tv\cdot\bnu_\Gamma) \bnu_\Gamma$,
 we deduce
 \[
 \nabla \tv(\bx)  = \left(\bI - \frac{\bnu_\Gamma(\bx) \otimes \bnu(\bx)}{\bnu_\Gamma(\bx) \cdot \bnu(\bx)}\right) \nabla_\Gamma \tv(\bx)
 \quad\forall \, \bx\in\Gamma.
 \]
 Inserting this into the previous expression for $\nabla_\gamma \tv(\bP_d(\bx))$
 leads to \eqref{e:tang_discrete_to_exact}.
 Finally, a density argument of $C^1(\gamma)$
 in $H^1(\gamma)$ for $\gamma$ of class $C^2$ concludes the proof.
 \end{proof}

The following result mimics Lemma \ref{L:geom_consist} (geometric consistency)
except that now it quantifies the effect of perturbing the surface $\gamma$ on the
bilinear forms written in \eqref{consistency} in terms of $\bP_d$.

\begin{lemma}[geometric consistency]\label{L:geom_consist_dist}
The error matrices
$\bE,\bE_\Gamma \in\mathbb{R}^{(n+1)\times(n+1)}$ in \eqref{consistency}
are given on $\Gamma$ by
\begin{gather}\label{error-matrix-gamma}
  \bE\circ\bP_d := \frac{q_\Gamma}{q} \Pi \big(\bI
  - d\bW \big) \Pi_\Gamma
  \big(\bI - d \bW \big) \Pi  - \Pi,
  \\
  \label{error-matrix-Gamma}
  \bE_\Gamma :=  { q \over q_\Gamma } \left( I - \frac{\bnu \otimes \bnu_\Gamma}{ \bnu \cdot \bnu_\Gamma}\right) (\bI -d \bW)^{-2}  \left( I - \frac{\bnu_\Gamma \otimes \bnu}{ \bnu \cdot \bnu_\Gamma}\right)- \Pi_\Gamma.
\end{gather}  
\end{lemma}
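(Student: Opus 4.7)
The strategy is to isolate $\bE$ and $\bE_\Gamma$ directly from their defining identities in \eqref{consistency} by plugging in the tangential-gradient conversions of Lemma \ref{L:tan-grads} and applying the change of variables $\bP_d:\Gamma\to\gamma$. Since $\bchi=\bP_d\circ\bchi_\Gamma$, both surfaces share the parametric domain $\mathcal V$, and the area-element identity \eqref{int-gamma} gives the Jacobian
\[
\int_\Gamma F = \int_{\mathcal V} F(\bchi_\Gamma(\by))\,q_\Gamma(\by)\,d\by = \int_\gamma (F\circ\bP_d^{-1})\,\frac{q_\Gamma}{q},
\]
with $q_\Gamma/q$ interpreted on $\gamma$ via $\bP_d^{-1}$. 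The analogous formula with $q/q_\Gamma$ converts $\int_\gamma$ into $\int_\Gamma$.

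For \eqref{error-matrix-gamma}, I would begin from $\int_\Gamma \nabla_\Gamma\tv\cdot\nabla_\Gamma w$ and substitute the forward relation \eqref{e:tang_exact_to_discrete}, $\nabla_\Gamma\tv=\Pi_\Gamma(\bI-d\bW)\Pi\,(\nabla_\gamma\ttv)\circ\bP_d$. Using that $\Pi$, $\Pi_\Gamma$ and $\bW=D^2d$ are symmetric (the symmetry of $\bW$ was recorded after \eqref{zero-eig}), the inner product becomes
\[
\nabla_\Gamma\tv\cdot\nabla_\Gamma w
= (\nabla_\gamma\ttv)^t\,\Pi(\bI-d\bW)\,\Pi_\Gamma^{\,t}\Pi_\Gamma\,(\bI-d\bW)\Pi\,\nabla_\gamma\widetilde w,
\]
and $\Pi_\Gamma^{\,t}\Pi_\Gamma=\Pi_\Gamma$ since $\Pi_\Gamma$ is an orthogonal projection. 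Changing variables with Jacobian $q_\Gamma/q$ then expresses $\int_\Gamma\nabla_\Gamma\tv\cdot\nabla_\Gamma w$ as an integral over $\gamma$ of $\nabla_\gamma\ttv$ against $(q_\Gamma/q)\,\Pi(\bI-d\bW)\Pi_\Gamma(\bI-d\bW)\Pi$. Subtracting $\int_\gamma\nabla_\gamma\ttv\cdot\nabla_\gamma\widetilde w=\int_\gamma\nabla_\gamma\ttv\cdot\Pi\,\nabla_\gamma\widetilde w$ (which uses that $\nabla_\gamma\widetilde w=\Pi\nabla_\gamma\widetilde w$) and comparing with the middle identity of \eqref{consistency} yields $\bE\circ\bP_d$ as claimed.

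For \eqref{error-matrix-Gamma} I would proceed symmetrically, but now starting from $\int_\gamma\nabla_\gamma\ttv\cdot\nabla_\gamma\widetilde w$ and using the reverse relation \eqref{e:tang_discrete_to_exact}. Setting $\bM:=\bI-\tfrac{\bnu_\Gamma\otimes\bnu}{\bnu_\Gamma\cdot\bnu}$, the transpose is $\bM^t=\bI-\tfrac{\bnu\otimes\bnu_\Gamma}{\bnu\cdot\bnu_\Gamma}$, so by the symmetry of $(\bI-d\bW)^{-1}$
\[
\nabla_\gamma\ttv\cdot\nabla_\gamma\widetilde w
= \nabla_\Gamma\tv\cdot \bM^t(\bI-d\bW)^{-2}\bM\,\nabla_\Gamma w.
\]
Changing variables with Jacobian $q/q_\Gamma$ rewrites the $\gamma$-integral as a $\Gamma$-integral, and subtracting $\int_\Gamma\nabla_\Gamma\tv\cdot\nabla_\Gamma w=\int_\Gamma\nabla_\Gamma\tv\cdot\Pi_\Gamma\nabla_\Gamma w$ (modulo the sign convention in \eqref{consistency}) identifies $\bE_\Gamma$.

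\textbf{Main obstacle.} The delicate point is bookkeeping: keeping track of where each quantity is evaluated (on $\gamma$ versus on $\Gamma$), and checking the transposition/symmetry of the projection-like matrices $\bM$, $\Pi$, $\Pi_\Gamma$ and of $\bI-d\bW$ so that the rank-one correction $\tfrac{\bnu_\Gamma\otimes\bnu}{\bnu_\Gamma\cdot\bnu}$ lands on the correct side in \eqref{error-matrix-Gamma}. Everything else is a direct algebraic manipulation from Lemma \ref{L:tan-grads} combined with the area-element change of variables.
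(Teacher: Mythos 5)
Your plan matches the paper's own proof exactly: substitute the forward/backward tangential-gradient relations from Lemma \ref{L:tan-grads}, use the symmetry of $\Pi$, $\Pi_\Gamma$, $\bI-d\bW$ together with $\Pi_\Gamma^2=\Pi_\Gamma$, change variables through the common parametric domain via $q_\Gamma/q$ (or $q/q_\Gamma$), and finally subtract the projection piece $\Pi$ (resp.\ $\Pi_\Gamma$). That is precisely how the paper argues; the paper simply records the forward case in full and says the backward case ``proceeds along the same lines.''

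One small caution about the phrase ``modulo the sign convention'': it is hiding a genuine issue rather than resolving one. If you run the subtraction $\int_\Gamma-\int_\gamma$ literally as prescribed by \eqref{consistency}, the backward direction produces $\bE_\Gamma=\Pi_\Gamma-\tfrac{q}{q_\Gamma}\bM^t(\bI-d\bW)^{-2}\bM$, which is the negative of the displayed formula \eqref{error-matrix-Gamma}; by contrast your forward computation gives exactly \eqref{error-matrix-gamma} with the stated sign. Since everything downstream only uses $\|\bE_\Gamma\|_{L_\infty}$ (Corollary \ref{C:lambda-2}, Theorem \ref{t:H1error} Step 2, etc.), the overall sign is immaterial, but you should state the sign you actually obtain rather than wave at ``the convention,'' and you might note that \eqref{error-matrix-G} of Lemma \ref{L:geom_consist} has the same sign as your computation, which suggests \eqref{error-matrix-Gamma} as printed carries a spurious overall minus.
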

\begin{proof}
In view of \eqref{int-gamma}, \eqref{e:tang_exact_to_discrete}, and the fact
that all matrices involved are symmetric and $\Pi_\Gamma^2=\Pi_\Gamma$, we can write
\[
\int_\Gamma \nabla_\Gamma w \cdot \nabla_\Gamma\tv
= \int_\gamma \nabla_\gamma \widetilde{w} \cdot
\Big(\frac{q_\Gamma}{q} \Pi \big(\bI - d\bW \big)
\Pi_\Gamma\big(\bI - d \bW \big) \Pi\Big)
\nabla_\gamma\widetilde\tv
\]
Noticing that $\nabla_\gamma \widetilde{w} = \Pi \nabla_\gamma \widetilde{w}$
the first equality on \eqref{consistency} follows immediately. The second
equality proceeds along the same lines but using \eqref{e:tang_discrete_to_exact}
instead.
\end{proof}  

It is clear from Lemma \ref{L:geom_consist_dist} that the ratio of area
elements $q / q_\Gamma$ matters.
We next derive a representation for $q / q_\Gamma$ for any dimension $n$,
proved originally for $n=2,3$ in \cite{DemlowDziuk:07,De09}.
We stress that, in view of Remark \ref{R:param-indep} (parametric independence),
the solution $u$ of the Laplace-Beltrami equation \eqref{e:weak} is independent
of the parametrization of $\gamma$. This allows us to consider a convenient
parametrization $\bchi$ for theory because it does not change the geometric
objects under consideration. We exploit this flexibility next.

\begin{lemma}[relation between $q$ and $q_\Gamma$]\label{L:area_ratio_distance}
  Given any parametrization $\bchi_\Gamma$ of $\Gamma$, let
  $\bchi := \bP_d \circ \bchi_\Gamma$ be the parametrization of $\gamma$.
  If $\bnu(\bx) \cdot \bnu_\Gamma(\bx)\ge0$ for all $\bx\in\Gamma$,
  then the ratio of area elements $q(\by)/q_\Gamma(\by)$ with
  $\by=\bchi_\Gamma^{-1}(\bx)$ satisfies
\begin{equation}\label{e:area_ratio_distance}
  \frac{q(\by)}{q_\Gamma(\by)}
  = \det\Big(\bI-d(\bx) \bW(\bx)\Big) (\bnu(\bx) \cdot \bnu_\Gamma(\bx))
  \quad\forall \, \bx \in \Gamma.
\end{equation}
\end{lemma}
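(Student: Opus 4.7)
The plan is to compute the ratio $q/q_\Gamma$ directly from the area-element determinant formula \eqref{e:area-rep}, exploiting the composition $\bchi = \bP_d \circ \bchi_\Gamma$. First I would differentiate the composition to obtain $D\bchi = D\bP_d \, D\bchi_\Gamma$ and use $\bP_d(\bx)=\bx-d(\bx)\nabla d(\bx)$ together with \eqref{projection} and \eqref{zero-eig} to get
\[
D\bP_d(\bx) = \Pi(\bx) - d(\bx)\bW(\bx) = \bigl(\bI - d(\bx)\bW(\bx)\bigr)\Pi(\bx),
\]
where the second equality uses $\bW(\bx)=\bW(\bx)\Pi(\bx)$ from \eqref{zero-eig}. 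Thus $D\bchi = (\bI-d\bW)\Pi D\bchi_\Gamma$ on $\Gamma$.

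Next I would apply \eqref{e:area-rep} to write $q = \det[\bnu, D\bchi]$ and $q_\Gamma = \det[\bnu_\Gamma, D\bchi_\Gamma]$, with the orientation condition $\bnu\cdot\bnu_\Gamma\ge 0$ ensuring consistency of signs. Since $\bW\bnu = 0$, we have $(\bI-d\bW)\bnu = \bnu$, which lets me factor out the matrix:
\[
[\bnu, D\bchi] = [(\bI-d\bW)\bnu,\; (\bI-d\bW)\Pi D\bchi_\Gamma] = (\bI-d\bW)\,[\bnu,\,\Pi D\bchi_\Gamma].
\]
Taking determinants gives $q = \det(\bI-d\bW)\,\det[\bnu,\Pi D\bchi_\Gamma]$.

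Then I would remove $\Pi$ using an elementary column operation. Because $\Pi D\bchi_\Gamma = D\bchi_\Gamma - \bnu\,(\bnu^t D\bchi_\Gamma)$, each column of $\Pi D\bchi_\Gamma$ differs from the corresponding column of $D\bchi_\Gamma$ only by a multiple of the first column $\bnu$ in the bordered matrix, so
\[
\det[\bnu,\,\Pi D\bchi_\Gamma] = \det[\bnu,\,D\bchi_\Gamma].
\]
Finally I would decompose $\bnu = (\bnu\cdot\bnu_\Gamma)\bnu_\Gamma + \bt$ with $\bt$ lying in the tangent plane of $\Gamma$, i.e.\ $\bt$ a linear combination of the columns of $D\bchi_\Gamma$. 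By multilinearity of the determinant,
\[
\det[\bnu, D\bchi_\Gamma] = (\bnu\cdot\bnu_\Gamma)\det[\bnu_\Gamma, D\bchi_\Gamma] + \det[\bt, D\bchi_\Gamma] = (\bnu\cdot\bnu_\Gamma)\, q_\Gamma,
\]
since the last determinant vanishes. Combining these three identities yields \eqref{e:area_ratio_distance}.

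The main obstacle is bookkeeping of the projection $\Pi$ and the normal $\bnu$ in the bordered matrix $[\bnu,D\bchi]$: one has to recognize that $\bnu$ is an eigenvector of $\bI-d\bW$ with eigenvalue $1$ (otherwise the factorization step fails) and that the column-cleaning step is legitimate because the first column is exactly $\bnu$. The orientation hypothesis $\bnu\cdot\bnu_\Gamma\ge 0$ is needed only to absorb absolute-value ambiguities so that the signed determinants coincide with the positive area elements on both sides.
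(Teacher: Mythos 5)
Your proof is correct, and it takes a genuinely different and arguably cleaner route than the paper. The paper's proof begins from the quotient $q/q_\Gamma = \det\big([\bnu, D\bchi][\bnu_\Gamma, D\bchi_\Gamma]^{-1}\big)$, explicitly computes the block structure of $[\bnu_\Gamma, D\bchi_\Gamma]^{-1}$ to reduce the expression to $\det(\bI-d\bW)\det\big(\bnu\otimes\bnu_\Gamma + \Pi\Pi_\Gamma\big)$, and then evaluates the remaining determinant by a spectral argument: it introduces a rotation $\bR$ carrying $\bnu$ to $\bnu_\Gamma$ in the $2$-plane $\mathrm{span}\{\bnu,\bnu_\Gamma\}$ and diagonalizes $\bnu\otimes\bnu + \Pi\bR\Pi$. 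Your approach instead works entirely at the level of the bordered matrices $[\bnu, D\bchi]$ and $[\bnu_\Gamma, D\bchi_\Gamma]$, factoring $(\bI-d\bW)$ out on the left using the eigenvector relation $(\bI-d\bW)\bnu = \bnu$, eliminating the projection $\Pi$ by column operations in the augmented determinant, and finishing with the orthogonal decomposition $\bnu = (\bnu\cdot\bnu_\Gamma)\bnu_\Gamma + \bt$ with $\bt$ tangent to $\Gamma$ and multilinearity of the determinant. This avoids both the computation of the inverse $[\bnu_\Gamma, D\bchi_\Gamma]^{-1}$ and the rotation/spectral step, replacing them with elementary determinant manipulations. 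The sign bookkeeping you mention at the end is exactly right: \eqref{e:area-rep} defines $q$ via the parametrization-induced normal, and the hypothesis $\bnu\cdot\bnu_\Gamma\ge 0$ guarantees that this normal agrees with $\nabla d$, which is needed for the factorization $[\bnu,D\bchi]=(\bI-d\bW)[\bnu,\Pi D\bchi_\Gamma]$ to produce the positive area element rather than its negative. Both proofs are valid; yours is shorter and more self-contained since it does not require the reader to recognize the eigenstructure of $\bnu\otimes\bnu+\Pi\bR\Pi$.
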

\begin{proof}
We start with the formula~\eqref{e:area-rep} for the area elements $q$
and $q_\Gamma$ to get
\[
\frac{q}{q_\Gamma} = \det\left(\lbrack \bnu, D\exactparam \rbrack
\,\lbrack \bnu_\Gamma, D\bchi_\Gamma \rbrack^{-1} \right).
\]
We write $ \lbrack \bnu_\Gamma, D\bchi_\Gamma \rbrack^{-1} = \lbrack \bv,\bA \rbrack^t$
for some $\bv \in \mathbb R^{n+1}$ and $\bA\in \mathbb R^{(n+1)\times n}$ to be found.
The identity  $ \lbrack \bv , \bA \rbrack^t  \lbrack \bnu_\Gamma, D\bchi_\Gamma \rbrack
= \bI$ yields
$
\bv = \bnu_\Gamma
$
while $   \lbrack \bnu_\Gamma, D\bchi_\Gamma \rbrack \lbrack \bv, \bA \rbrack^t= \bI$ gives
$
D\bchi_\Gamma \bA^t = \bI - \bnu_\Gamma \otimes \bnu_\Gamma = \Pi_\Gamma
$ and
\[
\lbrack \bnu, D\exactparam \rbrack
\,\lbrack \bnu_\Gamma, D\bchi_\Gamma \rbrack^{-1}
=\bnu\otimes\bnu_\Gamma + D\bchi \, \bA^t.
\]
To obtain an expression for $D\bchi$, let $\bx = \bchi_\Gamma(\by)\in\Gamma$ and
$\bchi(\by) = \bP_d(\bx) = \bx - d(\bx)\nabla d(\bx) \in\gamma$, and utilize
the chain rule
\[
D\bchi(\by) = \big(\bI - d(\bx)\bW(\bx) \big) \, \Pi(\bx) \, D\bchi_\Gamma(\by)
\quad\forall \, \by\in\mathcal{V},
\]
where we have argued as in \eqref{e:rel_dist_grad}. Compute now $D\bchi\bA^t$
and use that $D\bchi_\Gamma\bA^t=\Pi_\Gamma$ together with $\bW \bnu =0$ to arrive at
\begin{align*}
\frac{q}{q_\Gamma} &= \det \big( \bnu\otimes\bnu_\Gamma
+ (\bI - d\bW) \, \Pi \, \Pi_\Gamma  \big) 
\\
& = \det\big( (\bI-d\bW) \big(\bnu \otimes \bnu_\Gamma + \Pi \, \Pi_\Gamma \big) \big)
= \det\big( (\bI-d\bW) \big) \, \det\bB.
\end{align*}
where $\bB := \bnu \otimes \bnu_\Gamma + \Pi \,\Pi_\Gamma$. It thus remains to show that
$\det \bB = \bnu \cdot \bnu_\Gamma$.

We now embark on a spectral analysis of $\bB$.
We first note that the statement is trivial if $\bnu=\bnu_\Gamma$. We thus
assume that $\{\bnu,\bnu_\Gamma\}$ are linearly independent and that the
space $\mathbb{X} = \textrm{span} \{\bnu,\bnu_\Gamma\}$
is generated by two orthonormal vectors $\bnu$ and $\be$. 
We consider the orthogonal decomposition
$\mathbb{R}^{n+1} = \mathbb{X} \oplus \mathbb{X}^\perp$ and
a rotation $\bR\in\mathbb{R}^{(n+1)\times(n+1)}$ on $\mathbb{X}$
that maps $\bnu$ into $\bnu_\Gamma$, namely
\[
\bR \bnu = \bnu_\Gamma = \cos \theta \, \bnu + \sin\theta \, \be,
\quad
\bR \be = -\sin\theta \, \bnu + \cos \theta \, \be;
\]
thus the rotation angle $\theta$ satisfies $\cos\theta=\bnu \cdot \bnu_\Gamma$
and $\det\bR=1$. Consequently,
\[
\bB = \big( \bnu\otimes\bnu + \Pi \, \bR \, \Pi  \big) \bR^t
\quad\Rightarrow\quad
\det \bB = \det \big( \bnu\otimes\bnu + \Pi \, \bR \, \Pi  \big).
\]
The proof concludes upon realizing that $\bnu$ and $\be$ are
eigenvectors of $\bnu\otimes\bnu + \Pi \, \bR \, \Pi$ 
with eigenvalues $1$ and $\cos\theta$, and the remaining eigenvalues are $1$
with eigenspace $\mathbb{X}^\perp$.
\end{proof}

We are now ready to compare solutions $u$ and $u_\Gamma$
of \eqref{e:weak_relax} on two nearby surfaces $\gamma$ and $\Gamma$.
In essence, weak solutions $u$ and $u_\Gamma$ are close in $H^1$ provided
$\gamma$ and $\Gamma$ are close in a Lipschitz sense.
Therefore, to make this statement quantitative we
introduce the following geometric quantities
\begin{equation}\label{geom-quantities}
d_\infty := \|d\|_{L_\infty(\Gamma)},
\quad
\nu_\infty := \|\bnu-\bnu_\Gamma\|_{L_\infty(\Gamma)},
\quad
K_\infty := \|K\|_{L_\infty(\gamma)},
\end{equation}
where $\Gamma\subset\mathcal{N}$ is a Lipschitz surface.
Our goal is to bound $\|u-u_\Gamma\|_{H_\#^1(\Gamma)}$ in terms of the forcing functions
$f,f_\Gamma$, and $d_\infty, \nu_\infty, K_\infty$ in \eqref{geom-quantities}.
\begin{lemma}[perturbation error estimate for $C^2$ surfaces]\label{L:perturbation_bound_dist}
Let $u$ solve \eqref{e:weak_relax} and $u_\Gamma$ solve \eqref{Gamma:LBproblem}
with $\Gamma \subset \mathcal N$.
Let $\bchi_\Gamma$ and $\bchi := \bP_d \circ \bchi_\Gamma$ be the parametrizations
of $\Gamma$ and $\gamma$ that give rise to the area elements $q_\Gamma$ and $q$.
If the normal vectors satisfy  $\bnu\cdot\bnu_\Gamma \ge c >0$, then
\begin{equation}\label{perturbation_bound_dist}
  \|\nabla_\gamma(u-u_\Gamma)\|_{L_2(\gamma)} \lesssim
  \big(d_\infty K_\infty+\nu_\infty^2\big) \|f_\Gamma\|_{H^{-1}_\#(\Gamma)}
  + \|f q q_\Gamma^{-1}-f_\Gamma\|_{H^{-1}_\#(\Gamma)}.
\end{equation}
\end{lemma}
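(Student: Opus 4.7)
The plan is to mirror the three-step structure of the proof of Lemma \ref{L:perturbation_bound} (perturbation error estimate for $C^{1,\alpha}$ surfaces), now exploiting the distance-based parametrization $\bchi = \bP_d \circ \bchi_\Gamma$ and the refined representation \eqref{error-matrix-gamma} of $\bE$ supplied by Lemma \ref{L:geom_consist_dist} (geometric consistency). This will deliver the superconvergent bound $\|\bE\|_{L_\infty(\gamma)} \lesssim d_\infty K_\infty + \nu_\infty^2$ in place of the cruder $\lambda_\infty$ bound \eqref{bound-E} available in the Lipschitz setting.

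Setting $\widetilde v = \widetilde u - \widetilde u_\Gamma$ and combining \eqref{consistency} with \eqref{e:weak_relax} and \eqref{Gamma:LBproblem} yields the same error identity as in the $C^{1,\alpha}$ proof,
\[
\|\nabla_\gamma(\widetilde u - \widetilde u_\Gamma)\|_{L_2(\gamma)}^2 = \int_\Gamma \Big(f\tfrac{q}{q_\Gamma} - f_\Gamma\Big)\, v + \int_\gamma \nabla_\gamma \widetilde u_\Gamma \cdot \bE\, \nabla_\gamma \widetilde v.
\]
The heart of the argument is then bounding $\bE$. Writing $M := \Pi(\bI - d\bW)\Pi_\Gamma(\bI - d\bW)\Pi$, I split
\[
\bE \circ \bP_d = \Big(\tfrac{q_\Gamma}{q}-1\Big) M \;+\; \Pi\big[(\bI - d\bW)\Pi_\Gamma(\bI - d\bW) - \Pi_\Gamma\big]\Pi \;+\; (\Pi\Pi_\Gamma\Pi - \Pi).
\]
Lemma \ref{L:area_ratio_distance} gives $q/q_\Gamma = \det(\bI - d\bW)(\bnu\cdot\bnu_\Gamma)$, so expanding the determinant and using $\bnu\cdot\bnu_\Gamma = 1 - \tfrac{1}{2}|\bnu-\bnu_\Gamma|^2$ yields $|q_\Gamma/q - 1| \lesssim d_\infty K_\infty + \nu_\infty^2$; the hypothesis $\bnu\cdot\bnu_\Gamma \ge c > 0$ together with $d_\infty K_\infty \le 1/2$ ensures that $q/q_\Gamma$ and its inverse are uniformly bounded, hence also $|M| \lesssim 1$. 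The middle bracket expands to $-d(\bW\Pi_\Gamma + \Pi_\Gamma\bW) + d^2 \bW\Pi_\Gamma\bW$ and is therefore $O(d_\infty K_\infty)$. For the last term I would use the key identity $\Pi\Pi_\Gamma\Pi - \Pi = -(\Pi\bnu_\Gamma)\otimes(\Pi\bnu_\Gamma)$ together with $|\Pi\bnu_\Gamma|^2 = 1-(\bnu\cdot\bnu_\Gamma)^2 \lesssim \nu_\infty^2$, which supplies the desired quadratic dependence.

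The remaining step is routine: Cauchy--Schwarz combined with $\|\nabla_\Gamma u_\Gamma\|_{L_2(\Gamma)} \le \|f_\Gamma\|_{H^{-1}_\#(\Gamma)}$ (obtained from \eqref{dual-norm} applied to \eqref{Gamma:LBproblem}) controls the geometric-error integral, while the forcing integral is handled by subtracting the mean of $v$ and invoking \eqref{dual-norm} on $\Gamma$. Lemma \ref{L:norm-equiv} (norm equivalence) is then used to transport the $\nabla_\gamma$-norms back to $\Gamma$; its stability hypothesis is met because $\bnu\cdot\bnu_\Gamma \ge c$ and $d_\infty K_\infty \le 1/2$ guarantee a uniform stability constant $S_\bchi$ for $\bchi = \bP_d\circ\bchi_\Gamma$.

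The main obstacle is obtaining the \emph{quadratic} $\nu_\infty^2$ dependence in the estimate of $\bE$: a direct triangle inequality on $\Pi - \Pi_\Gamma$ gives only $O(\nu_\infty)$. It is the symmetric sandwich $\Pi\Pi_\Gamma\Pi - \Pi$ produced by the distance-based parametrization, combined with the factorization $1-(\bnu\cdot\bnu_\Gamma)^2 = (1-\bnu\cdot\bnu_\Gamma)(1+\bnu\cdot\bnu_\Gamma)$, that forces cancellation of the linear term. This identity is the algebraic source of the superconvergence phenomenon highlighted in the introduction and is precisely what makes the distance-function lift $\bP_d$ indispensable for this refined estimate.
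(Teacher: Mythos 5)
Your proposal is correct and follows essentially the same route as the paper: the same three-term decomposition of $\bE\circ\bP_d$, the same use of Lemma~\ref{L:area_ratio_distance} for the area-ratio factor, and the same rank-one identity $\Pi\Pi_\Gamma\Pi - \Pi = -(\Pi\bnu_\Gamma)\otimes(\Pi\bnu_\Gamma)$ for the quadratic $\nu_\infty^2$ term, with Steps~1 and~3 imported unchanged from Lemma~\ref{L:perturbation_bound}. The only cosmetic difference is that you bound $|\Pi\bnu_\Gamma|^2$ via $1-(\bnu\cdot\bnu_\Gamma)^2$ rather than the paper's expansion $\Pi\bnu_\Gamma=(\bnu_\Gamma-\bnu)+(1-\bnu\cdot\bnu_\Gamma)\bnu$, which is a minor (and arguably cleaner) algebraic variant of the same idea.
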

\begin{proof}
We proceed along the lines of Lemma \ref{L:perturbation_bound}
(perturbation error estimate for $C^{1,\alpha}$ surfaces) and realize that
Steps 1 and 3 are exactly the same. Therefore, we only deal with the estimate of
the geometric error matrix $\bE$. If we prove
\begin{equation}\label{est-E}
\|\bE \|_{L_\infty(\gamma)} \lesssim \nu_\infty^2 + d_\infty K_\infty\, ,
\end{equation}
then the assertion will readily follow. We first write
$\bE \circ \bP_d = \bI_1 + \bI_2 + \bI_3$ with
\begin{align*}
\bI_1 & := \Big(\frac{q_\Gamma}{q}-1\Big)
\Pi \, (\bI-d\bW) \, \Pi_\Gamma \, (\bI-d\bW) \, \Pi,
\\
\bI_2 & := \Big(\Pi \, (\bI-d\bW) \, \Pi_\Gamma \, (\bI-d\bW) \, \Pi
- \Pi \, \Pi_\Gamma \, \Pi\Big),
\\
\bI_3 & := \Big(\Pi \, \Pi_\Gamma \, \Pi - \Pi \Big).
\end{align*}
We now estimate these three terms separately. In view of \eqref{e:area_ratio_distance}
we deduce
\[
\frac{q(\by)}{q_\Gamma(\by)} -1 = \Big((\bnu(\bx)\cdot\bnu_\Gamma(\bx)-1) \, \prod_{i=1}^n \big(1-d(\bx)\kappa_i(\bx) \big) \Big)
+ \Big(\prod_{i=1}^n \big(1-d(\bx)\kappa_i(\bx) \big) - 1\Big),
\]
where $\bx = \bchi_\Gamma(\by)\in \Gamma$.
Since $1-\bnu\cdot\bnu_\Gamma = \frac12 |\bnu-\bnu_\Gamma|^2 \le \frac12 \nu_\infty^2$
and $\Gamma \subset \mathcal N$, we readily obtain
\begin{equation}\label{measure_error}
  \Big|\frac{q(\by)}{q_\Gamma(\by)}-1\Big| \lesssim \nu_\infty^2 + d_\infty K_\infty
  \quad\forall \, \by\in\mathcal{V},
\end{equation}
and a similar bound for $\frac{q_\Gamma}{q}$
because $\frac{q_\Gamma}{q}$ is bounded in $\mathcal{V}$ thanks to the
assumption $\bnu \cdot \bnu_\Gamma \geq c >0$.
The desired estimate for $\|\bI_1\|_{L_\infty(\gamma)}$ follows from the fact that
$\Pi, \Pi_\Gamma$ and $\bW$ are bounded. This property again, now combined with
\[
\bI_2 = - \Pi \, \Pi_\Gamma \, d\bW \, \Pi - \Pi \,d\bW \, \Pi_\Gamma \, \Pi
+ \Pi \,d\bW \, \Pi_\Gamma \,d\bW \, \Pi,
\]
yields $\|\bI_2\|_{L_\infty(\gamma)} \lesssim d_\infty K_\infty$. Finally, term $\bI_3$ reads
\[
\bI_3 = - \Pi\bnu_\Gamma \otimes \Pi\bnu_\Gamma =
-\big(\bnu_\Gamma-(\bnu \cdot \bnu_\Gamma) \bnu\big) \otimes
\big(\bnu_\Gamma- (\bnu \cdot \bnu_\Gamma) \bnu \big)
\]
Since $\bnu_\Gamma- (\bnu \cdot \bnu_\Gamma) \bnu = (\bnu_\Gamma-\bnu) + (1-\bnu\cdot\bnu_\Gamma) \bnu$ we infer that $\|\bI_3\|_{L_\infty(\gamma)} \lesssim \nu_\infty^2$.
This ends the proof.
\end{proof} 

It is worth comparing Lemmas \ref{L:perturbation_bound} and
\ref{L:perturbation_bound_dist} (perturbation error estimates). To do so, we next give an estimate for
$\nu_\infty$ in terms of $\lambda_\infty$.

\begin{lemma}[error estimate for normals]\label{L:est-normals}
The errors $\nu_\infty$ and $\lambda_\infty$ defined in \eqref{geom-quantities}
and \eqref{geo-est} satisfy
\begin{equation}\label{est-normals}
\nu_\infty \lesssim \lambda_\infty,
\end{equation}  
where the hidden constant depends on $S_\bchi$ defined in \eqref{stab-const}.
\end{lemma}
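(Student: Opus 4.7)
The plan is to bound $|\bnu - \bnu_\Gamma|$ pointwise in $\mathcal{V}$ via the explicit representation of the unit normal established in Section \ref{S:repres-surface}, and then take the supremum. Recall that $\bN = \sum_{j=1}^{n+1} A_j \be_j$ with $A_j = \det(\be_j, D\bchi)$, so $\bnu = \bN/|\bN|$, and analogously for $\bN_\Gamma$, $\bnu_\Gamma$; moreover \eqref{q-N} and \eqref{q:nondegen} give $|\bN| = q \ge |D^-\bchi|^n$ (and likewise for $\bN_\Gamma$).

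First I would estimate $|\bN - \bN_\Gamma|$. The scalars $A_j$ and $A_{\Gamma,j}$ are determinants of $(n{+}1)\times(n{+}1)$ matrices that share the first column $\be_j$ and differ only in the last $n$ columns $\{\partial_k \bchi\}$ versus $\{\partial_k \bchi_\Gamma\}$. Multilinearity of the determinant in those $n$ columns yields the telescoping identity
\[
A_j - A_{\Gamma,j} = \sum_{k=1}^n \det\bigl( \be_j,\partial_1\bchi,\ldots,\partial_{k-1}\bchi,\,\partial_k(\bchi-\bchi_\Gamma),\,\partial_{k+1}\bchi_\Gamma,\ldots,\partial_n\bchi_\Gamma \bigr).
\]
Since each of these $(n{+}1)\times(n{+}1)$ determinants is controlled by the product of the column norms, summing over $j$ gives
\[
|\bN - \bN_\Gamma| \lesssim |D(\bchi-\bchi_\Gamma)| \; \max\bigl\{|D\bchi|,|D\bchi_\Gamma|\bigr\}^{n-1},
\]
with an implicit constant depending only on $n$.

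Next I would convert this to a bound on the normalized vectors using
\[
\bnu - \bnu_\Gamma = \frac{\bN - \bN_\Gamma}{|\bN|} + \bN_\Gamma\Bigl(\frac{1}{|\bN|}-\frac{1}{|\bN_\Gamma|}\Bigr),
\]
combined with the elementary inequality $\bigl||\bN|-|\bN_\Gamma|\bigr| \le |\bN - \bN_\Gamma|$, which yields $|\bnu - \bnu_\Gamma| \le 2|\bN-\bN_\Gamma|/|\bN|$. Applying $|\bN|\ge |D^-\bchi|^n$ and the previous step gives
\[
|\bnu - \bnu_\Gamma| \lesssim \frac{|D(\bchi-\bchi_\Gamma)|}{|D^-\bchi|}\left(\frac{\max\{|D\bchi|,|D\bchi_\Gamma|\}}{|D^-\bchi|}\right)^{n-1}.
\]
By symmetry of roles of $\bchi$ and $\bchi_\Gamma$, the same bound holds with $|D^-\bchi|$ replaced by $|D^-\bchi_\Gamma|$ in the denominators, hence by $\min\{|D^-\bchi|,|D^-\bchi_\Gamma|\}$. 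The first factor is then bounded by $\lambda_\infty$ and the second by $S_\bchi^{\,n-1}$, yielding \eqref{est-normals} upon taking the supremum.

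I do not expect a genuine obstacle here: the main ideas (multilinear expansion of $\bN-\bN_\Gamma$ and the non-degeneracy bound $|\bN|\ge |D^-\bchi|^n$) are already in place. The only point requiring a little care is the symmetric treatment of $\bchi$ and $\bchi_\Gamma$ so that the denominator matches the $\min$ appearing in the definitions \eqref{stab-const} and \eqref{geo-est} of $S_\bchi$ and $\lambda_\infty$.
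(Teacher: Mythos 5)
Your argument is correct and follows essentially the same route as the paper: the decomposition $\bnu-\bnu_\Gamma = \frac{\bN-\bN_\Gamma}{|\bN|}+\frac{|\bN_\Gamma|-|\bN|}{|\bN|}\frac{\bN_\Gamma}{|\bN_\Gamma|}$ with the resulting bound $|\bnu-\bnu_\Gamma|\le 2|\bN-\bN_\Gamma|/|\bN|$, the multilinear (telescoping) expansion of $A_j-A_{\Gamma,j}$ yielding $|\bN-\bN_\Gamma|\lesssim |D(\bchi-\bchi_\Gamma)|\max\{|D\bchi|,|D\bchi_\Gamma|\}^{n-1}$, and the non-degeneracy bound $|\bN|=q\gtrsim |D^-\bchi|^n$ from \eqref{q-N} and \eqref{q:nondegen}. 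One small remark on the final step: no symmetry argument is actually needed (and the way you phrased it is slightly off). Since $\min\{|D^-\bchi|,|D^-\bchi_\Gamma|\}\le |D^-\bchi|$, replacing $|D^-\bchi|$ in your denominators by the minimum only \emph{enlarges} the right-hand side, so the bound $\lesssim S_\bchi^{\,n-1}\lambda_\infty$ follows immediately from the one-sided estimate you derived, without appealing to the symmetric roles of $\bchi$ and $\bchi_\Gamma$.
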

\begin{proof}
In view of the definition \eqref{unit-normal} of $\bnu$, we realize that
\[
\bnu-\bnu_\Gamma = \frac{\bN-\bN_\Gamma}{|\bN|}
+ \frac{|\bN_\Gamma|-|\bN|}{|\bN|} \frac{\bN_\Gamma}{|\bN_\Gamma|}
\quad\Rightarrow\quad
|\bnu-\bnu_\Gamma| \le 2 \frac{|\bN-\bN_\Gamma|}{|\bN|}.
\]
Since $\bN = \sum_{i=1}^{n+1} \det([\be_i,D\bchi]) \be_i$ and
$\det([\be_i,D\bchi]) - \det([\be_i,D\bchi_\Gamma])$ is a sum of products of
$\partial_j (\bchi-\bchi_\Gamma)\cdot\be_k$ with $k\ne i$ times $n-1$ factors
$\partial_\ell\bchi_m$, we have
\[
\big| \det([\be_i,D\bchi]) - \det([\be_i,D\bchi_\Gamma])  \big|
\lesssim |D(\bchi-\bchi_\Gamma)| \, |D\bchi|^{n-1}.
\]
We finally resort to $|\bN|=q$, proved in \eqref{q-N}, as well as $q\approx|D\bchi|^n$,
showed in the proof of Lemma \ref{L:error-est}, to conclude \eqref{est-normals}.
\end{proof}

We now stress the advantage of using the distance function lift $\bP_d$ to
represent the error $u-u_\gamma$ whenever the surface $\gamma$ is of class $C^2$.
Comparing \eqref{perturbation_bound} and
\eqref{perturbation_bound_dist} we see that the geometric error becomes of order $\|d\|_{L_\infty(\Gamma)}$ plus a quadratic term in $\lambda_\infty$ rather than linear. In the context of finite element methods, $\Gamma$ is often a polyhedral approximation to $\gamma$ having faces of diameter $h$. In this case $\|d\|_{L_\infty(\Gamma)}$ essentially becomes a Lagrange interpolation error measured in $L_\infty$ and $\lambda_\infty$ a Lagrange interpolation error measured in $W_\infty^1$.  The former error is of order $h^2$ and the latter of order $h$. Consequently, the perturbation error for $C^2$ surfaces is of order $h^2$, whereas for $C^{1,\alpha}$ surfaces with $\alpha<1$ it is of order $h^{\alpha}$ from the analysis of the previous subsection.  The increased approximation order for $C^2$ surfaces is a {\it superconvergence} effect.  We also recall from Theorem \ref{t:C1_implies_C2} ($C^1$ distance function implies $C^{1,1}$ surfaces) that the elegant properties of the distance function and closest point projection that lead to this superconvergence effect are not available when $\gamma$ is not of class $C^2$, thus the necessity of developing a separate perturbation theory for less regular surfaces as in the previous subsection.  It is not clear whether the order of the perturbation error actually jumps in this manner when crossing from $C^{1,\alpha}$ to $C^2$ surfaces, or if this jump is an artifact of proof.

\subsection{$H^2$ extensions from $C^2$ surfaces}
\label{S:apriori-C1a}

The analysis of the trace and narrow band methods that we carry out in later sections requires us to extend the solution $\wu\in H^2(\gamma)$
of \eqref{e:weak_relax} to tubular neighborhoods 
$
\mathcal N(\delta)
$
with the property
\begin{equation}\label{H2-extension}
\|u\|_{H^2({\mathcal N(\delta)})} \lesssim \delta^{\frac12} \|\wu\|_{H^2(\gamma)};
\end{equation}
we recall that $\Nd$ is defined in \eqref{e:delta-tube}.
The distance function lift $\bP_d$ provides a natural way to achieve this upon
setting $u=\wu\circ\bP_d$, namely
\[
u(\bx) = \wu \big(\bx - d(\bx) \nabla d(\bx) \big)
\quad\forall \, \bx\in{\mathcal N(\delta)}.
\]
However, this is problematic because it requires $\bP_d$ to be of class $C^2$,
and thus $\gamma$ of
class $C^3$, for \eqref{H2-extension} to hold. We now construct an extension 
that satisfies \eqref{H2-extension} for $\gamma$ of class $C^2$.   Our approach employs a regularization $\de$ of the signed distance function $d$ and construction of a regularized surface $\gae$ close to $\gamma$, with the regularization parameter $\varepsilon$ appropriately related to the desired value of $\delta$ above.  We begin by describing properties of this regularization.  

\medskip\noindent
{\bf Regularization.}
Recall that given $\gamma$ of class $C^{2}$ there exists a sufficiently thin tubular
neighborhood $\mathcal{N}$ so that the signed distance
function $d$ to $\gamma$ satisfies $d \in C^2(\mathcal{N})$.
Let $\delta>0$ and $\varepsilon = c \delta \le \frac{\delta}{2}$ be sufficiently small so that the
tubular neighborhood $\mathcal{N}(\delta)$ of width $\delta$ satisfies the property
\[
\mathcal{N}(\delta+2\varepsilon) \subset  \mathcal{N}.
\]
Let $\Be:=B(0,\varepsilon)$ be the ball of center $0$ and radius $\varepsilon$, $\re$ be a smooth and radially symmetric mollifier with support in
$\Be$
and
\[
\de(\bx) := d \star \re(\bx) = \int_\Be d(\bx-\by) \re(\by) d\by
\quad\forall \, \bx\in{\mathcal N(\delta)}
\]
be a regularized distance function. This function $\de$ induces the smooth
surface
\[
\gae := \big\{\bx\in\mathcal{N}: \quad \de(\bx) = 0 \big\},
\]
but is not the signed distance function to $\gae$, which we denote $\wde$.
The following properties are immediate from the previous definitions.

\begin{lemma}[properties of $\de$]\label{L:properties-de}
If $d\in C^2(\overline{\mathcal{N}})$, then
$\de$ satisfies
\[
\| d - \de \|_{L_\infty({\mathcal N(\delta)})}
+ \varepsilon \| \nabla(d - \de) \|_{L_\infty({\mathcal N(\delta)})}
\lesssim \varepsilon^{2} |d|_{W_\infty^2 (\overline{\mathcal{N}})}
\]
and $\|D^2 \de\|_{L_\infty({\mathcal N(\delta)})} \lesssim
 |d|_{W_\infty^2(\overline{\mathcal{N}})}$.
Moreover, the surface $\gae$ is smooth and the Hausdorff distance $\dist_H(\gamma,\gae)$
between $\gamma$ and $\gae$ satisfies
\[
\dist_H(\gamma,\gae) \lesssim \varepsilon^{2}
|d|_{W_\infty^2(\overline{\mathcal{N}})}
\]
provided $\varepsilon$ is small enough so that $C\varepsilon
|d|_{W_\infty^2(\overline{\mathcal{N}})} \le \frac12$ for a suitable constant $C$.
\end{lemma}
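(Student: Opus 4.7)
The plan is to attack the three claims in order, with the first two being direct mollification estimates and the third requiring an implicit-function-type argument built on the first.

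First, I would estimate $d - d_\varepsilon$ pointwise. Since $d \in C^2(\overline{\mathcal{N}})$ and $\mathcal{N}(\delta + 2\varepsilon) \subset \mathcal{N}$, for any $\bx \in \mathcal{N}(\delta)$ and $\by \in B_\varepsilon$ the point $\bx - \by$ lies in $\mathcal{N}$, so Taylor's theorem gives
\[
d(\bx - \by) = d(\bx) - \nabla d(\bx) \cdot \by + R(\bx,\by), \qquad |R(\bx,\by)| \lesssim |\by|^2 \, |d|_{W_\infty^2(\overline{\mathcal N})}.
\]
Multiplying by $\rho_\varepsilon(\by)$, integrating, and using that $\rho_\varepsilon$ is radially symmetric (so the linear term integrates to zero) yields $|d(\bx) - d_\varepsilon(\bx)| \lesssim \varepsilon^2 |d|_{W_\infty^2(\overline{\mathcal N})}$. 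For the gradient, I use $\nabla d_\varepsilon = (\nabla d) \star \rho_\varepsilon$ and write
\[
\nabla d(\bx) - \nabla d_\varepsilon(\bx) = \int_{B_\varepsilon} \bigl(\nabla d(\bx) - \nabla d(\bx-\by)\bigr) \rho_\varepsilon(\by) d\by,
\]
and estimate the integrand by the Lipschitz constant $|d|_{W_\infty^2(\overline{\mathcal N})}$ times $|\by|\le\varepsilon$, giving the required bound with the extra factor $\varepsilon$. The Hessian bound follows immediately from $D^2 d_\varepsilon = (D^2 d) \star \rho_\varepsilon$ together with Young's inequality and $\|\rho_\varepsilon\|_{L_1} = 1$.

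Next, smoothness of $\gamma_\varepsilon$. Since $d$ is locally integrable and $\rho_\varepsilon \in C^\infty$, the convolution $d_\varepsilon$ is $C^\infty$. By the first bound (with the $\varepsilon$-weighted gradient estimate), for $\varepsilon$ small enough to make $C\varepsilon |d|_{W_\infty^2(\overline{\mathcal N})} \le \tfrac12$, we have $|\nabla d_\varepsilon - \nabla d| \le \tfrac12$ on $\mathcal{N}(\delta)$, and since $|\nabla d| = 1$ this forces $|\nabla d_\varepsilon| \ge \tfrac12$ on $\mathcal{N}(\delta)$. The implicit function theorem applied at every point of $\gamma_\varepsilon$ (where $d_\varepsilon = 0$ and $\nabla d_\varepsilon \neq 0$) shows $\gamma_\varepsilon$ is a smooth embedded hypersurface.

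Finally, I treat the Hausdorff distance by a two-sided argument. The easy direction is: for $\bx \in \gamma_\varepsilon$, $|d(\bx)| = |d(\bx) - d_\varepsilon(\bx)| \lesssim \varepsilon^2 |d|_{W_\infty^2(\overline{\mathcal N})}$, so $\mathrm{dist}(\bx,\gamma) \le |d(\bx)| \lesssim \varepsilon^2 |d|_{W_\infty^2(\overline{\mathcal N})}$. The harder direction — which I expect to be the main obstacle — is to show that for every $\bx \in \gamma$ there exists $\by \in \gamma_\varepsilon$ with $|\bx - \by| \lesssim \varepsilon^2 |d|_{W_\infty^2(\overline{\mathcal N})}$. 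My plan is to consider the one-dimensional function $\phi(t) := d_\varepsilon(\bx + t\nabla d_\varepsilon(\bx))$, observe $|\phi(0)| = |d_\varepsilon(\bx) - d(\bx)| \lesssim \varepsilon^2 |d|_{W_\infty^2(\overline{\mathcal N})}$, and $\phi'(t) = \nabla d_\varepsilon(\bx + t\nabla d_\varepsilon(\bx)) \cdot \nabla d_\varepsilon(\bx) \ge \tfrac14$ for $|t|$ small (using $|\nabla d_\varepsilon| \ge \tfrac12$ shown above). An intermediate value / mean value argument then produces a zero of $\phi$ within distance $\lesssim \varepsilon^2 |d|_{W_\infty^2(\overline{\mathcal N})}$ of the origin, and the corresponding point $\by \in \gamma_\varepsilon$ satisfies $|\bx - \by| \lesssim \varepsilon^2 |d|_{W_\infty^2(\overline{\mathcal N})}$. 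Combining the two directions yields the claimed bound on $\mathrm{dist}_H(\gamma, \gamma_\varepsilon)$.
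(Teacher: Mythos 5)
Your proof is correct and follows essentially the same strategy as the paper: a Taylor expansion exploiting radial symmetry of $\rho_\varepsilon$ for the zeroth- and first-order mollification bounds, the implicit function theorem with nondegeneracy of $\nabla d_\varepsilon$ for smoothness of $\gamma_\varepsilon$, and a two-sided argument with an intermediate-value step for the Hausdorff distance. The only cosmetic differences are that you bound $D^2 d_\varepsilon$ directly as $(D^2 d)\star\rho_\varepsilon$ with Young's inequality rather than integrating by parts onto $\nabla\rho_\varepsilon$ as the paper does, and in the IVT step you move along $\nabla d_\varepsilon(\bx)$ and invoke a lower bound on $\phi'$ (which, to be fully precise, also uses the Lipschitz continuity of $\nabla d_\varepsilon$ supplied by the $D^2 d_\varepsilon$ bound, not just $|\nabla d_\varepsilon|\ge\tfrac12$) rather than moving along $\nabla d(\bx)$ and exploiting the exact endpoint values $d(\by(\pm\varepsilon))=\pm\varepsilon$; both variants work.
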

\begin{proof}
Since $\re$ is radially symmetric, we have that
\[
\big( d - \de  \big)(\bx) = \int_{\Be} \Big( d(\bx) - \nabla d(\bx) \cdot \by
- d(\bx-\by) \Big) \re(\by) d \by
\]
and
\[
\nabla \big( d - \de  \big)(\bx) =
\int_{\Be} \Big( \nabla d(\bx) - \nabla d(\bx-\by) \Big) \re(\by) d \by.
\]
These relationships imply the first assertion upon employing a Taylor expansion of $d$ and the Lipschitz nature of $\nabla d$, respectively. We also note that 
\[
D^2 \de(\bx) = \int_{\Be} \nabla d(\bx-\by) \otimes \nabla\re(\by) d\by
= \int_{\Be} \nabla \Big( d(\bx-\by) - d(\bx) \Big) \otimes \nabla\re(\by) d\by
\]
because $\int_{\Be} \nabla\re(\by) d\by = 0$ in view of the radial symmetry of $\re$. The second relationship bounding $D^2 \de$ then follows from the Lipschitz nature of $\nabla d$ (i.e. $|\nabla (d(\bx-\by)-d(\bx))| \lesssim \varepsilon$, $\by \in B_\varepsilon$) and the standard property $\|\nabla \rho_\varepsilon\|_{L_1(B_\varepsilon)} \lesssim \varepsilon^{-1}$ of the mollifier. 

To establish the smoothness of $\gamma_\varepsilon$, note that the closeness of $\nabla d$ and $\nabla \de$ implies that $\nabla \de$ is nondegenerate for $C \varepsilon |d|_{W_\infty^2(\overline{\mathcal{N}})} \le 1/2$.  The smoothness of $\gamma_\varepsilon$ then follows from the implicit function theorem.

The last assertion is a consequence of the nondegeneracy
of the distance function: Given $\by\in\gae$ let
$\bx = \bP_d(\by) =\by - d(\by) \nabla d(\by)\in\gamma$
be the closest point to $\by$ and note that
\[
|\by-\bx| = |d(\by)| = |d(\by)-\de(\by)| \lesssim \varepsilon^2 |d|_{W_\infty^2(\overline{\mathcal{N}})}. 
\]
Likewise, given $\bx\in\gamma$ let $\by(s) = \bx + s \nabla d(\bx)$.  There is $s \in (-\varepsilon, \varepsilon)$ such that $\de(\by(s))=0$.  To see this,
note that $d(\by(s))=\pm \varepsilon$ for $s=\pm\varepsilon$ and
\[
\de(\by(\varepsilon)) \ge d(\by(\varepsilon)) - C\varepsilon^{2} |d|_{W_\infty^2(\overline{\mathcal{N}})}
= \varepsilon \big( 1 - \varepsilon |d|_{W_\infty^2(\overline{\mathcal{N}})} \big) > 0
\]
provided $C\varepsilon |d|_{W_\infty^2(\overline{\mathcal{N}})}\le\frac12$; similarly
$\de(\by(-\varepsilon))<0$. Letting $\by=\by(s)$ be such that $\de(\by(s))=0$, we note that $\bx = \bP_d(\by)$, and so arguing as before we have that 
\[ |\by -\bx|= |d(\by)|=|d(\by)-\de(\by)| \lesssim \varepsilon^2 |d|_{W_\infty^2(\overline{\mathcal{N}})},
\]
which concludes the proof.
\end{proof}

We recall that 
$\wde$ is the signed distance function to the zero level set $\gae$ of $\de$.
Consider the $C^\infty$ lift
\begin{equation}\label{eps-lift}
\bPe(\bx) := \bx - \wde(\bx) \nabla \wde(\bx)
\quad\forall \, \bx\in{\mathcal N(\delta)}.
\end{equation}
It is natural and useful for later considerations to compare tubular neighborhoods
dictated by $d$ and $\wde$. Let
\[
{\mathcal{N}}_\varepsilon({\delta_\varepsilon}) :=
\{\bx\in\mathbb{R}^{n+1}:  |\wde(\bx)| \le {\delta_\varepsilon} \},
\]
where we choose $\delta_\varepsilon$ as follows depending
on $\delta$ and $\varepsilon$.
Given $\bx\in{\mathcal N(\delta)}$ let $\wx\in\gamma$ be the point at shortest distance,
whence $|\bx-\wx|\le\delta$, and let $\bxe\in\gae$ be a point such that
$|\wx-\bxe|\le C |d|_{W^2_\infty(\mathcal N)}\varepsilon^2$ which is guaranteed to exist
because ${\rm dist}_H(\gamma, \gamma_\varepsilon) \le C \varepsilon^2|d| \le \varepsilon$
in view of Lemma \ref{L:properties-de} (properties of $\de$). Therefore
\[
|\wde(\bx)| = \dist(\bx,\gae) \le |\bx-\bxe| \le |\bx-\wx| + |\wx-\bxe|
\le \delta + C|d|_{W^2_\infty(\mathcal N)}\varepsilon^2
\le \delta + \varepsilon =: {\delta_\varepsilon} ,
\]
provided $C\varepsilon|d|_{W^2_\infty(\mathcal N)}\le1$; note that
${\delta_\varepsilon} \le \frac32 \delta$. This implies
\begin{equation}\label{e:Ntilde}
  {\mathcal N(\delta)}\subset {\mathcal{N}}_\varepsilon\big( {\delta_\varepsilon} \big).
\end{equation}
Similarly, using again $C \varepsilon |d|_{W_\infty^2(\overline{\mathcal{N}})} \le 1$
in conjunction with Lemma \ref{L:properties-de} yields
\[
{\mathcal{N}}_\varepsilon\big( {\delta_\varepsilon} \big) \subset \mathcal{N}({\delta_\varepsilon}+\varepsilon)=\mathcal{N}(\delta + 2 \varepsilon) \subset \mathcal{N}.
\]

The next lemma and corollary study important properties of $\wde$ and $\bPe$,
in particular how derivatives degenerate with $\varepsilon$.

\begin{lemma}[properties of $\wde$]\label{L:properties-Pe}
The function $\wde\in C^\infty(\mathcal N(\delta))$ and satisfies
\[
\|\wde\|_{W^2_\infty({\mathcal N(\delta)})}
+ \varepsilon \|\wde\|_{W^3_\infty({\mathcal N(\delta)})}  \lesssim |d|_{W^2_\infty(\mathcal N)}.
\]
Moreover, the following error estimates hold
$$
\|\nabla(d-\wde)\|_{L_\infty(\mathcal N(\delta))}
  \lesssim \delta |d|_{W^2_\infty(\mathcal{N})},
\quad
\| 1 - \nabla d \cdot \nabla \wde \|_{L_\infty(\mathcal N(\delta))}
\lesssim \delta^2 |d|_{W^2_\infty(\mathcal{N})}^2.
$$
\end{lemma}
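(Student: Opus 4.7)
The plan is to combine the smoothness of $\de = d\star\re$ and of $\gae$ (from Lemma~\ref{L:properties-de}) with classical distance-function theory applied to $\wde$ as the signed distance to $\gae$, and then compare $\wde$ with $d$ via the closeness of $\de$ to $d$ already established in Lemma~\ref{L:properties-de}.

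For smoothness and the $W^2_\infty$ bound, I would first note that $\de \in C^\infty$ and $\gae$ is $C^\infty$ (the latter by the implicit function theorem as in Lemma~\ref{L:properties-de}). Classical distance-function theory then gives $\wde \in C^\infty$ in a tubular neighborhood of $\gae$ of width $\simeq 1/K_{\gae,\infty}$, where the principal curvatures of $\gae$ are controlled by $\|D^2 \de\|_{L_\infty} \lesssim |d|_{W^2_\infty(\mathcal{N})}$. Under the smallness relating $\delta$ to $|d|_{W^2_\infty(\mathcal{N})}$, the inclusion \eqref{e:Ntilde} places $\mathcal{N}(\delta)$ inside this tubular neighborhood, giving $\wde \in C^\infty(\mathcal{N}(\delta))$. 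The $W^2_\infty$ estimate then follows from $|\wde|\le\delta_\varepsilon\lesssim 1$, the eikonal identity $|\nabla\wde|\equiv 1$, and the fact that $D^2\wde(\bx)$ is the Weingarten map of the parallel surface of $\gae$ through $\bx$: applying Lemma~\ref{L:curv-parallel} to $\gae$ (instead of $\gamma$) yields $\|D^2\wde\|_{L_\infty(\mathcal{N}(\delta))} \lesssim |d|_{W^2_\infty(\mathcal{N})}$.

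The $W^3_\infty$ bound is the main obstacle, as $\wde$ is only implicitly defined through $\gae$ rather than as an explicit expression in $\de$. My approach is to differentiate once more the closed-form representation of $D^2\wde$ on parallel surfaces used in the previous step: the derivative that lands on $\gae$ at the top order produces $D^3\de$, while all remaining factors were already controlled in the $W^2_\infty$ analysis. The decisive mollifier inequality
\[
\|D^3\de\|_{L_\infty} \le \|D^2 d\|_{L_\infty}\,\|\nabla\re\|_{L_1(\Be)} \lesssim \varepsilon^{-1}|d|_{W^2_\infty(\mathcal{N})}
\]
then yields $\|D^3\wde\|_{L_\infty}\lesssim \varepsilon^{-1}|d|_{W^2_\infty(\mathcal{N})}$, so the explicit factor $\varepsilon$ in the statement precisely absorbs this blow-up.

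For the first error estimate, given $\bx\in\mathcal{N}(\delta)$ let $\bxe := \bPe(\bx) = \bx - \wde(\bx)\nabla\wde(\bx) \in \gae$, so that $|\bx-\bxe| = |\wde(\bx)| \lesssim \delta$. Since $\wde$ is a signed distance function, $\nabla\wde(\bx) = \nabla\wde(\bxe) = \nabla\de(\bxe)/|\nabla\de(\bxe)|$. The estimates $|\nabla(d-\de)|_{L_\infty} \lesssim \varepsilon|d|_{W^2_\infty(\mathcal{N})}$ and $\bigl||\nabla\de|-1\bigr| \lesssim \varepsilon|d|_{W^2_\infty(\mathcal{N})}$ from Lemma~\ref{L:properties-de} yield $|\nabla\wde(\bxe) - \nabla d(\bxe)| \lesssim \varepsilon|d|_{W^2_\infty(\mathcal{N})}$; combining with the Lipschitz bound $|\nabla d(\bx)-\nabla d(\bxe)| \le \|D^2 d\|_{L_\infty}|\bx-\bxe| \lesssim \delta|d|_{W^2_\infty(\mathcal{N})}$ and $\varepsilon\le\delta$ gives the first bound. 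The quadratic bound is then immediate from the polarization identity
\[
|\nabla d - \nabla\wde|^2 \;=\; 2\bigl(1 - \nabla d \cdot \nabla\wde\bigr),
\]
which holds since $|\nabla d| = |\nabla\wde| = 1$ throughout $\mathcal{N}(\delta)$.
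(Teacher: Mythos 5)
Your proof is correct, and it takes a genuinely different route from the paper's for the central $W^2_\infty$ and $W^3_\infty$ bounds. The paper invokes the Implicit Function Theorem to represent $\gae$ locally as a graph $x_{n+1}=\psi(\bx')$, bounds $\|\psi\|_{W^2_\infty}$ and $\|\psi\|_{W^3_\infty}$ via the corresponding norms of $\de$, and then invokes the (standard but unwritten) fact that these bounds ``translate'' to bounds on the signed distance function to a graph. You instead use classical distance-function theory for smooth surfaces together with the parallel-surface curvature formula of Lemma~\ref{L:curv-parallel} applied to $\gae$. For the $W^2_\infty$ bound this is arguably cleaner: $D^2\wde(\bx)$ is the Weingarten map of the parallel surface of $\gae$ through $\bx$, and Lemma~\ref{L:curv-parallel} bounds it by the curvatures of $\gae$, which in turn are controlled by $\|D^2\de\|_{L_\infty}\lesssim |d|_{W^2_\infty(\mathcal{N})}$. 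For the $W^3_\infty$ bound, both you and the paper leave a ``tedious but standard'' step implicit; in your route the potential circularity (that the derivative of $D^2\wde$ on $\gae$ is itself $D^3\wde$) is broken by expressing the Weingarten map of $\gae$ in terms of $\bnu_\varepsilon=\nabla\de/|\nabla\de|$ rather than in terms of $\wde$, so that the top-order term is the tangential derivative of $\bW_{\gae}$, controlled by $\|D^3\de\|_{L_\infty}\lesssim \varepsilon^{-1}|d|_{W^2_\infty(\mathcal{N})}$; it would strengthen the write-up to make this explicit. For the error estimates your decomposition uses two points ($\bx$ and $\bxe=\bPe(\bx)$) where the paper uses three ($\bx$, $\bPe(\bx)\in\gae$, and $\bP_d(\bx)\in\gamma$); yours is slightly more direct because it compares $\nabla d$ between $\bx$ and $\bxe$ by Lipschitz continuity instead of routing through $\gamma$, and both conclude with the same polarization identity $|\nabla d-\nabla\wde|^2=2(1-\nabla d\cdot\nabla\wde)$.
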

\begin{proof}
Since $\de(\bx)=0$ and $|\nabla\de(\bx)|\ge\frac12$
for all $\bx\in\gae$, fix $\bx_0\in\gae$ and a system of coordinates such that
$\bx = (\bx',x_{n+1})$ is a generic point and $\nabla\de(\bx_0)$ points in the
$(n+1)$-th coordinate direction. The Implicit Function Theorem guarantees the existence
of a ball $B$ in $\mathbb{R}^n$ centered at $\bx_0'$ and a $C^\infty$ function
$\psi:B\to\mathbb{R}$ such that
\[
\de(\bx',\psi(\bx')) = 0
\quad\forall \, \bx'\in B.
\]
In other words, $\gae$ is locally described in $B$ as a graph $x_{n+1}=\psi(\bx')$
for $\bx'\in B$. It is not difficult but tedious to see that
\begin{align*}
\|\psi\|_{W^2_\infty(B)} &\lesssim \|\de\|_{W^2_\infty({\Nd})}
\lesssim \|d\|_{W^2_\infty(\mathcal N)},
\\
\|\psi\|_{W^3_\infty(B)} &\lesssim \|\de\|_{W^3_\infty({\Nd})}
\lesssim \frac{1}{\varepsilon} \|d\|_{W^2_\infty(\mathcal N)},
\end{align*}
which translates into the first estimates for $\wde$
\[
\|\wde\|_{W^2_\infty(\Nd)} \lesssim \|d\|_{W^2_\infty(\mathcal N)},
\qquad
\|\wde\|_{W^3_\infty(\Nd)} \lesssim \frac{1}{\varepsilon}\|d\|_{W^2_\infty(\mathcal N)}.
\]

To prove the error estimates, let $\bx\in\Nd\subset\mathcal{N}$ and note that 
$$
\nabla \wde(\bx) = \nabla \wde (\by) = \frac{\nabla \de(\by)}{|\nabla \de(\by)|},
\qquad
\nabla d(\bx) = \nabla d(\bw)
$$
with $\by = \bx-\wde(\bx) \nabla \wde(\bx)\in\gae$ and
$\bw = \bx- d(\bx)\nabla d(\bx)\in\gamma$.
Hence, 
\[
|\bw-\by| \le |\bw-\bx| + |\by-\bx| \le \delta + {\delta_\varepsilon} \le \frac52\delta
\]
because of \eqref{e:Ntilde}. Since $|\nabla d(\by)|=1$, we now write
\[
\nabla d(\bx) - \nabla \wde(\bx) =
\nabla d(\bw) - \nabla d(\by)
+ \nabla d(\by) - \nabla \de(\by)
+ \frac{\nabla \de(\by)}{|\nabla \de(\by)|}
  \big(|\nabla \de(\by)| - |\nabla d(\by)|\big)
\]
and estimate pairs of terms on the right hand side separately. Since
$d\in W^2_\infty(\mathcal{N})$, we get
\[
\big| \nabla d(\bw) - \nabla d(\by)  \big| \le |\bw-\by| \, |d|_{W^2_\infty(\mathcal{N})}
\lesssim \delta |d|_{W^2_\infty(\mathcal{N})},
\]
and using Lemma \ref{L:properties-de} (properties of $\de$) we also obtain
\[
\big| |\nabla d(\by)| - |\nabla \de(\by)| \big| \le
\big| \nabla d(\by) - \nabla \de(\by) \big| \le \varepsilon |d|_{W^2_\infty(\mathcal{N})}
< \delta |d|_{W^2_\infty(\mathcal{N})},
\]
whence the first error estimate follows
\[
\big| \nabla d(\bx) - \nabla \wde(\bx)  \big| \lesssim
\delta |d|_{W^2_\infty(\mathcal{N})}
\quad\forall \, \bx \in \Nd.
\]
To show the desired second error estimate we observe that
$\big| 1 - \nabla d(\bx)\cdot\nabla\wde(\bx) \big|
= \frac12 \big| \nabla d(\bx) - \nabla\wde(\bx) \big|^2$. This concludes the proof.
\end{proof}

\begin{corollary}[property of $\bP_\varepsilon$]\label{c:properties-Pe}
The lift $\bPe$ belongs to $C^\infty(\mathcal N(\delta))$ and satisfies
$$
|\bPe|_{W^2_\infty({\mathcal N(\delta)})} \lesssim |d|_{W^2_\infty(\mathcal N)}
$$
for suitable constants $C_1,C_2$ so that $C_1\delta \le \varepsilon \le \frac \delta 2$
and $C_2\varepsilon|d|_{W^2_\infty(\mathcal N)}\le 1$.
  \end{corollary}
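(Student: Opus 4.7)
The strategy is straightforward: differentiate the explicit formula \eqref{eps-lift} twice, apply the estimates on $\wde$ supplied by Lemma \ref{L:properties-Pe}, and verify that the worst term is tamed by the relation $\varepsilon \ge C_1 \delta$. The smoothness claim $\bPe \in C^\infty({\mathcal N(\delta)})$ is immediate from the product/chain rule applied to \eqref{eps-lift}, since Lemma \ref{L:properties-Pe} together with the underlying smoothness of $\de$ guarantees $\wde \in C^\infty({\mathcal N(\delta)})$ under the hypothesis $C_2 \varepsilon |d|_{W^2_\infty(\mathcal N)} \le 1$.

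For the quantitative bound, I would write the components of $\bPe$ as $P_i(\bx) = x_i - \wde(\bx) \, \partial_i \wde(\bx)$ and compute
\begin{equation*}
\partial_k \partial_j P_i
= -\partial_{kj}\wde \, \partial_i \wde
- \partial_j \wde \, \partial_{ki}\wde
- \partial_k \wde \, \partial_{ij}\wde
- \wde \, \partial_{kij}\wde.
\end{equation*}
Thus bounding $|\bPe|_{W^2_\infty({\mathcal N(\delta)})}$ reduces to estimating two kinds of contributions in $L_\infty({\mathcal N(\delta)})$: the three ``mixed'' terms of the form $\nabla \wde \otimes D^2 \wde$, and the single ``singular'' term $\wde \, D^3 \wde$.

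For the mixed terms I would use $|\nabla \wde| \le 1 + \|\nabla(d - \wde)\|_{L_\infty({\mathcal N(\delta)})} \lesssim 1 + \delta |d|_{W^2_\infty(\mathcal N)} \lesssim 1$ (the latter by combining $\varepsilon \ge C_1 \delta$ with $C_2 \varepsilon |d|_{W^2_\infty(\mathcal N)} \le 1$), together with the bound $\|D^2 \wde\|_{L_\infty({\mathcal N(\delta)})} \lesssim |d|_{W^2_\infty(\mathcal N)}$ from Lemma \ref{L:properties-Pe}. Each of these three terms is therefore controlled by $|d|_{W^2_\infty(\mathcal N)}$, as desired.

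The main (and really only) obstacle is the singular term $\wde D^3 \wde$, which is precisely why the restriction $\varepsilon \ge C_1 \delta$ appears. Here Lemma \ref{L:properties-Pe} gives $\|D^3 \wde\|_{L_\infty({\mathcal N(\delta)})} \lesssim \varepsilon^{-1} |d|_{W^2_\infty(\mathcal N)}$, which blows up as $\varepsilon \to 0$, while $\|\wde\|_{L_\infty({\mathcal N(\delta)})} \le {\delta_\varepsilon} \lesssim \delta$ by \eqref{e:Ntilde}. Multiplying gives
\begin{equation*}
\|\wde D^3 \wde\|_{L_\infty({\mathcal N(\delta)})}
\lesssim \frac{\delta}{\varepsilon} \, |d|_{W^2_\infty(\mathcal N)}
\lesssim |d|_{W^2_\infty(\mathcal N)},
\end{equation*}
where the last inequality uses $\varepsilon \ge C_1 \delta$. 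Summing the four contributions completes the proof. The calibration of $C_1$ and $C_2$ is thus dictated exactly by these two needs: $C_1$ absorbs the $\delta/\varepsilon$ factor generated by the singular third-derivative term, and $C_2$ (together with $C_1$) ensures that $\delta |d|_{W^2_\infty(\mathcal N)}$ remains bounded so that $\nabla\wde$ stays close to unit length.
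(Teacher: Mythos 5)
Your proof is correct and follows essentially the same route as the paper: differentiate \eqref{eps-lift} twice, estimate the three terms involving $D^2\wde$ by the first bound in Lemma \ref{L:properties-Pe}, and tame the $\wde\,D^3\wde$ term via $\|\wde\|_{L_\infty(\Nd)} \lesssim \delta$ and $\|D^3\wde\|_{L_\infty(\Nd)} \lesssim \varepsilon^{-1}|d|_{W^2_\infty(\mathcal{N})}$ with $\varepsilon \gtrsim \delta$. The only superfluous step is your perturbation bound $|\nabla\wde| \lesssim 1$ via $\|\nabla(d-\wde)\|_{L_\infty}$: since $\wde$ is a genuine signed distance function, $|\nabla\wde|=1$ holds identically wherever it is defined, which is what the paper invokes directly.
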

  \begin{proof}
Differentiate the $k$-th component of $\bPe$ with respect to $x_i$ and $x_j$
to obtain
\[
\partial_{ij} \bP_{\varepsilon,k} = - \partial_{ij}^2 \wde \, \partial_k \wde
- \partial_i \wde \, \partial_{jk}^2 \wde - \partial_j \wde \, \partial_{ik}^2 \wde -  \wde \, \partial_{ijk}^3 \wde,
\]
whence invoking Lemma~\ref{L:properties-Pe} (properties of $\wde$) yields
\[
\|D^2 \bPe\|_{L_\infty(\Nd)} \lesssim
|d|_{W^2_\infty(\mathcal N)} + \frac{\delta}{\varepsilon} |d|_{W^2_\infty(\mathcal N)}\lesssim
|d|_{W^2_\infty(\mathcal N)}
\]
because of $|\nabla\wde|=1$ and \eqref{e:Ntilde}.
This completes the proof.
  \end{proof}
  
Given a function $\wu\in H^2(\gamma)$ we are now ready to introduce an
$H^2$ extension to ${\mathcal N(\delta)}$. 
For this, we assume that $\delta$ is sufficiently small so that
\eqref{e:Ntilde} is valid.
We first define the auxiliary function
$\ue = \wu\circ\bQe:\gae\to\mathbb{R}$, where
$\bQe=\bPe^{-1}:\gae\to\gamma$, and then the
extension $u = \ue\circ\bPe:{\mathcal N(\delta)}\to\mathbb{R}$, namely
\begin{equation}\label{e:non_const_ext}
  u(\bx) := \ue \big( \bx - \wde(\bx) \nabla \wde(\bx)   \big)
  \quad\forall \, \bx\in{\mathcal N(\delta)}.
\end{equation}
Consequently, we realize that $u=\wu\circ\bQe\circ\bPe$.
We introduce the notation $\bQe$ to avoid
confusion between $\bQe\circ\bPe:{\mathcal N(\delta)}\to\gamma$ and the identity.
We recall that the {\it coarea
formula}
\begin{equation}\label{e:coarea}
\int_{\mathcal N(\delta)} g  = \int_{\mathcal N(\delta)} g | \nabla d| =  \int_{-\delta}^{\delta} \int_{ \{ d^{-1}(s) \}} g d\sigma_s,
\end{equation}
is valid for any integrable function $g: \mathcal N(\delta) \rightarrow \mathbb R$
[Theorem 3.14, Evans and Gariepy]. We will use this formula next and later in
this chapter.
 
\begin{proposition}[$H^2$ extension]\label{P:H2-extension}
Let $\varepsilon$ and $\delta$ be as in Corollary \ref{c:properties-Pe}
(property of $\bPe$), and assume that $\varepsilon |d|_{W_\infty^2(\mathcal{N})}\le c$
for a sufficiently small constant $c$.  
If $\wu\in H^2(\gamma)$, then $u\in H^2({\mathcal N(\delta)})$ and 
\[
\|u\|_{H^2({\mathcal N(\delta)})} \lesssim \delta^{\frac12} |d|_{W^2_\infty(\mathcal N)} \|\wu\|_{H^2(\gamma)}.
\]
Moreover, the trace of $u$ on $\gamma$ coincides with $\wu$, that is $u$ an $H^2$
extension of $\wu$.
\end{proposition}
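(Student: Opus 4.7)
My plan is to construct the extension in two stages and combine the corresponding estimates. First stage: transfer $\wu$ from $\gamma$ to $\gae$ through the pullback $\ue=\wu\circ\bQe$ on the smooth surface $\gae$. Second stage: extend $\ue$ off $\gae$ into $\Nd$ by composition with the closest-point projection $\bPe$ onto $\gae$, yielding $u=\ue\circ\bPe$, which by construction is constant along normals to $\gae$. The desired bound will follow from a change-of-variables estimate on $\gae$ coupled with a coarea integration across the tubular neighborhood. By density of $C^\infty(\gamma)$ in $H^2(\gamma)$ it is enough to carry out the calculation for smooth $\wu$ and track a constant depending only on $|d|_{W^2_\infty(\mathcal{N})}$.

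For the first stage I would use that $\bPe|_\gamma:\gamma\to\gae$ is a $W^2_\infty$-diffeomorphism with bounds independent of $\varepsilon$: $\bPe$ is uniformly $W^2_\infty$ on $\Nd$ by Corollary~\ref{c:properties-Pe}, and $\bPe(\bx)-\bx=-\wde(\bx)\nabla\wde(\bx)$ is only an $O(\varepsilon^{2}|d|_{W^2_\infty(\mathcal{N})})$ perturbation of the identity on $\gamma$ by Lemma~\ref{L:properties-de}. Hence its inverse $\bQe$ has $W^2_\infty$ norm controlled by $|d|_{W^2_\infty(\mathcal{N})}$, and a routine chart-by-chart change of variables gives $\|\ue\|_{H^2(\gae)}\lesssim\|\wu\|_{H^2(\gamma)}$. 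For the second stage I would invoke \eqref{e:Ntilde} to embed $\Nd\subset\mathcal{N}_\varepsilon(\delta_\varepsilon)$ and apply the coarea formula \eqref{e:coarea} with $\wde$ in order to foliate the tubular neighborhood by the parallel surfaces $\{\wde=s\}$, $|s|\le\delta_\varepsilon\lesssim\delta$. Since $u$ is constant along normals to $\gae$, the integral of $|D^k u|^2$ over each slice is comparable to the corresponding $L^2$-norm of the $k$-th derivatives of $\ue$ on $\gae$, the area-element ratio between slices being uniformly bounded via Lemma~\ref{L:curv-parallel} together with the $W^2_\infty$ bound on $\wde$ from Lemma~\ref{L:properties-Pe}. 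Integration in $s$ over an interval of length $2\delta_\varepsilon\lesssim\delta$ then produces the desired prefactor $\delta^{1/2}$.

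The main obstacle is the control of $D^2u$. Differentiating $u=\ue\circ\bPe$ twice produces a term
\[
D\bPe^{t}\bigl(D^2\ue\circ\bPe\bigr) D\bPe,
\]
which the coarea argument handles via $\|D\bPe\|_{L_\infty}\lesssim 1$ and the $H^2$ bound on $\ue$, plus a term $\sum_{k}(\partial_k\ue\circ\bPe)\,D^2\bPe_k$ that looks problematic because $D^2\bPe$ contains $D^3\wde$ and $\|\wde\|_{W^3_\infty}$ blows up like $\varepsilon^{-1}$ by Lemma~\ref{L:properties-Pe}. The cancellation that rescues the bound is exactly the one behind Corollary~\ref{c:properties-Pe}: the $D^3\wde$ factor appears multiplied by $\wde$, which is of size $O(\delta)$ on $\Nd$, and the coupling $\varepsilon\simeq\delta$ in the hypothesis forces the product to be uniformly controlled by $|d|_{W^2_\infty(\mathcal{N})}$. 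This is the only place where the structure of the regularization enters critically, and it is the precise reason for employing $\wde$ rather than $d$ itself, since the latter would demand $\gamma$ of class $C^3$ to make sense of $D^2\bP_d$. Assembling the three bounds yields the claimed $H^2$-estimate, while the trace identity $u|_\gamma=\wu$ is immediate from $\bQe\circ\bPe|_\gamma=\mathrm{id}_\gamma$.
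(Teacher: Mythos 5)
Your proposal reproduces the paper's proof in essentially the same form: the same decomposition $u=\wu\circ\bQe\circ\bPe$, the same coarea reduction to integrals over $\gae$ (and its parallels), and the same identification of the critical mechanism — the $\wde\,D^3\wde$ term in $D^2\bPe$, controlled by the coupling $\varepsilon\simeq\delta$ packaged as Corollary~\ref{c:properties-Pe}. The "two-stage" ordering (first compare $\gae$ and $\gamma$, then integrate across the tube) is a cosmetic reorganization of the paper's calculation, which instead does the chain rule and coarea first, then relates $\nabla_{\gae}^k\ue$ to $\nabla_\gamma^k\wu$.

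One point is understated. Your Stage~1 — "a routine chart-by-chart change of variables gives $\|\ue\|_{H^2(\gae)}\lesssim\|\wu\|_{H^2(\gamma)}$" from the ambient $W^2_\infty$ bound on $\bPe$ and its $L_\infty$-closeness to the identity on $\gamma$ — is not routine: the ambient map $\bPe$ has a rank-$n$ Jacobian, so one cannot simply invert it in $\mathbb{R}^{n+1}$. What must be shown is that the \emph{tangential} differential of $\bPe|_\gamma:\gamma\to\gae$ is uniformly nondegenerate and $W^1_\infty$ in surface charts, with constants independent of $\varepsilon$. This is precisely what the paper does via \eqref{e:tang_discrete_to_exact}, producing the matrix ${\bf M}(\bx)=(\bI-\wde\bW_\varepsilon)^{-1}\bigl(\bI-\tfrac{\bnu_\gamma\otimes\bnu_\varepsilon}{\bnu_\gamma\cdot\bnu_\varepsilon}\bigr)$ and the bound $|{\bf M}|_{W^1_\infty(\gamma)}\lesssim|d|_{W^2_\infty(\mathcal N)}$; note that the same $\delta\|\wde\|_{W^3_\infty}\lesssim|d|_{W^2_\infty(\mathcal N)}$ cancellation you correctly identified for Stage~2 reappears here in bounding $\partial_i(\wde\bW_\varepsilon)$. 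So Stage~1 is not a preliminary to the main estimate — it \emph{is} the main estimate, reorganized.
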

\begin{proof}
In view of \eqref{e:rel_dist_grad}, the $i$-partial derivative of $u$ reads
\[
\partial_i u = \sum_{j=1}^{n+1} \big( \delta_{ij} - \wde \, \partial_{ij}^2\wde \big)
\, \overline{\partial}_j\ue\circ\bPe
\]
where $\overline{\partial}_j\ue$ stands for the $j$-component of $\nabla_\gae\ue$.
We use again \eqref{e:rel_dist_grad} to obtain
\begin{align*}
\nabla \partial_j u = &- \sum_{j=1}^{n+1} \big( \nabla\wde \, \partial_{ij}^2\wde
+ \wde  \, \partial_{ij}^2\nabla\wde \big) \, \overline{\partial}_j\ue\circ\bPe
\\ &+ \sum_{j=1}^{n+1} \big( \delta_{ij} - \wde \, \partial_{ij}^2\wde  \big)
\, \big( \bI - \wde \, D^2 \wde \big) \nabla_\gae \overline{\partial}_j\ue\ \circ \bPe.
\end{align*}
Setting $\Lambda := 1 + |d|_{W^2_\infty(\mathcal N)}$ and
applying Lemma \ref{L:properties-Pe} (properties of $\wde$) yields
\[
\big| D^2 u \big| \lesssim \Lambda
\big( |\nabla_\gae\ue\circ\bPe| + |\nabla_\gae^2 \ue\circ\bPe| \big).
\]
We reduce the computation of integrals in the bulk ${\mathcal N(\delta)}$ to integrals on
parallel surfaces $\gae(s) := \{\bx\in\mathbb{R}^{n+1}: \wde(\bx) = s\}$ via the
coarea formula \eqref{e:coarea}. 
Since $|\nabla\wde|=1$ in view of \eqref{e:Ntilde} the co-area formula implies
\begin{align*}
\int_{\mathcal N(\delta)} |D^2 u(\bx)|^2 d\bx & \lesssim \Lambda^2 \int_{\mathcal N(\delta)}
\sum_{k=1}^2|\nabla_\gae^k\ue(\bPe(\bx))|^2 \, |\nabla\wde(\bx)| \, d\bx
\\ & \le  \Lambda^2 \int_{-{\delta_\varepsilon}}^{{\delta_\varepsilon}} \int_{\gae(s)}
\sum_{k=1}^2|\nabla_\gae^k\ue(\bPe(\bx))|^2 \, d \sigma_{\varepsilon,s}(\bx) \, ds
\\& \lesssim \delta  \Lambda^2 \int_\gae \sum_{k=1}^2|\nabla_\gae^k\ue(\bx)|^2
\, d\sigma_\varepsilon(\bx),
\end{align*}
Lemma \ref{L:norm-equiv} (norm equivalence) immediately yields
\[ \int_{\gamma_\varepsilon} |\nabla_{\gamma_\varepsilon} u_\varepsilon(\bP_\varepsilon(\bx))|^2 \, d\sigma_\varepsilon(\bx) \lesssim \int_\gamma |\nabla_\gamma \wu(\bx)|^2 d \sigma(\bx).
\]
In order to relate second derivatives of $u_\varepsilon$ on $\gamma_\varepsilon$ to those of $\wu$ on $\gamma$, we apply \eqref{e:tang_discrete_to_exact} with $\gamma_\varepsilon$ playing the role of $\gamma$ and $\Gamma=\gamma$.  Then
\[
\nabla_{\gamma_\varepsilon} u_\varepsilon (\bP_\varepsilon(\bx)) = ({\bf I} -\wde \bW_\varepsilon)^{-1}(\bx) \left(\bI - \frac{\bnu_\gamma(\bx) \otimes \bnu_\varepsilon(\bx)}{\bnu_\gamma(\bx) \cdot \bnu_\varepsilon(\bx)}\right) \nabla_\gamma \wu(\bx) \quad \bx \in \gamma,
\]
and after applying this formula again to $\nabla_{\gamma_\varepsilon} u_\varepsilon (\bP_\varepsilon(\bx))$ we obtain
\[
|D_{\gamma_\varepsilon}^2 u_\varepsilon(\bP_\varepsilon(\bx))| \le |D_\gamma {\bf M}(\bx)| \, |\nabla_\gamma \wu(\bx)|+ |{\bf M}(\bx)| \, |D_\gamma^2 \wu(\bx)|,
\]
where ${\bf M}(\bx)=({\bf I} -\wde \bW_\varepsilon)^{-1}(\bx) \left(\bI - \frac{\bnu_\gamma(\bx) \otimes \bnu_\varepsilon(\bx)}{\bnu_\gamma(\bx) \cdot \bnu_\varepsilon(\bx)}\right)$.
We thus wish to bound $\|M\|_{W_\infty^1(\gamma)}$.  First we note that combining the bound on the Hausdorff distance between $\gamma$ and $\gamma_\varepsilon$ from Lemma \ref{L:properties-de} (properties of $\de$) with $\|\wde\|_{W_\infty^2(\mathcal{N}(\delta))} \lesssim |d|_{W_\infty^2(\mathcal{N})}$ from Lemma \ref{L:properties-Pe} (properties of $\wde$) yields for $\bx \in \gamma$ that the eigenvalues of $\wde(\bx) \bW_\varepsilon(\bx)$  are bounded by $C \varepsilon^2|d|_{W_\infty^2(\overline{\mathcal{N}})}^2$, which is less than $\frac{1}{2}$ under the assumption that $\varepsilon |d|_{W_\infty^2(\mathcal{N})}$ is sufficiently small; thus $\|(\bI-\wde \bW_\varepsilon)^{-1}\|_{L_\infty(\Nd)} \le 2$. In addition, combining the same assumption with $\varepsilon \simeq \delta$ and Lemma \ref{L:properties-Pe} yields
\[ 
\|1-\bnu_\gamma\cdot \bnu_\varepsilon\|_{L_\infty(\mathcal{N}(\delta))} \lesssim \delta^2|d|_{W_\infty^2(\mathcal{N})}^2 \lesssim \varepsilon^2 |d|_{W_\infty^2(\mathcal{N})}^2 \le \frac{1}{2},
\]
so that $\bnu_\gamma\cdot \bnu_\varepsilon \ge 1/2$ and
 \[
 \left \|\bI - \frac{\bnu_\gamma \otimes \bnu_\varepsilon}{\bnu_\gamma \cdot \bnu_\varepsilon}\right\|_{L_\infty(\Nd)} \lesssim 1 ;
 \]
thus $\|{\bf M}\|_{L_\infty(\Nd)} \lesssim 1$. In order to bound the derivatives of ${\bf M}$, we note that for a matrix ${\bf A}$ there holds  $\partial_i {\bf A}^{-1} = -{\bf A}^{-1}( \partial_i {\bf A}) {\bf A}^{-1}$.  For ${\bf A}=\bI-\wde \bW_\varepsilon$, we use Lemmas \ref{L:properties-de} and \ref{L:properties-Pe}, $|\nabla \wde|=1$, and the assumption $C_1\delta\le\varepsilon$ to deduce in $\Nd$
 \[ 
 \begin{aligned}
 |\partial_i {\bf A}|& = \big|(\partial_i \wde) \bW_\varepsilon + \wde \,
 \partial_i \bW_\varepsilon\big| 
 \\ & \lesssim \|\wde\|_{W_\infty^2(\mathcal{N}(\delta)} + \delta \|\wde\|_{W_\infty^3(\mathcal{N})} \lesssim |d|_{W_\infty^2(\mathcal{N})}.
 \end{aligned}
 \]
 Since we have already established that $\|\bf A^{-1}\|_{L_\infty(\Nd)}\lesssim 1$, we infer that $|(\bI-\wde \bW_\varepsilon)^{-1}|_{W_\infty^1(\Nd} \lesssim |d|_{W_\infty^2(\mathcal{N})}$.   A similar calculation for $\bI - \frac{\bnu_\gamma\otimes \bnu_\varepsilon}{\bnu_\gamma \cdot \bnu_\varepsilon}$, while recalling that $\bnu_\gamma \cdot \bnu_\varepsilon \ge 1/2$, yields  $|{\bf M}|_{W_\infty^1(\gamma)} \lesssim |d|_{W_\infty^2(\mathcal{N})}$ and, after applying Lemma \ref{L:norm-equiv} (norm equivalence), gives
\[ \|D_{\gamma_\varepsilon}^2 u_\varepsilon\|_{L_2(\gamma_\varepsilon)} \lesssim |d|_{W_\infty^2(\mathcal{N})} \left(\|\nabla_\gamma \wu\|_{L_2(\gamma)} + \|D_\gamma^2 \wu\|_{L_2(\gamma)}\right). \]

The asserted estimate follows from applying again the 
co-area formula \eqref{e:coarea}, which leads to
\[
\int_{\mathcal N(\delta)} |u|^2+|\nabla u|^2+|D^2u|^2\lesssim \delta \Lambda^2
\int_\gamma |\wu|^2 + |\nabla_\gamma \wu|^2 + |D^2\wu|^2.
\]
Finally, we take $\bx\in\gamma$, note that $\bQe(\bPe(\bx))=\bx$, and compute
\[
u(\bx) =  \wu\circ\bQe\circ\bPe(\bx) = \wu(\bx)
\]  
to realize that $u$ is indeed an extension of $\wu$ to ${\mathcal N(\delta)}$.
\end{proof}
  
We now derive the elliptic PDE's satisfied by $\ue$ on $\gae$ and $u$ in ${\mathcal N(\delta)}$.
For $\wu\in H^2(\gamma)$, let
$\wf = -\Delta_\gamma \wu \in L_{2,\#}(\gamma)$ and consider the extension $\wfe$ to
$\gae$
\[
\wfe := \wf \circ \bQe.
\]

\begin{lemma}[PDE satisfied by $\ue$]\label{L:PDE-ue}
If $\gamma$ is closed and of class $C^2$, then $\gae$ is also closed and of class
$C^\infty$, and the extension $\ue = \wu\circ\bQe$ satisfies on $\gae$
\[  
- \wmue \mathrm{div}_\gae \Big(\frac{1}{\wmue} \wbAe \nabla_\gae \ue \Big) = \wfe,
\]
where $\wbAe := \big(\bI - \wde \, D^2\wde \big) \Pi
\big(\bI - \wde \, D^2\wde \big)\circ\bQe$,
$\Pi$ stands for the orthogonal  projection $\Pi = (\bI - \nabla d\otimes\nabla d)$
on $\gamma$ and $\wmue := \frac{\qe}{q\circ\bQe}$ reads
\[
\wmue = \det\Big(\bI - \wde \, D^2\wde  \Big)
\big( \nabla d \cdot \nabla\wde  \big) \circ\bQe.
\]
\end{lemma}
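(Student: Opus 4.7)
The plan is to derive the equation on $\gae$ by pulling back the weak form of $-\Delta_\gamma \wu = \wf$ via the diffeomorphism $\bQe : \gae \to \gamma$, using the tools already developed in Lemmas \ref{L:tan-grads} (relation between tangential gradients) and \ref{L:area_ratio_distance} (relation between $q$ and $q_\Gamma$). The identification to keep in mind throughout is that the roles played by ``$\gamma$'' and ``$\Gamma$'' in Section~\ref{S:perturb-C2} are reversed here: the smoother surface is $\gae$ (playing ``$\gamma$'' in the lemmas, with distance function $\wde$ and Weingarten map $D^2\wde$), while $\gamma$ plays the role of ``$\Gamma$'' and is parametrized via $\bchi_\gamma = \bQe \circ \bchi_\gae$.

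First I would dispatch the regularity and closedness of $\gae$. Since $\de \in C^\infty(\mathcal{N})$ (mollification) and Lemma \ref{L:properties-de} gives $|\nabla\de| \ge \tfrac12$ on $\mathcal{N}$ for $\varepsilon$ small, the implicit function theorem yields $\gae \in C^\infty$. Closedness follows because $\bPe|_\gamma : \gamma \to \gae$ is a $C^\infty$ bijection from a closed compact manifold (for $\varepsilon$ small enough that Lemma \ref{L:properties-Pe} gives $\nabla d \cdot \nabla\wde \ge \tfrac12$, so $\bPe|_\gamma$ is injective and its image is all of $\gae$).

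Next I would start from $\int_\gamma \nabla_\gamma \wu \cdot \nabla_\gamma \widetilde\varphi = \int_\gamma \wf \widetilde\varphi$ for $\widetilde\varphi \in H^1(\gamma)$, set $\varphi_\varepsilon := \widetilde\varphi \circ \bQe \in H^1(\gae)$, and change variables $\bx = \bQe(\by)$. For the area element, applying Lemma \ref{L:area_ratio_distance} with our role reversal and using $\bnu_\gae = \nabla\wde$, $\bnu_\gamma = \nabla d$ (the positivity hypothesis $\nabla d \cdot \nabla\wde > 0$ is guaranteed by Lemma \ref{L:properties-Pe}) gives $d\sigma_\gamma = \wmue^{-1} d\sigma_\gae$ with $\wmue$ as stated. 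For the gradients, applying \eqref{e:tang_exact_to_discrete} with ``$\gamma$'' $=\gae$, ``$\Gamma$''$=\gamma$, ``$\bP_d$''$=\bPe|_\gamma$, ``$d$''$=\wde$, ``$\bW$''$=D^2\wde$ yields
\[
\nabla_\gamma \wu(\bx) = \Pi_\gamma(\bx)(\bI - \wde D^2\wde)(\bx)\,\Pi_\gae(\bx)\,\nabla_\gae \ue(\bPe(\bx)),
\]
and analogously for $\widetilde\varphi$. Using symmetry of $\Pi_\gae$, $\Pi_\gamma$, and $\bI - \wde D^2\wde$, together with $\Pi_\gae \nabla_\gae \ue = \nabla_\gae \ue$ and $\Pi_\gamma^2 = \Pi_\gamma$, the integrand collapses to $\wbAe\,\nabla_\gae\ue \cdot \nabla_\gae\varphi_\varepsilon$ once we compose the matrix with $\bQe$ (absorbed into the definition of $\wbAe$).

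Putting these pieces together converts the weak form to
\[
\int_\gae \wbAe \nabla_\gae \ue \cdot \nabla_\gae \varphi_\varepsilon \,\frac{1}{\wmue}\, d\sigma_\gae
= \int_\gae \wfe \varphi_\varepsilon \,\frac{1}{\wmue}\, d\sigma_\gae,
\]
valid for all $\varphi_\varepsilon \in H^1(\gae)$. Since $\gae$ is closed, Corollary \ref{C:int-parts} (integration by parts) with vanishing boundary term gives $-\mathrm{div}_\gae(\wmue^{-1}\wbAe\nabla_\gae\ue) = \wmue^{-1}\wfe$; multiplying by $\wmue$ yields the asserted identity. The main obstacle is the bookkeeping of role-reversal in applying Lemmas \ref{L:tan-grads} and \ref{L:area_ratio_distance} and the symmetry manipulation in step three; no genuinely new estimate is required, as positivity of $\nabla d \cdot \nabla\wde$ and invertibility of $\bI - \wde D^2\wde$ both follow from the already-established Lemma \ref{L:properties-Pe}.
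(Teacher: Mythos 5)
Your proposal is correct and follows essentially the same route as the paper's proof: start from the weak form on $\gamma$, pull back via $\bQe$ using \eqref{e:tang_exact_to_discrete} (with $\gae$ in the role of the smooth surface and $\gamma$ as the perturbation) and \eqref{e:area_ratio_distance} to transform the gradient and area element, then integrate by parts on the closed surface $\gae$ via Corollary~\ref{C:int-parts}. The only difference is that you spell out the smoothness and closedness of $\gae$, which the paper takes as established in Lemma~\ref{L:properties-de}.
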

\begin{proof}
Given $\wv\in H^1(\gamma)$, let $\tv = \ttv\circ\bQe \in H^1(\gae)$.
We resort to \eqref{e:tang_exact_to_discrete} to write
\[
\nabla_\gamma \wu = \Pi \big(\bI - \wde \, D^2 \wde  \big)
\nabla_\gae \ue \circ \bPe \quad\textrm{on }\gamma,
\]
because $\nabla_\gae \ue \circ \bPe = \Pi_\varepsilon \nabla_\gae \ue \circ \bPe$.
This combined with \eqref{int-gamma} and Corollary \ref{C:int-parts}
(integration by parts) on the closed surface $\gae$ yields
\[
\int_\gamma \nabla_\gamma \wu \cdot \nabla_\gamma \ttv
= \int_\gae \frac{1}{\wmue} \wbAe \nabla_\gae \ue \cdot \nabla_\gae \tv
= -  \int_\gae \mathrm{div}_\gae \Big( \frac{1}{\wmue} \wbAe \nabla_\gae \ue \Big) \tv
\]
with $\wmue=\frac{\qe}{q\circ\bQe}$ given by \eqref{e:area_ratio_distance}.
Likewise,
\[
\int_\gamma \wf \, \ttv = \int_\gae \frac{1}{\wmue}\wfe \tv .
\]
Since the last two equalities hold for all $\tv\in H^1(\gae)$, the assertion
follows.
\end{proof}

We extend the function $\wfe$ to ${\mathcal{N}}_\varepsilon({\delta_\varepsilon})$ as follows:
\[
\fe := \wfe \circ \bPe = \wf \circ \bQe \circ \bPe.
\]
Equivalently, given $\bx\in{\mathcal{N}}_\varepsilon({\delta_\varepsilon})$
let $\wx\in\gamma$ be the unique point such that for some $s$
\[
\wx = \bx + s \nabla\wde(\bx)
\quad\Rightarrow\quad
\fe(\bx) = \wf(\wx).
\]

\begin{proposition}[PDE satisfied by $u$]\label{P:BVP}
Let $\varepsilon$ and $\delta$ be as in Corollary \ref{c:properties-Pe}
(property of $\bPe$).
The extension $u\in H^2({\mathcal N(\delta)})$ of $\wu$ of
Proposition \ref{P:H2-extension} satisfies the PDE
\[
-\frac{1}{\mue} \div { \mue \bBe \nabla u}  = \fe
\qquad \hbox{ in }\qquad  {\mathcal N(\delta)},
\]
where
\[
\bBe := \big(\bI - \wde \, D^2\wde \big)^{-1} \Pi_\varepsilon \bAe \Pi_\varepsilon
\big( \bI - \wde \, D^2\wde \big)^{-1} ,
\]
$\bAe:=\wbAe\circ\bPe$ with $\wbAe$ given in Lemma \ref{L:PDE-ue},
$\Pi_\varepsilon = \bI - \nabla\wde\otimes\nabla\wde$, $\mue$ is given by
\[
\mue := \frac{1}{\wmue\circ\bPe} \det\Big( \bI - \wde \, D^2\wde  \Big),
\]
and $\wmue$ is defined in Lemma \ref{L:PDE-ue}.
\end{proposition}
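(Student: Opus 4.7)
My plan is to verify the asserted equation in weak form: for every $\varphi\in C_c^\infty(\mathcal N(\delta))$,
\[
\int_{\mathcal N(\delta)} \mue\,\bBe\nabla u\cdot\nabla\varphi\,d\bx = \int_{\mathcal N(\delta)} \mue\,\fe\,\varphi\,d\bx.
\]
The bridge to the tangential PDE for $\ue$ on $\gae$ from Lemma \ref{L:PDE-ue} will be the co-area formula applied to $\wde$, combined with a change of variables that transports integrals on the level sets of $\wde$ back to $\gae$ along normal fibers.

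First I compute $\bBe\nabla u$ directly. Since $u=\ue\circ\bPe$ and $\bPe$ is the normal projection onto $\gae$, applying \eqref{e:rel_dist_grad} with $\wde,\bPe$ in place of $d,\bP_d$ gives
\[
\nabla u(\bx) = (\bI - \wde\,D^2\wde)(\bx)\,\nabla_\gae\ue(\bPe(\bx)),
\]
where I use that $\nabla_\gae\ue$ is tangent to $\gae$ and $\nabla\wde(\bx)=\nabla\wde(\bPe(\bx))$. The identity $D^2\wde\,\nabla\wde=0$ implies that $\Pi_\varepsilon$ commutes with $(\bI-\wde\,D^2\wde)^{\pm 1}$ and acts as the identity on $\nabla_\gae\ue\circ\bPe$, which collapses the defining formula for $\bBe$ to
\[
\bBe\nabla u \;=\; \Pi_\varepsilon\,(\bI-\wde\,D^2\wde)^{-1}\,\bAe\,\nabla_\gae\ue\circ\bPe.
\]
In particular $\bBe\nabla u$ is tangent to the level surface $\gae(\wde(\cdot))$, so in the test integral $\nabla\varphi$ may be replaced by $\Pi_\varepsilon\nabla\varphi$.

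Using $|\nabla\wde|=1$, the co-area formula \eqref{e:coarea} splits the left-hand side as
\[
\int_{-\delta_\varepsilon}^{\delta_\varepsilon}\!\int_{\gae(s)} \mue\,\bBe\nabla u\cdot\Pi_\varepsilon\nabla\varphi\,d\sigma_{\gae(s)}\,ds,
\]
where $\gae(s):=\{\wde=s\}$. Each slice is transported to $\gae$ via the bijection $\bPe|_{\gae(s)}$; for $\bx_s\in\gae(s)$ I abbreviate $A:=\bI+sD^2\wde(\bPe(\bx_s))$. Lemma \ref{L:curv-parallel} applied to $\wde$ yields the pivotal identity $(\bI-\wde\,D^2\wde)(\bx_s)=A^{-1}$, from which follow the area ratio $d\sigma_{\gae(s)}/d\sigma_\gae=\det A$ (also deducible from Lemma \ref{L:area_ratio_distance}) and the balance $\mue(\bx_s)\det A = \wmue(\bPe(\bx_s))^{-1}$ coming from the definition of $\mue$. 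Applying \eqref{e:tang_exact_to_discrete} to the pair $(\gae(s),\gae)$ with the pullback $\varphi_s(\bz):=\varphi(\bz + s\nabla\wde(\bz))$ on $\gae$ produces
\[
\Pi_\varepsilon\nabla\varphi(\bx_s) \;=\; A^{-1}\,\nabla_\gae\varphi_s(\bPe(\bx_s)).
\]

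Assembling these identities on each slice, the Jacobian $\det A$ cancels the corresponding factor carried by $\mue$, and the symmetric-matrix identity $A\bv\cdot A^{-1}\bw = \bv\cdot\bw$ absorbs the two remaining factors of $A$ (one from $\bBe\nabla u$ via $(\bI-\wde D^2\wde)^{-1}=A$, one from $\Pi_\varepsilon\nabla\varphi$). The transformed slice integrand thus collapses to $\wmue^{-1}\,\wbAe\,\nabla_\gae\ue\cdot\nabla_\gae\varphi_s$ on $\gae$. Invoking the weak form of Lemma \ref{L:PDE-ue} with test function $\varphi_s$ replaces this by $\wmue^{-1}\wfe\varphi_s$; reversing the change of variables and using $\fe=\wfe\circ\bPe$ then returns $\int_{\mathcal N(\delta)}\mue\,\fe\,\varphi$, and the arbitrariness of $\varphi$ yields the assertion. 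The main obstacle is the careful bookkeeping required to expose these cancellations; once the identity $(\bI-\wde\,D^2\wde)(\bx_s)=A^{-1}$ is in hand, the whole computation aligns through symmetry.
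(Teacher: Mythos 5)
Your proof is correct and uses essentially the same machinery as the paper: the co-area formula to slice $\mathcal N(\delta)$ by level sets of $\wde$, the tangential-gradient relations \eqref{e:tang_exact_to_discrete}--\eqref{e:tang_discrete_to_exact} to transport data between parallel surfaces, the area-element ratio from Lemma \ref{L:area_ratio_distance}, and Lemma \ref{L:PDE-ue} in weak form on $\gae$. The organization is reversed (the paper starts from the bilinear form on $\gae$, pushes it to a generic slice $\gae(s)$, integrates in $s$, and then integrates by parts in the bulk; you start from the bulk weak form, slice it, and pull each slice back to $\gae$), and you package the key parallel-surface cancellation into the compact identity $(\bI-\wde D^2\wde)(\bx_s)=A^{-1}$ with $A=\bI+sD^2\wde(\bPe(\bx_s))$, but the content and the lemmas invoked are the same.
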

\begin{proof}
We proceed as in Proposition \ref{P:H2-extension} ($H^2$ extension).
Let $\gamma_\varepsilon(s)$ be a parallel surface to $\gae$ at distance $s$, and let
$|s| \le {\delta_\varepsilon}$ with ${\delta_\varepsilon} = \frac{3}{2}\delta$  so that \eqref{e:Ntilde} holds.
We first employ \eqref{e:tang_discrete_to_exact} to obtain the bilinear form
for $u$ on $\gamma_\varepsilon(s)$. For
$\delta$ sufficiently small Lemma~\ref{L:properties-Pe} (properties of $\wde$)
guarantees that $\big(\bI - \wde \, D^2\wde \big)$ is invertible in
${\mathcal{N}}_\varepsilon(\delta_\varepsilon)$.
Hence, if $\bDe = \big(\bI - \wde \, D^2\wde \big)^{-1} \Pi_\varepsilon$
and $\tv \in C_0^\infty({\mathcal N(\delta)})$, we restrict $\tv$ to $\gamma_\varepsilon(s)$,
define the auxiliary function
$\ttv := \tv \circ \bPe^{-1}\in C^\infty(\gae)$ and observe that
\eqref{e:tang_discrete_to_exact} reads on $\gamma_\varepsilon(s)$
\[
\nabla_\gae \ttv \circ \bPe = \bDe \nabla \tv, 
\]
where $\nabla \tv$ is the full gradient of $\tv$; this is because of the presence
of the projection matrix $\Pi_\varepsilon$ on the tangent hyperplane to $\gamma_\varepsilon(s)$
in the definition of $\bDe$. We get
\begin{align*}
\int_\gae \frac{1}{\wmue} \wbAe \nabla_\gae\ue \cdot \nabla_\gae \ttv
= \int_{\gamma_\varepsilon(s)} \mue \bAe \bDe
\nabla u \cdot \bDe \nabla \tv
\end{align*}
where $\wmue$ is given in Lemma \ref{L:PDE-ue} (PDE satisfied by $\ue$) and
$\mue$ is the surface measure density on $\gae(s)$ due to the change of variables, namely
\[
\mue = \frac{1}{\wmue\circ\bPe} \frac{\qe}{q_{\varepsilon,s}}
= \frac{1}{\wmue\circ\bPe} \det\Big( \bI - \wde \, D^2\wde  \Big)
\]
according to \eqref{e:area_ratio_distance}.
Similarly, the linear form for the forcing reads
\[
\int_\gae \frac{1}{\wmue} \, \wfe \, \ttv = \int_{\gamma_\varepsilon(s)} \mue \, \fe \, \tv.
\]
Since the left hand sides of the previous integral expressions coincide,
in view of Lemma \ref{L:PDE-ue},
we now integrate over
$s\in(-{\delta_\varepsilon},{\delta_\varepsilon})$ and use the co-area
formula \eqref{e:coarea} to convert the resulting integrals into bulk integrals
\begin{align*}
  \int_{{\mathcal{N}}_\varepsilon({\delta_\varepsilon})}
  \mue \bAe \bDe \nabla u \cdot \bDe \nabla \tv
  & = \int_{{\mathcal{N}}_\varepsilon({\delta_\varepsilon})}
  \mue \bAe \bDe \nabla u \cdot \bDe \nabla \tv
\, |\nabla\wde|
\\
& =
\int_{-{\delta_\varepsilon}}^{{\delta_\varepsilon}}\int_{\gamma_\varepsilon(s)}
\mue \bAe \bDe \nabla u \cdot \bDe \nabla \tv \, d\sigma_{\varepsilon,s} \, ds
\\
& = \int_{-{\delta_\varepsilon}}^{{\delta_\varepsilon}}\int_{\gamma_\varepsilon(s)}
\fe \tv \, \mue \, d\sigma_{\varepsilon,s} \, ds
\\
& = \int_{{\mathcal{N}}_\varepsilon({\delta_\varepsilon})} \fe \tv \, \mue \, |\nabla\wde|
= \int_{{\mathcal{N}}_\varepsilon({\delta_\varepsilon})} \fe \tv \, \mue,
\end{align*}
because $|\nabla \wde|=1$ in ${\mathcal{N}}_\varepsilon({\delta_\varepsilon})$.
Since ${\mathcal N(\delta)}\subset {\mathcal{N}}_\varepsilon({\delta_\varepsilon})$ according to \eqref{e:Ntilde}, integration by parts gives
\[
- \int_{\mathcal N(\delta)} \mathrm{div} \big(\mue \bDe \bAe \bDe \nabla u \big) \tv =
\int_{\mathcal N(\delta)} \fe \, \tv \mue
\quad\forall \, v\in C_0^\infty({\mathcal N(\delta)}),
\]
whence the desired PDE follows after noticing that
$\big(\bI - \wde \, D^2\wde \big)^{-1}$ and  $\Pi_\varepsilon$ commute.
This completes the proof.
\end{proof}



\section{Parametric Finite Element Method}\label{sec:parametric}

The parametric method hinges on a surface approximation $\Gamma$
``interpolating'' the exact surface $\gamma$.
Recall that the latter is assumed to be a closed, compact, orientable hypersurface in $\mathbb R^{n+1}$.
In the lowest order case of piecewise linear polynomials, this corresponds to a polyhedral surface $\Gamma$ whose vertices lie on $\gamma$ or, more generally, sufficiently close to $\gamma$.  The finite element space is then obtained in a classical way by mapping a finite element triplet defined on a reference element in $\mathbb R^n$ to a facet of $\Gamma$ in $\mathbb R^{n+1}$. The FEM requires a bi-Lipschitz map $\bP:\Gamma\to\gamma$ which is not necessarily the distance function lift $\bP_d$. The latter is used for numerical analysis purposes only even for smooth surfaces.

There are two sources of error: the approximation of the exact surface $\gamma$ by the polyhedral surface $\Gamma$, the so-called \emph{geometric consistency error}, and the \emph{Galerkin error} arising from the actual finite element approximation on $\Gamma$. In this section we quantify these two errors depending on the regularity of $\gamma$.
For the former we rely on the discussion of section \ref{S:perturbation} that addresses
the effect of perturbing $\gamma$.
For $\gamma$ of class $C^{1,\alpha}$ we deal with
a generic lift $\bP:\Gamma\to\gamma$ and
obtain a suboptimal geometric consistency error.
For $C^2$ surfaces, instead, we resort to $\bP_d$
for error analysis and restore geometric optimality.

\subsection{FEM on Lipschitz Parametric Surfaces.}

\noindent
{\bf Lipschitz Parametric Surfaces.}
We adopt the viewpoint that the surface $\gamma$ is described as the deformation of an
$n$-dimensional polyhedral surface $\lingamma$ by a globally bi-Lipschitz
\emph{homeomorphism} $\bP:\lingamma \rightarrow \gamma \subset \mathbb
R^{n+1}$. 
Thus there exists $L>0$ such that for all $\bx_1, \bx_2 \in \lingamma$
\begin{equation}
\label{surf_bi_lipschitz}
L^{-1} |\bx_1-\bx_2| \le |\widetilde{\bx}_1 -\widetilde{\bx}_2| \le L|\bx_1-\bx_2|, \qquad
\widetilde{\bx}_i = \bP(\bx_i), ~i=1,2.
\end{equation}
If $\gamma$ is $C^2$, we may take $\bP=\bP_d$, but our current definition allows for much more flexibility in the choice of $\bP$.  For example, if $\gamma$ has nonempty boundary and is given as the graph of a function $\psi:\Omega \rightarrow \mathbb{R}^{n+1}$ with $\Omega \subset \mathbb{R}^n$, then the map between $\bx=(x,z) \in \Gamma$ with $x \in \Omega$ and $z \in \mathbb{R}$ could be given by $\bP(x,z)=(x, \psi(x))\in\gamma$, i.e., the ``vertical'' graph map.  

The (closed) facets of $\lingamma$ are denoted by $T$, and form the collection $\T=\{T \}$.   We assume that these facets are all simplices and denote by $S_\T$ the set of interior faces of $\T$.  Extension to other element shapes such as $n$-quadrilaterals and to nonconforming discretizations is possible under reasonable assumptions with minor modifications, but we do not elaborate them further.  
We let $\bP_T:T \rightarrow \mathbb R^{n+1}$ be
the restriction of $\bP$ to $T$.
The partition $\T$ of $\lingamma$ induces the partition
$\widetilde{\T}=\{\widetilde{T}\}_{T \in \T}$ of $\gamma$ upon setting
\[
\widetilde{T} := \bP_T(T)
\quad\forall \, T\in \T.
\]
Note that this \emph{non-overlapping} parametrization of $\gamma$ allows for Lipschitz surfaces rougher than globally $C^2$.  We additionally define {\it macro patches} 
\begin{equation}\label{patch}
\linpatch_T=\cup \big\{T': T' \cap T \neq \emptyset \big\},
\qquad
\widetilde{\omega}_T =  \bP(\linpatch_T).
\end{equation}
Let $h_T := | T |^{\frac 1 n}$ and
$\sigma < \infty$ be the triangulation shape-regularity constant, i.e.
\begin{equation}\label{e:shape_reg}
\sigma:=\sup_{\T} \max_{T \in \T} \frac{{\rm diam}(T)}{h_T}.
\end{equation}
We further assume that the number of elements in each patch $\omega_T$ is uniformly bounded.  This assumption automatically follows from shape regularity for triangulations of Euclidean domains, but the situation is more subtle for surface triangulations as illustrated in Figure~\ref{f:valence}.  Such a bound does for example hold if $\lingamma$ is obtained by systematic refinement of an initial surface mesh with a uniform bound on the number of elements in a patch \cite{DemlowDziuk:07}, or more generally using adaptive refinement strategies \cite{BCMMN16,BCMN:Magenes}.  In addition, this implies that all elements in $\linpatch_T$ have uniformly equivalent diameters, as it happens for shape regular triangulations on Euclidean domains.

\begin{figure}[ht!]
\centerline{\includegraphics[width=0.5\textwidth]{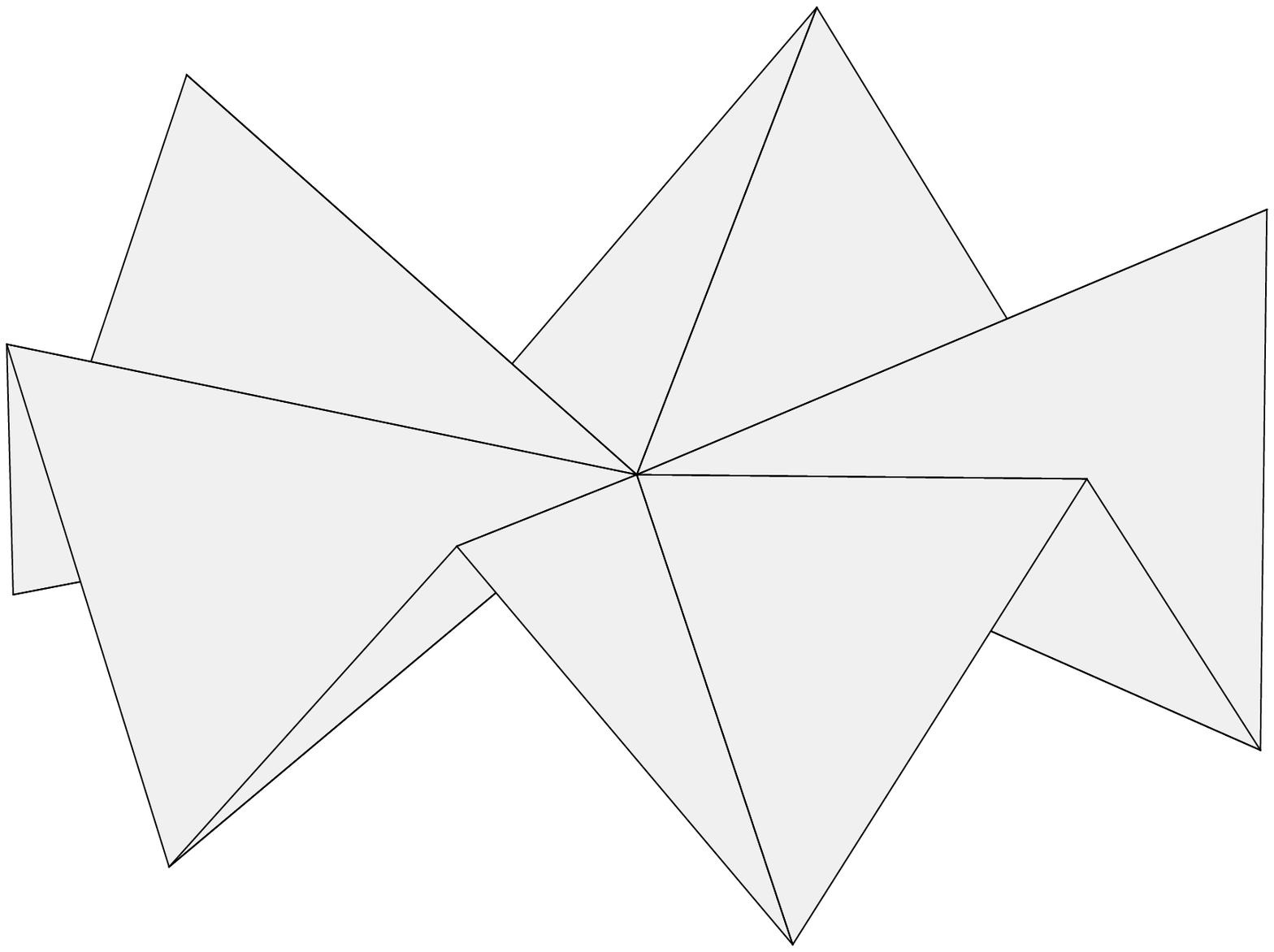}
\includegraphics[width=0.5\textwidth]{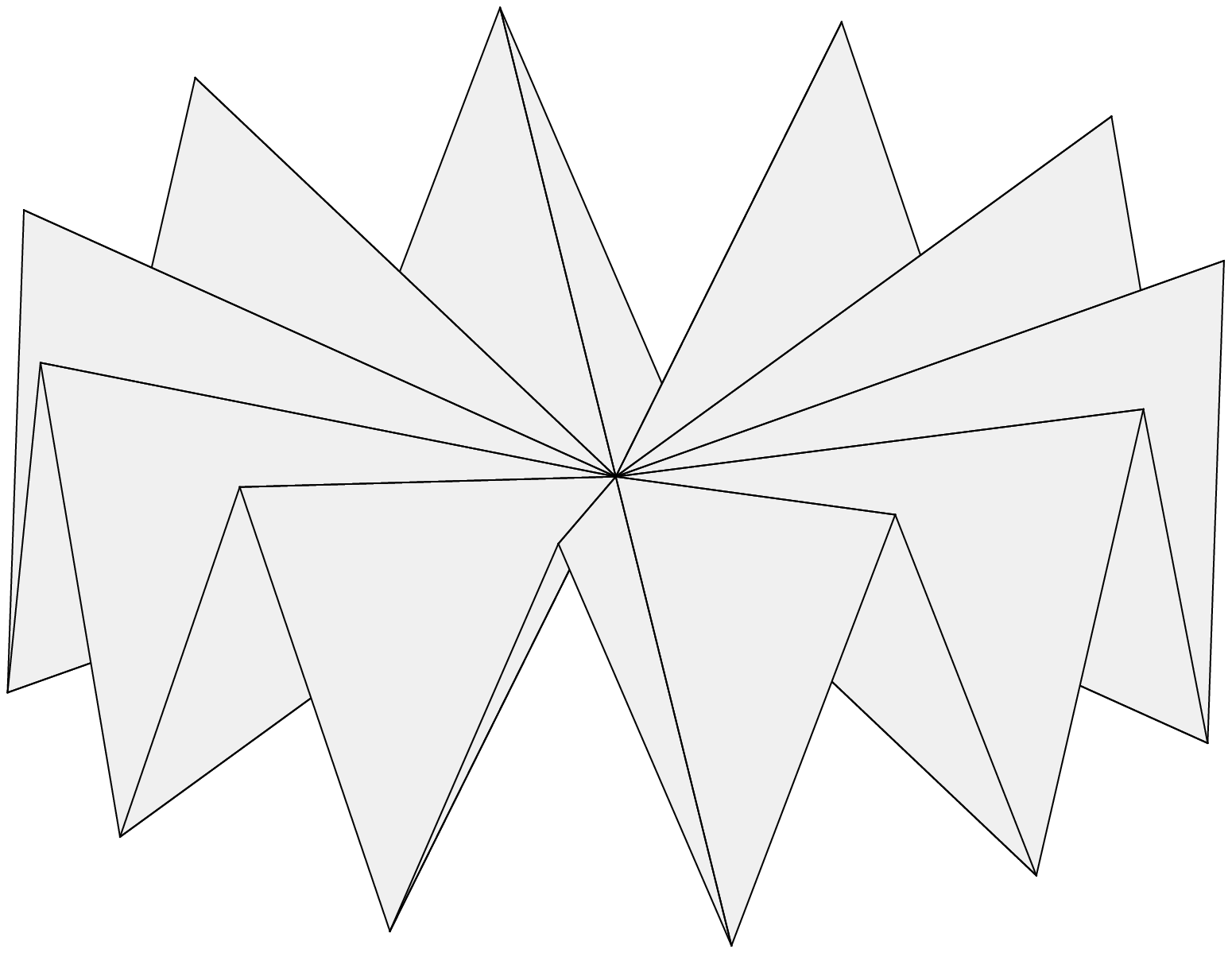}}
\caption{
Two different configurations when $n=2$ illustrating that the number of elements sharing the same vertex could be arbitrarily large even when using triangles satisfying \eqref{e:shape_reg_init}.} \label{f:valence}
\end{figure}

To provide a parametric description,
let $\widehat{T}$ be the unit reference simplex, sometimes called the {\it universal parametric domain}.
We denote by $\linparam_T:\mathbb R^n\to\mathbb R^{n+1}$ the affine map such that $T = \linparam_T(\widehat{T})$ and note that \eqref{e:shape_reg} implies
\begin{equation}\label{e:shape_reg_init}
h_T | \bw|  \lesssim | D \linparam_T \bw | \lesssim h_T | \bw|, \qquad \forall \bw \in \mathbb R^n. 
\end{equation}
Hereafter we omit to specify when the constants (possibly hidden in $\lesssim$ signs) depend on $\sigma$.
As pointed out in \cite{BP:11}, even if the initial surface approximation satisfies \eqref{e:shape_reg_init}, this property might not hold for refinements unless the initial polyhedral surface approximates the exact surface well. 
We refer to \cite{BCMMN16,BCMN:Magenes} for a discussion on how to circumvent this in an adaptive strategy.
However, since this work focusses on a-priori and a-posteriori error estimation rather that adaptivity, we assume \eqref{e:shape_reg_init} directly. 

We are now ready to introduce the local {\it non-overlapping} parametrization $\bchi$
of $\gamma$.
Let $\exactparam_T := \bP \circ\linparam_T: \widehat{T} \rightarrow \widetilde{T}$ be the corresponding local parametrization of $\widetilde{T}$ and $\bchi:= \{ \bchi_T \}_{T\in \T}$; see Figure~\ref{f:param}.
We record for latter use that thanks to the Lipschitz properties \eqref{surf_bi_lipschitz} and \eqref{e:shape_reg_init}, $\exactparam_T$ also satisfies 
\begin{equation}\label{e:shape_reg_exact}
  h_T | \bw|  \lesssim | D \exactparam_T(\by) \bw | \lesssim h_T | \bw |
 \quad \forall \bw \in \mathbb R^n, ~ \by\in\widehat{T}.
\end{equation}
\begin{figure}[ht!]
\begin{picture}(0,0)
\put(-75,-140){$\widehat T$}
\put(10,-120){$\linparam_T$}
\put(30,-65){$T\subset \Gamma$}
\put(15,-50){$\bP$}
\put(30,5){$\widetilde T \subset \gamma$}
\put(-40,-75){$\exactparam_T$}
\end{picture}
\centerline{\includegraphics[width=0.5\textwidth]{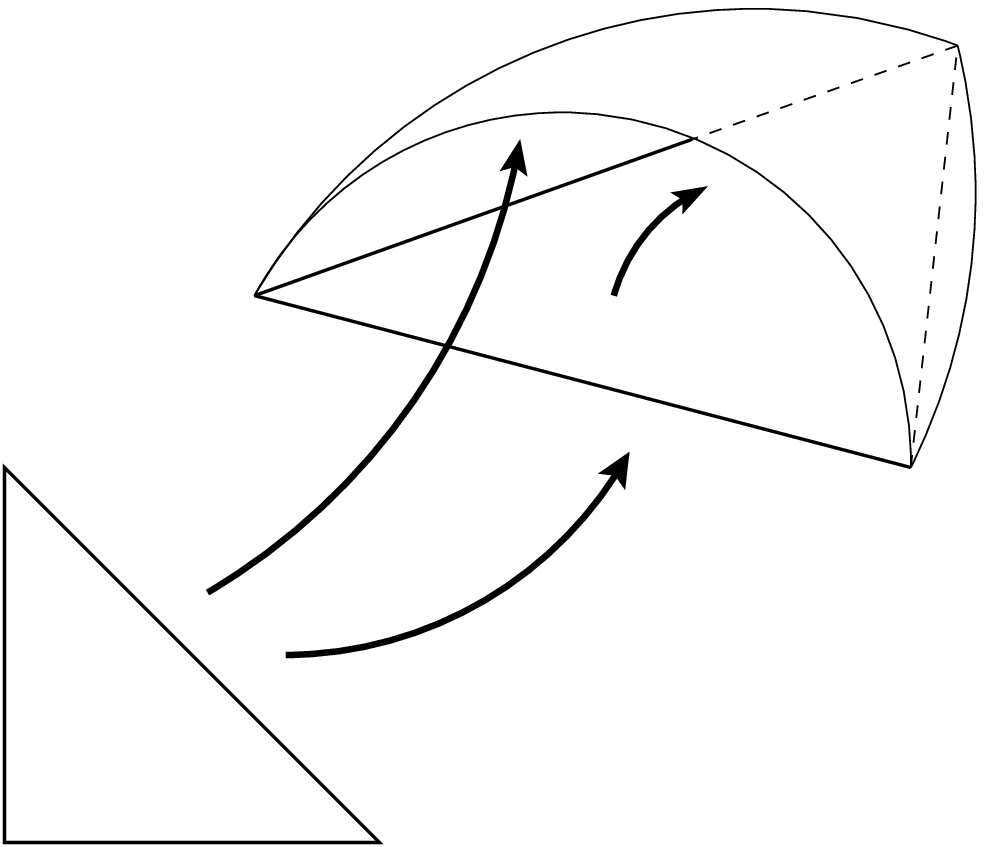}}
\caption{
Non-overlapping parametrizations  $\linparam_T : \widehat T \rightarrow T$ of $\Gamma$ and $\exactparam_T : \widehat T \rightarrow \widetilde T$ of $\gamma$.} \label{f:param}
\end{figure}

It turns out that it
will be convenient to consider $\bchi_T$ to be defined on a larger domain than $T$,
say $\widehat{\omega}_T\subset\mathbb{R}^n$, so that 
$\exactparam_T = \bP \circ\linparam_T:\widehat{\omega}_T \to\widetilde{\omega}_T$
is a  bi-Lipschitz local parametrization of $\gamma$: there exists
a universal constant $L\geq1$ such that for each fixed $T \in \T$ and
for all $\widetilde \bx_1=\exactparam_T(\by_1), \widetilde \bx_2=\exactparam_T(\by_2)
\in \widetilde{\omega}_T$, 
\begin{equation}\label{bi_lipschitz}
L^{-1} h_T  |\by_1-\by_2| 
\le |\widetilde \bx_1 -  \widetilde \bx_2 | 
\le L h_T |\by_1-\by_2|;
\end{equation}
in this case $\bchi:= \{ \bchi_T \}_{T\in \T}$ is an {\it overlapping} parametrization
of $\gamma$.
We further assume that $\bP(\mathbf v) = \mathbf v$ 
for all vertices $\mathbf v$ of $\lingamma$, so that $\linparam_T$ is the nodal
Lagrange interpolant of $\exactparam_T$ into linears.  

We finally note that a function $\widetilde \tv_T:\widetilde{T}\to\mathbb R$ defines uniquely two
functions $\widehat{\tv}_T:\widehat{T}\to\mathbb R$ and $\tv_T:T\to\mathbb R$ via the
maps $\bXi_T$ and $\bP$, namely
\begin{align}  \label{para_lift}
\widehat{\tv}_T(\by) := \widetilde \tv_T(\exactparam_T(\by)) \quad \forall\; \widehat{\bx}\in
\widehat{T} \quad \text{ and }\quad \tv_T(\bx) := \widetilde \tv_T(\bP(\bx))
\quad\forall\; \bx\in T.
\end{align}
Moreover, each one of these functions induces the other two uniquely. Accordingly,
we will use the symbol $\tv$ for all three functions if no confusion arises.

\medskip\noindent
{\bf Differential Geometry on Polyhedral Surfaces.} 
%
We use the  atlas $\{\widehat{T},\widetilde{T},\bchi_T\}_{T\in\T}$,
induced by the non-overlapping parametrization $\bchi:= \{ \bchi_T \}_{T\in \T}$,
to describe $\gamma$ in the spirit of Section \ref{sec:preliminaries}.
Likewise, we employ the atlas $\{\widehat{T},T,\bX_T\}_{T\in\T}$ to describe
the polyhedral surface $\Gamma$.
In view of \eqref{e:shape_reg_exact}, 
the discrete first fundamental form $\bg_T$ and area element $q_T$ of $\Gamma$
are given elementwise by
\begin{equation}\label{area}
  \bg_T := (D \bX_T)^t D \bX_T,
  \quad
  q_T := \sqrt{\det\bg_T},
  \quad\forall \, T\in\T.
\end{equation}
and satisfy
\begin{equation}\label{e:eigen_g}
  \textrm{eigen}(\bg_T) \approx h_T^2, \quad  q_T \approx h_T^n,
  \quad\forall \, T\in\T.
\end{equation}
They give rise to the piecewise constant functions $\bg_\Gamma:=\{\bg_T\}_{T\in\T}$
and $q_\Gamma:=\{q_T\}_{T\in\T}$. Similar properties are enjoyed by $\bchi$, which
imply that the stability constant $S_\bchi$ defined in \eqref{stab-const} is purely
geometric and independent of meshsize:
\begin{equation}\label{Sapprox1}
S_\bchi \approx 1.
\end{equation}
In addition, notice that \eqref{e:shape_reg_exact} and \eqref{bi_lipschitz} imply that
\begin{equation}\label{q:nondegen:assume}
C_1 \le \frac{q}{q_\Gamma} \le C_2
\end{equation}
for constants $C_1$, $C_2$ independent of discretization parameters.
Moreover, the
vector $\bN_T := \sum_{i=1}^{n+1} \det\big([\be_j, D \bX_T)]\big) \be_j$
is perpendicular to $T\in\T$
provided $\{ \be_j \}_{j=1}^{n+1}$ are the canonical unit vectors of $\mathbb R^{n+1}$.
This vector satisfies $|\bN_T|=q_T$ and yields the unit normal to $T$
\[
\bnu_T := \frac{\bN_T}{|\bN_T|}
\quad\forall \, T\in\T,
\]
and corresponding piecewise constant unit normal vector
$\bnu_\Gamma := \{ \bnu_T \}_{T \in \T}$ to $\Gamma$.

Given a function $\tv:\Gamma\to\mathbb{R}$, its tangential gradient
$\nabla_\Gamma\tv$ and Laplace-Beltrami operator $\Delta_\Gamma \tv$ over $\Gamma$
obey the formulas
\begin{equation}\label{grad-Gamma}
  \nabla \widehat\tv = (D \bX)^t \, \nabla_{\Gamma} \tv, \quad
  \nabla_{\Gamma} \tv = (D \bX) \, \bg_\Gamma^{-1} \, \nabla \widehat\tv,
\end{equation}
and 
\begin{equation}\label{lap-bel}
\Delta_\Gamma \tv = \frac{1}{q_\Gamma} {\text{div}} 
\big(q_\Gamma  \, \bg_\Gamma^{-1} \, \nabla\widehat\tv \big),
\end{equation}
where $\widehat \tv:{\widehat{T}}\to\mathbb{R}$ is defined in \eqref{para_lift}.
The strong form of $\Delta_\Gamma \tv$ is well defined only elementwise because
$q_\Gamma  \, \bg_\Gamma^{-1}$ is piecewise constant and so discontinuous over $\T$.
To find the correct strong form we start from the weak form \eqref{e:weak_relax},
split the integral over elements and use Corollary \ref{C:int-parts}
(integration by parts) to obtain
\begin{align*}
  \int_\Gamma \nabla_\Gamma \tv \cdot \nabla_\Gamma w & =
  \sum_{T\in\T} -\int_T w \Delta_\Gamma \tv +
  \int_{\partial T} w \nabla_\Gamma\tv\cdot\bmu_T
  \\
  & = \sum_{T\in\T} -\int_T w \Delta_\Gamma \tv +
  \sum_{S\in\mathcal{S}} \int_S w [\nabla_\Gamma \tv],
\end{align*}
where the jump residual is computed over each face $S\in\mathcal{S}$ of elements of $\T$
via
\begin{equation}\label{jump-residual}
  [\nabla_\Gamma v] := \nabla_\Gamma \tv_+ \cdot \bmu_+
    + \nabla_\Gamma \tv_- \cdot \bmu_-
\end{equation}
and $T_\pm \in\T$ are such that $S=T_+ \cap T_-$ and $\tv_\pm, \bmu_\pm$ are the
restrictions of $\tv$ and the outer unit normal to $\partial T_\pm$ which is parallel
to $T_\pm$. We then see that $\Delta_\Gamma \tv$ consists of the absolutely
continuous part \eqref{lap-bel} with respect to surface measure defined elementwise
and the singular part \eqref{jump-residual} supported on the skeleton of $\T$. This
formula makes sense for functions which are piecewise $H^2$ and globally $H^1$, such
as continuous piecewise polynomials.

\medskip\noindent
{\bf Parametric Finite Element Method.} 
%
In this work, we focus on continuous piecewise linear finite elements and
polyhedral surface approximations.
Let $\mathcal P$ be the space of linear polynomials and let $\V(\T)$ be the
space of continuous piecewise linear polynomial functions over $\Gamma$, namely
\begin{equation*}
\V(\T) :=\left\lbrace V \in C^0(\Gamma)  \;\big|\; 
	   V|_{T}  = \widehat V \circ \bX^{-1} \mbox{ for some } \widehat V\in \mathcal P, ~T \in \T\right\rbrace.
\end{equation*}
The finite element space associated with the Laplace-Beltrami equation over
$\Gamma$ is the restriction of $\V(\T)$ to functions with vanishing mean
\[
\V_\#(\T):= \V(\T) \cap L_{2,\#}(\Gamma).
\]
We define $\interp: C^0(\Gamma) \to \V(\T)$ to be the Lagrange interpolation operator
and use the same notation for vector-valued functions.

We are now ready to introduce the parametric FEM: seek $U:=U_\T \in \V_\#(\T)$
that solves
\begin{equation}\label{e:galerkin}
  \int_\Gamma \nabla_\Gamma U \cdot \nabla_\Gamma V = \int_\Gamma F V
  \quad \forall V \in \V_\#(\T),
\end{equation}
where $F \in L_{2,\#}(\Gamma)$ is an approximation of $f \in L_{2,\#}(\gamma)$ to be specified later.
Lax-Milgram theory guarantees that $U \in \V_\#(\T)$ is well defined. 
Observe that because $F \in L_{2,\#}(\Gamma)$, we also have
\begin{equation}\label{e:galerkin_relax}
  \int_\Gamma \nabla_\Gamma U \cdot \nabla_\Gamma V = \int_\Gamma F V
  \quad \forall V \in \V(\T).
\end{equation}

Since the exact problem \eqref{e:weak_relax} and discrete problem \eqref{e:galerkin_relax}
are defined on different domains $\gamma$ and $\Gamma$, the error $u-U$ does not
satisfy Galerkin orthogonality in either one. The next statement accounts for
geometric inconsistency and uses the convention~\eqref{para_lift} for the
generic lift $\bP$.

\begin{lemma}[Galerkin quasi-orthogonality]\label{L:GO}
Let $\bE$ and $\bE_\Gamma$ be defined in \eqref{error-matrix-g} and
  \eqref{error-matrix-G} via the parametrizations $\bchi=\bP\circ\bX$
  and $\chi_\Gamma=\bX$. Then, for all $V \in \V(\T)$, there holds
\begin{equation}\label{e:galerkin_Gamma}
\int_\Gamma \nabla_\Gamma (u-U) \cdot \nabla_\Gamma V = \int_\Gamma \Big(f \frac{q}{q_\Gamma}-F \Big) V + \int_\Gamma \nabla_\Gamma u \cdot \bE_\Gamma \, \nabla_\Gamma V,
\end{equation}
and 
\begin{equation}\label{e:galerkin_gamma}
\int_\gamma \nabla_\gamma (\wu-\wU) \cdot \nabla_\gamma \wV = \int_\gamma \Big (\widetilde f - \wF \frac{q_\Gamma}{q} \Big) \wV + \int_\gamma \nabla_\gamma \wU \cdot \bE \, \nabla_\gamma \wV.
\end{equation}
\end{lemma}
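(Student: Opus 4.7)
The plan is to combine four ingredients already at hand: the geometric consistency identity \eqref{consistency}, the continuous weak formulation \eqref{e:weak_relax}, the Galerkin equation \eqref{e:galerkin_relax}, and the change-of-area formula \eqref{int-gamma}. Because $\bP=\bchi\circ\bX^{-1}$ is bi-Lipschitz and $V\in\V(\T)\subset H^1(\Gamma)$, the lift $\wV=V\circ\bP^{-1}$ belongs to $H^1(\gamma)$, and conversely $u=\wu\circ\bP\in H^1(\Gamma)$; thus the identities in \eqref{consistency} apply with $(\tv,w)=(u,V)$ and $(\ttv,\widetilde w)=(\wu,\wV)$, and the quantitative objects $\bE$, $\bE_\Gamma$, $q$, $q_\Gamma$ defined on the two surfaces via the parametrizations $\bchi=\bP\circ\bX$ and $\bchi_\Gamma=\bX$ are exactly those appearing in the statement.

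For \eqref{e:galerkin_Gamma} I would start from $\int_\Gamma\nabla_\Gamma(u-U)\cdot\nabla_\Gamma V$ and apply the second equality of \eqref{consistency} to the $u$-term, rewriting
\[
\int_\Gamma\nabla_\Gamma u\cdot\nabla_\Gamma V=\int_\gamma\nabla_\gamma\wu\cdot\nabla_\gamma\wV+\int_\Gamma\nabla_\Gamma u\cdot\bE_\Gamma\,\nabla_\Gamma V.
\]
The continuous equation \eqref{e:weak_relax} with test $\wV\in H^1(\gamma)$ converts the first summand into $\int_\gamma\wf\wV$, which by \eqref{int-gamma} equals $\int_\Gamma f\,\frac{q}{q_\Gamma}\,V$. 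Subtracting \eqref{e:galerkin_relax} tested against $V$ yields the asserted identity.

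For \eqref{e:galerkin_gamma} I would argue symmetrically from the $\gamma$ side: starting from $\int_\gamma\nabla_\gamma(\wu-\wU)\cdot\nabla_\gamma\wV$, use \eqref{e:weak_relax} on the $\wu$-term to obtain $\int_\gamma\wf\wV$, and apply the first equality of \eqref{consistency} to the $\wU$-term, giving
\[
\int_\gamma\nabla_\gamma\wU\cdot\nabla_\gamma\wV=\int_\Gamma\nabla_\Gamma U\cdot\nabla_\Gamma V-\int_\gamma\nabla_\gamma\wU\cdot\bE\,\nabla_\gamma\wV.
\]
Then \eqref{e:galerkin_relax} tested against $V$ turns $\int_\Gamma\nabla_\Gamma U\cdot\nabla_\Gamma V$ into $\int_\Gamma FV$, which by the reverse change of variables coming from \eqref{int-gamma} equals $\int_\gamma\wF\,\frac{q_\Gamma}{q}\,\wV$. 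Assembling the pieces gives \eqref{e:galerkin_gamma}.

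There is no real obstacle here: this lemma is essentially a bookkeeping exercise that packages \eqref{consistency} together with the two variational equations and the area-element Jacobian. The only point requiring care is tracking, at each step, on which surface each test function lives and which form of \eqref{int-gamma} (namely $\int_\gamma\wg=\int_\Gamma g\,\frac{q}{q_\Gamma}$ versus $\int_\Gamma g=\int_\gamma\wg\,\frac{q_\Gamma}{q}$) to apply when transferring the forcing; this choice is unambiguously dictated by the convention $\bchi=\bP\circ\bX$ and $\bchi_\Gamma=\bX$ fixed in the hypothesis.
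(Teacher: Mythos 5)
Your proof is correct and follows essentially the same route as the paper's: apply the appropriate equality of \eqref{consistency} to the continuous-solution term, invoke \eqref{e:weak_relax} and the area-element change of variables \eqref{int-gamma} to rewrite the continuous bilinear form as a forcing integral over $\Gamma$, and subtract the Galerkin equation \eqref{e:galerkin_relax}. The paper proves only \eqref{e:galerkin_Gamma} and states that \eqref{e:galerkin_gamma} follows similarly; your symmetric derivation of the latter is exactly that argument spelled out.
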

\begin{proof}
We only prove \eqref{e:galerkin_Gamma} as \eqref{e:galerkin_gamma} follows similarly.
Using the equation \eqref{e:galerkin_relax} satisfied by $U$ and the consistency relation \eqref{consistency}, we obtain
\begin{align*}
\int_\Gamma \nabla_\Gamma (u-U) \cdot \nabla_\Gamma V
= \int_\gamma \nabla_\gamma \wu\cdot \nabla_\gamma \wV + \int_\Gamma \nabla_\Gamma u \cdot \bE_\Gamma \, \nabla_\Gamma V- \int_\Gamma F V.
\end{align*}
The first term on the right-hand side
equals $\int_\gamma \widetilde f V = \int_\Gamma f \frac{q}{q_\Gamma} V$, in view
of \eqref{e:weak_relax} and \eqref{int-gamma}, and thus yields \eqref{e:galerkin_Gamma}.
\end{proof}

\subsection{Geometric Consistency}\label{S:geom_consistency}
%
In this section we study the error inherent to approximating $\gamma$ with $\Gamma$.
The polyhedral surface $\Gamma$
is always represented by a lift $\bP$ whose regularity depends
on that of $\gamma$. We present two scenarios depending on such regularity. We
first assume that $\gamma$ is piecewise $C^{1,\alpha}$ and globally Lipschitz,
and later assume that $\gamma$ is $C^2$ and exploit the distance function lift
$\bP_d$ to improve the error estimates.

\medskip\noindent
{\bf Uniform Poincar\'e-Friedrichs estimate on $\Gamma$.} 
The analysis below takes advantage of the \emph{uniform Poincar\'e-Friedrichs estimate on $\Gamma$}
\begin{equation} \label{poin-unif} \|\tv\|_{L_2(\Gamma)} \lesssim \|\nabla \tv\|_{L_2(\Gamma)}
\quad\forall \tv \in H^1_\#(\Gamma),
\end{equation}
where the constant hidden in the above inequality is independent of $\Gamma$.
Note that when $\gamma$ is of class $C^{1,\alpha}$, $0<\alpha \leq 1$, Lemma~\ref{L:Poincare-unif} (uniform Poincar\'e-Friedrichs constant) implies that \eqref{poin-unif} follows from \eqref{Sapprox1} and \eqref{q:nondegen:assume}, which in turn are consequences of assumption~\eqref{surf_bi_lipschitz}.
Furthermore, when $\gamma$ is of class $C^2$ and $\bP=\bP_d$, the discussion in Section \ref{S:perturb-C2} yields conditions which are also  easy to verify: $\Gamma \subset \mathcal{N}(1/2K_\infty)$ and  $\bnu\cdot \bnu_\Gamma \ge c >0$ on $\Gamma$.

\medskip\noindent
{\bf Geometric Estimators.} 
%
Since $\gamma$ is described by $\bchi$ and $\Gamma$ by $\bX$ it is natural to
consider the difference  $D\bchi-D\bX$ as a measure of geometric mismatch \cite{BCMMN16}.
We thus start with the \textit{geometric element indicator}
\begin{equation}
  \lambda_T:= \| \smash { D (\bP-\interp \bP)}\|_{ L_\infty(T)} =
  \| \smash { D \bP- \bI}\|_{ L_\infty(T)}
  \quad\forall \, T\in\T
\end{equation}
and the corresponding \textit{geometric estimator}
\begin{equation}\label{geo-estimator}
\lambda_\T(\Gamma) :=\max_{ T \in \T} \lambda_T.
\end{equation}
We have seen that the relative measure of accuracy \eqref{geo-est}
controls the geometric error. In this vein, 
we observe that $D\bchi_T=D\bP \, D\bX_T$ because
$\exactparam_T = \bP \circ \linparam_T$, whence such measure satisfies
\begin{equation}\label{e:approx_X}
  \max_{\by\in\widehat{T}} \frac{| D (\exactparam_T-\linparam_T)(\by)|}
       {\min\big\{|D^- \exactparam_T(\by)| \, , |D^- \linparam_T(\by)|\big\}}
  \leq S_\bchi \lambda_T
  \quad\forall \, T\in\T,
\end{equation}
with a stability constant $S_\bchi\approx 1$ according to \eqref{stab-const}.
Therefore $\lambda_\T(\Gamma)$ is expected to dominate the geometric error
for surfaces of class $C^{1,\alpha}$ with $0<\alpha<1$. This is consistent
with Lemma \ref{L:perturbation_bound} (perturbation error estimate for $C^{1,\alpha}$
surfaces).

For $C^2$ surfaces, however, $\lambda_\T(\Gamma)$ is suboptimal in that it
overestimates the influence of geometry \cite{BD:18}. According to Lemma
\ref{L:perturbation_bound_dist} (perturbation error estimate for $C^2$ surfaces)
and Lemma
\ref{L:est-normals} (error estimates for normals), the following quantities
should play a crucial role in dealing with geometry via the auxiliary lift $\bP_d$
\begin{equation}
\label{d:beta}
\beta_T:=\|\bP-\interp\bP\|_{L_\infty(T)},
\quad\beta_\T(\Gamma):=\max_{T \in \T} \beta_T \, ,
\end{equation}
and
\begin{equation}\label{d:mu}
\mu_T := \beta_T + \lambda_T^2, \quad \mu_\T(\Gamma) := 
\max_{T \in \T} \mu_T;
\end{equation}
we stress that $\mu_\T(\Gamma)$ is formally of higher order than $\lambda_\T(\Gamma)$. 
We will show below that $\mu_\T(\Gamma)$ indeed controls the geometric error and
accounts for the ``superconvergence'' property associated with
the projection $\bP_d$ along the normal direction to $\gamma$
alluded to at the end of section \ref{S:perturb-C2}.


\medskip\noindent
{\bf Geometric Consistency Error for $C^{1,\alpha}$ Surfaces.}  
%
We now quantify the geometric error incurred when replacing $\gamma$ by its polygonal approximation $\Gamma$.

\begin{corollary}[geometric consistency errors for $C^{1,\alpha}$ surfaces] \label{C:lambda}
If $\bX$ and $\bchi$ satisfy \eqref{e:shape_reg_init} and
\eqref{e:shape_reg_exact}, then for all $T \in \T$ we have
\begin{equation}\label{e:approx_q_g}
  \| 1 - q^{-1}q_\Gamma \|_{L_\infty(\widehat T)} ,~~
  \| \bI - \bg_\Gamma\bg^{-1} \|_{L_\infty(\widehat T)} ,~~
  \| \bnu_\Gamma - \bnu \|_{L_\infty(T)}
  \lesssim \lambda_T,
\end{equation}
where the hidden constants depend on $S_\bchi \approx 1$ defined in
\eqref{stab-const}. Moreover,
\begin{equation}\label{e:estim_consist}
  \| \bE \|_{L_\infty(\widehat T)} + \| \bE_\Gamma\|_{L_\infty(\widehat T)} \lesssim \lambda_T
  \quad\forall \, T \in \T. 
\end{equation}
\end{corollary}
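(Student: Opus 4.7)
The plan is to localize the results of Section~\ref{S:perturb-C1alpha} to each element $T \in \T$, treating $\widehat T$ as the common parametric domain for both $\bchi_T = \bP \circ \bX_T$ and $\bX_T$. The entire argument reduces to verifying that, on $\widehat T$, the relative accuracy $\lambda_\infty$ defined in \eqref{geo-est} is controlled by $\lambda_T$, and then reading off the conclusions from earlier lemmas.

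First I would observe the identity $D\bchi_T(\by) = D\bP(\bX_T(\by)) \, D\bX_T$ on $\widehat T$, from which
\[
D(\bchi_T - \bX_T) = (D\bP - \bI) D\bX_T.
\]
Together with \eqref{e:shape_reg_init}–\eqref{e:shape_reg_exact}, which make both $|D^-\bX_T|$ and $|D^-\bchi_T|$ comparable to $h_T$, this gives the bound \eqref{e:approx_X}, namely $\lambda_\infty \le S_\bchi \lambda_T$ on $\widehat T$, with $S_\bchi \approx 1$ by \eqref{Sapprox1}. With this local bound in hand, the first two inequalities
\[
\| 1 - q^{-1}q_\Gamma \|_{L_\infty(\widehat T)}, \quad \| \bI - \bg_\Gamma \bg^{-1}\|_{L_\infty(\widehat T)} \lesssim \lambda_T
\]
follow immediately from Lemma~\ref{L:error-est}, and the normal estimate
$\|\bnu_\Gamma - \bnu\|_{L_\infty(T)} \lesssim \lambda_T$ follows from Lemma~\ref{L:est-normals} (error estimate for normals).

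For the error matrix bounds, I would recycle the computation done in Step~2 of the proof of Lemma~\ref{L:perturbation_bound}. Starting from the explicit expression \eqref{error-matrix-g} in Lemma~\ref{L:geom_consist}, rewrite
\[
\bE = D\bchi \Big( (q^{-1}q_\Gamma - 1)\bg_\Gamma^{-1} - \bg^{-1}(\bI - \bg\bg_\Gamma^{-1}) \Big) D\bchi^t.
\]
Now $|\bg^{-1}|, |\bg_\Gamma^{-1}| \lesssim h_T^{-2}$ and $|D\bchi| \lesssim h_T$ by \eqref{e:eigen_g} and \eqref{e:shape_reg_exact}, so the $h_T$ factors balance out, and combining with the already established bounds on $\|1 - q^{-1}q_\Gamma\|_{L_\infty(\widehat T)}$ and $\|\bI - \bg\bg_\Gamma^{-1}\|_{L_\infty(\widehat T)}$ yields $\|\bE\|_{L_\infty(\widehat T)} \lesssim \lambda_T$. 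The same argument, using \eqref{error-matrix-G} in place of \eqref{error-matrix-g} and the companion bounds from Lemma~\ref{L:error-est}, gives $\|\bE_\Gamma\|_{L_\infty(\widehat T)} \lesssim \lambda_T$.

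There is no real obstacle here: everything is already in place, and the only point requiring a moment's care is the scaling argument showing that the $h_T^{-2}$ growth of $\bg^{-1}$ and $\bg_\Gamma^{-1}$ is exactly compensated by the $h_T$ bounds on the outer factors $D\bchi$, so that the $\lambda_T$ estimate is dimensionally consistent and uniform in mesh size.
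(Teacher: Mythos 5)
Your proof is correct and takes essentially the same route as the paper: it localizes the perturbation theory to $\widehat T$ via \eqref{e:approx_X} to control the relative accuracy by $S_\bchi\lambda_T$, then invokes Lemma~\ref{L:error-est} and Lemma~\ref{L:est-normals} for \eqref{e:approx_q_g} and reuses the $\bE$-decomposition from Step~2 of Lemma~\ref{L:perturbation_bound} (i.e., \eqref{bound-E}) for \eqref{e:estim_consist}. The paper's proof is just a terser citation of the same chain of results.
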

\begin{proof}
We first point out that \eqref{e:shape_reg_init} and \eqref{e:shape_reg_exact}
yield $S_\bchi \approx 1$ according to \eqref{Sapprox1}. The asserted estimates
follow from Lemma \ref{L:error-est} (error estimates for $\bg$ and $q$) and
Lemma \ref{L:est-normals} (error estimate for normals)
in conjunction with \eqref{bound-E} and \eqref{e:approx_X}.
\end{proof}

\medskip\noindent
{\bf Geometric Consistency Error for $C^2$ Surfaces.}
%
We now take advantage of the lift $\bP_d$ for error representation. We recall
that, as in section \ref{S:perturb-C2},
the parametrizations of $\gamma$ and $\Gamma$ are given by
$\bchi = \bP_d \circ \bX$ and $\bX$. In particular, the infinitesimal area element $q$ of $\gamma$ is defined using $\bP_d$ and so are the consistency matrices $\bE$, $\bE_\Gamma$; see \eqref{error-matrix-gamma}, \eqref{error-matrix-Gamma}.
To improve upon Corollary \ref{C:lambda} (geometric
consistency errors for $C^{1,\alpha}$ surfaces) we need more stringent geometric
assumptions than simply $\Gamma \subset \mathcal N$.
These assumptions are somewhat technical but are checkable computationally
with information extracted from $\bP$ but without access to $\bP_d$ \cite{BD:18}.
We list them now.

\begin{itemize}
\item
{\bf Distance between $\gamma$ and $\Gamma$.}
Invoking the closest point property of the distance function projection
$\bP_d$ and the definition \eqref{d:beta} of $\beta_\T(\Gamma)$, we see that
$|\bx - \bP_d(\bx)| \le |\bx -\bP(\bx)| \le \beta_\T(\Gamma)$ for all $\bx\in\Gamma$.
We thus assume that $\Gamma$ is sufficiently close to $\gamma$ in the sense that
\begin{equation}\label{beta-small}
\beta_\T(\Gamma) < \frac{1}{2 K_\infty}
\quad\Rightarrow\quad
\Gamma\subset\mathcal{N},
\end{equation}
according to \eqref{N:def-2}. Therefore, the estimates of section \ref{S:perturb-C2}
are valid. Moreover, the discrepancy between the two lifts satisfies for all $T\in\T$
\[
| \bP(\bx) - \bP_{d}(\bx)| \leq | \bP(\bx) - \bx| + |\bx -\bP_{d}(\bx)|
\leq 2 | \bx -\bP(\bx)| \le 2 \beta_T
\quad\forall \,\bx\in T.
\]
\item
{\bf Mismatch between $\bP$ and $\bP_d$.} We assume that
\begin{equation}\label{P_Pd:mismatch}
  \bP_d \circ \bP^{-1} (\widetilde{T}) \subset \widetilde{\omega}_T
  \quad\forall \, T \in \T,
\end{equation}
where $\widetilde{\omega}_T$ is the patch around $\widetilde{T}$ within $\gamma$.
If $\widetilde\bx=\bP(\bx)\in\gamma$ for $\bx\in\Gamma$, then
\begin{equation}\label{error-mismatch}
|\widetilde\bx -  \bP_d \circ \bP^{-1}(\widetilde\bx)|
= |\bP(\bx) - \bP_d(\bx)| \le 2 \beta_T
\quad\forall \, \bx\in T.
\end{equation}
and all $T\in\T$. Since $\gamma$ is of class $C^2$, we expect $\frac{\beta_T}{h_T}\to0$
as $h_T\to0$ and realize that \eqref{P_Pd:mismatch} is always valid asymptotically.
We emphasize that it is possible to check \eqref{P_Pd:mismatch} computationally
without accessing $\bP_d$ \cite{BD:18}.

\end{itemize}

\begin{corollary}[geometric consistency errors for $C^2$ surfaces] \label{C:lambda-2}
If \eqref{beta-small} and \eqref{q:nondegen:assume} hold, then
so do the following estimates for all $T\in\T$
\begin{equation}\label{q-d-nu}
  \|d\|_{L_\infty(T)} \lesssim \beta_T,
  \quad
  \|\bnu - \bnu_\Gamma \|_{L_\infty(T)} \lesssim \lambda_T,
  \quad
  \| 1 - q^{-1} q_\Gamma\|_{L_\infty(T)} \lesssim \mu_T,
\end{equation}
where all the geometric quantities are defined using the parametrizations
$\bchi = \bP_d \circ \bX$ and $\bX$.
Moreover,
\begin{equation}\label{est-E-EG}
  \|\bE\|_{L_\infty(T)} , ~~ \|\bE_\Gamma\|_{L_\infty(T)} \lesssim \mu_T
  \quad\forall \, T\in\T.
\end{equation}
\end{corollary}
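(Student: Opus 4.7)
My plan proceeds in four steps, proving the three estimates in \eqref{q-d-nu} in order and then \eqref{est-E-EG}. A crucial preliminary observation is that, since $\bP(\mathbf v)=\mathbf v$ at every vertex $\mathbf v$ of $\T$, the piecewise linear Lagrange interpolant is $\interp \bP = \mathrm{id}$ on each $T\in\T$. Hence $\beta_T = \|\bP-\mathrm{id}\|_{L_\infty(T)}$ and $\lambda_T = \|D\bP-\bI\|_{L_\infty(T)}$, and since $\bP-\mathrm{id}$ vanishes at the vertices of $T$ a trivial mean-value estimate yields $\beta_T \lesssim h_T \lambda_T$, a fact I will use repeatedly. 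The distance estimate is then immediate: for any $\bx\in T$, the closest point property of $\bP_d$ combined with $\bP(\bx)\in\gamma$ gives $|d(\bx)| = \dist(\bx,\gamma)\le|\bx-\bP(\bx)|\le\beta_T$.

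For the normal estimate, the plan is to apply Lemma~\ref{L:est-normals} with the alternative (analysis-only) parametrization $\bchi':=\bP\circ\bX$ of $\gamma$, which differs from the $\bchi = \bP_d\circ\bX$ used in the Corollary. Since $D(\bchi'-\bX)=(D\bP-\bI)\,D\bX$ and the bi-Lipschitz property of $\bP$ combined with shape regularity \eqref{e:shape_reg_init}--\eqref{e:shape_reg_exact} ensures $|D\bchi'|, |D^-\bchi'|\approx h_T$, the local analogue of the relative accuracy $\lambda_\infty$ appearing in Lemma~\ref{L:est-normals} is controlled by $\lambda_T$, giving $|\bnu(\bP(\bx))-\bnu_\Gamma(\bx)|\lesssim\lambda_T$. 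To switch from $\bnu(\bP(\bx))$ to $\bnu(\bP_d(\bx))$, I will exploit the Lipschitz regularity of $\bnu$ on $\gamma$ (with constant $K_\infty$) together with the mismatch bound \eqref{error-mismatch} to pick up an extra error of order $\beta_T K_\infty$, which is absorbed into $\lambda_T$ via $\beta_T\lesssim h_T\lambda_T$ under the standing smallness of $h_T$. This lift-swap is the main obstacle of the proof: all downstream estimates are formulated with $\bchi=\bP_d\circ\bX$, yet the user only supplies the generic $\bP$.

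For the area-element bound, I will plug the previous two estimates into the exact identity \eqref{e:area_ratio_distance}. On $T$ one has $q/q_\Gamma=\det(\bI-d\bW)(\bnu\cdot\bnu_\Gamma)$, the determinant expands as $\prod_{i=1}^n(1-d\kappa_i)=1+O(\beta_T K_\infty)$ using the first estimate, while $\bnu\cdot\bnu_\Gamma = 1-\tfrac12|\bnu-\bnu_\Gamma|^2 = 1+O(\lambda_T^2)$ using the second. Consequently $q/q_\Gamma-1=O(\mu_T)$, and inverting via the nondegeneracy assumption \eqref{q:nondegen:assume} yields $|1-q^{-1}q_\Gamma|\lesssim\mu_T$ on $T$.

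Finally, for \eqref{est-E-EG} I will replay elementwise the decomposition $\bE\circ\bP_d = \bI_1+\bI_2+\bI_3$ from the proof of Lemma~\ref{L:perturbation_bound_dist}. The three terms are bounded by $|1-q^{-1}q_\Gamma|_{L_\infty(T)}$, $\|d\|_{L_\infty(T)}K_\infty$, and $\|\bnu-\bnu_\Gamma\|_{L_\infty(T)}^2$ respectively, which by the previous three steps are each $\lesssim\mu_T=\beta_T+\lambda_T^2$. The bound for $\bE_\Gamma$ is entirely analogous, starting from \eqref{error-matrix-Gamma} and using $q_\Gamma/q$ in place of $q/q_\Gamma$; no new difficulties arise here beyond the bookkeeping.
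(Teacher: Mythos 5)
Your proof is correct and follows essentially the paper's own chain of citations: the closest-point property of $\bP_d$ for the distance bound, Lemma~\ref{L:est-normals} for the normal bound, \eqref{e:area_ratio_distance} and \eqref{measure_error} for the area-ratio bound, and the $\bI_1+\bI_2+\bI_3$ decomposition of $\bE\circ\bP_d$ from the proof of Lemma~\ref{L:perturbation_bound_dist} for \eqref{est-E-EG}. The worthwhile detail you add is the lift swap in the normal estimate, which the paper's three-sentence proof glosses over. The corollary evaluates $\bnu$ at $\bP_d(\bx)$, while $\lambda_T$ is built from the generic lift $\bP$; applying Lemma~\ref{L:est-normals} directly with $\bchi=\bP_d\circ\bX$ would produce a circular estimate, because the relevant relative measure then involves $\|\bnu-\bnu_\Gamma\|$ itself (for $\bw$ tangent to $T$ one has $(D\bP_d-\bI)\bw=-(\bnu\cdot\bw)\bnu-d\bW\bw$ with $\bnu\cdot\bw=(\bnu-\bnu_\Gamma)\cdot\bw$). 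Your route through the auxiliary parametrization $\bchi'=\bP\circ\bX$, followed by a Lipschitz comparison of $\bnu\circ\bP$ with $\bnu\circ\bP_d$ via \eqref{error-mismatch}, is the right fix and presumably what the paper's terse citation intends. One small caveat on your final absorption $K_\infty\beta_T\lesssim\lambda_T$: you use $\beta_T\lesssim h_T\lambda_T$ together with $h_T K_\infty\lesssim 1$, and the latter is not among the corollary's stated hypotheses---it holds with a $\gamma$-dependent constant because $\Gamma\subset\mathcal{N}$ sits in a bounded neighborhood of the compact surface $\gamma$, so you should acknowledge that dependence rather than invoke a ``standing smallness of $h_T$'' that the hypotheses do not actually impose.
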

\begin{proof}
The first estimate in \eqref{q-d-nu} is trivial from the definition \eqref{d:beta}
of $\beta_T$, whereas the second estimate in \eqref{q-d-nu} is a consequence of
\eqref{est-normals}. The third estimate in \eqref{q-d-nu} results from 
\eqref{measure_error} and \eqref{est-normals}. With these estimates at hand,
the estimate for $\bE$ in \eqref{est-E-EG} comes from \eqref{est-E} and that
for $\bE_\Gamma$ is similar.
\end{proof}

We conclude with a technical result assessing the mismatch between $\bP$ and $\bP_d$.
We motivate it with the following simpler $L_\infty$-estimate valid for all $T\in\T$
\[
\| \widetilde w - \widetilde w \circ \bP_d \circ \bP^{-1}\|_{L_\infty(\widetilde T)}
\lesssim \| \nabla_\gamma \widetilde w \|_{L_{\infty}(\widetilde{\omega}_T)} \, \beta_T
\quad\forall \, \bx\in T.
\]
This is a trivial consequence of the property \eqref{error-mismatch}
for $\widetilde{\bx} \in \widetilde{T}$
\[
| \widetilde{w}(\widetilde \bx) - \widetilde{w} ( \bP_d \circ \bP^{-1}(\widetilde \bx))|
\leq \| \nabla_\gamma \widetilde w \|_{L_{\infty}(\widetilde{\omega}_T)} | \widetilde \bx -   \bP_d \circ \bP^{-1}(\widetilde\bx)|\le 2 \| \nabla_\gamma \widetilde w \|_{L_{\infty}(\widetilde{\omega}_T)} \beta_T.
\]
The estimate below is $L_2$-based and its proof entails regularization by convolution
\cite[Lemma~3.4]{BD:18}.

\begin{proposition}[mismatch between $\bP$ and $\bP_d$]\label{l:convolution}
Assume that \eqref{e:shape_reg_init} as well as the assumptions
\eqref{q:nondegen:assume}, \eqref{beta-small} and \eqref{P_Pd:mismatch} hold.
Then there exists $\lambda_* >0$ such for $\widetilde{w} \in H^1(\gamma)$ and $T \in \T$ we have
$$
\| \widetilde w - \widetilde w \circ \bP_d \circ \bP^{-1} \|_{L_2(\widetilde T)} \lesssim \beta_T \| \widetilde w \|_{H^1(\widetilde \omega_T)}
$$
provided $\lambda_T \leq \lambda_*$ and $\widetilde \omega_T$ is a patch in
$\gamma$ around $\widetilde{T}$.
\end{proposition}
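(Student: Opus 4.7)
My proof plan proceeds in two steps: first I would prove the estimate for smooth functions via the fundamental theorem of calculus along a homotopy between $\text{id}$ and $\Phi:=\bP_d\circ\bP^{-1}$, then extend to all of $H^1(\gamma)$ by regularization (convolution) on the parametric patch.

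For the smooth case, assume first that $\widetilde w\in C^1(\widetilde\omega_T)$. The assumption \eqref{P_Pd:mismatch} gives $\Phi(\widetilde T)\subset\widetilde\omega_T$, and \eqref{error-mismatch} gives the uniform bound $|\widetilde\bx-\Phi(\widetilde\bx)|\le 2\beta_T$ on $\widetilde T$. I would construct a family of maps $\Phi_s\colon\widetilde T\to\widetilde\omega_T$, $s\in[0,1]$, with $\Phi_0=\mathrm{id}$ and $\Phi_1=\Phi$, done most cleanly by pulling back to the parametric domain $\widehat T$ via $\bchi_T$, taking a linear homotopy there, and pushing forward through the parametrization of $\gamma$ (a slightly enlarged overlapping chart, so that all intermediate images remain in $\widetilde\omega_T$). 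By the fundamental theorem of calculus and Cauchy–Schwarz,
\begin{equation*}
|\widetilde w(\Phi(\widetilde\bx))-\widetilde w(\widetilde\bx)|^2
\le \Bigl(\sup_{s\in[0,1]}|\partial_s\Phi_s(\widetilde\bx)|\Bigr)
\int_0^1 |\nabla_\gamma\widetilde w(\Phi_s(\widetilde\bx))|^2\,|\partial_s\Phi_s(\widetilde\bx)|\,ds
\lesssim \beta_T^2\int_0^1|\nabla_\gamma\widetilde w(\Phi_s(\widetilde\bx))|^2\,ds.
\end{equation*}
Integrating over $\widetilde T$, swapping the order of integration, and applying a change of variables $\bz=\Phi_s(\widetilde\bx)$ at each fixed $s\in[0,1]$ yields the desired bound, provided the Jacobian of $\Phi_s$ is uniformly bounded above and below.

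The main obstacle is controlling this Jacobian: I would show that the smallness assumption $\lambda_T\le\lambda_*$ forces $D\Phi$ (and therefore $D\Phi_s$ for all $s\in[0,1]$) to stay uniformly close to the identity in the appropriate parametric coordinates. Indeed, pulled back to $\widehat T$, the linear homotopy from $\mathrm{id}$ to $\widehat\Phi:=\bchi_T^{-1}\circ\Phi\circ\bchi_T$ differs from the identity by a term bounded by a constant times $\lambda_T$, using that $\beta_T\lesssim h_T\lambda_T$ on an element whose vertices $\bP$ fixes (or via the estimates on $\bE$ from Corollary~\ref{C:lambda-2}). Choosing $\lambda_*$ small enough makes $D\widehat\Phi_s$ invertible with $|\det D\widehat\Phi_s|\simeq 1$, and combined with \eqref{e:shape_reg_exact} and \eqref{e:shape_reg_init}, the resulting change-of-variables factor is bounded by a universal constant depending only on shape regularity.

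For general $\widetilde w\in H^1(\gamma)$, I would argue by density. Pulling back to the parametric patch $\widehat\omega_T$ via local charts and mollifying at scale $\eta>0$ produces a sequence $\widetilde w_\eta\in C^\infty(\widetilde\omega_T)$ with $\widetilde w_\eta\to \widetilde w$ in $H^1(\widetilde\omega_T)$ as $\eta\to 0^+$. Applying the smooth estimate to each $\widetilde w_\eta$ gives
\begin{equation*}
\|\widetilde w_\eta-\widetilde w_\eta\circ\Phi\|_{L_2(\widetilde T)}\lesssim\beta_T\|\nabla_\gamma\widetilde w_\eta\|_{L_2(\widetilde\omega_T)}.
\end{equation*}
A triangle inequality together with $\|\widetilde w_\eta-\widetilde w\|_{L_2(\widetilde T)}\to 0$ and $\|(\widetilde w_\eta-\widetilde w)\circ\Phi\|_{L_2(\widetilde T)}\lesssim\|\widetilde w_\eta-\widetilde w\|_{L_2(\widetilde\omega_T)}\to 0$ (again via the Jacobian control from $\lambda_T\le\lambda_*$) lets me pass to the limit $\eta\to 0$, obtaining the claimed inequality with $\|\widetilde w\|_{H^1(\widetilde\omega_T)}$ on the right-hand side.
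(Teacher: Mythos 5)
Your proposal is correct and takes a genuinely different route from the paper. The paper pulls back to $\widehat T\subset\mathbb{R}^n$, Calder\'on-extends, mollifies $\widehat w$ at scale $\varepsilon\approx h_T^{-1}\beta_T$, splits by the triangle inequality, and handles the middle term $\|\widehat w_\varepsilon-\widehat w_\varepsilon\circ\widehat\psi\|_{L_2}$ by a lattice-covering argument that trades a local $W^1_\infty$ bound on the mollified function for an $L_2$ bound via finite overlap; a single change of variables through the bi-Lipschitz map $\widehat\psi$ handles the third term. You instead run the fundamental theorem of calculus along a homotopy $\Phi_s$ between the identity and $\Phi=\bP_d\circ\bP^{-1}$ for smooth $\widetilde w$, swap Fubini, change variables for each $s$, and then pass to $H^1$ by density. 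Both hit the factor $\beta_T$ (your bound $|\partial_s\Phi_s|\lesssim\beta_T$ plays the role of the paper's choice of mollification scale), and both ultimately rest on uniform bi-Lipschitz control. What the paper's route buys is that it changes variables through only the single fixed map $\widehat\psi$; your route requires uniform Jacobian control for the entire family $\Phi_s$, $s\in[0,1]$, which is a mildly stronger requirement (in particular injectivity of each $\Phi_s$), but this is where the hypothesis $\lambda_T\le\lambda_*$ earns its keep. What your route buys is that it is the classical translation-in-$L_2$ argument, stated directly on the surface without the lattice machinery.

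Two details are worth being explicit about. First, the uniform bi-Lipschitz control for the whole family $\Phi_s$: pulling back, $\partial_s\widehat\Phi_s=\widehat\Phi-\mathrm{id}$ has magnitude $\lesssim h_T^{-1}\beta_T\lesssim\lambda_T$, so the linear homotopy stays in $\widehat\omega_T$ and $D\widehat\Phi_s$ stays close to $\bI$ once $\lambda_T\le\lambda_*$; but to verify $\|D\widehat\Phi-\bI\|_{L_\infty}\lesssim\lambda_T$ (not just the displacement bound) you need to use the estimates on the normals and area ratios in \eqref{est-normals} and \eqref{q-d-nu}, which is more than the displacement estimate \eqref{error-mismatch} alone gives. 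Second, in your density step, mollifying the pulled-back $\widehat w$ at scale $\eta$ produces a function defined only on a shrunk subset of $\widehat\omega_T$; you either need the Calder\'on extension that the paper uses, or you must take $\eta$ small enough (relative to the gap between $\bigcup_s\widehat\Phi_s(\widehat T)$ and $\partial\widehat\omega_T$) so that all relevant evaluations of $\widehat w_\eta$ are defined. Neither is a gap in the idea, only in the write-up.
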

\begin{proof}
We proceed in several steps.

\noindent
{\it Step 1: Reduction to $\mathbb{R}^n$.}
Fix $T \in \T$ and recall that $\bchi_T = \bP \circ \bX_T$ satisfies
\eqref{e:shape_reg_exact} and maps the reference patch $\widehat{\omega}_T$
into $\widetilde{\omega}_T$. For notational ease, let
\[
\psi=\bP_d \circ \bP^{-1}:\gamma\to\gamma,
\qquad
\widehat{\psi}=\bchi_T^{-1} \circ \psi \circ \bchi_T:\widehat{\omega}_T\to\widehat{\omega}_T.
\]
Given $\widetilde{w} \in H^1(\gamma)$, let
$\widehat{w}=\widetilde{w} \circ \bchi_T:\widehat{\omega}_T\to\mathbb{R}$,
and note that $\widehat{w}\in H^1(\widehat{\omega}_T)$ because $\bchi_T$ is Lipschitz.
We change variables via $\bchi_T$ to $\widehat{T}$ and invoke the non-degeneracy
property \eqref{q:nondegen} to obtain
\[
\|\widetilde{w}-\widetilde{w}\circ \psi\|_{L_2(\widetilde{T})}
\lesssim h_T^{n/2} \|\widehat{w}-\widehat{w} \circ \widehat{\psi}\|_{L_2(\widehat{T})}.
\]
The assumption $\bP_d \circ \bP^{-1} (\widetilde{T}) \subset \widetilde{\omega}_T$ given in \eqref{P_Pd:mismatch} is equivalent to $\widehat{\psi}(\widehat{T}) \subset \widehat{\omega}_T$ and is sufficient to ensure that the quantity on the right hand side is well-defined.

Since $\widehat{w}$ is defined on $\widehat{\omega}_T\subset\mathbb{R}^n$, and
its boundary is Lipschitz, there is a universal extension operator
$E:H^1(\widehat{\omega}_T)\to H^1(\mathbb{R}^n)$ which is bounded both in $L_2$ and in the $H^1$-seminorm \cite{Stein:70}; this is the so-called Calder\'on operator. 
We relabel $E \widehat{w}$ to be $\widehat{w}$, and thus assume it is bounded in
$H^1(\mathbb{R}^n)$ while satisfying
\[
|\widehat{w}|_{H^1(\mathbb{R}^n)} \lesssim |\widehat{w}|_{H^1(\widehat{\omega}_T)}.
\]

\noindent
{\it Step 2: Mollification.}
We now regularize $\widehat{w}$ by convolution with a standard smooth mollifier
supported in the ball $B(0,\varepsilon)$ centered at $0$ with radius $\varepsilon>0$
to be determined. If $\Omega\subset\mathbb{R}^n$ is an arbitrary domain,
it is well known that
\begin{gather*}
\|\widehat{w}-\widehat{w}_\varepsilon\|_{L_2(\Omega)} \lesssim
\varepsilon |\widehat{w}|_{H^1(\Omega + B(0,\varepsilon))},
\\
|\widehat{w}_\varepsilon|_{W^1_\infty(\Omega)} \lesssim
\varepsilon^{-n/2} |\widehat{w}|_{H^1(\Omega + B(0,\varepsilon))}.
\end{gather*}
We may now write, without restriction on $\varepsilon$, that
\[
\|\widehat{w}-\widehat{w} \circ \widehat{\psi}\|_{L_2(\widehat{T})} \lesssim
\|\widehat{w}-\widehat{w}_\varepsilon\|_{L_2(\widehat{T})}
+ \|\widehat{w}_\varepsilon-\widehat{w}_\varepsilon \circ \widehat{\psi}\|_{L_2(\widehat{T})}
+ \|\widehat{w}_\varepsilon\circ \widehat{\psi}-\widehat{w} \circ \widehat{\psi}\|_{L_2(\widehat{T})}.
\]
We estimate the first term using the first formula above for the mollifier
\[
\|\widehat{w}-\widehat{w}_\varepsilon \|_{L_2(\widehat{T})} \lesssim
\varepsilon |\widehat{w}|_{H^1(\mathbb{R}^n)} \lesssim
\varepsilon |\widehat{w}|_{H^1(\widehat{\omega}_T)}.
\]
Similarly, changing variables via the map $\widehat\psi$, which turns out to be
Lipschitz in view of \eqref{q:nondegen} and \eqref{e:shape_reg_exact}, and
applying the restriction $\widehat{\psi}(\widehat{T}) \subset \widehat{\omega}_T$
stated in \eqref{P_Pd:mismatch}, we find that
\[
\|(\widehat{w}_\varepsilon -\widehat{w}) \circ \widehat{\psi}\|_{L_2(\widehat{T})} \lesssim
\|\widehat{w}_\varepsilon-\widehat{w}\|_{L_2(\widehat{\omega}_T)} \lesssim
\varepsilon |\widehat{w}|_{H^1(\widehat{\omega}_T)}.
\]

\medskip\noindent
{\it Step 3: Estimate for
$\|\widehat{w}_\varepsilon-\widehat{w}_\varepsilon \circ \widehat{\psi}\|_{L_2(\widehat{T})}$.}
Let $\{\by_i\}$ be a lattice on $\mathbb{R}^n$ with minimum distance between $\by_i$ and $\by_j$ ($i \neq j$) proportional to $\varepsilon$ and such that $\{B(\by_i,\varepsilon)\}$ covers $\mathbb{R}^n$.  The set $\{B(\by_i,M\varepsilon)\}$ then has finite overlap for any $M\ge1$, with the maximum cardinality of the overlap depending on $M$. We choose
\[
\varepsilon = \sup_{\by\in\widehat{T}} |\by - \widehat\psi(\by)|
\quad\Rightarrow\quad
\|\widehat{w}_\varepsilon-\widehat{w}_\varepsilon \circ \widehat{\psi}\|_{L_\infty(B(\by_i,\varepsilon)\cap\widehat{T})}
 \lesssim \varepsilon |\widehat{w}_\varepsilon|_{W_\infty^1(B(\by_i,2\varepsilon))}
\]
Applying the second property of mollifiers given above yields
\[
|\widehat{w}_\varepsilon|_{W_\infty^1(B(\by_i,2\varepsilon))} \lesssim
\varepsilon^{-n/2} |\widehat{w}|_{H^1(B(\by_i,3\varepsilon))},
\]
whence
\begin{align*}
  \|\widehat{w}_\varepsilon  -\widehat{w}_\varepsilon \circ \widehat{\psi}\|_{L_2(\widehat{T})}^2  &\lesssim \varepsilon^{n} \sum_{i} \|\widehat{w}_\varepsilon-\widehat{w}_\varepsilon \circ \widehat{\psi}\|_{L_\infty(B(\by_i,\varepsilon) \cap \widehat{T})}^2
\\  
& \lesssim \varepsilon^2 \sum_{i} |\widehat{w}|_{H^1(B(\by_i,3\varepsilon))}^2
 \lesssim \varepsilon^2 |\widehat{w} |_{H^1(\mathbb{R}^n)}^2
\lesssim \varepsilon^2 |\widehat{w}|_{H^1(\widehat{\omega}_T)}^2.
\end{align*}

\medskip\noindent
{\it Step 4: Bound on $\varepsilon$.}
Making use of the bi-Lipschitz character \eqref{e:shape_reg_exact}, we get
\begin{align*}
  \big| \by - \widehat\psi (\by) \big| & =
  \big|\bchi_T^{-1} \big(\bchi_T(\by)) - \bchi_T^{-1}\big(\psi(\bchi(\by))\big) \big|
  \\
  & \le L h_T^{-1} \big| \bchi_T(\by) -  \psi(\bchi(\by)) \big|
  = L h_T^{-1} \big| \widetilde{\bx} - \bP_d\circ\bP^{-1}( \widetilde{\bx} ) \big|,
\end{align*}
where $\widetilde{\bx} = \bchi_T(\by)$.
Recalling \eqref{error-mismatch} and the definition of $\varepsilon$, we thus obtain
\[
\varepsilon \le 2L h_T^{-1} \beta_T.
\]
We now gather the estimates of Steps 2 and 3.
Mapping from $\widehat{T}$ to $\widetilde{T}$ and back via $\bchi_T$, and utilizing
\eqref{q:nondegen} and \eqref{e:shape_reg_exact}, yields
\begin{align*}
  \|\widetilde{w}-\widetilde{w} \circ \psi \|_{L_2(\widetilde{T})}^2
  &\lesssim h_T^n \|\widehat{w}-\widehat{w} \circ \widehat{\psi}\|_{L_2(\widehat{T})}^2
  \lesssim h_T^n \varepsilon^2 |\widehat{w}|_{H^1(\widehat{\omega}_T)}^2 
\\ & \lesssim h_T^n h_T^{-2} \beta_T^2 h_T^{2-n} |\widetilde{w}|_{H^1(\widetilde{\omega}_T)}^2 =\beta_T^2 |\widetilde{w}|_{H^1(\widetilde{\omega}_T)}^2.  
\end{align*}
This completes the proof.
\end{proof}

We conclude this section with a variant of Proposition \ref{l:convolution}
(mismatch between $\bP$ and $\bP_d$) which turns out to be instrumental for the
study of the Narrow Band method discussed later in Section \ref{sec:narrow}.
\begin{proposition}[Lipschitz perturbation]\label{p:mol_bulk}
Let $\Omega_1,\Omega_2\subset\subset\Omega\subset\mathbb R^{n+1}$ be Lipschitz
bounded domains and $\bL:\Omega_1\to\Omega_2$
be a bi-Lipschitz isomorphism. If \looseness=-1
\[
r := \max_{\bx\in\Omega_1} |\bL(\bx) - \bx|
\]
is sufficiently small so that $(\Omega_1\cup\Omega_2)+B(0,r)\subset\Omega$
then for all $g\in H^1(\Omega)$ we have
$$
\| g - g \circ \bL \|_{L^2(\Omega_1)} \lesssim r \| g \|_{H^1(\Omega)}.
$$
\end{proposition}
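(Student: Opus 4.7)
The plan is to mimic the regularization-by-convolution strategy used in Proposition~\ref{l:convolution} (mismatch between $\bP$ and $\bP_d$), with the role of the local parametrization chart replaced by the global identity map, and the scale $\varepsilon$ tuned to the perturbation size $r$. I would choose $\varepsilon = r$, fix a standard radially symmetric mollifier $\re$ supported in $B(0,\varepsilon)$, and set $g_\varepsilon := g * \re$. The hypothesis $(\Omega_1 \cup \Omega_2) + B(0,r) \subset \Omega$ is precisely what is needed to guarantee that $g_\varepsilon$ and its derivatives are well defined on $\Omega_1 \cup \Omega_2$ and are controlled by $g$ on $\Omega$.

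Next I would split via the triangle inequality
\[
\|g - g\circ\bL\|_{L_2(\Omega_1)} \le \|g - g_\varepsilon\|_{L_2(\Omega_1)} + \|g_\varepsilon - g_\varepsilon\circ\bL\|_{L_2(\Omega_1)} + \|(g_\varepsilon - g)\circ\bL\|_{L_2(\Omega_1)}
\]
and dispatch the outer two terms by standard tools. The first term is bounded by $\varepsilon\|g\|_{H^1(\Omega_1 + B(0,\varepsilon))} \lesssim r\|g\|_{H^1(\Omega)}$ by the classical mollification estimate. For the third term, the bi-Lipschitz character of $\bL$ yields a bounded Jacobian, so the change of variables $\bz = \bL(\bx)$ together with $\bL(\Omega_1) = \Omega_2 \subset \Omega$ gives $\|(g_\varepsilon - g)\circ\bL\|_{L_2(\Omega_1)} \lesssim \|g - g_\varepsilon\|_{L_2(\Omega_2)} \lesssim r \|g\|_{H^1(\Omega)}$, where the hidden constant absorbs the bi-Lipschitz constant of $\bL$.

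The main obstacle is the middle term, where the pointwise gradient bound on $g_\varepsilon$ is only available in an $L_\infty$ sense with a factor $\varepsilon^{-(n+1)/2}$ coming from Young's inequality. I would address this exactly as in Step~3 of Proposition~\ref{l:convolution}: introduce a lattice $\{\by_i\}\subset\mathbb R^{n+1}$ with spacing comparable to $\varepsilon$ such that $\{B(\by_i,\varepsilon)\}$ covers $\Omega_1$ and $\{B(\by_i,M\varepsilon)\}$ has finite overlap for each fixed $M$. On each patch the definition of $r$ gives
\[
\|g_\varepsilon - g_\varepsilon\circ\bL\|_{L_\infty(B(\by_i,\varepsilon)\cap\Omega_1)} \le r\,|g_\varepsilon|_{W_\infty^1(B(\by_i,2\varepsilon))} \lesssim r\,\varepsilon^{-(n+1)/2}\,\|g\|_{H^1(B(\by_i,3\varepsilon))},
\]
so summing the squared $L_2$ contributions $\lesssim \varepsilon^{n+1}\|\cdot\|_{L_\infty}^2$ over $i$ and invoking the finite overlap of $\{B(\by_i,3\varepsilon)\}$ yields $\|g_\varepsilon - g_\varepsilon\circ\bL\|_{L_2(\Omega_1)}^2 \lesssim r^2\|g\|_{H^1(\Omega)}^2$.

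Combining the three bounds and recalling $\varepsilon = r$ completes the proof. The only subtle point is ensuring the $L_\infty$ gradient estimate for $g_\varepsilon$ on $B(\by_i,2\varepsilon)$ genuinely requires only $H^1$ data on $B(\by_i,3\varepsilon)$ (via $\|\nabla\re\|_{L_2(B(0,\varepsilon))}\lesssim \varepsilon^{-(n+1)/2-1}$ times $\|g\|_{L_2}$ is not the right estimate — one instead moves the derivative onto $g$ and writes $\nabla g_\varepsilon = (\nabla g)*\re$, giving $\|\nabla g_\varepsilon\|_{L_\infty}\lesssim \varepsilon^{-(n+1)/2}\|\nabla g\|_{L_2}$ on the enlarged ball); this is the same trick as in Step~3 of Proposition~\ref{l:convolution} transplanted from $\mathbb R^n$ to $\mathbb R^{n+1}$, and no extension operator is needed here because $g$ is already defined on all of $\Omega$.
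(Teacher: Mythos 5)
Your proposal is correct and follows exactly the same strategy as the paper's proof: set $\varepsilon=r$, mollify, split via the triangle inequality into the same three terms, handle the outer two by standard mollification estimates and the bi-Lipschitz change of variables, and dispatch the middle term by the lattice/finite-overlap argument of Step~3 of Proposition~\ref{l:convolution}. Your added remark that no extension operator is needed (since $g$ is defined on all of $\Omega$) and your clarification of the $L_\infty$ gradient bound via $\nabla g_\varepsilon=(\nabla g)*\re$ are accurate and match the intent of the paper's terse reference to Step~3.
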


\begin{proof}
We now proceed as in Proposition \ref{l:convolution}:
let $\eps=r >0$ and $g_\eps$ be a regularization of $g$ by convolution with a
standard smooth mollifier supported in the ball $B(0,\eps)$. We write
$$
\| g - g \circ \bL \|_{L^2(\Omega_1)}  \leq \| g - g_\eps \|_{L^2(\Omega_1)}
+ \| g_\eps - g_\eps \circ \bL \|_{L^2(\Omega_1)}
+ \| g_\eps \circ \bL - g \circ \bL \|_{L^2(\Omega_1)} 
$$
and note that
$$
\| g - g_\eps \|_{L_2(\Omega_1)} \lesssim \eps \| g\|_{H^1(\Omega)},
\quad
\| g_\eps \circ \bL - g \circ \bL \|_{L^2(\Omega_1)} \lesssim \eps \| g\|_{H^1(\Omega)}
$$
because $\bL^{-1}$ is Lipschitz. 
To estimate $\| g_\eps - g_\eps \circ \bL \|_{L^2(\Omega_1)}$, we argue as in Step 3 of
Proposition \ref{l:convolution} (mismatch between $\bP$ and $\bP_d$). This
completes the proof.
\end{proof}

\subsection{A-Priori Error Analysis}\label{S:a-priori}

In this section we derive {\it a-priori} error estimates in $H^1$ and $L_2$, namely
estimates expressed in terms of regularity of the exact solution $\wu$ of \eqref{e:weak}.
Compared to the existing literature these estimates
involve two lifts: $\bP_d$ and $\bP$. The former, based on the distance function $d$, is only used theoretically or to define a notion of error when comparing $U$ with $\widetilde u$. 
The latter is generic and used in practice to define the finite element method, i.e., by setting $F = \widetilde f \circ \bP \frac{q}{q_\Gamma}$ and the discrete parametrization
$\bX$ to be the interpolant of the continuous one $\chi=\bP\circ\bX$.
Optimal orders of convergence are derived without the need to access the
distance function.

We also address a gap in the literature.  Existing proofs of optimal a priori estimates for surface FEMs employ the distance function lift $\bP_d=\bx-d(\bx) \nabla d(\bx)$.  However, when $\gamma$ is $C^2$, this map is only $C^1$ because of the presence of $\nabla d$ in its definition.  Thus given $\wv \in H^2(\gamma)$, its extension $\tv = \wv \circ \bP_d$ to $\Gamma$ is only in $H^1$ and not piecewise in $H^2$ as is needed to prove optimal approximation order.  Thus existing proofs that only employ the distance function lift require the assumption that $\gamma$ be of class $C^3$ in order to obtain optimal order error estimates in the standard way; cf. the work of Dziuk in \cite{Dz88} in which such error estimates were originally obtained.   

As pointed out already in Theorem~\ref{t:C1_implies_C2} ($C^1$ distance function implies $C^{1,1}$ surface), the distance function $d$ to a $C^{1,\alpha}$ surface is no better than Lipschitz in general. Therefore, the aforementioned strategy does not extend to
$C^{1,\alpha}$ surfaces. 
However, the best approximation property of the Galerkin method together with the geometric consistency estimates of Section~\ref{S:geom_consistency} yields a-priori error estimates in $H^1$. We present this discussion after that for $C^2$ surfaces.

\medskip\noindent
{\bf A-Priori Error Estimates for $C^2$ Surfaces.}
The following lemma will be instrumental to prove optimal a priori error estimates for $\gamma$ of class $C^2$. It states that a function
$\nabla_\Gamma(\wu\circ\bP_d)$ can be approximated in $H^1(\Gamma)$ to first order
for a function $\wu\in H^2(\gamma)$. The
difficulty is that the composite function $\wu\circ\bP_d\notin H^2(\Gamma)$
whereas $\nabla_\gamma \wu \circ\bP_d \in H^1(\Gamma)$. The proof exploits this property
to restore optimal approximability of $\nabla_\Gamma(\wu\circ\bP_d)$ in $H^1(\Gamma)$.

\begin{lemma}[approximability in $H^1(\Gamma)$]\label{L:approxH1}
Let $\gamma$ be a surface of class $C^2$ and $\wu \in H^2(\gamma)$. Let
$K_\infty$ be defined in \eqref{K:def} and $\beta_\T(\Gamma)$ be given in \eqref{d:beta}.
Then we have
\begin{equation}
\label{opt_approx}
\inf_{\tV \in \V(\T)} \|\nabla_\Gamma(\wu \circ \bP_d -\tV)\|_{L_2(\Gamma)}
\lesssim h_\T |\wu|_{H^2(\gamma)}+\beta_\T(\Gamma) K_\infty \|\nabla_\gamma \wu\|_{L_2(\gamma)}.
\end{equation}
\end{lemma}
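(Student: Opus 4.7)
The difficulty is that $u := \wu \circ \bP_d$ lies in $H^1(\Gamma)$ but is generally not piecewise $H^2(\Gamma)$ when $\gamma$ is only of class $C^2$, since $\bP_d$ is then $C^1$ but not $C^{1,1}$. This blocks the standard Bramble--Hilbert estimate for Lagrange interpolation. The plan is to use Lemma~\ref{L:tan-grads} to split $\nabla_\Gamma u$ into a genuinely $H^1(\Gamma)$-regular intrinsic piece and a small geometric defect, and to approximate only the former with a Scott--Zhang-type quasi-interpolant.

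By Lemma~\ref{L:tan-grads} and $\Pi \nabla_\gamma \wu = \nabla_\gamma \wu$, one has
\[
\nabla_\Gamma u = \Pi_\Gamma (\nabla_\gamma \wu)\circ \bP_d - \Pi_\Gamma\, d\bW\, (\nabla_\gamma \wu)\circ \bP_d.
\]
The second summand is pointwise bounded by $|d| K_\infty |(\nabla_\gamma \wu)\circ \bP_d|$; combining Corollary~\ref{C:lambda-2} ($\|d\|_{L_\infty(\Gamma)}\lesssim \beta_\T(\Gamma)$) with the norm equivalence of Lemma~\ref{L:norm-equiv} through $\bP_d$ accounts directly for the $\beta_\T(\Gamma) K_\infty \|\nabla_\gamma \wu\|_{L_2(\gamma)}$ contribution in \eqref{opt_approx}. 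Crucially, $(\nabla_\gamma \wu)\circ \bP_d$ itself is an $H^1(\Gamma)$ object, as the composition of an $H^1(\gamma)$ function with the Lipschitz map $\bP_d$, even though $u$ is not piecewise $H^2$.

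For the first summand, choose $\tV \in \V(\T)$ to be a locally $H^1$-stable Scott--Zhang quasi-interpolant of $u$; this yields the standard local best-approximation estimate
\[
\|\nabla_\Gamma(u - \tV)\|_{L_2(T)} \lesssim \inf_{V\in\V(\T)}\|\nabla_\Gamma(u-V)\|_{L_2(\omega_T)}.
\]
Select $V$ so that on each element $T'\subset \omega_T$, $\nabla_\Gamma V|_{T'}$ equals $\Pi_\Gamma|_{T'}$ times the $L_2$-average of $(\nabla_\gamma\wu)\circ\bP_d$ on $T'$; a Fortin/lifted-Lagrange construction, together with the continuity of $\V(\T)$, makes this realizable up to harmless lower-order terms. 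Then the difference $\nabla_\Gamma u - \nabla_\Gamma V$ splits into the geometric defect $\Pi_\Gamma d\bW (\nabla_\gamma\wu)\circ\bP_d$ (controlled as above) and the mean oscillation $\Pi_\Gamma|_{T'}\bigl[(\nabla_\gamma\wu)\circ\bP_d - \overline{(\nabla_\gamma\wu)\circ\bP_d}^{T'}\bigr]$. A Poincar\'e inequality on $T'$ bounds the latter by $h_{T'}\,|(\nabla_\gamma\wu)\circ\bP_d|_{H^1(T')}$, and a change of variables through the bi-Lipschitz $C^1$ map $\bP_d$ converts this to $h_{T'}\,|\wu|_{H^2(\widetilde T')}$. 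Summing over elements with bounded patch overlap yields the $h_\T |\wu|_{H^2(\gamma)}$ contribution.

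\textbf{Main obstacle.} The technical sticking point is the Fortin-type construction: while the element-wise $L_2$-average of $\Pi_\Gamma \bw$ is the natural target for $\nabla_\Gamma V|_{T'}$, it does not automatically arise as the gradient of a globally continuous $V\in\V(\T)$ across elements with distinct tangent hyperplanes. The conceptual payoff, though, is that by isolating the factor $d\bW$ in the first step we need only $(\nabla_\gamma\wu)\circ\bP_d \in H^1(\Gamma)$ --- which holds as soon as $\bP_d$ is Lipschitz, that is, $\gamma \in C^2$ --- rather than $\wu\circ\bP_d$ piecewise $H^2(\Gamma)$, which would force $\gamma \in C^{2,1}$ as in the classical argument of \cite{Dz88}.
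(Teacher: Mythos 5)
Your decomposition is exactly the paper's: you split $\nabla_\Gamma(\wu\circ\bP_d)$ via Lemma~\ref{L:tan-grads} into the tangent vector field $\Pi_\Gamma\,(\nabla_\gamma\wu)\circ\bP_d$ (which is elementwise $H^1$ because $\Pi_\Gamma$ is constant on each face and $\nabla_\gamma\wu\in H^1(\gamma)$, $\bP_d$ is Lipschitz) and the defect $-\Pi_\Gamma\,d\,\bW\,(\nabla_\gamma\wu)\circ\bP_d$ controlled by $\beta_T K_\infty$, and you then approximate only the former by piecewise constant tangent fields. This is precisely the step that replaces the classical ``interpolate $u$ directly and use $|u|_{H^2(T)}$'' argument, and it is what makes $\gamma\in C^2$ (rather than $C^3$) sufficient. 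So the conceptual content is right.

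The gap is the one you yourself flag as the ``main obstacle,'' and it is genuine: nothing in your construction actually produces a \emph{globally continuous} $V\in\V(\T)$ whose elementwise gradient hits the prescribed constant tangent fields, and a Scott--Zhang operator applied to $\wu\circ\bP_d$ does not rescue you because quasi-optimality of Scott--Zhang compares against a single polynomial on each patch (not against an arbitrary element of $\V(\T)$ on that patch), and that single-polynomial best approximation would again drag in $|\wu\circ\bP_d|_{H^2(\omega_T)}$, which is exactly what you cannot control for $\gamma\in C^2$. The paper resolves the obstacle not by constructing a continuous competitor but by dispensing with continuity entirely: it invokes Veeser's equivalence theorem \cite{Vee15}, which says that for gradient approximation the infimum over continuous $\V(\T)$ is equivalent (up to constants) to the sum of \emph{per-element, fully decoupled} infima over discontinuous piecewise polynomials. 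After that, each local problem is trivial: on a flat face $T$, $\nabla_\Gamma$ maps $\mathbb{P}^1$ onto the constant tangent vector fields, so the best constant approximant to $\Pi_\Gamma(\nabla_\gamma\wu)\circ\bP_d$ on $T$ is automatically achievable, and Poincar\'e plus the chain rule (using that $D\bP_d=\Pi-d\,D^2d$ is merely bounded, which suffices because $\Pi_\Gamma$ is constant on $T$) gives the $h_T|\wu|_{H^2(\widetilde T)}$ bound. Without the Veeser localization, or an explicit construction that does produce a continuous competitor, the proof as you wrote it does not close.
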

\begin{proof}
We know from Veeser \cite{Vee15} that continuous and discontinuous
piecewise polynomial approximations in $H^1$ are equivalent.
 Even though this crucial result was originally
proved for Euclidean domains, it proofs carries over with essentially no changes
to the case of surface meshes
\begin{equation}
\label{local_gradient}
\inf_{\tV \in \V(\T)} \|\nabla_\Gamma(\wu \circ \bP_d -\tV)\|_{L_2(\Gamma)}^2 \lesssim \sum_{T \in \T} \inf_{\tV_T \in \V(T)} \|\nabla_\Gamma(\wu \circ \bP_d-\tV_T)\|_{L_2(T)}^2.
\end{equation}
We refer to \cite{CD15} for related results on surfaces. We thus fix $T\in\T$
and argue over this element hereafter; recall that $\widetilde{T}=\bP_d(T)$.

Applying the triangle inequality yields
$$
\big|\nabla_\Gamma(\wu \circ \bP_d-\tV_T) \big| \le
\big| \nabla_\Gamma(\wu \circ \bP_d)-\Pi_\Gamma (\nabla_\gamma \wu \circ \bP_d) \big|
+ \big| \Pi_\Gamma (\nabla_\gamma \wu \circ \bP_d) -\nabla_\Gamma \tV_T \big|.
$$
Using \eqref{e:tang_exact_to_discrete}, we next find that
$$|\nabla_\Gamma(\wu \circ \bP_d)-\Pi_\Gamma (\nabla_\gamma \wu \circ \bP_d)|=
\big|\Pi_\Gamma [d \bW  (\nabla_\gamma \wu \circ \bP_d)]\big| \le
K_\infty|d| \, \big|(\nabla_\gamma \wu) \circ \bP_d \big|,$$
which along with \eqref{H1:equiv} yields
$$
\| \nabla_\Gamma(\wu \circ \bP_d)-\Pi_\Gamma (\nabla_\gamma \wu \circ \bP_d)\|_{L_2(T)}
\lesssim \beta_T \, K_\infty \, \|\nabla_\gamma \wu\|_{L_2(\widetilde{T})}.
$$

Next note that $\Pi_\Gamma = \bI - \bnu_\Gamma\otimes\bnu_\Gamma$ is constant
over $T$. Therefore, $\Pi_\Gamma (\nabla_\gamma \wu \circ \bP_d) \in [H^1(T)]^{n+1}$
in $T$ because $\wu \in H^2(\gamma)$ implies $\nabla_\gamma \wu \in [H^1(\gamma)]^{n+1}$
and $\bP_d$ is $C^1$.  In addition, $\Pi_\Gamma (\nabla_\gamma \wu \circ \bP_d)$
is a tangent vector field on $\Gamma$.  On the other hand, $\nabla_\Gamma$
maps the affine functions $\mathbb{P}^1$ onto the subspace of $[\mathbb{P}^0]^{n+1}$
tangent to $\Gamma$, so standard approximation theory leads to
$$
\inf_{\tV_T \in \V(T)} \|{\bf w}-\nabla_\Gamma \tV_T\|_{L_2(T)} \lesssim h_T |{\bf w}|_{H^1(T)}
$$
for any tangent vector field ${\bf w} \in [H^1(T)]^{n+1}$ to $\Gamma$.  Using that
$\nabla \bP_d = \Pi-d \bW$ and $\bW$ is bounded because $\gamma$ is of class $C^2$,
together with the fact that $\Pi_\Gamma$ is constant in $T$, we deduce
\begin{align*} 
\inf_{\tV_T \in \V(T)}  \|\Pi_\Gamma (\nabla_\gamma \wu \circ \bP_d)&-\nabla_\Gamma \tV_T\|_{L_2(T)} \lesssim h_T |\Pi_\Gamma (\nabla_\gamma \wu \circ \bP_d)|_{H^1(T)}
\\ &  \lesssim h_T \|\Pi-d\bW\|_{L_\infty(T)} \|D_\gamma^2 \wu \circ \bP_d\|_{L_2(T)} 
 \lesssim h_T |\wu|_{H_2(\widetilde{T})},
\end{align*}
where we used the notation $D^2_\gamma  \wu := \nabla_\gamma \nabla_\gamma  \wu$.
This completes the proof.
\end{proof}

This proof reveals that \eqref{opt_approx} can in fact be written locally:   
\begin{align*}
\inf_{\tV \in \V(T)} \|\nabla_\Gamma(\wu \circ \bP_d -\tV)\|_{L_2(T)}^2
&\lesssim \beta_T^2 \, K_\infty^2 \|\nabla_\gamma \wu\|_{L_2(\widetilde{T})}^2
\\
& + \inf_{\bV \in \mathbb{P}^0(T)}  \|\Pi_\Gamma (\nabla_\gamma \wu \circ \bP_d)
- \bV \|_{L_2(T)}^2.
\end{align*}

We now apply Lemma \ref{L:approxH1} (approximability in $H^1(\Gamma)$) to derive an
a-priori error estimate. We present two proofs. The first one is very compact and
relies on Lemmas \ref{L:perturbation_bound} and \ref{L:perturbation_bound_dist}
(perturbation error estimate).
The second proof is selfcontained and paves the way to the $L_2$ error estimate
that follows. In both cases we rely on Lemma \ref{L:regularity} (regularity)
for $\gamma$ of class $C^2$ and $\wf \in L_{2,\#}(\gamma)$:
$$
\| \wu \|_{H^2(\gamma)} \lesssim \| \wf \|_{L_2(\gamma)}.
$$

\begin{theorem}[$H^1$ a-priori error estimate for $C^2$ surfaces] \label{t:H1error}
Let $\gamma$ be of class $C^2$,  $\wf \in L_{2,\#}(\gamma)$ and $\wu\in H^2(\gamma)$ 
be the solution of \eqref{e:weak}.
Let $U \in \V_\#(\T)$ be the solution to \eqref{e:galerkin} with $F =\wf \circ \bP \frac{q}{q_\Gamma}$ defined via the lift $\bP$.
If the geometric assumptions \eqref{bi_lipschitz},
\eqref{beta-small}, and \eqref{P_Pd:mismatch} are valid,
then
$$
\| \nabla_\Gamma (\wu \circ \bP -U)\|_{L_2 (\Gamma)}\lesssim \big(h_\T+ \lambda_\T(\Gamma) \big)
\| \wf \|_{L_{2}(\gamma)} \lesssim  h_\T \| \wf \|_{L_{2}(\gamma)}
$$
as well as
$$
\| \nabla_\Gamma (\wu \circ \bP_d-U)\|_{L_2 (\Gamma)}\lesssim \big(h_\T + \mu_\T(\Gamma) \big)
\| \wf \|_{L_{2}(\gamma)} \lesssim  h_\T \|\wf \|_{L_{2}(\gamma)}.
$$
\end{theorem}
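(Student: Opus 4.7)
The plan is to derive both bounds from a single Galerkin quasi-orthogonality argument based on the theoretical parametrization $\bchi = \bP_d \circ \bX$ of $\gamma$. The $\bP_d$-estimate, which captures the superconvergent geometric order $\mu_\T(\Gamma)$, will follow by combining the new approximability result in Lemma \ref{L:approxH1} with the sharp geometric consistency estimates of Corollary \ref{C:lambda-2}. The $\bP$-estimate, which attains only the weaker geometric order $\lambda_\T(\Gamma)$, will then be deduced from the $\bP_d$-estimate via a triangle-inequality comparison of the two lifts. The final bounds $\lesssim h_\T\|\wf\|_{L_2(\gamma)}$ follow from $\lambda_\T(\Gamma), \mu_\T(\Gamma) \lesssim h_\T$, which are standard Lagrange-interpolation bounds valid on $C^2$ surfaces.

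For the main ($\bP_d$) estimate, I set $u := \wu \circ \bP_d \in H^1(\Gamma)$ and invoke Lemma \ref{L:GO} with $\bchi = \bP_d \circ \bX$, obtaining for every $V \in \V_\#(\T)$
$$
\int_\Gamma \nabla_\Gamma(u-U) \cdot \nabla_\Gamma V = \int_\Gamma \nabla_\Gamma u \cdot \bE_\Gamma \nabla_\Gamma V + \int_\Gamma \Big(f\frac{q}{q_\Gamma} - F\Big) V,
$$
with $f = \wf \circ \bP_d$. Inserting an arbitrary $W \in \V_\#(\T)$, choosing $V = W - U$ in this identity, and combining with Cauchy-Schwarz and the uniform Poincar\'e-Friedrichs estimate \eqref{poin-unif} yields the Cea-type bound
$$
\|\nabla_\Gamma(u-U)\|_{L_2(\Gamma)} \lesssim \|\nabla_\Gamma(u-W)\|_{L_2(\Gamma)} + \|\bE_\Gamma\|_{L_\infty(\Gamma)}\|\nabla_\Gamma u\|_{L_2(\Gamma)} + \Big\|F - f\frac{q}{q_\Gamma}\Big\|_{H^{-1}_\#(\Gamma)}.
$$
Taking the infimum over $W$, Lemma \ref{L:approxH1} controls the best-approximation term by $h_\T \|\wu\|_{H^2(\gamma)} + \beta_\T(\Gamma) K_\infty \|\nabla_\gamma \wu\|_{L_2(\gamma)}$, Corollary \ref{C:lambda-2} delivers $\|\bE_\Gamma\|_{L_\infty(\Gamma)} \lesssim \mu_\T(\Gamma)$, and regularity (Lemma \ref{L:regularity}) converts $\|\wu\|_{H^2(\gamma)}$ into $\|\wf\|_{L_2(\gamma)}$. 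The data-consistency term is the subtle step: changing variables to $\gamma$ via $\bP$ on the one hand and $\bP_d$ on the other gives $\int_\Gamma (F - f q/q_\Gamma) V = \int_\gamma \wf (\wV - \wV \circ \bP \circ \bP_d^{-1})$ with $\wV = V \circ \bP^{-1}$; since $\bP \circ \bP_d^{-1}$ differs from the identity on $\gamma$ by at most $2\beta_\T(\Gamma)$ in view of \eqref{error-mismatch}, Cauchy-Schwarz combined with Proposition \ref{l:convolution} produces the bound $\beta_\T(\Gamma) \|\wf\|_{L_2(\gamma)} \lesssim \mu_\T(\Gamma)\|\wf\|_{L_2(\gamma)}$. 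Assembling these estimates produces the $\bP_d$-bound.

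For the $\bP$-estimate I apply the triangle inequality
$$
\|\nabla_\Gamma(\wu \circ \bP - U)\|_{L_2(\Gamma)} \le \|\nabla_\Gamma(\wu \circ \bP - \wu \circ \bP_d)\|_{L_2(\Gamma)} + \|\nabla_\Gamma(\wu \circ \bP_d - U)\|_{L_2(\Gamma)},
$$
bounding the second summand by the $\bP_d$-estimate already obtained. For the first summand I compare the tangential-gradient representations supplied by Lemma \ref{L:tan-grads} under the two lifts: the $L_\infty$-difference of the Jacobian factors is $O(\lambda_\T(\Gamma))$, while the remainder $(\nabla_\gamma \wu)\circ \bP - (\nabla_\gamma \wu)\circ \bP_d$ is controlled in $L_2(\Gamma)$ by a further application of Proposition \ref{l:convolution}, producing an $O(\lambda_\T(\Gamma))\|\wu\|_{H^2(\gamma)}$ contribution. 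The main obstacle throughout is the treatment of consistency terms in which $\wf$ or $\nabla_\gamma \wu$ appear composed with two different lifts: because $\wf$ only belongs to $L_2(\gamma)$ and $\nabla_\gamma \wu$ only to $[H^1(\gamma)]^{n+1}$, naive pointwise comparisons are unavailable, and one must dualize against $H^1_\#(\Gamma)$ test functions, transferring the derivative loss to the test function where the mollification-based estimate of Proposition \ref{l:convolution} restores control.
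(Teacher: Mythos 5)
Your proof of the $\bP_d$-estimate is essentially the paper's ``Proof 2'': write the error as the sum of a geometric consistency term (controlled by $\bE_\Gamma$ via Corollary~\ref{C:lambda-2}), a best-approximation term (Lemma~\ref{L:approxH1}), and a data-mismatch term (handled by dualizing and applying Proposition~\ref{l:convolution}), then close with the uniform Poincar\'e--Friedrichs bound. One small slip: Proposition~\ref{l:convolution} is stated for $\widetilde w - \widetilde w\circ\bP_d\circ\bP^{-1}$, so the cleanest choice is $\widetilde w = V\circ\bP_d^{-1}$, not $\wV = V\circ\bP^{-1}$ with the map $\bP\circ\bP_d^{-1}$; your version would require a mirror variant of the proposition, which is true but not stated.

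Where you genuinely diverge from the paper is the $\bP$-estimate. The paper proves it \emph{independently} by rerunning the same argument with $\bP$ in place of $\bP_d$ and invoking Corollary~\ref{C:lambda} ($\lambda_\T$-order geometric consistency for $C^{1,\alpha}$ surfaces). You instead deduce it from the $\bP_d$-estimate by a triangle inequality, transferring the burden to the lift-mismatch term $\|\nabla_\Gamma(\wu\circ\bP - \wu\circ\bP_d)\|_{L_2(\Gamma)}$. Your route works: using \eqref{tan-grads-lip} (not Lemma~\ref{L:tan-grads}, which covers only the distance lift) one has $\nabla_\Gamma(\wu\circ\bP) - \nabla_\Gamma(\wu\circ\bP_d) = D\bX\,\bg_\Gamma^{-1}(D\bchi - D\bchi_d)^t(\nabla_\gamma\wu)\circ\bP + D\bX\,\bg_\Gamma^{-1}D\bchi_d^t[(\nabla_\gamma\wu)\circ\bP - (\nabla_\gamma\wu)\circ\bP_d]$; the Jacobian difference is $O(\lambda_T)$ since $D\bchi - D\bX = (D\bP-\bI)D\bX$ and, for tangent vectors $w$ of $\Gamma$, $(D\bP_d-\bI)w = -(\bnu\cdot w)\bnu - d\,\bW w$ with $|\bnu\cdot w| \lesssim \lambda_T|w|$ and $|d|\lesssim\beta_T$; and the gradient mismatch is $O(\beta_\T)$ by Proposition~\ref{l:convolution} applied componentwise. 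Since $\mu_\T \lesssim \lambda_\T$, the triangle inequality recovers the claimed $h_\T+\lambda_\T$ order. Your route is slightly more economical (one substantive argument rather than two), at the cost of the extra Jacobian comparison; the paper's route avoids that comparison but requires rerunning the full Strang argument with the weaker $C^{1,\alpha}$ consistency bounds. Both are valid.
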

\noindent
{\it Proof 1.}
We prove the second estimate.
Let $f_\Gamma=F$ and $u_\Gamma \in H^1_\# (\Gamma)$ solve \eqref{Gamma:LBproblem}
$$
\int_\Gamma \nabla_\Gamma u_\Gamma \nabla_\Gamma \tv = \int_\Gamma f_\Gamma \tv
\quad \forall \, \tv \in H^1_\#(\Gamma).
$$
Since $U \in \V_\#(\T)$ is the Galerkin approximation
to $u_\Gamma$ on $\Gamma$, we infer that
$$
\|\nabla_\Gamma(u_\Gamma-U)\| = \inf_{\tV \in \V(\T)} \|\nabla_\Gamma(u_\Gamma-\tV)\|.
$$
This combined with the triangle inequality yields
\[
\|\nabla_\Gamma (\wu\circ\bP_d -U)\|_{L_2(\Gamma)} \le
2 \|\nabla_\Gamma(\wu\circ\bP_d-u_\Gamma)\|_{L_2(\Gamma)}
+  \inf_{\tV \in \V(\T)} \|\nabla_\Gamma(\wu\circ\bP_d-\tV)\|_{L_2(\Gamma)}.
\]
Applying Lemma \ref{L:approxH1} (approximability of $H^1(\Gamma)$), together with
$\| \wu \|_{H^2(\gamma)}\lesssim \| \wf \|_{L_2(\gamma)}$,
readily gives
\[
\inf_{\tV \in \V(\T)} \|\nabla_\Gamma(\wu\circ\bP_d-\tV)\|_{L_2(\Gamma)}
\lesssim \big(h_\T +\beta_\T(\Gamma) \big) \|\wf\|_{L_2(\gamma)}.
\]
To estimate the remaining term, we resort to Lemma \ref{L:norm-equiv} (norm equivalence),
Lemma \ref{L:perturbation_bound_dist}
(perturbation error estimate) along with Corollary \ref{C:lambda-2} (geometric
consistency errors for $C^2$ surfaces) to obtain
\[
\|\nabla_\Gamma(\wu\circ\bP_d - u_\Gamma)\|_{L_2(\Gamma)}
\lesssim \mu_\T(\Gamma) \|F\|_{H_\#^{-1}(\Gamma)} + \|f q_d q_\Gamma^{-1}-F\|_{H_\#^{-1}(\Gamma)},
\]
where $q_d$ denotes the area element induced by the parametrization
$\bchi=\bP_d\circ\bX$ of $\gamma$.
We denote by $\bP_d^{-1}$ the inverse of $\bP_d$ restricted to $\Gamma$, and
use Proposition \ref{l:convolution} (mismatch between $\bP$ and $\bP_d$),
with $\widetilde{w}=\tv \circ \bP_d^{-1}$ and $\tv\in H^1_\#(\Gamma)$, to get
\begin{align*}
\|f q_d \, q_\Gamma^{-1}-F\|_{H^{-1}(\Gamma)} & = \sup_{\|\nabla_\Gamma \tv\|_{L_2(\Gamma)}=1} \int_\Gamma \Big(\wf\circ \bP_d \frac{q_d}{q_\Gamma}-\wf \circ \bP \frac{q}{q_\Gamma}\Big) \tv
\\ &  = \sup_{\|\nabla_\Gamma \tv\|_{L_2(\Gamma)}=1} \int_\gamma \wf \big(\tv \circ \bP_d^{-1} - \tv \circ \bP^{-1} \big)
\lesssim \beta_\T(\Gamma) \|\wf\|_{L_2(\gamma)}.
\end{align*}
Combining the previous inequalities with
$\|F\|_{H^{-1}(\Gamma)} \lesssim \|\wf\|_{L_2(\gamma)}$ completes the proof of the
second assertion. The proof of the first one proceeds along the same lines
but using Lemma \ref{L:perturbation_bound} (perturbation error estimate
for $C^{1,\alpha}$ surfaces)
and Corollary \ref{C:lambda} (geometric consistency for $C^{1,\alpha}$ surfaces)
instead.
\qed

\medskip\noindent
{\it Proof 2.}
We closely mimic the proof of Lemmas \ref{L:perturbation_bound} and \ref{L:perturbation_bound_dist} (perturbation error estimate) for the solution to the Laplace-Beltrami problem on nearby surfaces, with an additional step needed due to the Galerkin approximation. In addition, the fact that $F=\wf\circ\bP \frac{q}{q_\Gamma}$ is defined using the map $\bP$ while all other quantities are lifted using the closest point projection $\bP_d$ adds a twist to our proof as compared with standard proofs of such error estimates. We let $u=\wu\circ\bP_d(\bx)$ for all $\bx\in\Gamma$ for notational convenience, and focus on the second assertion.  

\medskip\noindent
{\it Step 1: Error representation}.
For $V \in \V(\T)$ arbitrary, we let $W=V-U$ 
to arrive at
\begin{align*} 
\| \nabla_\Gamma (V-U)\|_{L_2(\Gamma)}^2 = \int_\Gamma \nabla_\Gamma (u-U)\cdot \nabla_\Gamma W + \int_\Gamma \nabla_\Gamma (V-u) \cdot \nabla_\Gamma W.
\end{align*}
We now invoke Lemma \ref{L:GO} (Galerkin quasi-orthogonality) to rewrite the first
term as follows:
\[
\int_\Gamma \nabla_\Gamma (u-U)\cdot \nabla_\Gamma W
= \int_\Gamma \Big(\wf\circ\bP_d \frac{q_d}{q_\Gamma}-F \Big) W
+ \int_\Gamma \nabla_\Gamma u \cdot \bE_\Gamma \nabla_\Gamma W,
\]
where the area element $q_d$ over $\gamma$ is induced by the parametrization
$\bchi = \bP_d\circ\bX$. We thus have the error representation formula
\begin{align*}
\| \nabla_\Gamma (V-U)\|_{L_2(\Gamma)}^2 & = \int_\Gamma \nabla_\Gamma u \cdot \bE_{\Gamma} \nabla_\Gamma W + \int_\Gamma \nabla_\Gamma (V-u) \cdot \nabla_\Gamma W \\
& + \int_\Gamma \Big(\wf \circ \bP_d \frac{q_d}{q_\Gamma}-F\Big) W := I+II+III,
\end{align*}
and estimate the three terms on the right hand side separately.

\medskip\noindent
{\it Step 2: Geometric and interpolation errors.}
According to Corollary \ref{C:lambda-2} (geometric consistency errors for $C^2$ surfaces),
the error matrix satisfies $\|\bE_\Gamma\|_{L_\infty(\Gamma)} \lesssim \mu_\T(\Gamma)$. This,
together with Lemma \ref{L:norm-equiv} (norm equivalence) and the a priori
bound $\|\nabla_\gamma \wu\|_{L_2(\gamma)} \le \|\wu\|_{H^2(\gamma)} \lesssim \|\wf\|_{L_2(\gamma)}$, yields
\[
I \lesssim \mu_\T(\Gamma) \|\wf\|_{L_2(\gamma)} \|\nabla_\Gamma W\|_{L_2(\Gamma)}.
\]
On the other hand, we can choose $V\in\V(\T)$ so that Lemma \ref{L:approxH1}
(approximability in $H^1(\Gamma)$) holds, whence
\[
II \lesssim \big(h_\T + \beta_\T(\Gamma) \big) \|\wf\|_{L_2(\gamma)}
\|\nabla_\Gamma W\|_{L_2(\Gamma)}.
\]

\medskip\noindent
{\it Step 3: Final estimates}.
We recall that the discrete forcing is given by $F = \wf \circ \bP \frac{q}{q_\Gamma}$,
where $q$ is the area element in $\gamma$ induced by the parametrization
$\bchi = \bP \circ \bX$. Changing variables to $\gamma$ via the lifts
$\bP_d$ and $\bP$ for each integral in $III$ gives
\[
III = \int_\Gamma \Big(\wf \circ \bP_d \frac{q_d}{q_\Gamma}
- \wf \circ \bP \frac{q}{q_\Gamma} \Big) W
= \int_\gamma \wf \Big( W\circ \bP_d^{-1} - W \circ \bP^{-1} \Big),
\]
where again $\bP_d^{-1}$ denotes the inverse of $\bP_d$ restricted to $\Gamma$.
Since $\wf$ has vanishing mean over $\gamma$, we can assume that so does $W$
over $\Gamma$. This allows us to invoke \eqref{poin-unif} (uniform Poincar\'e-Friedrichs
constant) to deduce $\|W\|_{H^1(\Gamma)} \lesssim \|\nabla_\Gamma W\|_{L_2(\Gamma)}$
and thus apply Proposition \ref{l:convolution} (mismatch between $\bP$ and $\bP_d$)
to obtain
\[
III \lesssim  \beta_\T(\Gamma) \| f \|_{L_2(\Gamma)} \|\nabla_\Gamma W\|_{L_2(\Gamma)}.
\]

Collecting the previous estimates, and using that
$\beta_\T(\Gamma)\le\mu_\T(\Gamma)$, leads to
\[
\|\nabla_\Gamma (U-V)\|_{L_2(\Gamma)} \lesssim \big(h_\T + \mu_\T(\Gamma) \big)
\|f\|_{L_2(\Gamma)} \lesssim h_\T \|\wf\|_{L_2(\gamma)}
\]
because $\mu_\T(\Gamma) \lesssim h_\T^2 |d|_{W^2_\infty(\Gamma)}$ according to the
definition \eqref{d:mu} of $\mu_\T(\Gamma)$ and Corollary \ref{C:lambda-2}
(geometric consistency for $C^2$ surfaces). Invoking again Lemma \ref{L:approxH1}
(approximability in $H^1(\Gamma)$) yields the second assertion.
    
The first statement follows similarly upon replacing $\widetilde u\circ\bP_d$ by
$\widetilde u \circ \bP$, $\bP_d$ by $\bP$ and invoking Corollary \ref{C:lambda}
(geometric consistency errors for $C^{1,\alpha}$ surfaces)
$$
\| \bE_{\Gamma} \|_{L_\infty(\Gamma)}  \lesssim \lambda_\T(\Gamma)
\lesssim h_\T |\bP|_{W^2_\infty (\Gamma)}.
$$
This concludes the proof.
\qed

Comparing Corollary \ref{C:lambda} (geometric consistency errors for $C^{1,\alpha}$
surfaces) with Corollary \ref{C:lambda-2} (geometric consistency errors for
$C^{2}$ surfaces) ones sees that using the distance function lift $\bP_d$
for error representation gives rise to a quadratic geometric error estimator for surfaces
$\gamma$ of class $C^2$
\[
\mu_\T(\Gamma)  \lesssim h_\T^2 |d|_{W^2_\infty(\Gamma)} \, ,
\]
even though the FEM is designed in terms of a generic lift $\bP$ also of class $C^2$.
Meanwhile the geometric estimator
$\lambda_\T(\Gamma)\lesssim h_\T |\bP|_{W^2_\infty (\Gamma)}$ is linear for this
regularity class.
This does not affect the $H^1$ a-priori error analysis for piecewise linear
approximations of $\gamma$ and $u$, which is first order, but it is crucial to derive
optimal second-order $L_2$ error estimates by a duality argument. We present next
such estimates for surfaces of class $C^2$ and a FEM based on a generic lift $\bP$ also
of class $C^2$, a result that seems to be new in the literature.

\begin{theorem}[$L_2$ a-priori error estimate for $C^2$ surfaces]\label{t:L2_apriori}
Let $\gamma$ be of class $C^2$ and be described by a generic lift  $\bP$ of class
$C^2$. Let the geometric conditions \eqref{bi_lipschitz},
\eqref{beta-small}, and \eqref{P_Pd:mismatch} be satisfied.
Let $\wu\in H^1_\#(\gamma)$ solve \eqref{e:weak_relax} and $U \in \V_\#(\T)$ solve
\eqref{e:galerkin} with $F= \wf \circ \bP \frac{q}{q_\Gamma}$. Then
\begin{equation}\label{e:L2error}
  \| \wu \circ \bP - U \|_{L_2 (\Gamma)}
  \lesssim h_{\T}^2  \| \wf \|_{L_{2}(\gamma)},
\end{equation}
provided $\lambda \leq \lambda_*$, where $\lambda_*$ is as in Proposition~\ref{l:convolution}.
\end{theorem}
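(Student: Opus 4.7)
The plan is to run a duality (Aubin--Nitsche) argument on $\gamma$, with two preparations specific to the surface setting: (i) replace the generic lift $\bP$ in the error by the distance lift $\bP_d$ so that the dual problem posed on $\gamma$ can be transported to $\Gamma$ via the $C^1$ map $\bP_d$; and (ii) carefully track mean-value compatibility between $\gamma$ and $\Gamma$ since neither $e := \wu\circ\bP_d - U$ on $\Gamma$ nor its lift $\widetilde e := e\circ\bP_d^{-1}$ on $\gamma$ need be mean-zero. The first preparation is a direct application of Proposition \ref{l:convolution} (with $\widetilde w = \wu\circ\bP\circ\bP_d^{-1}$, after a change of variables from $\Gamma$ to $\gamma$) combined with Corollary \ref{C:lambda-2} and Lemma \ref{L:regularity}, which yields $\|\wu\circ\bP - \wu\circ\bP_d\|_{L_2(\Gamma)} \lesssim \beta_\T(\Gamma)\|\wu\|_{H^1(\gamma)} \lesssim h_\T^2\|\wf\|_{L_2(\gamma)}$. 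For the mean value, observe $|\int_\Gamma \wu\circ\bP_d| = |\int_\gamma \wu(q_\Gamma/q_d - 1)| \lesssim \mu_\T(\Gamma)\|\wu\|_{L_2(\gamma)} \lesssim h_\T^2\|\wf\|_{L_2(\gamma)}$, so the mean contributions will be of the right order and can be absorbed at the end.

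Having reduced to bounding $\|e\|_{L_2(\Gamma)}$, I would solve the dual problem $-\Delta_\gamma \widetilde z = \widetilde e_0$ with $\int_\gamma \widetilde z = 0$, where $\widetilde e_0$ is the mean-zero shift of $\widetilde e$. Lemma \ref{L:regularity} yields $\|\widetilde z\|_{H^2(\gamma)} \lesssim \|\widetilde e_0\|_{L_2(\gamma)}$. Integration by parts gives $\|\widetilde e_0\|_{L_2(\gamma)}^2 = \int_\gamma \nabla_\gamma \widetilde z\cdot \nabla_\gamma \widetilde e$, and the geometric consistency identity \eqref{consistency} rewrites this as
\[
\|\widetilde e_0\|_{L_2(\gamma)}^2 = \int_\Gamma \nabla_\Gamma z\cdot \nabla_\Gamma e - \int_\gamma \nabla_\gamma \widetilde z\cdot \bE\,\nabla_\gamma \widetilde e,
\]
with $z := \widetilde z\circ\bP_d$ on $\Gamma$. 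The second term is of order $O(h_\T^3)\|e\|_{L_2(\Gamma)}\|\wf\|_{L_2(\gamma)}$ thanks to $\|\bE\|_{L_\infty} \lesssim \mu_\T(\Gamma) \lesssim h_\T^2$ from Corollary \ref{C:lambda-2}, the regularity bound on $\widetilde z$, norm equivalence (Lemma \ref{L:norm-equiv}), and the $H^1$ estimate of Theorem \ref{t:H1error}. For the main term, I would insert a quasi-interpolant $Z\in\V(\T)$ of $z$ satisfying Lemma \ref{L:approxH1}, split $\int_\Gamma \nabla_\Gamma z\cdot \nabla_\Gamma e = \int_\Gamma \nabla_\Gamma(z-Z)\cdot \nabla_\Gamma e + \int_\Gamma \nabla_\Gamma Z\cdot \nabla_\Gamma e$, and estimate the first piece by $h_\T\|\widetilde z\|_{H^2(\gamma)}\cdot h_\T\|\wf\|_{L_2(\gamma)}$, which is of the desired order.

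The hard part is the second piece. Using the Galerkin quasi-orthogonality of Lemma \ref{L:GO} with test function $Z$, it equals a forcing-mismatch integral $\int_\Gamma(F-F_d)Z$ plus a geometric term in $\bE_\Gamma$, where $F_d := \wf\circ\bP_d\,\frac{q_d}{q_\Gamma}$ is the ``canonical'' forcing associated with $\bP_d$. The $\bE_\Gamma$ term is again of order $O(h_\T^3)$. The forcing mismatch is the place where the use of a generic lift $\bP$ (rather than $\bP_d$) for defining $F$ really shows up and is the main technical obstacle, because $\wf$ is only $L_2$ so one cannot differentiate it. I would circumvent this by changing variables to $\gamma$:
\[
\int_\Gamma(F-F_d)Z = \int_\gamma \wf\,(Z\circ\bP^{-1} - Z\circ\bP_d^{-1}),
\]
and then apply Proposition \ref{l:convolution} to the \emph{discrete} function $\widetilde w := Z\circ\bP_d^{-1} \in H^1(\gamma)$, noting that $\widetilde w\circ\bP_d\circ\bP^{-1} = Z\circ\bP^{-1}$. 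This yields $\|Z\circ\bP^{-1}-Z\circ\bP_d^{-1}\|_{L_2(\gamma)} \lesssim \beta_\T(\Gamma)\|Z\|_{H^1(\Gamma)} \lesssim h_\T^2\|\widetilde z\|_{H^2(\gamma)} \lesssim h_\T^2\|e\|_{L_2(\Gamma)}$; Cauchy--Schwarz then controls the forcing mismatch by $h_\T^2\|\wf\|_{L_2(\gamma)}\|e\|_{L_2(\Gamma)}$.

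Collecting the estimates yields $\|\widetilde e_0\|_{L_2(\gamma)}^2 \lesssim h_\T^2\|\wf\|_{L_2(\gamma)}\|e\|_{L_2(\Gamma)}$. Combining with the a priori control $\|\widetilde e\|_{L_2(\gamma)}^2 = \|\widetilde e_0\|_{L_2(\gamma)}^2 + |\gamma|\,\overline{\widetilde e}^{\,2}$ and the earlier bound $|\overline{\widetilde e}| \lesssim h_\T^2(\|\wf\|_{L_2(\gamma)}+\|e\|_{L_2(\Gamma)})$ on the mean, Lemma \ref{L:norm-equiv} gives $\|e\|_{L_2(\Gamma)}^2 \lesssim h_\T^2\|\wf\|_{L_2(\gamma)}\|e\|_{L_2(\Gamma)} + h_\T^4(\|\wf\|_{L_2(\gamma)}^2 + \|e\|_{L_2(\Gamma)}^2)$. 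Dividing by $\|e\|_{L_2(\Gamma)}$ and absorbing the higher-order term for $h_\T$ sufficiently small delivers \eqref{e:L2error} after combining with the initial $h_\T^2$-bound on $\|\wu\circ\bP-\wu\circ\bP_d\|_{L_2(\Gamma)}$.
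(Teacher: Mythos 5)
Your argument is correct and follows the same broad strategy as the paper: an Aubin--Nitsche duality argument on $\gamma$, with the geometric consistency matrix $\bE$, Galerkin quasi-orthogonality, and Proposition~\ref{l:convolution} (mismatch between $\bP$ and $\bP_d$) as the key ingredients, and with the preliminary reduction from $\wu\circ\bP$ to $\wu\circ\bP_d$ via Proposition~\ref{l:convolution}. The two technical points where you deviate from the paper's proof are both legitimate alternatives. First, the paper replaces $\widetilde U := U\circ\bP_d^{-1}$ by $\widetilde U_\# := \frac{q_\Gamma}{q}\,\widetilde U$, which is automatically mean-free on $\gamma$ because $\int_\gamma \widetilde U_\# = \int_\Gamma U = 0$; this makes the dual right-hand side $\wu - \widetilde U_\#$ compatible without further work. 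You instead subtract the mean of $\widetilde e$ by hand, and pay for it with a separate estimate on $\overline{\widetilde e}$; this buys a slightly more transparent error representation but requires the extra bookkeeping you note. Second, the paper takes $Z$ to be the Galerkin approximation of the dual solution $\wz$ over $\Gamma$, so that $\|\nabla_\Gamma(z-Z)\|_{L_2(\Gamma)}$ is controlled by a direct application of Theorem~\ref{t:H1error}; you take $Z$ to be a near-optimal quasi-interpolant coming from Lemma~\ref{L:approxH1} and then invoke Galerkin quasi-orthogonality to shift the main contribution to $\nabla_\Gamma Z\cdot\nabla_\Gamma e$. Both choices lead to the same estimate; the paper's choice hides one application of \eqref{consistency} inside Theorem~\ref{t:H1error}, while yours keeps the $\bE_\Gamma$ term explicit.

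One small imprecision worth fixing: you bound $|\int_\Gamma \wu\circ\bP_d|$, which controls $|\int_\Gamma e|$ (since $\int_\Gamma U = 0$), but what you actually need for $\|\widetilde e\|_{L_2(\gamma)}^2 = \|\widetilde e_0\|_{L_2(\gamma)}^2 + |\gamma|\,\overline{\widetilde e}^{\,2}$ is the mean of $\widetilde e$ over $\gamma$. The correct computation is $\overline{\widetilde e} = -|\gamma|^{-1}\int_\gamma \widetilde U$ (using $\int_\gamma\wu=0$), and then $\int_\gamma \widetilde U = \int_\Gamma U\,\frac{q_d}{q_\Gamma} = \int_\Gamma U\big(\frac{q_d}{q_\Gamma}-1\big)$ (using $\int_\Gamma U=0$), which is $\lesssim \mu_\T(\Gamma)\|U\|_{L_2(\Gamma)} \lesssim h_\T^2\|\wf\|_{L_2(\gamma)}$ by Corollary~\ref{C:lambda-2}. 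This gives the same order, so the conclusion stands, but the quantity you actually estimate is not the one you use. Also note that when you write ``a quasi-interpolant $Z$ satisfying Lemma~\ref{L:approxH1},'' that lemma only provides an infimum estimate; you should say $Z$ realizes (or nearly realizes) that infimum, and derive its $H^1(\Gamma)$-stability by triangle inequality, as you will need $\|Z\|_{H^1(\Gamma)} \lesssim \|\widetilde z\|_{H^2(\gamma)}$ for the forcing-mismatch term.
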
 
\begin{proof}
We employ a standard duality argument, but enforcing compatibility (mean-value-zero)
conditions. We use the lift $\bP_d$ and its inverse $\bP_d^{-1}$ when restricted to $\Gamma$ to switch from $\gamma$ to $\Gamma$ back and
forth. To this end we use the notation $\widetilde{w} = w\circ\bP_d^{-1}:\gamma\to\mathbb{R}$
and $\tv = \ttv\circ\bP_d:\Gamma\to\mathbb{R}$ for functions $w:\Gamma\to\mathbb{R}$
and $\ttv:\gamma\to\mathbb{R}$. We denote $q_d$ the area element induced by
$\bP_d$. We finally observe that if $\bP$ is of class $C^2$ then
\[
\beta_\T(\Gamma) \le \mu_\T(\Gamma) \lesssim h_\T^2 |\bP|_{W^2_\infty(\Gamma)},
\]
where $\beta_\T(\Gamma)$ and $\mu_\T(\Gamma)$ are defined in \eqref{d:beta}
and \eqref{d:mu}. We split the proof into several steps.

\medskip\noindent
{\it Step 1: Duality argument.}
We associate with $U\in\V_\#(\T)$ the function $\widetilde{U}_\# = \frac{q_\Gamma}{q} \widetilde U \in L_{2,\#}(\gamma)$ with vanishing mean over $\gamma$ and let $\wz \in H^1_\#(\gamma)$ satisfy
$$
\int_\gamma \nabla_\gamma \wz \cdot \nabla_\gamma  \ww = \int_\gamma \big(\wu - \widetilde{U}_\#\big) \ww
\quad \forall \, \ww \in H^1_\#(\gamma).
$$
Observe that the Lax-Milgram lemma and Lemma \ref{L:Poincare} (Poincar\'e-Friedrichs inequality) guarantee existence and uniqueness of $\wz\in H^1_\#(\gamma)$.
Let also $Z\in\V_\#(\T)$ be the Galerkin approximation to $\wz$ over $\Gamma$, that is
$$
\int_\Gamma \nabla_\Gamma Z \cdot \nabla_\Gamma W =
\int_\Gamma \big( u_\# - U \big) W, \quad \forall \, W \in \V(\T),
$$
where $u_\# := \frac{q}{q_\Gamma} u$
has vanishing mean over $\Gamma$. Note also that $u_\# - U=(\wu-\wU_\#)\circ\bP_d \frac{q}{q_\Gamma}$ is a compatible right-hand side for Theorem \ref{t:H1error} ($H^1$
a-priori error estimate).
We thus have 
$$
\| \wu - \widetilde{U}_\# \|_{L_2(\gamma)}^2  = 
\int_{\gamma} \nabla_\gamma \big(\wu- \widetilde U \big) \cdot \nabla_\gamma (\wz- \widetilde Z)
+  \int_{\gamma} \nabla_\gamma \big(\wu- \widetilde U \big) \cdot \nabla_\gamma \widetilde Z.
$$
Applying Lemma \ref{L:GO} (Galerkin orthogonality) the second integral becomes
\[
\int_{\gamma} \nabla_\gamma \big(\wu- \widetilde U\big) \cdot \nabla_\gamma \widetilde Z
= \int_\gamma \Big(\wf - \widetilde F \frac{q_\Gamma}{q_d}  \Big) \widetilde Z
+ \int_\gamma \nabla_\gamma \widetilde U \cdot \bE \, \nabla_\gamma \widetilde Z,
\]
with $\widetilde F = F\circ\bP_d$. Changing variables first via the
lift $\bP_d$ and next via $\bP$, we get
\[
\int_\gamma \widetilde F \widetilde Z \frac{q_\Gamma}{q_d} = \int_\Gamma F Z
= \int_\gamma F\circ \bP^{-1} ~ Z\circ\bP^{-1} ~ \frac{q_\Gamma}{q} =\int_\gamma \wf \, Z\circ\bP^{-1}.
\]
Consequently, we have derived the following error representation:
\begin{equation}\label{error-rep}
\begin{aligned}
\| \wu - \widetilde{U}_\# \|_{L_2(\gamma)}^2  &= 
\int_{\gamma} \nabla_\gamma \big(\wu- \widetilde U \big) \cdot \nabla_\gamma (\wz- \widetilde Z)
\\
& + \int_\gamma \wf \Big( Z\circ\bP_d^{-1} - Z \circ \bP^{-1} \Big)
\\
& + \int_\gamma \nabla_\gamma \widetilde U \cdot \bE \, \nabla_\gamma \widetilde Z.
\end{aligned}
\end{equation}
The first term is standard and the next two account for the mismatch between
$\bP$ and $\bP_d$ and geometric consistency. We examine them separately now.

\medskip\noindent
{\it Step 2: Bounds.}
Since $\gamma$ is of class $C^2$, Lemma \ref{L:regularity} (regularity)
gives for $z$
\[
\|\wz\|_{H^2(\gamma)} \lesssim \| \wu - \widetilde{U}_\# \|_{L_2(\gamma)}.
\]
Combining Theorem \ref{t:H1error} ($H^1$ a-priori error estimate) for $\wz$
with Lemma \ref{L:norm-equiv} (norm equivalence) yields the following estimate
in $L_2(\gamma)$ instead of $L_2(\Gamma)$
\[
\|\nabla_\gamma (\wz-\widetilde Z)\|_{L_2(\gamma)} \lesssim
h_\T \| \wu - \widetilde{U}_\# \|_{L_2(\gamma)}.
\]
Applying Theorem \ref{t:H1error} again, this time for $u$, implies
\[
\int_{\gamma} \nabla_\gamma \big(\wu- \widetilde U \big) \cdot \nabla_\gamma (\wz- \widetilde Z)
\lesssim h_\T^2 \|\wf\|_{L_2(\gamma)} \| \wu - \widetilde{U}_\# \|_{L_2(\gamma)}.
\]

On the other hand, Proposition \ref{l:convolution} (mismatch between $\bP$
and $\bP_d$) with $\widetilde{w} = Z\circ\bP_d^{-1}$ leads to
\begin{align*}
\int_\gamma \wf \Big( Z\circ\bP_d^{-1} - Z \circ \bP^{-1} \Big) & \lesssim
\beta_\T(\Gamma) \|\wf\|_{L_2(\gamma)} \|Z \circ \bP^{-1}\|_{H^1(\gamma)}
\\
& \lesssim \beta_\T(\Gamma) \|\wf\|_{L_2(\gamma)} \|\nabla_\Gamma Z\|_{L_2(\Gamma)},
\end{align*}
because $Z$ has a zero mean on $\Gamma$. Since $\bP$ is of class $C^2$, one sees
that $\beta_\T(\Gamma)\lesssim h_\T^2 |\bP|_{W^2_\infty(\Gamma)}$. Hence the
a-priori bound $\|\nabla_\Gamma Z\|_{L_2(\Gamma)} \lesssim \|u-U\|_{L_2(\Gamma)}$
implies
\[
\int_\gamma \wf \Big( Z\circ\bP_d^{-1} - Z \circ \bP^{-1} \Big) \lesssim
h_\T^2 |\bP|_{W^2_\infty(\Gamma)} \|\wf\|_{L_2(\gamma)} \|u_\#-U\|_{L_2(\Gamma)}.
\]
Finally, Corollary \ref{C:lambda-2} (geometric consistency error for $C^2$ surfaces),
in conjunction with Lemma \ref{L:norm-equiv} (norm equivalence),
allows us to tackle the geometric error
\[
\int_\gamma \nabla_\gamma \widetilde U \cdot \bE \, \nabla_\gamma \widetilde Z \lesssim
\|\nabla_\Gamma U\|_{L_2(\Gamma)} \|\nabla_\Gamma Z\|_{L_2(\Gamma)} \|\bE\|_{L_\infty(\gamma)}
\lesssim h_\T^2 \|\wf\|_{L_2(\gamma)} \|u_\#-U\|_{L_2(\Gamma)},
\]
where again we have used a priori bounds for $\nabla_\Gamma U$ and $\nabla_\Gamma Z$. Lemma \ref{L:norm-equiv} (norm equivalence) and the nondegeneracy property \eqref{q:nondegen:assume} of $\frac{q}{q_\Gamma}$ imply that $\|u_\#-U\|_{L_2(\Gamma)} \lesssim \|\wu-\widetilde{U}_\#\|_{L_2(\gamma)}$.
Collecting the previous estimates and dividing through by $\|\wu-\widetilde{U}_\#\|_{L_2(\gamma)}$, we thus arrive at
\[
\| \wu - \widetilde{U}_\# \|_{L_2(\gamma)} \lesssim
h_\T^2 \|\wf\|_{L_2(\gamma)}. 
\]

\smallskip\noindent
{\it Step 3: Discrepancy between $\widetilde{U}$ and $\widetilde{U}_\#$ and final estimates.}  We still need to deal with the discrepancy between $\widetilde{U}$ and $\widetilde{U}_\#=\frac{q_\Gamma}{q} \widetilde{U}$.  Using Lemma \ref{C:lambda-2} (geometric consistency errors for $C^2$ surfaces) and Lemma \ref{L:norm-equiv} again, we find that
\[ \|\widetilde{U} -\widetilde{U}_\#\|_{L_2(\gamma)} \le \|1-q_\Gamma q^{-1}\|_{L_\infty(\gamma)} \|\widetilde{U}\|_{L_2(\gamma)} \lesssim h_\T^2 \|\widetilde{f}\|_{L_2(\gamma)}.\]
Applying the triangle inequality followed by Lemma \ref{L:norm-equiv}
gives the intermediate estimate
\[
\| \wu\circ\bP_d - U \|_{L_2(\Gamma)} \lesssim \| \wu - \widetilde U \|_{L_2(\gamma)} \lesssim
h_\T^2 \|\wf\|_{L_2(\gamma)}.
\]
To conclude the proof, we simply note that
\[
\| \wu\circ\bP_d - \wu\circ\bP \|_{L_2(\Gamma)} \approx
\| \wu - \wu\circ\bP_d\circ\bP^{-1} \|_{L_2(\gamma)} \lesssim
\beta_\T(\Gamma) \|\wu\|_{H^1(\gamma)} \lesssim h_T^2 \|\wf\|_{L_2(\gamma)},
\]
according to Proposition \ref{l:convolution} (mismatch between $\bP$ and $\bP_d$)
and the estimate $\beta_\T(\Gamma)\lesssim h_\T^2 \|\bP\|_{W^2_\infty(\Gamma)}$
for $\bP$ of class $C^2$ (see definition \eqref{d:beta} of $\beta_\T(\Gamma)$).
Finally, the triangle inequality leads to the asserted estimate.
\end{proof}

The estimate \eqref{e:L2error} is known for surfaces $\gamma$ of class $C^3$ and
the distance function lift $\bP_d$ \cite{Dz88}.
We insist that \eqref{e:L2error} appears to
be new even for $\bP=\bP_d$ for surfaces of class $C^2$ and is optimal both in
terms of regularity of $u$ and $\gamma$ as well as order.

The $C^2$ regularity of $\gamma$ enters in three distinct places in Step 2
of the proof to tackle the right hand side of \eqref{error-rep}
as well as in Step 3. The first instance is via Lemma \ref{L:regularity}
(regularity) to handle the $H^2$ regularity of both $u$ and $z$ in terms of the
$L_2$ norm of the forcing terms: it turns out that \eqref{regularity} becomes
\[
|\wu|_{H^2(\gamma)} \lesssim |d|_{W^2_\infty(\mathcal{N})} \|\wf\|_{L_2(\gamma)},
\]
whence the factor $|d|_{W^2_\infty(\mathcal{N})}^2$ appears. The same happens with the
term involving $\|\bE\|_{L_\infty(\gamma)}$ in view of \eqref{est-E-EG}, whereas
a factor $|\bP|_{W^2_\infty(\Gamma)}$ shows up for the middle term in \eqref{error-rep}
and the end of the proof due to Proposition \ref{l:convolution} (mismatch
between $\bP$ and $\bP_d$). The complete estimate thus reads
\begin{equation}\label{complete-est}
  \|\wu\circ\bP - U\|_{L_2(\Gamma)} \lesssim
  h_\T^2 |d|_{W^2_\infty(\mathcal{N})}^2 \|\wf\|_{L_2(\gamma)}.
\end{equation}

\medskip\noindent
{\bf A-Priori Error Estimates for $C^{1,\alpha}$ Surfaces.}
We end this section proving $H^1$ error estimates for surfaces $\gamma$
of class $C^{1,\alpha}$ and solutions $\wu$ of class $H^{1+s}(\gamma)$ for $0<s\le 1$.
We recall Lemma \ref{L:regularity-W2p} (regularity for $W^2_p$ surfaces) that establishes
this regularity for $s=1$, provided $n< p \le \infty$, along with
\[
\|\wu\|_{H^2(\gamma)} \lesssim \|\wf\|_{L_2(\gamma)}.
\]
In general, however,
the relation between $\alpha$ and $s$ is not well understood; we refer to
\cite{BDO19} where it is proved the existence of $s=s(\alpha)>0$ such that
$\wu \in H^{1+s}(\gamma)$. We start with a variant of Lemma \ref{L:approxH1}
(approximability in $H^1(\Gamma)$).

\begin{lemma}[approximability in $H^1(\Gamma)$]\label{L:approxH1-C1a}
Let $\gamma$ be a surface of class $C^{1,\alpha}$ and $\wu \in H^{1+s}(\gamma)$,
where $0<s<\alpha<1$ or $0<s\le \alpha=1$.
Then we have
\begin{equation}
\label{opt_approx2}
\inf_{\tV \in \V(\T)} \|\nabla_\Gamma(\wu \circ \bP -\tV)\|_{L_2(\Gamma)}
\lesssim h_\T^s |\wu|_{H^{1+s}(\gamma)}.
\end{equation}
\end{lemma}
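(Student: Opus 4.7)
The plan is to mirror the proof of Lemma~\ref{L:approxH1}, replacing the integer-order approximation by a fractional one and using the generic lift $\bP$ in place of the distance-function lift $\bP_d$. First, I invoke Veeser's equivalence between continuous and discontinuous piecewise polynomial approximation in $H^1$ (which, as noted in the proof of Lemma~\ref{L:approxH1}, carries over to surface meshes) to localize the estimate:
\[
\inf_{\tV \in \V(\T)} \|\nabla_\Gamma(\wu \circ \bP -\tV)\|_{L_2(\Gamma)}^2
\;\lesssim\; \sum_{T \in \T} \inf_{\tV_T \in \V(T)} \|\nabla_\Gamma(\wu \circ \bP-\tV_T)\|_{L_2(T)}^2.
\]
Fix $T\in\T$ and set $\mathbf v := \nabla_\Gamma(\wu\circ\bP)|_T$, which is a $T$-tangential vector field. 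Since $\nabla_\Gamma$ maps $\mathcal P^1(T)$ onto the constant tangent vector fields on the flat element $T$, the best local $L_2$ approximation reduces to choosing $\nabla_\Gamma \tV_T$ equal to the average $\bar{\mathbf v}$ (which is automatically tangential because $\mathbf v$ is). A standard fractional Poincar\'e inequality on the simplex $T$ then yields
\[
\inf_{\tV_T \in \V(T)} \|\nabla_\Gamma(\wu \circ \bP-\tV_T)\|_{L_2(T)} \;\le\; \|\mathbf v - \bar{\mathbf v}\|_{L_2(T)}\;\lesssim\; h_T^s \, |\mathbf v|_{H^s(T)}.
\]

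The heart of the argument is to bound $|\mathbf v|_{H^s(T)}$ by $|\wu|_{H^{1+s}(\widetilde T)}$. From Lemma~\ref{L:tan-grads-lip}, together with $\bchi_T=\bP\circ\bX_T$ and the identity $D\bX_T\,\bg_\Gamma^{-1}\,D\bX_T^t=\Pi_T$ (the constant tangential projection on $T$, see \eqref{projection-b}), one obtains the pointwise representation
\[
\mathbf v(\bx) \;=\; \Pi_T \, D\bP(\bx)^t \, (\nabla_\gamma\wu)\!\circ\!\bP(\bx), \qquad \bx\in T.
\]
Because $\bP$ is of class $C^{1,\alpha}$, the factor $D\bP^t$ belongs to $C^{0,\alpha}(T)$, while $(\nabla_\gamma\wu)\!\circ\!\bP\in H^s(T)$ by bi-Lipschitz invariance of $H^s$ (which holds for $0<s<1$, and reduces to the classical chain rule for $s=1$). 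A Leibniz-type estimate for the Sobolev--Slobodeckij seminorm of a product with a H\"older factor then yields $|\mathbf v|_{H^s(T)}\lesssim |\wu|_{H^{1+s}(\widetilde T)}$, the constant depending on $\|D\bP\|_{C^{0,\alpha}}$ and on the shape-regularity of $\T$.

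The main obstacle is precisely this fractional product estimate. The natural bound reads
\[
\int_T\!\!\int_T \frac{|D\bP(\bx)^t g(\bx)-D\bP(\by)^t g(\by)|^2}{|\bx-\by|^{n+2s}}\,d\bx\,d\by
\;\lesssim\; \|D\bP\|_{L_\infty}^2 |g|_{H^s(T)}^2 + [D\bP]_{C^{0,\alpha}}^2\!\int_T\!\!\int_T\!\!\frac{|g(\by)|^2}{|\bx-\by|^{n+2(s-\alpha)}}\,d\bx\,d\by,
\]
and the second integral is finite precisely when $s<\alpha$ (with $s=\alpha=1$ recovered by the Lipschitz product rule), which is exactly the hypothesis of the lemma. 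Combining this with the fact that $\bP:T\to\widetilde T$ is bi-Lipschitz with constants of order one (so $\|(\nabla_\gamma\wu)\circ\bP\|_{H^s(T)}\simeq \|\nabla_\gamma\wu\|_{H^s(\widetilde T)}$), and summing over $T\in\T$ while using the non-overlapping character of the decomposition $\gamma=\cup_T\widetilde T$ together with the monotonicity
\[
\sum_{T\in\T} |\wu|_{H^{1+s}(\widetilde T)}^2 \;\le\; |\wu|_{H^{1+s}(\gamma)}^2,
\]
yields \eqref{opt_approx2}.
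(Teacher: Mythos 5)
Your proof is correct and takes essentially the same route as the paper: both localize via Veeser's equivalence of continuous and discontinuous piecewise polynomial approximation, represent $\nabla_\Gamma(\wu\circ\bP)$ elementwise through the chain-rule identity from Lemma~\ref{L:tan-grads-lip}, and then use the $C^{0,\alpha}$ regularity of $D\bP$ together with a Sobolev--Slobodeckij product estimate (finite precisely when $s<\alpha$, or $s\le\alpha=1$ via the Lipschitz product rule) to conclude $|\nabla_\Gamma(\wu\circ\bP)|_{H^s(T)}\lesssim |\wu|_{H^{1+s}(\widetilde T)}$. You have simply made explicit the local approximation step (mean-value approximation plus fractional Poincar\'e) and the product estimate that the paper folds into the phrase ``a direct calculation using the definition of the seminorm''; the substance and the role of the hypothesis $s<\alpha$ (resp.\ $s\le\alpha=1$) are identical.
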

\begin{proof}
We recall that $u=\wu\circ\bP$ and $\nabla_\Gamma u \circ \chi_\Gamma = D \bchi_\Gamma \bg_\Gamma^{-1} D\bchi^t \nabla_\gamma \widetilde u \circ \chi$, according to \eqref{tan-grads-lip}, and that $D \bchi_\Gamma, \bg_\Gamma^{-1}$ and $D\bchi$ are uniformly of class $C^{0,\alpha}$; here $\bchi_\Gamma=\bX$. Given $T\in\mathcal T$, a direct calculation using the definition of the seminorm $|\cdot|_{H^s(T)}$ shows that the composition of a Lipschitz map with a $H^s$ function as well as the product of a $C^{0,\alpha}$ function with a $H^s$ function belong to $H^s$ provided $s<\alpha$ or $s\le\alpha=1$. Consequently, we infer that $\nabla_\Gamma u \in H^{1+s}(T)$ for all $T\in\mathcal T$ along with
\[  
| u |_{H^{1+s}(T)} \lesssim | \wu |_{H^{1+s}(\widetilde T)}.
\]
A scaling argument guarantees that the constant hidden in this inequality is independent of $T\in \mathcal T$. We next apply the localized interpolation estimate of
Vesser \cite{Vee15} to deduce
\[
\inf_{V \in \mathbb V(\mathcal T)} \| \nabla_\Gamma (u - V)\|_{L_2(\Gamma)}^2 \lesssim 
\sum_{T\in \mathcal T} \inf_{V \in \mathbb V(T)} \| \nabla_\Gamma (u - V)\|_{L_2(T)}^2
\lesssim h_{\mathcal T}^{2s} |\wu|_{H^{1+s}(\gamma)}^2,
\]
which is the asserted estimate.
\end{proof}

We now compare Lemma \ref{L:approxH1-C1a} with Lemma \ref{L:approxH1} (approximability in $H^1(\Gamma)$). We stress that the lift $\bP=\bchi\circ\bX^{-1}$ is of class $C^{1,\alpha}$ for surfaces of class $C^{1,\alpha}$, whereas the distance lift $\bP_d$ is just of class $C^1$ for surfaces of class $C^2$. This is why the proof of Lemma \ref{L:approxH1-C1a} is considerably simpler than that of Lemma \ref{L:approxH1}. The virtue of $\bP_d$ is reflected in a higher order geometric error $\mu_\T(\Gamma)$ in Theorem \ref{t:H1error} ($H^1$ a-priori error estimate for $C^2$ surfaces) relative to the next $H^1$ error estimate. This is also responsible for the optimal Theorem \ref{t:L2_apriori} ($L_2$ a-priori error estimate for $C^2$ surfaces) which does not have a counterpart in this context.

\begin{theorem}[$H^1$ a-priori error estimate for $C^{1,\alpha}$ surfaces] \label{t:H1errorC1a}
  Let $\gamma$ be of class $C^{1,\alpha}$, $0<\alpha\leq 1$,  and assume that the geometric assumptions \eqref{bi_lipschitz},
\eqref{beta-small}, and \eqref{P_Pd:mismatch} are valid.
 Let $\wf \in L_{2,\#}(\gamma)$ and $\wu\in H^{1+s}(\gamma)$ be the solution of \eqref{e:weak} and satisfy
\[
\|\wu\|_{H^{1+s}(\gamma)} \lesssim \|\wf\|_{L_2(\gamma)},
\]
provided $0<s<\alpha<1$ or $0<s\le\alpha=1$.
If $U \in \V_\#(\T)$ is the solution to \eqref{e:galerkin} with $F =\wf \circ \bP \frac{q}{q_\Gamma}$ defined via the lift $\bP$, then
$$
\| \nabla_\Gamma (\wu \circ \bP -U)\|_{L_2 (\Gamma)}\lesssim h_\T^s \|\wu\|_{H^{1+s}(\gamma)}
+ \lambda_\T(\Gamma) \| \wf \|_{L_{2}(\gamma)} \lesssim  h_\T^s \| \wf \|_{L_{2}(\gamma)}.
$$
\end{theorem}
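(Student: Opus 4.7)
\medskip
\noindent\textbf{Proof proposal.} The plan is to follow the same strategy as Proof~1 of Theorem~\ref{t:H1error}, swapping in the $C^{1,\alpha}$ versions of every ingredient: the perturbation estimate of Lemma~\ref{L:perturbation_bound}, the geometric consistency bounds of Corollary~\ref{C:lambda}, and the interpolation estimate of Lemma~\ref{L:approxH1-C1a}. Because the distance-function lift $\bP_d$ is unavailable here, every geometric comparison between $\gamma$ and $\Gamma$ will be carried out through the generic bi-Lipschitz lift $\bP$, and the geometric error will therefore only be of order $\lambda_\T(\Gamma)$ rather than $\mu_\T(\Gamma)$.

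First I would introduce the intermediate continuous problem on the polyhedral surface: let $u_\Gamma \in H^1_\#(\Gamma)$ solve \eqref{Gamma:LBproblem} with the data $f_\Gamma := F = \widetilde f \circ \bP \, q / q_\Gamma$. Then $U$ is exactly the Galerkin approximation of $u_\Gamma$ in $\V_\#(\T)$, and by C\'ea's lemma combined with a triangle inequality,
\[
  \|\nabla_\Gamma(\widetilde u\circ\bP - U)\|_{L_2(\Gamma)}
  \lesssim \|\nabla_\Gamma(\widetilde u\circ\bP - u_\Gamma)\|_{L_2(\Gamma)}
  + \inf_{V\in\V(\T)}\|\nabla_\Gamma(\widetilde u\circ\bP - V)\|_{L_2(\Gamma)}.
\]
The second term on the right is controlled directly by Lemma~\ref{L:approxH1-C1a}, giving $h_\T^s|\widetilde u|_{H^{1+s}(\gamma)}$.

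For the first term I would appeal to Lemma~\ref{L:perturbation_bound}. With the parametrizations $\bchi = \bP\circ\bX$ and $\bchi_\Gamma = \bX$, the choice $F = \widetilde f \circ \bP\,q/q_\Gamma$ makes the data-consistency term $fqq_\Gamma^{-1} - f_\Gamma$ vanish identically, so the perturbation estimate reduces to
\[
  \|\nabla_\gamma(\widetilde u - \widetilde u_\Gamma)\|_{L_2(\gamma)}
  \lesssim \lambda_\infty \|F\|_{H^{-1}_\#(\Gamma)}
  \lesssim \lambda_\T(\Gamma)\,\|\widetilde f\|_{L_2(\gamma)},
\]
where I use \eqref{e:approx_X} and $S_\bchi\approx 1$ (from \eqref{Sapprox1}) to identify $\lambda_\infty$ with $\lambda_\T(\Gamma)$, and the uniform Poincar\'e--Friedrichs bound \eqref{poin-unif} plus Lemma~\ref{L:norm-equiv} to estimate $\|F\|_{H^{-1}_\#(\Gamma)}\lesssim\|\widetilde f\|_{L_2(\gamma)}$. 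Lifting back via Lemma~\ref{L:norm-equiv} (norm equivalence) transfers this bound to $\|\nabla_\Gamma(\widetilde u\circ\bP - u_\Gamma)\|_{L_2(\Gamma)}$.

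Combining the two estimates yields the first inequality in the statement. The second inequality follows from the definition of $\lambda_\T(\Gamma)$ and the $C^{1,\alpha}$ regularity of $\bP$: nodal Lagrange interpolation of a $C^{1,\alpha}$ map has $W^1_\infty$-error of order $h_\T^\alpha$, so $\lambda_\T(\Gamma)\lesssim h_\T^\alpha \le h_\T^s$ since $s\le\alpha$ and $h_\T\le 1$, together with the assumed regularity bound $\|\widetilde u\|_{H^{1+s}(\gamma)}\lesssim\|\widetilde f\|_{L_2(\gamma)}$. There is no serious obstacle here; the only point that deserves care is keeping the identifications straight --- specifically checking that the choice of $F$ kills the data-consistency term in Lemma~\ref{L:perturbation_bound} and that the lifts in Lemma~\ref{L:norm-equiv} are applied consistently so that no dependence on the (possibly poorly behaved) $\bP_d$ sneaks in.
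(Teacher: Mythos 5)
Your proposal is correct and follows essentially the same route as the paper: both split the error via the triangle inequality and Galerkin best approximation into an approximation piece (bounded by Lemma~\ref{L:approxH1-C1a}) and a perturbation piece (bounded by Lemma~\ref{L:perturbation_bound} together with Corollary~\ref{C:lambda} and the uniform Poincar\'e--Friedrichs estimate \eqref{poin-unif}), then close with the interpolation estimate $\lambda_\T(\Gamma)\lesssim h_\T^\alpha$. Your observation that the choice $F=\wf\circ\bP\,q/q_\Gamma$ annihilates the data-consistency term in Lemma~\ref{L:perturbation_bound} is exactly the point the paper uses implicitly, and the rest of the bookkeeping matches.
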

\begin{proof}
  We proceed along the lines of Proof 1 of Theorem \ref{t:H1error} ($H^1$ a-priori error estimate for $C^2$ surfaces), which splits the error into an approximation and a perturbation term. For the former we simply resort to Lemma \ref{L:approxH1-C1a} instead of
  Lemma \ref{L:approxH1} (approximability in $H^1(\Gamma)$). For the latter we argue exactly as in Theorem \ref{t:H1error} and thus employ \eqref{poin-unif} (uniform Poincar\'e-Friedrichs constant), Lemma \ref{L:perturbation_bound} (perturbation error estimate for $C^{1,\alpha}$ surfaces) and Corollary \ref{C:lambda} (geometric consistency for $C^{1,\alpha}$ surfaces). This shows the first asserted estimate.
The second bound follows from the standard interpolation estimate
$$
\lambda_\T(\Gamma) \lesssim h_\T^{\alpha} \| \bchi \|_{C^{1,\alpha}(\mathcal{V})}
$$
and the condition $\alpha \ge s$. This ends the proof.
\end{proof}

\subsection{A-Posteriori Error Analysis}\label{S:a-posteriori}

In contrast to the previous section, we now derive error estimates in $H^1$
which rely on information extracted from the computed solution $U$ of \eqref{e:galerkin}
and data, but do not make use of the exact solution $\wu$ of \eqref{e:weak}. They are
{\it a-posteriori} estimates of residual type, are fully computable, and are
instrumental to drive adaptive procedures. In this vein, we mention
\cite{BCMMN16,BCMN:Magenes} but we do not elaborate on this issue any longer.

The a-posteriori analysis requires a quasi-interpolation operator acting on
$H^1(\Gamma)$ functions, i.e. functions without point values. 
We use the Scott-Zhang operator $\interp^{\textrm{sz}}:H^1(\Gamma) \rightarrow \V(\T)$ and recall its local approximability and stability properties for all $T\in\T$
\begin{equation}\label{e:sz_interp}
\| \tv- \interp^{\textrm{sz}} \tv \|_{L^2(T)} \lesssim  h_T \| \nabla_\Gamma \tv \|_{L^2(\omega_T)}, \quad \| \nabla_\Gamma \interp^{\textrm{sz}} \tv\|_{L^2(T)} \lesssim  \| \nabla_\Gamma \tv \|_{L^2(\omega_T)} ,
\end{equation}
where $\omega_T$ is a macro patch defined in \eqref{patch} associated with $T$.
We do not require that $\interp^\textrm{sz} \tv\in\V_\#(\T)$ even if
$\tv\in H^1_\#(\Gamma)$, as it happened earlier in the a-priori error analysis
of Section \ref{S:a-priori}.

In order to derive a posteriori error estimates, we first introduce the
{\it interior} and {\it jump residuals} for any $V\in \V(\T)$:
\begin{align*}
R_T(V) &:= F\mid_T +\Delta_\Gamma V  \mid_T\quad\forall \, T\in {\T}\\
J_S(V) &:= \nabla_\Gamma V^+\mid_S \cdot \bmu^+_S +\nabla_\Gamma V^-\mid_S \cdot \bmu^-_S \quad \forall \, S\in S_{\T}
\end{align*}
where for $S=\overline{T}^+\cap \overline{T}^-$ is the face shared by $T^\pm\in\T$
and $\bmu^\pm_S:= \bmu_{T^\pm}$  are pointing outward co-normals to the elements $T^\pm$
(see Section~\ref{S:diver-thm}).
We point that when using piecewise affine functions $V=\widehat{V}\circ\bX^{-1}$
on polyhedral surfaces
$\Gamma$, the Laplace-Beltrami operator \eqref{lap-bel-def} vanishes within elements 
$$
\Delta_\Gamma V = \frac 1 {q_\Gamma}  \div{q_{\Gamma}\bg_{\Gamma}^{-1}\nabla \widehat V} = 0
\quad\forall \, T\in\T,
$$
and that, in contrast to the flat case, $\bmu^+_S\ne -\bmu^-_S$ in general.
If $J_{\partial T}(V)$ denotes the jump residual on $\partial T$, then
we define the {\it element indicator} to be
\[
\eta_{\T}(V,T)^2 := h_T^2 \| R_T(V)\|_{L^2(T)}^2
+  h_T \| J_{\partial{T}}(V) \|^2_{L^2(\partial{T})}
\quad\forall \, T\in\T,
\]
and the {\it error estimator} to be
\[
\eta_{\T}(V)^2 := \sum_{T\in \T}\eta_{\T}(V,T)^2.
\]

\begin{theorem}[a-posteriori upper bound for $C^{1,\alpha}$ surfaces]\label{t:posteriori_generic}
  Let $\gamma$ be of class $C^{1,\alpha}$, be parametrized by $\chi = \bP \circ \bX$
  and satisfy the geometric assumption~\eqref{bi_lipschitz}.
Let $\wu \in H^1_\#(\gamma)$ be the solution to \eqref{e:weak} and $U \in \V_\#(\T)$ be the solution to \eqref{e:galerkin} with $F = \wf \circ \bP { q\over q_\Gamma} \in L_{2,\#}(\Gamma)$.
Then, for $\widetilde{U}:=U\circ\bP^{-1}:\gamma\to\mathbb{R}$ we have
$$
\| \nabla_\gamma (\wu- \widetilde U) \|_{L^2(\gamma)}^2 \lesssim \eta_{\T}(U)^2
+ \lambda^2_{\T}(\Gamma) \| \wf \|_{L_2(\gamma)}^2.
$$
\end{theorem}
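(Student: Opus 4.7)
The plan is to follow the standard residual-type argument on $\Gamma$, being careful to track the geometric consistency matrix $\bE_\Gamma$ wherever we transfer between $\gamma$ and $\Gamma$. First, by Lemma~\ref{L:norm-equiv} (norm equivalence), which applies because $\bP=\bchi\circ\bX^{-1}$ is bi-Lipschitz by~\eqref{bi_lipschitz}, it suffices to bound $\|\nabla_\Gamma(u-U)\|_{L_2(\Gamma)}$, where $u:=\wu\circ\bP\in H^1(\Gamma)$. Note that $u$ does not necessarily have vanishing mean on $\Gamma$, but this causes no trouble because \eqref{e:galerkin_relax} holds for all $V\in\V(\T)$, not just $\V_\#(\T)$.

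Second, I invoke Lemma~\ref{L:GO} (Galerkin quasi-orthogonality). The precise choice $F=\wf\circ\bP\,q/q_\Gamma$ makes the data term $\int_\Gamma(f\,q/q_\Gamma-F)V$ vanish identically, so
\begin{equation*}
\int_\Gamma \nabla_\Gamma(u-U)\cdot\nabla_\Gamma V = \int_\Gamma \nabla_\Gamma u\cdot\bE_\Gamma\,\nabla_\Gamma V \qquad \forall\, V\in\V(\T).
\end{equation*}
Setting $v:=u-U$ and $V:=\interp^{\textrm{sz}} v\in\V(\T)$, I split
\begin{equation*}
\|\nabla_\Gamma v\|_{L_2(\Gamma)}^2 = \int_\Gamma \nabla_\Gamma(u-U)\cdot\nabla_\Gamma(v-V) + \int_\Gamma \nabla_\Gamma u\cdot\bE_\Gamma\,\nabla_\Gamma V.
\end{equation*}

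Third, I process the first summand in the usual way. Using the consistency relation~\eqref{consistency}, the continuous weak formulation~\eqref{e:weak_relax}, and~\eqref{int-gamma}, I rewrite
\begin{equation*}
\int_\Gamma \nabla_\Gamma u\cdot\nabla_\Gamma(v-V) = \int_\Gamma F(v-V) + \int_\Gamma \nabla_\Gamma u\cdot\bE_\Gamma\,\nabla_\Gamma(v-V).
\end{equation*}
Integrating $\int_\Gamma \nabla_\Gamma U\cdot\nabla_\Gamma(v-V)$ elementwise by parts (Corollary~\ref{C:int-parts}) and combining with the identity above produces the interior residual $R_T(U)=F|_T+\Delta_\Gamma U|_T$ and the jump residual $J_S(U)$, plus a remaining $\bE_\Gamma$ contribution. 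Element/face Cauchy--Schwarz together with the local estimates~\eqref{e:sz_interp} yields
\begin{equation*}
\Big|\int_\Gamma \nabla_\Gamma(u-U)\cdot\nabla_\Gamma(v-V)\Big| \lesssim \Bigl(\eta_\T(U)+\|\bE_\Gamma\|_{L_\infty(\Gamma)}\|\nabla_\Gamma u\|_{L_2(\Gamma)}\Bigr)\|\nabla_\Gamma v\|_{L_2(\Gamma)}.
\end{equation*}

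Finally, the second summand is bounded directly by $\|\bE_\Gamma\|_{L_\infty(\Gamma)}\|\nabla_\Gamma u\|_{L_2(\Gamma)}\|\nabla_\Gamma V\|_{L_2(\Gamma)}$, and the Scott--Zhang stability bound in~\eqref{e:sz_interp} (summed over elements) gives $\|\nabla_\Gamma V\|_{L_2(\Gamma)}\lesssim\|\nabla_\Gamma v\|_{L_2(\Gamma)}$. I then invoke Corollary~\ref{C:lambda} to get $\|\bE_\Gamma\|_{L_\infty(\Gamma)}\lesssim\lambda_\T(\Gamma)$ and Lemma~\ref{L:norm-equiv} together with Lemma~\ref{L:Poincare} to obtain $\|\nabla_\Gamma u\|_{L_2(\Gamma)}\lesssim\|\nabla_\gamma\wu\|_{L_2(\gamma)}\le\|\wf\|_{H^{-1}_\#(\gamma)}\lesssim\|\wf\|_{L_2(\gamma)}$. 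Dividing through by $\|\nabla_\Gamma v\|_{L_2(\Gamma)}$, squaring, and returning to $\gamma$ via Lemma~\ref{L:norm-equiv} produces the claimed bound. The main obstacle is mostly bookkeeping: the key trick is that the specific choice $F=\wf\circ\bP\,q/q_\Gamma$ eliminates any data-oscillation residual, so that every non-estimator contribution funnels cleanly into the single geometric term $\lambda_\T(\Gamma)\|\wf\|_{L_2(\gamma)}$; one also has to check that the mean-value issue for $v$ is harmless because the Galerkin quasi-orthogonality~\eqref{e:galerkin_Gamma} is valid for arbitrary $V\in\V(\T)$.
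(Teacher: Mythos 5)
Your proposal is correct and follows essentially the same residual-type argument as the paper: Galerkin quasi-orthogonality on $\Gamma$ plus the consistency relation \eqref{consistency}, elementwise integration by parts, Scott--Zhang interpolation via \eqref{e:sz_interp}, and Corollary~\ref{C:lambda} for the geometric matrix. The only (harmless) differences are bookkeeping: you test directly with $v = u - U$ and carry the $\bE_\Gamma$ form on $\Gamma$ throughout, whereas the paper tests with a generic $\widetilde\tv \in H^1(\gamma)$ and carries the $\bE$ form on $\gamma$; likewise you bound $\|\nabla_\Gamma u\|_{L_2(\Gamma)}$ by the continuous-problem stability while the paper bounds $\|\nabla_\gamma \widetilde U\|_{L_2(\gamma)}$ via discrete stability --- both yield the same factor $\|\wf\|_{L_2(\gamma)}$.
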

\begin{proof}
Using definitions \eqref{e:weak_relax} and \eqref{e:galerkin_relax},
along with the consistency relation \eqref{consistency},
enables us to write for any $\widetilde{\tv}\in H^1(\gamma),
\tv=\widetilde\tv\circ\bP\in H^1(\Gamma)$ and $V\in \mathbb V(\T)$
\begin{equation}\label{e:post_split}
\int_\gamma \nabla_\gamma (\wu-\widetilde U) \cdot \nabla_\gamma \widetilde\tv = I_1+I_2+I_3
\end{equation}
with
\begin{align*}
I_1 & = -\int_\Gamma \nabla_\Gamma U \cdot \nabla_\Gamma (\tv-V) +\int_\Gamma F(\tv-V),
\\
I_2 & = \int_\gamma \nabla_\gamma \widetilde{U} \cdot \bE \, \nabla_\gamma \widetilde{\tv},
\\
I_3 & = \int_\gamma \wf \, \widetilde\tv-\int_\Gamma F \tv.
\end{align*}
Employing the definition $F =  \wf\circ\bP { q \over q_\gamma }$ and changing
variables we deduce $I_3 = 0$.

On the one hand, decomposing $I_1$ over elements $T\in \T$, and resorting to
Corollary \ref{C:int-parts} (integration by parts) on $T$, leads to
\begin{equation}\label{e:I1}
I_1 = \sum_{T\in \T}\int_T R_T(U)(\tv-V) + \sum_{S \in\mathcal{S}}\int_{S} J_S(U)(\tv-V)
\end{equation}
and so
$$
I_1 \lesssim \sum_{T\in \T} \eta_{\T}(U,T)\left( h^{-1}_T \| \tv-V\|_{L^2(T)} + \|\nabla_\Gamma (\tv-V) \|_{L^2(T)}\right),
$$
because of the scaled trace inequality
\[
\|w\|_{L_2(\partial T)} \lesssim h_T^{-\frac12} \|w\|_{L_2(\partial T)}
+ h_T^{\frac12} \|\nabla_\Gamma w\|_{L_2(\partial T)}
\quad\forall \, w\in H^1(T).
\]
We now choose $V= \interp^{\textsc{sz}} \tv$ to be the Scott-Zhang
quasi-interpolant of $\tv$.
The local approximability and stability properties \eqref{e:sz_interp} imply
\begin{equation}\label{e:post_I1}
  I_1\lesssim \eta_{\T}(U) \| \nabla_\Gamma \tv \|_{L^2(\Gamma)}
  \lesssim \eta_{\T}(U) \| \nabla_\gamma \widetilde \tv \|_{L^2(\gamma)},
\end{equation}
where we have used the finite ovelapping properties of the patches $\{\omega_T\}_{T\in\T}$
and Lemma \ref{L:norm-equiv} (norm equivalence).
Regarding term $I_2$ we apply Corollary \ref{C:lambda} (geometric consistency errors
for $C^{1,\alpha}$ surfaces) to arrive at
$$
I_2 \lesssim \lambda_\T(\Gamma) \,
\| \nabla_{\gamma}\widetilde U \|_{L^2(\gamma)} \| \nabla_\gamma \widetilde \tv \|_{L^2(\gamma)}
\lesssim \lambda_\T(\Gamma) \, \| \wf \|_{L^2(\gamma)}
\, \| \nabla_\gamma \widetilde \tv \|_{L^2(\gamma)},
 $$
because of the estimates
$$
\| \nabla_{\gamma}\widetilde U \|_{L^2(\gamma)} \lesssim
\| \nabla_{\Gamma}  U \|_{L^2(\Gamma)}  \lesssim \| F \|_{H^{-1}_\#(\Gamma)}
\lesssim \| \wf \|_{L^2(\gamma)}
$$
which are a consequence of Lemma \ref{L:norm-equiv} (norm equivalence), $F = \wf\circ\bP \frac{q}{q_\Gamma}$, Lemma~\ref{L:Poincare} (Poincar\'e-Friedrich inequality) and $\int_\gamma \widetilde f = 0$.
Combining the above estimates, we end up with the assertion.
\end{proof}

To assess the tightness of the upper bound in Theorem \ref{t:posteriori_generic}
it is customary to show a lower bound. To this end, we introduce the so-called
{\it data oscillation}
\[
\mathrm{osc}_{\T}(F,T)^2 := h_T^2 \| F - \overline{F} \|_{L^2(T)}^2,
\quad
\mathrm{osc}_{\T}(F)^2:= \sum_{T\in \T} \mathrm{osc}_{\T}(F,T)^2,
\]
where $\overline F$ is the piecewise average of $F$. This quantity
accounts for the fact that the residual is evaluated in a weighted
$L_2$-norm rather than the natural $H^{-1}$-norm. This in turn makes the estimator
$\eta_\T(U)$ computable but perhaps at the expense of overestimation. This is
the subject of our next estimate, proved in \cite{BCMN:Magenes}.
We recall that for $T\in \mathcal T$, $\omega_T$ denotes the union of elements
in $\T$ that intersect $T$ and $\widetilde \omega_T$ stands for the lift of
$\omega_T$  to $\gamma$ via $\bP$. Moreover, we set
\[
\osc_{\T}(F,\omega_T)^2 := \sum_{T'\subset\omega_T} \osc_{\T}(F,T')^2,
\qquad
\lambda^2_\T(\omega_T) := \max_{T'\subset\omega_T} \lambda^2_T.
\]

\begin{theorem}[a-posteriori lower bound for $C^{1,\alpha}$ surfaces]\label{T:apost-lower}
Under the same conditions of Theorem \ref{t:posteriori_generic} (a-posteriori
upper bound for $C^{1,\alpha}$ surfaces), we have
\[
\eta_{\T}(U,T)^2 \lesssim \| \nabla_\gamma (\widetilde u- \widetilde U) \|_{L^2(\widetilde{\omega}_T)}^2
+ \osc_{\T}(F,\omega_T)^2 + \lambda^2_\T(\omega_T).
\]
\end{theorem}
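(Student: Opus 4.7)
The plan is to adapt the classical Verf\"urth residual bubble function technique to the surface setting, carrying out the estimates element by element on the polyhedral surface $\Gamma$ and using norm equivalence (Lemma~\ref{L:norm-equiv}) together with the geometric consistency bound $\|\bE_\Gamma\|_{L_\infty(T)}\lesssim\lambda_T$ from Corollary~\ref{C:lambda} to transfer information to $\gamma$. The starting point is the following identity valid for any $V\in H^1(\Gamma)$ supported in $T$ (or on a single face patch): thanks to \eqref{consistency}, the lifted weak form $\int_\gamma\nabla_\gamma\widetilde u\cdot\nabla_\gamma\widetilde V=\int_\gamma\widetilde f\widetilde V$, and the choice $F=\widetilde f\circ\bP\,q/q_\Gamma$, one finds
\[
\int_\Gamma\nabla_\Gamma u\cdot\nabla_\Gamma V=\int_\Gamma FV+\int_\Gamma\nabla_\Gamma u\cdot\bE_\Gamma\nabla_\Gamma V,
\]
where $u=\widetilde u\circ\bP$. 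Subtracting the elementwise integration by parts of $\int_T\nabla_\Gamma U\cdot\nabla_\Gamma V$ against the jump identity then yields the master relation
\[
\sum_{T\in\mathcal T}\int_T R_T(U)V+\sum_{S\in\mathcal S}\int_S J_S(U)V=\int_\Gamma\nabla_\Gamma(u-U)\cdot\nabla_\Gamma V-\int_\Gamma\nabla_\Gamma u\cdot\bE_\Gamma\nabla_\Gamma V.
\]

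For the interior residual, I take the standard element bubble $\phi_T\in H^1_0(T)$ and test with $V=\phi_T\overline R_T$ where $\overline R_T\in\mathcal P(T)$ is the piecewise average of $R_T|_T=F|_T$ (recall $\Delta_\Gamma U|_T=0$ for piecewise linear $U$). The usual bubble inversion estimates give $\|\overline R_T\|_{L_2(T)}^2\lesssim\int_T\phi_T\overline R_T^2$ and $\|\nabla_\Gamma V\|_{L_2(T)}\lesssim h_T^{-1}\|\overline R_T\|_{L_2(T)}$, so writing $\int_T\phi_T\overline R_T^2=\int_TR_TV+\int_T(\overline R_T-R_T)V$ and applying the master relation plus Cauchy--Schwarz on $T$ delivers
\[
h_T^2\|\overline R_T\|_{L_2(T)}^2\lesssim\|\nabla_\Gamma(u-U)\|_{L_2(T)}^2+\lambda_T^2\|\nabla_\Gamma u\|_{L_2(T)}^2+h_T^2\|F-\overline F\|_{L_2(T)}^2.
\]
A triangle inequality absorbing $\|F-\overline F\|$ into the oscillation, followed by norm equivalence $\|\nabla_\Gamma(u-U)\|_{L_2(T)}\approx\|\nabla_\gamma(\widetilde u-\widetilde U)\|_{L_2(\widetilde T)}$ and the a priori bound $\|\nabla_\gamma\widetilde u\|_{L_2(\gamma)}\lesssim\|\widetilde f\|_{L_2(\gamma)}$, absorbs the $\nabla_\Gamma u$ term into the $\lambda_T^2$ contribution.

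For the jump residual I use the face bubble $\phi_S$ supported in the two-element patch $\omega_S=T^+\cup T^-$, and test with $V=\phi_S\,\mathrm{ext}(\overline J_S)$, where $\overline J_S\in\mathcal P(S)$ is a polynomial approximation of $J_S$ and $\mathrm{ext}$ denotes a polynomial extension. Standard scaled trace/inverse estimates give $\|\overline J_S\|_{L_2(S)}^2\lesssim\int_S\phi_S\overline J_S^2$ together with $\|V\|_{L_2(T^\pm)}\lesssim h_T^{1/2}\|\overline J_S\|_{L_2(S)}$ and $\|\nabla_\Gamma V\|_{L_2(T^\pm)}\lesssim h_T^{-1/2}\|\overline J_S\|_{L_2(S)}$. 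Inserting $V$ into the master relation, isolating the jump term, and controlling the resulting interior integrals via the already-established interior residual bound on $T^\pm$ yields
\[
h_T\|J_S\|_{L_2(S)}^2\lesssim\|\nabla_\gamma(\widetilde u-\widetilde U)\|_{L_2(\widetilde\omega_T)}^2+\mathrm{osc}_{\mathcal T}(F,\omega_T)^2+\lambda_{\mathcal T}^2(\omega_T),
\]
after a final application of norm equivalence.

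The principal technical obstacle is the geometric consistency term $\int_T\nabla_\Gamma u\cdot\bE_\Gamma\nabla_\Gamma V$: it involves the full gradient of $u$ rather than that of the error, and consequently cannot be absorbed into $\|\nabla_\gamma(\widetilde u-\widetilde U)\|_{L_2(\widetilde\omega_T)}$. The key observation is that the factor $\|\bE_\Gamma\|_{L_\infty(T)}\lesssim\lambda_T$ from Corollary~\ref{C:lambda} provides the small geometric parameter, and $\|\nabla_\gamma\widetilde u\|_{L_2(\widetilde T)}$ is controlled by data through the a priori energy bound, so this term contributes precisely the $\lambda_{\mathcal T}^2(\omega_T)$ term (with an implicit dependence on $\|\widetilde f\|_{L_2(\gamma)}$) appearing in the statement. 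A secondary subtlety is ensuring that the bubble function constants are uniform across surface patches, which follows from shape regularity~\eqref{e:shape_reg_init} together with \eqref{bi_lipschitz} so that every facet and face of $\Gamma$ behaves like a flat Euclidean simplex.
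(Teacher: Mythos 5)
Your proposal is correct and follows essentially the same Verf\"urth bubble-function strategy as the paper's proof: localize the error representation by inserting element and face bubble functions, invoke norm equivalence (Lemma~\ref{L:norm-equiv}) to pass between $\Gamma$ and $\gamma$, and control the geometric consistency term by the $L_\infty$ bound on the error matrix from Corollary~\ref{C:lambda} together with the a priori energy bound. The only cosmetic differences are that you keep the consistency term in the form $\int_\Gamma\nabla_\Gamma u\cdot\bE_\Gamma\nabla_\Gamma V$ on $\Gamma$ rather than $\int_\gamma\nabla_\gamma\widetilde U\cdot\bE\,\nabla_\gamma\widetilde\tv$ on $\gamma$ as in the paper (both are available from Lemma~\ref{L:GO}), and you introduce polynomial averages/extensions for the jump residual, which is unnecessary for piecewise linear $U$ since $J_S(U)$ is already constant on each face $S$; neither affects the argument, and your explicit note on the implicit $\|\widetilde f\|_{L_2(\gamma)}$ dependence in the $\lambda_\T^2(\omega_T)$ term correctly reflects the paper's convention.
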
  
\begin{proof}
The proof of the lower bound is standard and is only sketched here.
It relies on an argument due to Verf\"urth \cite{MR3059294}.
The starting point is the error relation~\eqref{e:post_split} localized to $T\in \mathcal T$ via the test function $\tv = \overline{F} b_T$, where $b_T \in H^1_0(T)$ is the cubic bubble taking value $1$ at the element barycenter. Employing the norm equivalence \eqref{H1:equiv} (valid elementwise), we realize that
$$
\| \nabla_\gamma \ttv \|_{L_2(\widetilde{T})} \lesssim \| \nabla_\Gamma \tv \|_{L_2(T)} \lesssim h_T^{-1} \| \overline{F}\|_{L_2(T)},
$$
whence taking  $V=0$  in \eqref{e:post_split} yields
$$
\| \overline{F} \|_{L_2(T)}^2 \lesssim \int_T \overline{F} \tv \lesssim h_T^{-1} \left( \| \nabla_\gamma (\wu - \widetilde U)\|_{L_2(\widetilde T)} + \osc_\T(F,T) + \|\bE\|_{L_\infty(\widetilde T)} \right) \| \overline{F}\|_{L_2(T)}
$$
upon recalling that $I_3=0$ with our choice of $F$ and the expression \eqref{e:I1} for $I_1$.
Corollary \ref{C:lambda} (geometric consistency errors for $C^{1,\alpha}$ surfaces),
combined with a triangle inequality, then leads to the desired estimate for the bulk term
$$
h_T^2\| F \|_{L^2(T)}^2 \lesssim \| \nabla_\gamma (\wu - \widetilde U)\|_{L_2(\widetilde T)}^2 + \mathrm{osc}_{\T}(F,T)^2 + \lambda_T^2. 
$$
As for the jump term, we define for a side $S \in \mathcal S$ with adjacent elements $T^\pm$,  $b_S \in H^1_0(\omega_S)$ as the  quadratic bubble taking value $1$ at the barycenter of $S$ and $0$ at all other quadratic nodes in $\omega_S:= T^+\cup T^-$.
We also let $\widetilde{\omega}_S:= \bP(\omega_S)$ be the lift of $\omega_S$ to
$\gamma$ by the map $\bP$.
Taking $\tv=J_S(U) b_S$ and $V=0$ in \eqref{e:post_split}, and recalling the expression \eqref{e:I1} for $I_1$ and that $I_3=0$, yields
\begin{align*}
&\| J_S(U)\|_{L_2(S)}^2  \lesssim \int_S J_S(U)\tv \\
&\quad \lesssim  \left( \| \nabla_\gamma (\widetilde u - \widetilde U)\|_{L_2(\widetilde{\omega}_S))} + h_T \| F\|_{L_2(\omega_S)} + \max(\lambda_{T^+},\lambda_{T^-}) \right) \| \nabla_\gamma \ttv\|_{L_2(\widetilde{\omega}_S))}.
\end{align*}
Finally, it suffices to use the preceding estimate for $h_T \| F\|_{L_2(T)}$,
together with 
$$
\| \nabla_\gamma \ttv\|_{L_2(\widetilde{\omega}_S))} \lesssim \| \nabla_\Gamma \tv\|_{L_2(\omega_S)} \lesssim h_T^{-1/2} \| J_S(U)\|_{L_2(S)},
$$
to conclude the proof.
\end{proof}  

One important observation to make is that $\mathrm{osc}_\T(F)$ is generically
of higher order than $\eta_{\T}(U)$ for $\wf\in L_2(\gamma)$, whence
this term can be ignored relative to $\eta_{\T}(U)$ asymptotically. However,
the geometric estimator $\lambda_\T(\Gamma)$ is linear and thus of the same
order as $\eta_{\T}(U)$, thereby making the lower bound of Theorem \ref{T:apost-lower}
questionable. This estimator comes from the estimate \eqref{e:estim_consist}
of Corollary \ref{C:lambda} (geometric consistency errors for $C^{1,\alpha}$ surfaces), which cannot obviously be improved for surfaces
of class $C^{1,\alpha}$. However, Corollary \ref{C:lambda-2}
(geometric consistency errors for $C^2$ surfaces) shows that this 
effect becomes of second order for surfaces of class $C^2$. Practically, the estimator
$\lambda_\T(\Gamma)$ is pessimistic and leads
to unnecessary and thus suboptimal refinements for $C^2$ surfaces \cite{BD:18}.
We discuss the impact of this superconvergence estimate next following
\cite{BD:18}.

\begin{theorem}[a-posteriori upper bound for $C^2$ surfaces]\label{T:apost-upper-dist} 
Let $\gamma$ be of class $C^2$ and \eqref{e:shape_reg_init},
\eqref{q:nondegen:assume}, \eqref{beta-small},  and \eqref{P_Pd:mismatch} hold.
Let $\wu$ be the solution of \eqref{e:weak} with $\wf \in L_{2,\#}(\gamma)$ and
$U \in \V(\T)$ be the solution to \eqref{e:galerkin} with
$F = \wf \circ \bP \frac{q}{q_\Gamma}$, where $q$ corresponds to the parametrization $\bchi = \bP \circ \bX$ of $\gamma$. Then
$$
\| \nabla_\gamma (\wu - U \circ \bP_d^{-1}) \|_{L_2(\gamma)}^2 \lesssim \eta_\T(U)^2
+ \mu_\T^2(\Gamma) \, \| \wf \|_{L_2(\gamma)}^2.
$$
\end{theorem}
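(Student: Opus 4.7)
The plan is to mimic the proof of Theorem \ref{t:posteriori_generic} (a-posteriori upper bound for $C^{1,\alpha}$ surfaces) but to use the distance lift $\bP_d$ for the error representation rather than the generic $\bP$; the lift $\bP$ still appears through $F=\wf\circ\bP\,q/q_\Gamma$ in the discrete formulation. Concretely, set $\widetilde U:=U\circ\bP_d^{-1}$ on $\gamma$, take an arbitrary $\widetilde\tv\in H^1(\gamma)$, and let $\tv:=\widetilde\tv\circ\bP_d\in H^1(\Gamma)$. Combining \eqref{e:weak_relax}, \eqref{e:galerkin_relax} and the consistency identity \eqref{consistency}---with the parametrizations $\bchi=\bP_d\circ\bX$ and $\bchi_\Gamma=\bX$ so that $\bE$ is the matrix of Corollary \ref{C:lambda-2}---yields, for any $V\in\V(\T)$,
\[
\int_\gamma \nabla_\gamma(\wu-\widetilde U)\cdot\nabla_\gamma\widetilde\tv = I_1+I_2+I_3,
\]
with $I_1=\int_\Gamma F(\tv-V)-\int_\Gamma\nabla_\Gamma U\cdot\nabla_\Gamma(\tv-V)$, $I_2=\int_\gamma\nabla_\gamma\widetilde U\cdot\bE\,\nabla_\gamma\widetilde\tv$, and $I_3=\int_\gamma\wf\widetilde\tv-\int_\Gamma F\tv$.

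The estimates of $I_1$ and $I_2$ are close copies of their counterparts in Theorem \ref{t:posteriori_generic}. For $I_1$ I will choose $V=\interp^{\mathrm{sz}}\tv$, split the integral over elements, apply elementwise integration by parts (Corollary \ref{C:int-parts}) to produce the interior and jump residuals, use the Scott-Zhang bounds \eqref{e:sz_interp} together with a scaled trace inequality, and finally pass from $\|\nabla_\Gamma\tv\|_{L_2(\Gamma)}$ to $\|\nabla_\gamma\widetilde\tv\|_{L_2(\gamma)}$ via Lemma \ref{L:norm-equiv} (which applies to the $\bP_d$-lift under \eqref{beta-small} and \eqref{bi_lipschitz}); this gives $|I_1|\lesssim\eta_\T(U)\,\|\nabla_\gamma\widetilde\tv\|_{L_2(\gamma)}$. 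For $I_2$ the only, but crucial, difference is that Corollary \ref{C:lambda-2} now produces $\|\bE\|_{L_\infty(\gamma)}\lesssim\mu_\T(\Gamma)$ rather than $\lambda_\T(\Gamma)$, so invoking the a priori bound $\|\nabla_\gamma\widetilde U\|_{L_2(\gamma)}\lesssim\|F\|_{H_\#^{-1}(\Gamma)}\lesssim\|\wf\|_{L_2(\gamma)}$ yields $|I_2|\lesssim\mu_\T(\Gamma)\|\wf\|_{L_2(\gamma)}\|\nabla_\gamma\widetilde\tv\|_{L_2(\gamma)}$.

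The main obstacle, and the essentially new point, is $I_3$. In the proof of Theorem \ref{t:posteriori_generic} this term vanished identically because the same lift $\bP$ was used both to define $F$ and to lift test functions. Here $F$ still uses $\bP$ while $\tv=\widetilde\tv\circ\bP_d$ uses $\bP_d$, so changing variables via $\bP$ in the second integral produces
\[
I_3 = \int_\gamma \wf\,\bigl[\widetilde\tv-\widetilde\tv\circ\bP_d\circ\bP^{-1}\bigr].
\]
This is precisely the mismatch measured by Proposition \ref{l:convolution}: applied elementwise with $\widetilde w=\widetilde\tv$ and then summed over $T\in\T$ (using the finite overlap of the patches $\widetilde\omega_T$) it delivers $|I_3|\lesssim\beta_\T(\Gamma)\|\wf\|_{L_2(\gamma)}\|\widetilde\tv\|_{H^1(\gamma)}$, and since $\beta_\T(\Gamma)\le\mu_\T(\Gamma)$ by \eqref{d:mu} this is absorbed into the claimed bound.

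To close the estimate I will take $\widetilde\tv=\wu-\widetilde U$; because gradients are insensitive to constants I may subtract the mean so that $\widetilde\tv\in H^1_\#(\gamma)$, and then Lemma \ref{L:Poincare} (Poincar\'e-Friedrichs) gives $\|\widetilde\tv\|_{H^1(\gamma)}\lesssim\|\nabla_\gamma\widetilde\tv\|_{L_2(\gamma)}$. Substituting the three bounds into the error representation, dividing through by $\|\nabla_\gamma(\wu-\widetilde U)\|_{L_2(\gamma)}$, and squaring produces the stated estimate. The only genuinely new ingredient beyond the $C^{1,\alpha}$ argument is the control of $I_3$ via Proposition \ref{l:convolution}; this is what justifies using the sharper geometric quantity $\mu_\T(\Gamma)$ throughout instead of $\lambda_\T(\Gamma)$.
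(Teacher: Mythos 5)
Your argument is correct and is essentially identical to the paper's proof: the same split into $I_1+I_2+I_3$ with $\bE$ now the error matrix \eqref{error-matrix-gamma} from Lemma \ref{L:geom_consist_dist}, the same treatment of $I_1$ and $I_2$ via Corollary \ref{C:lambda-2}, and the same observation that $I_3=\int_\gamma \wf\bigl(\wv-\wv\circ\bP_d\circ\bP^{-1}\bigr)$ no longer vanishes and is controlled by Proposition \ref{l:convolution} together with $\beta_\T(\Gamma)\le\mu_\T(\Gamma)$. Your explicit remark that the decomposition is invariant under constant shifts of $\wv$, which justifies normalizing $\wv$ to have zero mean before invoking the Poincar\'e--Friedrichs inequality, is a welcome clarification that the paper leaves implicit.
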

\begin{proof}
We proceed as in the proof of Theorem~\ref{t:posteriori_generic} (a-posteriori upper
bound for $C^{1,\alpha}$ surfaces) but using the distance function lift to represent
the errors. We denote
$\widetilde{U} = U \circ \bP_d^{-1}$, $\tv = \widetilde\tv \circ \bP_d$ for a generic
$\widetilde\tv\in H^1(\gamma)$ and get for any $V\in \mathbb V(\T)$
\begin{equation}\label{e:error_rep_dist}
\int_\gamma \nabla_\gamma \big(\wu- \widetilde{U}\big) \cdot \nabla_\gamma \wv
= I_1+I_2+I_3
\end{equation}
with
\begin{align*}
I_1 & = -\int_\Gamma \nabla_\Gamma U \cdot \nabla_\Gamma (v-V) +\int_\Gamma F(v-V),
\\
I_2 & = \int_\gamma \nabla_\gamma \widetilde{U} \cdot \bE \, \nabla_\gamma \wv,
\\
I_3 & = \int_\gamma \wf \, \wv - \int_\Gamma F \tv,
\end{align*}
where we have used again \eqref{consistency} but with the error matrix $\bE$ now defined with respect to $\bP_d$ and 
given by \eqref{error-matrix-gamma} of Lemma \ref{L:geom_consist_dist}
(geometric consistency). We tackle $I_1$ and $I_2$ exactly as in
Theorem \ref{t:posteriori_generic}, thereby obtaining
\[
I_1 \lesssim \eta_\T(U) \| \nabla_\gamma \ttv \|_{L_2(\gamma)},
\quad
I_2 \lesssim \mu_\T(\Gamma) \, \| \wf \|_{L^2(\gamma)} \| \nabla_\gamma \widetilde \tv \|_{L_2(\gamma)},
\]
except that we resort to \eqref{est-E-EG} of Corollary \ref{C:lambda-2} (geometric
consistency errors for $C^2$ surfaces) to estimate $\bE$.

On the other hand, $I_3$ no longer vanishes because
$F=\wf\circ\bP \frac{q}{q_\Gamma}$ is defined via $\bP$
and the function $\tv$ via $\bP_d$. Using $\bP$ to change variables back to
$\gamma$ we obtain
\[
\int_\Gamma F \tv = \int_\Gamma (\wf\circ\bP) ~ (\wv\circ\bP_d) \frac{q}{q_\Gamma}
= \int_\gamma \wf ~ (\wv\circ\bP_d \circ\bP^{-1}),
\]
whence $I_3$ becomes
\begin{equation}\label{e:I3-dist}
I_3 = \int_\gamma \wf ~ \big( \wv  -  \wv \circ \bP_d \circ \bP^{-1} \big). 
\end{equation}
Invoking Proposition \ref{l:convolution} (mismatch between $\bP$ and $\bP_d$) yields
\[
I_3 \lesssim \beta_\T(\Gamma) \, \| \wf \|_{L_2(\gamma)}
\| \nabla_\gamma \wv \|_{L_2(\gamma)}
\]
and concludes the proof because $\beta_\T(\Gamma) \le \mu_\T(\Gamma)$.
\end{proof}

We conclude with a lower bound for $C^2$ surfaces.
We point out that, compared with the existing results in the literature, see e.g. \cite{DemlowDziuk:07}, we account for the mismatch between the two lifts $\bP$ and $\bP_d$. 

\begin{theorem}[a-posteriori lower bound for $C^{2}$ surfaces]\label{T:apost-lower-dist}
Under the same conditions as Theorem \ref{T:apost-upper-dist} (a-posteriori
upper bound for $C^{2}$ surfaces), we have
\[
\eta_{\T}(U,T)^2 \lesssim \| \nabla_\gamma (\wu- \widetilde U) \|_{L^2(\widetilde{\omega}_T)}^2
+ \osc_\T(F,\omega_T)^2 + \mu_\T(\omega_T)^2,
\]
where $\mu_\T(\omega_T) = \max_{T' \subset \omega_T} \mu_{T'}$.
\end{theorem}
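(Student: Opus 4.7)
The proof will parallel that of Theorem \ref{T:apost-lower} (a-posteriori lower bound for $C^{1,\alpha}$ surfaces) using Verf\"urth's bubble function technique, but starting from the distance-lift-based error representation \eqref{e:error_rep_dist} of Theorem \ref{T:apost-upper-dist} rather than the one based on the generic lift $\bP$. The new features are that (i) the geometric error matrix $\bE$ is now bounded by $\mu_T$ via Corollary \ref{C:lambda-2} (geometric consistency errors for $C^2$ surfaces), and (ii) the extra term $I_3$ in \eqref{e:I3-dist}, which encodes the mismatch between $\bP$ and $\bP_d$, no longer vanishes and must be controlled via Proposition \ref{l:convolution} (mismatch between $\bP$ and $\bP_d$).

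\textbf{Bulk residual.} Fix $T\in\T$ and set $\tv = \overline{F} b_T$, where $b_T$ is the standard element bubble supported in $T$. Let $\widetilde\tv = \tv \circ \bP_d^{-1}$, which is supported in $\widetilde T$. Standard bubble function estimates yield $\|\overline{F}\|_{L_2(T)}^2 \lesssim \int_T \overline F \, \tv$, $\|\tv\|_{L_2(T)} \lesssim \|\overline F\|_{L_2(T)}$, and, together with \eqref{H1:equiv}, $\|\nabla_\gamma \widetilde\tv\|_{L_2(\widetilde T)} \lesssim \|\nabla_\Gamma \tv\|_{L_2(T)} \lesssim h_T^{-1}\|\overline F\|_{L_2(T)}$. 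Since $U$ is piecewise linear, $R_T(U)=F$ on $T$, and choosing $V=0$ in \eqref{e:error_rep_dist} gives $I_1 = \int_T F \, \tv$; hence
$$
\int_T \overline F \, \tv = \int_\gamma \nabla_\gamma(\wu-\widetilde U)\cdot\nabla_\gamma\widetilde\tv - I_2 - I_3 - \int_T (F-\overline F) \, \tv.
$$
For $I_2$, Corollary \ref{C:lambda-2} gives $\|\bE\|_{L_\infty(\widetilde T)}\lesssim \mu_T$, so $|I_2| \lesssim \mu_T \,\|\nabla_\gamma\widetilde U\|_{L_2(\widetilde T)} \, h_T^{-1}\|\overline F\|_{L_2(T)}$. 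For $I_3$, Proposition \ref{l:convolution} applied to $\widetilde w = \widetilde\tv$ (with compact support in $\widetilde T\subset\widetilde\omega_T$) together with the inverse estimate above yields $|I_3|\lesssim \beta_T\,\|\wf\|_{L_2(\widetilde\omega_T)} h_T^{-1}\|\overline F\|_{L_2(T)}$. Dividing by $\|\overline F\|_{L_2(T)}$, using $\beta_T\le\mu_T$, and a triangle inequality to recover $\|F\|_{L_2(T)}$ from $\|\overline F\|_{L_2(T)}$ and $\|F-\overline F\|_{L_2(T)}$, one obtains
$$
h_T^2\|F\|_{L_2(T)}^2 \lesssim \|\nabla_\gamma(\wu-\widetilde U)\|_{L_2(\widetilde T)}^2 + \osc_\T(F,T)^2 + \mu_T^2,
$$
where the implicit constant absorbs the local a priori bounds $\|\nabla_\gamma\widetilde U\|_{L_2(\widetilde T)}$ and $\|\wf\|_{L_2(\widetilde\omega_T)}$, exactly as in the $C^{1,\alpha}$ analog.

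\textbf{Jump residual.} For each side $S\subset\partial T$ with neighbours $T^\pm$ and patch $\omega_S = T^+\cup T^-$, set $\tv = J_S(U) b_S$, with $J_S(U)$ extended constantly in the co-normal direction and $b_S$ the face bubble supported in $\omega_S$. Reasoning as above, with $\widetilde\tv = \tv\circ\bP_d^{-1}$ supported in $\widetilde\omega_S\subset\widetilde\omega_T$, the error representation \eqref{e:error_rep_dist} with $V=0$ gives $\int_S J_S(U)\tv = I_1 - \int_{\omega_S} R_{T^\pm}(U)\tv + \ldots$, leading after standard manipulations to
$$
h_T\|J_S(U)\|_{L_2(S)}^2 \lesssim \|\nabla_\gamma(\wu-\widetilde U)\|_{L_2(\widetilde\omega_S)}^2 + h_T^2 \|F\|_{L_2(\omega_S)}^2 + \mu_T^2 + \beta_T^2,
$$
where the bound on $I_2$ uses Corollary \ref{C:lambda-2} and the bound on $I_3$ uses Proposition \ref{l:convolution}. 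Combining with the bulk bound on $\omega_T$ and the definitions $\osc_\T(F,\omega_T)$ and $\mu_\T(\omega_T) = \max_{T'\subset\omega_T}\mu_{T'} \ge \max_{T'}\beta_{T'}$ yields the claimed estimate.

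\textbf{Main obstacle.} The delicate point is the $I_3$ term, which has no counterpart in Theorem \ref{T:apost-lower}. Its control rests on Proposition \ref{l:convolution}, which requires the geometric assumption \eqref{P_Pd:mismatch} so that $\bP_d\circ\bP^{-1}(\widetilde T)\subset\widetilde\omega_T$ and the mismatch map is bi-Lipschitz on the patch; this is precisely where the hypotheses of Theorem \ref{T:apost-upper-dist} are invoked. The locality of this estimate (support in $\widetilde\omega_T$, with the right-hand side involving $\|\wv\|_{H^1(\widetilde\omega_T)}$) is crucial to arrive at the local term $\mu_\T(\omega_T)$ rather than a global quantity.
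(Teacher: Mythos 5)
Your proposal is correct and follows exactly the same route as the paper's own (very terse) proof: reuse the Verf\"urth bubble-function argument from Theorem~\ref{T:apost-lower}, replace Corollary~\ref{C:lambda} by Corollary~\ref{C:lambda-2} to bound $I_2$ via $\mu_T$, and handle the new term $I_3$ from \eqref{e:I3-dist} with the locality of Proposition~\ref{l:convolution}. You have in fact supplied more detail than the paper (which only signals these two variants), and your identification of the $I_3$ term and of the role of assumption \eqref{P_Pd:mismatch} as the essential new ingredient matches the intended argument.
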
  
\begin{proof}
The proof follows along the lines of Theorem~\ref{T:apost-lower} (a-posteriori lower bound for $C^{1,\alpha}$ surfaces) with the following variants.
We use Corollary \ref{C:lambda-2} instead of Corollary \ref{C:lambda} in
the error representation~\eqref{e:error_rep_dist} to tackle $I_2$
and account for the fact that $I_3\ne0$ via~\eqref{e:I3-dist} for a generic
lift $\bP$.
\end{proof}


\section{Trace Method}\label{sec:trace}

In this section we present a class of methods which are known as {\it trace finite element methods} or {\it cut finite element methods} \cite{ORG09, BHL15, Re15}.  The setting for these methods is situations in which a PDE posed on an $n$-dimensional hypersurface $\gamma$ embedded in $\mathbb{R}^{n+1}$ must be solved numerically, and a bulk or volume background mesh of some domain $\Omega \subset \mathbb{R}^{n+1}$ is present with $\gamma \subset \Omega$.  It is often more convenient to describe $\gamma$ and solve associated PDE employing the background mesh instead of independently meshing $\gamma$.  A paradigm physical example is a two-phase flow problem.  There $\Omega$ is subdivided into subdomains $\Omega_1$ and $\Omega_2$ (one for each phase) and $\gamma$ is the interface between $\Omega_1$ and $\Omega_2$.  In simulations $\Omega$ is typically meshed in order to solve equations of fluid dynamics (e.g., Stokes or Navier-Stokes), while accounting for interfacial effects such as surface tension also requires solving a surface PDE on $\gamma$.  It can be particularly inconvenient to independently mesh $\Omega$ and $\gamma$ in dynamic simulations in which $\gamma$ evolves as either a specified or free boundary.   In addition to the overhead associated with transferring information between unrelated bulk and surface meshes,  remeshing is generally necessary from time to time when parametric methods are used to describe dynamic interfaces because mesh degeneracies may occur as the surface deforms.

Trace and cut FEMs were introduced by Olshanskii et al \cite{ORG09} and have been further developed over the past decade as one option for circumventing these difficulties.  In order to describe them more precisely, first let $\T:=\T_\Omega$ be a  simplicial decomposition of $\Omega \subset \mathbb{R}^{n+1}$, $n\ge1$.
We let $h_T:=| T |^{\frac 1 {n+1}}$ for any $T\in\T$ and set $h:=\max_{T\in \mathcal T} h_T$ for the mesh-size of $\T$.
We will omit to mention the explicit dependence on the shape regularity constant of $\T$
$$
\sigma := \max_{T\in \mathcal T} \frac{\textrm{diam}(T)}{h_T}
$$
in most estimates below.
Assume that $\gamma \subset \Omega$ is a closed, $C^2$ $n$-dimensional surface.  As outlined in Section \ref{S:distance-function}, $\gamma$ is then the zero level set of a $C^2$ distance function $d$ defined on a tubular neighborhood $\mathcal{N}$ of $\gamma$. Let $\V(\mathcal T) \subset H^1(\Omega)$ consist of the continuous piecewise linear functions over $\T$.   In order to fix thoughts, let $d_h \in \V(\mathcal T)$ be the Lagrange interpolant $\interp d$ of $d$ satisfying
\[
\|d-d_h\|_{L_\infty(\mathcal{N})} + h \|d-d_h\|_{W_\infty^1(\mathcal{N})}
\lesssim h^2 |d|_{W^2_\infty(\mathcal{N})}.
\]
The discrete computational surface $\Gamma$ is then defined by 
$$
\Gamma := \{\bx \in \Omega : d_h(\bx)=0\}.
$$
Below we also discuss how to derive $\Gamma$ from more general implicit representations of $\gamma$. 
Because $d_h$ is piecewise linear, $\Gamma$ consists of intersections of hyperplanes with simplices and is thus a polyhedron having triangular and quadrilateral faces for $n=2$ (see Figure \ref{f:tracemeshes}).
We denote by $\mathcal{F}$ the collection of faces of $\Gamma$.
In addition, the conditions placed on $d_h$ ensure that $\|d\|_{L_\infty(\Gamma)} + h \|\bnu-\bnu_\Gamma\|_{L_\infty(\Gamma)} \lesssim h^2,$
so the perturbation results for $C^2$ surfaces outlined in Section \ref{S:perturb-C2} hold on $\Gamma$ with order $h^2$ geometric perturbation error.

The surface finite element space $\V(\mathcal F)$ is simply the restriction of $\V(\mathcal T)$ to $\Gamma$:  
\[
\V(\mathcal F) := \big\{ V|_\Gamma:  V\in \V(\mathcal T)    \big\}.
\]
By its definition $\V(\mathcal F)\subset H^1(\Gamma) $ consists of the continuous functions which are affine over each face $F \in \mathcal{F}$.
We also denote by $\V_\#(\mathcal F) := \V(\mathcal F) \cap L_{2,\#}(\Gamma)$ its subspace consisting of functions with vanishing mean values.
In order to approximate the solution $\wu$ to the Laplace-Beltrami problem $-\Delta_\gamma \wu =\widetilde f$ on $\gamma$, we first define a suitable approximation $F_\Gamma$ to $f$ and then seek $U \in \V_\#(\mathcal F)$ such that 
\begin{align}
\label{def:trace_fem}\int_\Gamma \nabla_\Gamma U \cdot \nabla_\Gamma V = \int_\Gamma F_\Gamma V, \qquad \forall V \in \V_\#(\mathcal F).
\end{align}

\begin{figure}[ht!]
\centerline{\includegraphics[width=0.51\textwidth]{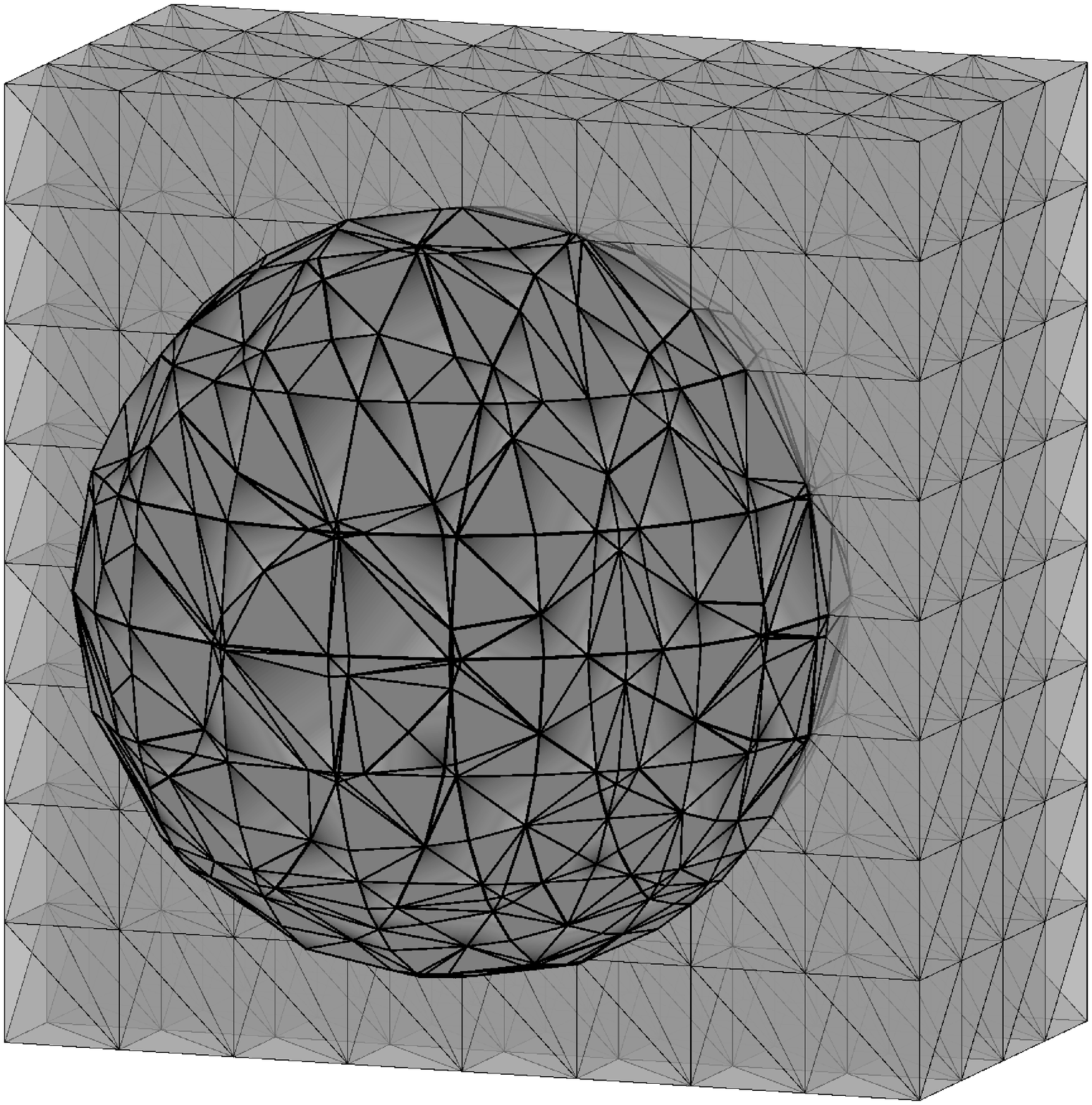}
\includegraphics[width=0.46\textwidth]{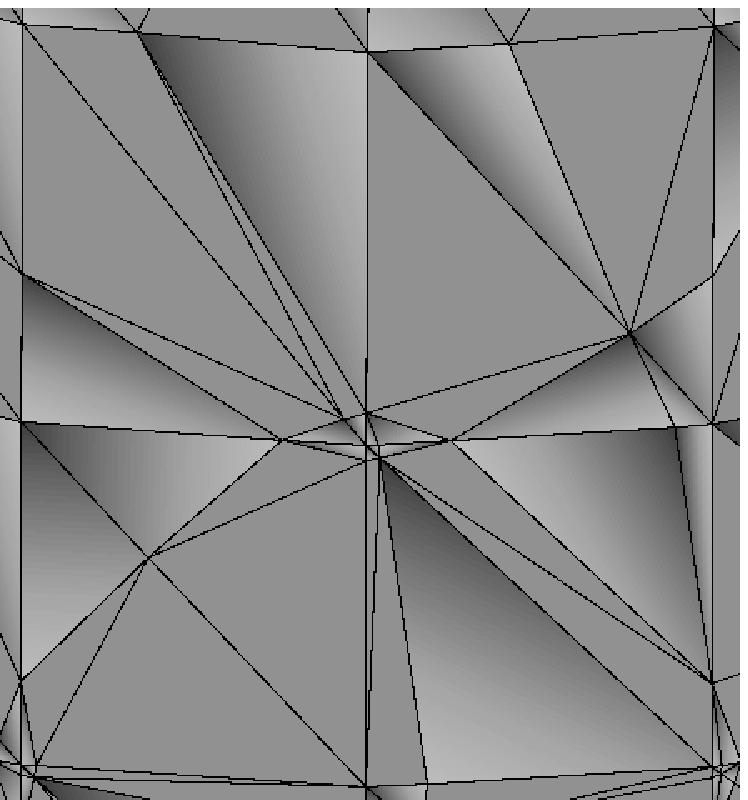}}
\caption{Bulk mesh cutaway with associated trace mesh (left); blowup of a trace mesh showing small and narrow elements (right).  
} \label{f:tracemeshes}
\end{figure}

This is the trace method and has two notable advantages:

\begin{itemize}
\item
{\it Only single mesh}: The main advantage is that both bulk and
interfacial effects can be computed using the same mesh.

\item
{\it Error estimates}: 
Optimal-order and regularity error estimates hold in the $H^1$ and $L_2$ norms.
\end{itemize}

\noindent
On a practical and theoretical levels the method exhibits three main challenges:

\begin{itemize}
\item
  {\it Implicit surface representation}: The simplest option of taking the distance function $d$ to define $\gamma$ and its Lagrange interpolant of $d_h$ to define $\Gamma$ is not generally practical as $d$ is rarely available in applications. It is generally more practical to assume that the discrete surface $\Gamma$ is derived from a more general level set representation $\phi$ of $\gamma$.   We provide a brief discussion of general level set representations below.  
\item
{\it Surface integration}: Computing the finite element system is more cumbersome than in standard parametric surface FEMs since both the mesh $\mathcal{F}$ and the finite element space $\V(\mathcal F)$ are derived from their corresponding bulk counterparts. These difficulties are manageable in the case of the piecewise linear method presented here, but become significantly more cumbersome when a higher-order surface approximation is used.
\item
{\it Linear algebra and stabilization}: In contrast to parametric surface FEMs there is no obvious practical basis for $\V(\mathcal F)$, only spanning sets derived from subsets of the bulk space $\V(\mathcal T)$.  In practice such a spanning set is derived from the degrees of freedom for $\V(\mathcal T)$ corresponding to elements touching $\Gamma$.  Degenerate modes arise from this procedure.  These are either handled at the linear algebra level or by various stabilization procedures.

\end{itemize}

Theoretical study of trace FEMs is also more involved than for parametric surface FEMs.   One prominent issue is that the surface mesh $\mathcal{F}$ does not consist of shape regular elements, as is documented in Figure \ref{f:tracemeshes}.  This is because the faces in $\mathcal{F}$ consist of {\it arbitrary} intersections of hyperplanes with simplices (planes and tetrahedra for $n=2$).  Thus elements may be arbitrarily small with respect to the bulk mesh-size $h$ or fail to satisfy a minimum angle condition, and it is not possible to directly employ standard error estimation techniques.  Properties of the ``high-quality'' bulk mesh $\T$ and finite element space $\V(\mathcal T)$ must be invoked instead, which in turn requires careful use of extensions and restrictions of functions to and from $\gamma$ and $\Gamma$.   For purposes of intuition, it is however useful to note that the surface mesh $\mathcal{F}$ does inherit some structure from the regularity of the bulk mesh $\T$.  Elements in $\mathcal{F}$ for example satisfy a maximum-angle condition \cite{ORX12}, and each element in $\mathcal{F}$ also shares a vertex with a shape-regular element of diameter equivalent to $h$ \cite{DO12}.

Below we prove a priori and a posteriori error estimates for a piecewise linear trace FEM.  In keeping with the previous section, we concentrate on surface representations and regularity in our discussion.  In particular, we only assume that $\gamma$ is $C^2$, whereas previous approaches require that $\gamma$ be $C^3$.  The recent article \cite{OR17} provides a broader survey of trace FEMs, including discussion of topics such as higher-order versions, stabilization procedures, and space-time trace FEMs that we omit here.  

\subsection{Preliminaries}\label{S:trace-prelim}

{\bf Bulk and Surface Meshes.}
Below we need to carefully distinguish between mesh structures defined relative to the surface mesh $\mathcal{F}$ and those defined relative to the volume mesh $\T$.  First note that we shall consistently denote by $F$ ($n$-dimensional) surface elements lying in $\mathcal{F}$, which as we have noted above may not be shape-regular.  In addition, $T$ will be used to denote $(n+1)$-simplices lying in $\T$.  Given a face $F \in \mathcal{F}$, we denote by $T_F$ the simplex in which $F$ lies (or one of them if $F$ is a face shared by two bulk elements).  In addition, given $T \in \T$ we denote by $\omega_\T^1(T)$ the patch of elements of $\T$ surrounding $T$ (first ring)
\[
\omega_\T^1(T) := \bigcup \big\{T'\in\T: ~~ T'\cap T \neq \emptyset  \big\},
\]
and by $\omega_\T^2(T)$ the patch of elements of $\T$ surrounding $\omega_\T^1(T)$ (second ring). We also define 
$$h_F={\rm diam}(T_F) \quad F \in \mathcal{F},$$
whence the local mesh size of the face element $F$ is taken to be the diameter of the corresponding bulk element.   Note that it is possible that ${\rm diam}(F) << h_F.$  We will also denote by $h_T$ the diameter of elements $T\in\T$. We finally let
$$\T_\Gamma := \big\{T \in \T: ~~ \Gamma \cap T \neq \emptyset \hbox{ or } \gamma \cap T \neq \emptyset \big\}$$
be the set of elements of $\T$ touching either $\Gamma$ or $\gamma$.  

\medskip\noindent
{\bf Geometric Assumptions.}  
Above we described $\Gamma$ as the zero level set of an approximate distance function $d_h$.  In this section we first place abstract requirements on $\Gamma$ that are sufficient to obtain optimal-order and regularity a priori error estimates and then prove that these requirements are satisfied on sufficiently fine bulk meshes $\T$ when $\Gamma$ is built from a suitably general level set description of $\gamma$.   We now list three main geometric assumptions.

\begin{itemize} 
\item {\bf Description of $\Gamma$.}  We assume that $\Gamma$ is a polyhedral surface whose faces $F\in\mathcal{F}$ consist of the intersection of hyperplanes with simplices $T\in\T$. We further assume that $\Gamma \subset \mathcal{N}$ with $\mathcal{N}$ the tubular neighborhood defined in \eqref{N:def}.
\item {\bf Geometric resolution of $\gamma$.}  Let $d$ be the distance function to $\gamma$, $\bnu = \nabla d$ and $\bnu_\Gamma$ be the outward unit normal on $\Gamma$.  We assume that
\begin{align}
\label{trace:geo_res_assumption}
\|d\|_{L_\infty(F)} + h_F \|\bnu-\bnu_\Gamma\|_{L_\infty(F)}
\lesssim h_F^2 |d|_{W^2_\infty(\mathcal{N})}
\quad F \in \mathcal{F}.
\end{align}
This assumption is sufficient to ensure optimal decay of the geometric consistency error in a priori error estimates.  
\item {\bf Local flattening.}  We assume that for each $T \in \T$ with $T \cap \gamma \neq \emptyset$, there is a ball $B_R$ of radius $R$ with $R \simeq 1$ (independent of $h_T$) such that $T \subset B_{R/2}$ and there is a uniformly bi-$C^2$ map
\begin{align}
\label{ass:flattening}
\Phi:B_R \rightarrow \mathbb{R}^3, \quad \Phi(\gamma \cap B_R) \hbox{ lies in a hyperplane.}
\end{align}
\end{itemize} 

The flattening assumption \eqref{ass:flattening} follows from the $C^2$ nature of the surface $\gamma$ provided elements $T\in\T$ intersecting $\gamma$ are sufficiently fine with respect to the inverse of the maximum principal curvature. The flattening map $\Phi$ may be constructed by expressing $\gamma$ as a $C^2$ graph over tangent hyperplanes of $\gamma$, with the radius of the domain of these graphs bounded by the inverse of the maximum principal curvature of $\gamma$ (cf. \cite[Appendix C]{Ev98} for the construction of $\Phi$; the bound for $R$ follows from the definition of curvature).

\medskip\noindent
{\bf Level set representations.}
While we prove our results below under the abstract geometric resolution assumption \eqref{trace:geo_res_assumption} involving the distance function, in practice trace methods often build the discrete surface $\Gamma$ from a more general implicit representation of $\gamma$.   Such a representation may be obtained by assuming that $\gamma$ is the zero level set of a {\it level-set function} $\phi:\mathcal{N} \rightarrow \mathbb{R}$
$$\gamma=\{\bx \in \mathcal{N}: ~~ \phi(\bx)=0\}.$$
Broadening our assumptions concerning implicit representation of $\gamma$ is important in many practical applications.  Because the distance function $d$ has a closed form expression only if $\gamma$ is a sphere or a torus, there are many settings where $\gamma$ may easily be represented as a level set even if $d$ is not available. A simple example is the ellipsoid given by $\gamma=\left \{\bx \in \mathbb{R}^3: \frac{x^2}{a^2}+\frac{y^2}{b^2}+\frac{z^2}{c^2}-1=0 \right \}$.  {\it Level set methods} in which an evolving free boundary is computationally approximated by the level set of a discrete function are also popular in many applications.  In this case it is also natural to define $\gamma$ via a generic level set function $\phi$ rather than restrict attention to the distance function $d$.  

Our essential assumptions concerning $\phi$ are that $\phi \in C^2(\mathcal{N})$ and 
\begin{align}
\label{eq:phi_assumption}
\nabla \phi(\bx) \cdot \bnu(\bx) \ge c_\phi >0 \quad \forall \bx \in \gamma.
\end{align}
Because $\gamma$ is a level set of $\phi$, $|\nabla \phi|=|\nabla \phi \cdot \nu|$ on $\gamma$, so the assumption \eqref{eq:phi_assumption} is equivalent to assuming that $\nabla \phi$ is nondegenerate on $\gamma$ and points in the same direction as $\bnu=\nabla d$. Let $\phi_h \in \V(\mathcal T)$ be an approximation to $\phi$ satisfying 
\begin{align}
\label{eq:phih_assumption}
\|\phi-\phi_h\|_{L_\infty(T)} + h_T\|\phi-\phi_h\|_{W_\infty^1(T)}
  \lesssim h_T^2 \|\phi\|_{W^2_\infty(\mathcal{N})}
\quad T \in \T, 
\end{align}
and define the discrete surface $\Gamma$ by 
$$\Gamma : = \big\{\bx \in \mathcal{N}: ~~ \phi_h(\bx)=0 \big\}.$$

\begin{lemma}[geometric resolution]\label{L:geo-res}
Let $\gamma$ be $C^2$. Under the above assumptions, the inequality \eqref{trace:geo_res_assumption} holds for $h:=\max_{T \in \T} h_T$ sufficiently small, namely
\begin{align}
\label{eq:phi_properties}
\|d\|_{L_\infty(F)}+ h_F \|\bnu-\bnu_\Gamma\|_{L_\infty(F)}
  \lesssim h_F^2 \|\phi\|_{W^2_\infty(\mathcal{N})}
\quad F \in \mathcal{F}.
\end{align}
\end{lemma}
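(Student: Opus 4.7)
The strategy is to fix an arbitrary face $F\in\mathcal F$ together with a point $\bx\in F$ and exploit the defining relation $\phi_h(\bx)=0$ via two uses of the interpolation estimate \eqref{eq:phih_assumption}, one at the $L_\infty$ level to control $\|d\|_{L_\infty(F)}$ and one at the $W^1_\infty$ level to control $\|\bnu-\bnu_\Gamma\|_{L_\infty(F)}$. The crucial structural tool is the extension of $\bnu$ to $\mathcal N$ by $\bnu=\nabla d$, which together with the nondegeneracy assumption \eqref{eq:phi_assumption} pins $\phi$'s zero set to $\gamma$ transversally. Throughout, I would work elementwise on the bulk element $T_F$ that contains $F$, so that $h$-dependent estimates will automatically localize to $h_F$ since $\phi_h|_{T_F}\in\mathbb P^1$.

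\textbf{Bound on $\|d\|_{L_\infty(F)}$.} Given $\bx\in F$, let $\bP_d(\bx)\in\gamma$ denote its closest point, so $\bx-\bP_d(\bx)=d(\bx)\bnu(\bx)$. A second-order Taylor expansion of $\phi$ from $\bx$ to $\bP_d(\bx)$ gives
\[
0=\phi(\bP_d(\bx))=\phi(\bx)-d(\bx)\,\nabla\phi(\bx)\cdot\bnu(\bx)+O\bigl(d(\bx)^2\|\phi\|_{W^2_\infty(\mathcal N)}\bigr).
\]
Combining \eqref{eq:phi_assumption} with the continuity of $\nabla\phi\cdot\bnu$ on $\mathcal N$ shows that $\nabla\phi(\bx)\cdot\bnu(\bx)\ge c_\phi/2$ uniformly once $d(\bx)$ is small enough; the latter smallness is propagated by the uniform convergence $\phi_h\to\phi$ via \eqref{eq:phih_assumption} and the implicit function theorem. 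Using $\phi_h(\bx)=0$ and \eqref{eq:phih_assumption} on $T_F$, one then obtains
\[
|d(\bx)|\lesssim|\phi(\bx)|=|\phi(\bx)-\phi_h(\bx)|\lesssim h_F^2\,\|\phi\|_{W^2_\infty(\mathcal N)}.
\]
Absorbing the quadratic remainder into the left-hand side for $h$ small yields the first estimate in \eqref{eq:phi_properties}.

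\textbf{Bound on $\|\bnu-\bnu_\Gamma\|_{L_\infty(F)}$.} Since $\phi_h$ is affine on $T_F$, $\bnu_\Gamma=\nabla\phi_h/|\nabla\phi_h|$ is constant on $F$. Using $\bnu(\bx)=\nabla d(\bx)=\nabla d(\bP_d(\bx))=\nabla\phi(\bP_d(\bx))/|\nabla\phi(\bP_d(\bx))|$ (up to a fixed sign), I would write the telescoping decomposition
\[
\bnu(\bx)-\bnu_\Gamma(\bx)=\Bigl[\tfrac{\nabla\phi(\bP_d(\bx))}{|\nabla\phi(\bP_d(\bx))|}-\tfrac{\nabla\phi(\bx)}{|\nabla\phi(\bx)|}\Bigr]+\Bigl[\tfrac{\nabla\phi(\bx)}{|\nabla\phi(\bx)|}-\tfrac{\nabla\phi_h(\bx)}{|\nabla\phi_h(\bx)|}\Bigr].
\]
For the first bracket I would invoke the $C^1$ regularity of $\nabla\phi/|\nabla\phi|$ (guaranteed by $\phi\in C^2$ and $|\nabla\phi|\ge c>0$ on $\mathcal N$, a consequence of \eqref{eq:phi_assumption}) together with the bound $|\bx-\bP_d(\bx)|=|d(\bx)|\lesssim h_F^2$ just established, obtaining a term of order $h_F^2\|\phi\|_{W^2_\infty(\mathcal N)}$. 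For the second bracket I would combine the stability of the denominators (bounded below for $h$ small, again by \eqref{eq:phi_assumption} and \eqref{eq:phih_assumption}) with the interpolation estimate $\|\nabla(\phi-\phi_h)\|_{L_\infty(T_F)}\lesssim h_F\|\phi\|_{W^2_\infty(\mathcal N)}$, producing a bound of order $h_F\|\phi\|_{W^2_\infty(\mathcal N)}$. Multiplying through by $h_F$ gives the second estimate in \eqref{eq:phi_properties} and finishes the proof.

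\textbf{Expected obstacle.} The only delicate point is a soft one, namely establishing a priori that $d(\bx)$ is small enough to justify linearizing the Taylor expansion and the nondegeneracy of $\nabla\phi\cdot\bnu$ at $\bx$. This is not a technical calculation but an existence-type statement about $h$ being small, and I would dispatch it by observing that $\|\phi\|_{L_\infty(\Gamma)}=\|\phi-\phi_h\|_{L_\infty(\Gamma)}\lesssim h^2\to 0$ and invoking the fact that $\{|\phi|\le\eta\}\subset\mathcal N(1/(2K_\infty))$ for some $\eta>0$, so $\Gamma\subset\mathcal N$ automatically for $h$ small; everything else in the argument is then routine.
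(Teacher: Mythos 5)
Your proposal is correct and follows essentially the same strategy as the paper: establish $|d(\bx)|\lesssim h_F^2$ via $\phi_h(\bx)=0$, the interpolation estimate, and nondegeneracy of $\nabla\phi\cdot\bnu$ near $\gamma$, then estimate $\bnu-\bnu_\Gamma$ by the same telescoping split through $\nabla\phi(\bx)/|\nabla\phi(\bx)|$. The only cosmetic difference is that the paper replaces your second-order Taylor expansion (with an $O(d^2)$ remainder to absorb) by the exact first-order integral identity $\phi(\bx)=\int_0^1\nabla\phi(s\bx+(1-s)\bP_d(\bx))\cdot(\bx-\bP_d(\bx))\,ds$, which gives $|\phi(\bx)|\simeq|d(\bx)|$ directly with no remainder; your "Expected obstacle" paragraph matches the paper's introduction of the fixed slab $\mathcal N_\phi$ on which $\nabla\phi\cdot\bnu\ge c_\phi/2$.
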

\begin{proof}

First let $\bx \in \mathcal{N}$, for which the projection $\bP_d(\bx)$ on $\gamma$ is uniquely defined. Let $\zeta(s) := \nabla \phi\big( s\bx + (1-s) \bP_d(\bx) \big)\cdot\bnu(\bx)$ and compute
$$
\begin{aligned}
|(\nabla \phi \cdot \bnu) (\bx) &- (\nabla \phi \cdot \bnu) (\bP_d(\bx) )| =
|\zeta(1)-\zeta(0)| = \left |\int_0^1 \zeta'(s) ds \right |
\\
&= \left | \int_0^1 \nabla \big(\nabla\phi\big( s\bx + (1-s) \bP_d(\bx) \big)\cdot\bnu(\bx) \big) \cdot \big(\bx-\bP_p(\bx) \big) \right |
\\ & \le  |\bx- \bP_d(\bx)| \, \|\nabla (\nabla \phi \cdot \nu)\|_{L_\infty([\bP_d(x), \bx])}.
\end{aligned}
$$
Since $(\nabla \phi \cdot \bnu) (\bP_d(\bx)) \ge c_\phi>0$, $\phi \in C^2(\mathcal{N})$, and $\bnu \in C^1(\mathcal{N})$, there thus exists a constant $C_\phi\le\frac{1}{2K_\infty}$ (depending on $\|\phi\|_{W_\infty^2(\mathcal{N})}$ and $|d|_{W_\infty^2(\mathcal{N})}$) such that
\[
(\nabla \phi \cdot \bnu)(\bx) \ge \frac{c_\phi}{2}
\quad \forall \,\bx \in \mathcal{N}_{\phi}:=\big\{\by\in\Omega: ~ |d(\by)|\le C_\phi  \big\} \subset \mathcal{N},
\]
according to \eqref{N:def}. Therefore, for any $\bx \in \mathcal{N}_\phi$ we have $\phi(\bP_d(\bx))=0$ and
$$
|\phi(\bx)| = \left | \int_0^1 \nabla \phi(s\bx+(1-s)\bP_d(\bx)) \cdot (\bx-\bP_d(\bx)) \right | \simeq |\bx - \bP_d(\bx)| = |d(\bx)|,
$$
because $\bx-\bP_d(\bx) = |\bx-\bP_d(\bx)| \bnu(\bx)$. Given any face $F \in \mathcal{F}$ of $\Gamma$, we realize that $\phi_h(\bx)=0$ for all $\bx\in F$ and
\[
|\phi(\bx)| = |\phi(\bx)-\phi_h(\bx)| \lesssim h_F^2 \|\phi\|_{W^2_\infty(\mathcal{N})}.
\]
If $h\ge h_F$ is sufficiently small, then $\bx\in\mathcal{N}_\phi$ and $|d(\bx)| \simeq |\phi(\bx)| \lesssim h_F^2|\phi|_{W^2_\infty(\mathcal{N})}$. This is the desired bound for the first term on the left hand side of \eqref{eq:phi_properties}.

To prove the remaining bound in \eqref{eq:phi_properties}, we note that for $\bx \in F \in \mathcal{F}$, we have $\bnu_\Gamma(\bx)= \frac{ \nabla \phi_h(\bx)}{|\nabla \phi_h(\bx)|}$ and $\bnu(\bx) = \nu(\bP_d(\bx)) = \frac{\nabla \phi(\bP_d(\bx))}{|\nabla \phi (\bP_d(\bx))|}$. Consequently, for such $\bx \in \Gamma$, we use \eqref{eq:phih_assumption}, the bound $|d(\bx)| \lesssim h_F^2$ already proved, and the $C^2$ nature of $\phi$ to obtain
\begin{align*}
\begin{aligned}
\left |(\bnu_\Gamma-\bnu)(\bx) \right| &= \left |\frac{ \nabla \phi_h(\bx)}{| \nabla \phi_h(\bx)|}-\frac{\nabla \phi(\bP_d(\bx))}{|\nabla \phi (\bP_d(\bx))|} \right| 
\\ & \le \left |\frac{ \nabla \phi_h(\bx)}{|\nabla \phi_h(\bx)|}-\frac{ \nabla \phi(\bx)}{|\nabla \phi(\bx)|} \right | + \left | \frac{ \nabla \phi(\bx)}{|\nabla \phi(\bx)|}-\frac{\nabla \phi(\bP_d(\bx))}{|\nabla \phi (\bP_d(\bx))|}\right | \\ & \lesssim \big(h_F + h_F^2\big) \|\phi\|_{W^2_\infty(\mathcal{N})} \lesssim h_F \|\phi\|_{W^2_\infty(\mathcal{N})}.
\end{aligned}
\end{align*}  
This completes the proof.
\end{proof}
 
Thus we have shown that it is possible to define the discrete surface $\Gamma$ using a generic level set representation of $\gamma$ in such a way that $\Gamma$ has the same geometric approximation properties as if it were derived more directly from the distance function $d$.  Below we assume practical access to the distance function $d$ and associated geometric properties (curvatures and normal vectors) in two further places: the first one is the definition of the right hand side $F_\Gamma$ in formulating the trace FEM and the second one is the definition of geometric a posteriori error estimators.  As outlined in \cite{DemlowDziuk:07}, it is computationally feasible to accurately approximate $d(\bx)$, $\bP_d(x)$, and $\nu(\bx)$ for $\bx \in \Gamma$ under the assumption that we have access to a level set function $\phi$ with the properties assumed above.  In outline, the foundational building block of this procedure is a numerical approximation to $\bP_d(\bx)$.  Two such algorithms are proposed in \cite{DemlowDziuk:07}, one being Newton's method and the other an ad hoc first order method; cf. \cite{Gr17} for generalizations and analysis of these methods.  Once $\bP_d(\bx)$ is computed, we then have
\[
|d(\bx)| = |\bx - \bP_d(\bx)|,
\quad
\nu(\bx)= \frac{\nabla \phi(\bP_d(\bx))}{|\nabla \phi(\bP_d(\bx))|},
\quad\bW(\bP_d(\bx))= \nabla \frac{\nabla \phi(\bP_d(\bx))}{|\nabla \phi(\bP_d(\bx))|}.
\]
These relationships allow for the computation of all geometric information required to bound geometric errors in the trace method a posteriori.  In addition, because we may reasonably assume access to $\bP_d$ it is in turn reasonable to assume a consistent definition of the right hand side $F_\Gamma$, that is, $F_\Gamma = \frac{q}{q_\Gamma} f \circ \bP_d$.  A different definition of $F_\Gamma$ would lead to an additional consistency term in the results below.

\medskip\noindent
{\bf Harmonic Extension and Traces.}
Here we collect instrumental results for our proofs of a priori and a posteriori error estimates. For the latter we use the fractional-order space $H^{3/2}(\Omega)$, so we first define the seminorm of $H^{1+s}(\Omega)$
  $$|\tv|_{H^{1+s}(\Omega)}^2 := \sum_{|\alpha|=1} \iint_{\Omega \times \Omega} \frac{|D^\alpha \tv(x)-D^\alpha \tv(y)|^2}{|x-y|^{n+2s}} \hspace{1pt} {\rm d}x \hspace{1pt}{\rm d}y$$
for a Lipschitz domain $\Omega\subset\mathbb{R}^n$ and $0<s<1$, and corresponding norm
$$\|\tv\|_{H^{1+s}(\Omega)}^2 =\|\tv\|_{H^1(\Omega)}^2 + |\tv|_{H^{1+s}(\Omega)}^2.$$

Our first lemma is a standard extension result which may for example be found in \cite[Theorem 1.4.3.1]{G85}.  

\begin{lemma}[$H^{1+s}$ extension]
\label{lem:standard_extension}
Let $D$ be a bounded Lipschitz domain in $\mathbb{R}^n$, $n\ge2$.  Then there is an extension operator $E: H^{1+s}(D) \rightarrow H^{1+s}(\mathbb{R}^n)$ such that 
\begin{align}
\label{extension}
\|E\tv\|_{H^{1+s}(\mathbb{R}^n)} \lesssim \|\tv\|_{H^{1+s}(D)} \quad \forall \,s\in [0,1), \quad \forall \tv \in H^{1+s}(D).
\end{align}
\end{lemma}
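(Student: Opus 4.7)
The plan is to follow the classical localization-plus-reflection strategy, which is exactly the route taken in \cite{G85}. First, I would handle the model case $D = \mathbb{R}^n_+ := \{x = (x',x_n) : x_n > 0\}$ by means of a higher-order reflection of Lichtenstein type: for fixed integer $N\ge 2$, define
\begin{equation*}
(E_0 \tv)(x', x_n) := \begin{cases} \tv(x',x_n), & x_n \ge 0, \\ \sum_{k=1}^{N} c_k \, \tv(x',-\lambda_k x_n), & x_n < 0, \end{cases}
\end{equation*}
where the coefficients $\{c_k,\lambda_k\}$ are chosen so that $\sum_k c_k(-\lambda_k)^j = 1$ for $j = 0, 1, \dots, N-1$. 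This matching ensures that $E_0\tv$ and its tangential and normal derivatives up to order $N-1$ agree across $\{x_n = 0\}$, so that $E_0$ is simultaneously bounded as a map $H^m(\mathbb{R}^n_+) \to H^m(\mathbb{R}^n)$ for $m = 0,1,\dots,N-1$. Taking $N \ge 2$ gives in particular the endpoint estimates for $H^1$ and $H^2$.

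Next, I would use real (or complex) interpolation together with the identification $H^{1+s}(\mathbb{R}^n_+) = (H^1(\mathbb{R}^n_+), H^2(\mathbb{R}^n_+))_{s,2}$ for $s \in (0,1)$, with the analogous identification on $\mathbb{R}^n$. Since $E_0$ is bounded between the endpoint spaces, it is bounded $H^{1+s}(\mathbb{R}^n_+) \to H^{1+s}(\mathbb{R}^n)$ with a constant that depends continuously on $s$ and, crucially, stays finite on any compact subinterval of $[0,1)$.

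For a general bounded Lipschitz domain $D$, I would cover $\overline{D}$ by finitely many open sets $\{U_i\}_{i=1}^M$ and an interior set $U_0 \subset\subset D$, such that for each $i \ge 1$ there is a bi-Lipschitz map $\Phi_i : U_i \to B_i \subset \mathbb{R}^n$ taking $D \cap U_i$ onto $B_i \cap \mathbb{R}^n_+$ and $\partial D \cap U_i$ onto $B_i \cap \{x_n = 0\}$. Let $\{\psi_i\}_{i=0}^M$ be a smooth partition of unity subordinate to this covering. For the interior piece I take $E(\psi_0 \tv)$ to be the extension by zero, which is harmless because $\psi_0 \tv$ vanishes near $\partial D$. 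For each $i \ge 1$, I pull back $\psi_i \tv$ through $\Phi_i^{-1}$ to obtain a function on $B_i \cap \mathbb{R}^n_+$, apply the half-space operator $E_0$, cut off by a smooth bump supported in $B_i$, and push the result forward through $\Phi_i$; finally I sum these pieces to define $E\tv$.

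The main obstacle is justifying that the bi-Lipschitz changes of variable $\Phi_i$ preserve the $H^{1+s}$ norm up to multiplicative constants depending only on the Lipschitz constants of $\Phi_i$ and $\Phi_i^{-1}$. This is exactly where the restriction $s < 1$ enters: composition with a Lipschitz map fails to preserve $H^2$, but it does preserve $H^1$, and using the Slobodeckii characterization
\begin{equation*}
|w|_{H^{1+s}(\Omega)}^2 = \sum_{|\alpha|=1} \iint_{\Omega\times\Omega} \frac{|D^\alpha w(x)-D^\alpha w(y)|^2}{|x-y|^{n+2s}}\, dx\, dy,
\end{equation*}
together with the chain rule (which for Lipschitz $\Phi_i$ gives a bounded, a.e.\ defined Jacobian), one checks that the fractional seminorm of a first derivative is preserved up to constants for $s \in [0,1)$. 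Collecting these estimates over the finite covering, summing, and absorbing bounds on $\|\psi_i\|_{W^{1,\infty}}$ yields the claimed inequality. The constant in \eqref{extension} may degenerate as $s \uparrow 1$ via the interpolation step, but it is uniformly bounded on any $[0,s_0]$ with $s_0 < 1$, which is all that is needed.
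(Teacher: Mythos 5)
The localization-plus-reflection scheme you describe is the standard one for $C^{1,1}$ (or smoother) domains, and the half-space reflection plus interpolation in your first two steps is fine. The gap is in the flattening step. Your argument hinges on the claim that pulling back through a bi-Lipschitz map $\Phi_i$ preserves $H^{1+s}$ for all $s\in[0,1)$. Using the Slobodeckii seminorm of $\nabla(w\circ\Phi_i) = D\Phi_i^T\,(\nabla w)\circ\Phi_i$, the cross term
\[
\iint \frac{|(\nabla w\circ\Phi_i)(y)|^2\,|D\Phi_i(x)-D\Phi_i(y)|^2}{|x-y|^{n+2s}}\,dx\,dy
\]
cannot be controlled when $D\Phi_i$ is merely $L^\infty$: for a Lipschitz boundary the flattening Jacobian typically has jump discontinuities across hypersurfaces, and a BV multiplier does not act boundedly on $H^s$ once $s\ge 1/2$. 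Concretely, take $n=2$, $w(y_1,y_2)=y_2$ (cut off), and $\Phi(x_1,x_2)=(x_1,x_2+|x_1|)$, a perfectly good bi-Lipschitz boundary-flattening map. Then $w\circ\Phi(x)=x_2+|x_1|$, whose derivative $\partial_1(w\circ\Phi)=\operatorname{sgn}(x_1)$ is not in $H^{1/2}_{\mathrm{loc}}(\mathbb{R}^2)$, so $w\circ\Phi\notin H^{3/2}_{\mathrm{loc}}$ even though $w$ is smooth. Your step ``one checks that the fractional seminorm of a first derivative is preserved up to constants for $s\in[0,1)$'' therefore fails precisely on $[1/2,1)$, and $s=1/2$ is the case the paper actually uses (the $H^{3/2}$ extension in the trace estimate of Lemma~\ref{L:trace-est}).

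The paper itself does not reprove the lemma but cites Grisvard's \cite[Theorem~1.4.3.1]{G85}, which rests on an extension operator that handles Lipschitz domains without flattening. The standard route is Stein's extension: working directly in the epigraph coordinates $\{x_n>\phi(x')\}$, one uses a regularized distance $\delta^*$ (smooth even though $\phi$ is only Lipschitz) and defines $Eu(x)=\int_1^\infty u(x',x_n+t\delta^*(x))\psi(t)\,dt$ below the graph, with $\psi$ chosen so that $E$ is simultaneously bounded on $W^{k,p}$ for all integer $k$. The single operator $E$ then extends to fractional orders by interpolation between integer Sobolev spaces over the fixed domain $D$ and over $\mathbb{R}^n$. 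The crucial difference from your proposal is that the vertical averaging replaces the boundary-flattening pullback, avoiding the composition with a rough map entirely. If you want to keep a reflection-style argument, you must either restrict to $s<1/2$ or upgrade the regularity hypothesis on $D$ to $C^{1,\alpha}$ with $\alpha>s$, neither of which matches the lemma as stated or its use in the paper.
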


We also state a trace result relating $H^1(\mathbb{R}^2)$ and $H^{3/2}(\mathbb{R}^3)$; this is a special case of \cite[Theorem 7.58]{Ad75}.   
\begin{lemma}[trace]
\label{lem:special_trace}
If $\tv \in H^{3/2}(\mathbb{R}^n)$, $n\ge2$, and $\mathbb{P}$ is any $(n-1)$-dimensional hyperplane in $\mathbb{R}^n$, then
\begin{align}
\label{special_trace}
\|\tv\|_{H^1(\mathbb{P})}  \lesssim \|\tv\|_{H^{3/2}(\mathbb{R}^n)}.
\end{align}
\end{lemma}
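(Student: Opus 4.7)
The plan is to invoke the classical trace theorem of Adams, which asserts that the trace operator from $H^{s}(\mathbb{R}^n)$ to $H^{s-1/2}(\mathbb{R}^{n-1})$ is bounded for $s>1/2$, specialized to $s=3/2$. Since the statement is attributed to \cite[Theorem 7.58]{Ad75}, the work is primarily to reduce the general hyperplane $\mathbb{P}$ to a coordinate hyperplane and then execute the standard Fourier-analytic trace computation.

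First I would observe that both the $H^{3/2}(\mathbb{R}^n)$ and the $H^1(\mathbb{P})$ norms are invariant under rigid motions of $\mathbb{R}^n$ that send $\mathbb{P}$ to itself, and also behave covariantly under rotations/translations of $\mathbb{R}^n$ that map $\mathbb{P}$ to another affine hyperplane. Hence, after composing with a suitable rotation and translation, it suffices to treat the case $\mathbb{P}=\mathbb{R}^{n-1}\times\{0\}$ and prove
\[
\|\tv(\cdot,0)\|_{H^1(\mathbb{R}^{n-1})}\lesssim \|\tv\|_{H^{3/2}(\mathbb{R}^n)}.
\]

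Next I would proceed on the Fourier side. Writing $\bxi=(\bxi',\xi_n)\in\mathbb{R}^{n-1}\times\mathbb{R}$, the partial Fourier transform of $\tv(\cdot,0)$ is
\[
\widehat{\tv(\cdot,0)}(\bxi')=\int_{\mathbb{R}}\widehat{\tv}(\bxi',\xi_n)\,d\xi_n.
\]
Applying the Cauchy--Schwarz inequality with the weight $(1+|\bxi|^2)^{3/2}$ yields
\[
\bigl|\widehat{\tv(\cdot,0)}(\bxi')\bigr|^2 \le \Bigl(\int_{\mathbb{R}}(1+|\bxi|^2)^{-3/2}\,d\xi_n\Bigr)\Bigl(\int_{\mathbb{R}}|\widehat{\tv}(\bxi',\xi_n)|^2(1+|\bxi|^2)^{3/2}\,d\xi_n\Bigr).
\]
A direct computation shows that the first factor is comparable to $(1+|\bxi'|^2)^{-1}$. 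Multiplying through by $(1+|\bxi'|^2)$ and integrating in $\bxi'$ then delivers
\[
\int_{\mathbb{R}^{n-1}}(1+|\bxi'|^2)\bigl|\widehat{\tv(\cdot,0)}(\bxi')\bigr|^2 d\bxi' \lesssim \int_{\mathbb{R}^n}(1+|\bxi|^2)^{3/2}|\widehat{\tv}(\bxi)|^2 d\bxi,
\]
which is precisely the desired inequality after recognizing the left-hand side as $\|\tv(\cdot,0)\|_{H^1(\mathbb{R}^{n-1})}^2$ (up to equivalent norms).

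There is no real obstacle here; the only subtle point is the passage from the fractional Sobolev--Slobodeckij seminorm used in the excerpt's definition of $H^{1+s}$ to the Fourier-side Bessel-potential norm. I would address this by noting that on $\mathbb{R}^n$ these two norms are equivalent (a standard Fourier-characterization result, valid for $0<s<1$), so the computation above can be performed on the Fourier side without loss. Alternatively, one can simply cite \cite[Theorem 7.58]{Ad75} directly with $s=3/2$, $m=1$, and codimension one, after the rotation/translation reduction.
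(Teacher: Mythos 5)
Your proposal is correct and matches the paper's approach, which simply cites \cite[Theorem 7.58]{Ad75} for this lemma; you note that alternative yourself at the end. The Fourier-side argument you supply (reduction to a coordinate hyperplane by rigid-motion invariance of both norms, then Cauchy--Schwarz with weight $(1+|\bxi|^2)^{3/2}$ and the integral $\int_{\mathbb{R}}(1+|\bxi'|^2+\xi_n^2)^{-3/2}\,d\xi_n \simeq (1+|\bxi'|^2)^{-1}$) is a correct self-contained proof of the same fact, with the appropriate caveat about equivalence of the Slobodeckij and Bessel-potential norms on $\mathbb{R}^n$.
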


The following is an important technical lemma which expresses traces relationships between norms on surface elements (flat or curved) and corresponding norms on bulk elements.  An essential component of these estimates is that they allow for surfaces to cut through bulk elements in an arbitrary fashion.   Such estimates were essential in the proof of the first a posteriori estimates for trace methods in \cite{DO12}.  In the context of a priori error estimates for trace methods, these provide a substantially simplified proof of error bounds when compared with the original proofs given in \cite{ORG09}; cf. \cite{HH02, HH04, BHL15, Re15}.

\begin{lemma}[trace estimates for cut elements]
\label{L:trace-est}
Let $D\subset \mathbb{R}^n$ ($n \ge 2$) be a (not necessarily bounded) Lipschitz domain, and let $D_{n-1}$ be the intersection of $D$ with an arbitrary hyperplane of dimension $n-1$.  Then
\begin{align}
\label{general_trace_est}
\|\tv \|_{L_2(D_{n-1})} \lesssim \|\tv\|_{H^1(D)} \quad \forall \tv \in H^1(D),
\end{align}
where the hidden constant depends on the Lipschitz nature of $D$ but not on the orientation or size of $D_{n-1}$.  In particular, let $F \in \mathcal{F}$ with $F \subset T \in \T$. Then
\begin{align}
\label{trace_est}
\|\tv\|_{L_2(F)} \lesssim h_T^{-1/2} \|\tv\|_{L_2(T)} + h_T^{1/2}\|\nabla \tv \|_{L_2(T)}
\quad \forall \tv \in H^1(T).
\end{align}
In addition, given $T \in \T$ there hold
\begin{align}
\label{curved_trace_est} 
\|\tv\|_{L_2(T \cap \gamma)} \lesssim h_T^{-1/2} \|\tv\|_{L_2(T)}+ h_T^{1/2} \|\nabla \tv\|_{L_2(T)} \quad \forall \tv \in H^1(T),
\end{align}
and
\begin{align}
\begin{aligned}
\label{H32_curved_trace}
h_T^{-1} & \|\tv\|_{L_2(T \cap \gamma)}  + \|\nabla_\gamma \tv\|_{L_2(T \cap \gamma)} 
\\ & \lesssim h_T^{-3/2} \|\tv\|_{L_2(T)} + h_T^{-1/2} \|\nabla \tv\|_{L_2(T)} + |\tv|_{H^{3/2}(T)} \quad\forall \tv \in H^{3/2}(T).
\end{aligned}
\end{align}
\end{lemma}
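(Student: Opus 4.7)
The four estimates form a chain: \eqref{general_trace_est} is the fundamental building block, \eqref{trace_est} follows by scaling, \eqref{curved_trace_est} combines scaling with the flattening map \eqref{ass:flattening}, and \eqref{H32_curved_trace} upgrades this using the fractional trace Lemma \ref{lem:special_trace}. The key unifying idea is to reduce every estimate to a reference configuration of size $\simeq 1$ where orientation-free trace inequalities apply, then restore the $h_T$ dependence by a scaling calculation.

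\textbf{Proof of \eqref{general_trace_est}.} Using Lemma \ref{lem:standard_extension} with $s=0$, I would extend $\tv$ to $E\tv \in H^1(\mathbb R^n)$ with $\|E\tv\|_{H^1(\mathbb R^n)} \lesssim \|\tv\|_{H^1(D)}$. Since the $H^1(\mathbb R^n)$ norm is invariant under translations and rotations, I may assume the hyperplane carrying $D_{n-1}$ is $\{x_n=0\}$. A standard Fubini plus one-dimensional trace argument then gives $\|E\tv(\cdot,0)\|_{L_2(\mathbb R^{n-1})} \lesssim \|E\tv\|_{H^1(\mathbb R^n)}$, and restricting to $D_{n-1}$ finishes the claim with a constant that depends only on the Lipschitz character of $D$ through $E$.

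\textbf{Proof of \eqref{trace_est} and \eqref{curved_trace_est}.} For \eqref{trace_est}, pick a vertex $\bar\bx_T$ of $T$ and scale by $\bx = h_T\hat\bx + \bar\bx_T$. The pulled-back element $\hat T := h_T^{-1}(T-\bar\bx_T)$ is shape regular of diameter $\simeq 1$, and $\hat F := h_T^{-1}(F-\bar\bx_T)$ lies in a hyperplane. Apply \eqref{general_trace_est} to $\hat\tv(\hat\bx):=\tv(\bx)$ on $\hat T$ with $D_{n-1}=\hat F$ and unscale to recover the $h_T^{\pm 1/2}$ weights. For \eqref{curved_trace_est} I would combine this scaling with the flattening map: set $\hat\Phi(\hat\bx) := h_T^{-1}\bigl(\Phi(h_T\hat\bx+\bar\bx_T) - \Phi(\bar\bx_T)\bigr)$ on the scaled ball $h_T^{-1}(B_R-\bar\bx_T)$. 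By \eqref{ass:flattening}, $\hat\Phi$ is uniformly bi-$C^2$ on $\hat T$ (its second derivative is $h_T\,D^2\Phi$), and $\hat\Phi(\hat T\cap \hat\gamma) \subset \mathbb P$ for some hyperplane $\mathbb P$. The transformed domain $\hat\Phi(\hat T)$ is Lipschitz with constants controlled solely by the shape regularity of $\hat T$ and the bi-$C^2$ bounds on $\Phi$. Apply \eqref{general_trace_est} to $\tilde\tv := \hat\tv\circ\hat\Phi^{-1}$ with $D=\hat\Phi(\hat T)$ and $D_{n-1}=\hat\Phi(\hat T\cap \hat\gamma)\subset\mathbb P$; pushing back through $\hat\Phi$ and then through the scaling yields \eqref{curved_trace_est}.

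\textbf{Proof of \eqref{H32_curved_trace}.} The $L_2$ part on $T\cap\gamma$ is already controlled by \eqref{curved_trace_est}. For the surface gradient, repeat the scaling plus flattening of the previous step. Since $\hat\Phi$ is uniformly bi-$C^2$, the chain rule identifies $\nabla_{\hat\gamma}\hat\tv$ with $(D\hat\Phi)^\top \nabla_{\mathbb P}\tilde\tv$ composed with $\hat\Phi$, so $\|\nabla_{\hat\gamma}\hat\tv\|_{L_2(\hat T\cap\hat\gamma)} \lesssim \|\nabla_{\mathbb P}\tilde\tv\|_{L_2(\hat\Phi(\hat T)\cap\mathbb P)}$. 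Extend $\tilde\tv$ to $E\tilde\tv\in H^{3/2}(\mathbb R^n)$ by Lemma \ref{lem:standard_extension} with $s=1/2$, then invoke Lemma \ref{lem:special_trace} to obtain
\[
\|\nabla_{\mathbb P}\tilde\tv\|_{L_2(\hat\Phi(\hat T)\cap\mathbb P)} \le \|E\tilde\tv\|_{H^1(\mathbb P)} \lesssim \|E\tilde\tv\|_{H^{3/2}(\mathbb R^n)} \lesssim \|\tilde\tv\|_{H^{3/2}(\hat\Phi(\hat T))} \lesssim \|\hat\tv\|_{H^{3/2}(\hat T)}.
\]
A direct computation of the scalings gives $\|\hat\tv\|_{L_2(\hat T)} = h_T^{-(n+1)/2}\|\tv\|_{L_2(T)}$, $\|\nabla\hat\tv\|_{L_2(\hat T)} = h_T^{(1-n)/2}\|\nabla\tv\|_{L_2(T)}$, $|\hat\tv|_{H^{3/2}(\hat T)} = h_T^{(2-n)/2}|\tv|_{H^{3/2}(T)}$ and $\|\nabla_{\hat\gamma}\hat\tv\|_{L_2(\hat T\cap\hat\gamma)} = h_T^{(2-n)/2}\|\nabla_{\gamma}\tv\|_{L_2(T\cap\gamma)}$, so absorbing the common factor $h_T^{(2-n)/2}$ gives exactly the weighted bound in \eqref{H32_curved_trace}.

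\textbf{Main obstacle.} The delicate step is the last one: uniformity of the constant in Lemma \ref{lem:standard_extension} for $H^{3/2}$ on the (possibly irregular) transformed domain $\hat\Phi(\hat T)$, and the compatibility of the fractional $H^{3/2}$ seminorm with the composite scaling-then-flattening change of variables. Both are handled by the shape regularity of $\hat T$ and the uniform bi-$C^2$ bounds on $\Phi$ (which in fact improve under scaling since $D^2\hat\Phi = h_T D^2\Phi$), but keeping the constants independent of $h_T$ and of the arbitrary way $\gamma$ cuts through $T$ requires care throughout.
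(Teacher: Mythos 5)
Your proposal is correct and follows essentially the same strategy as the paper: an affine-type rescaling to a size-one reference configuration, the flattening map $\Phi$ from \eqref{ass:flattening}, an $H^{1+s}$ extension, and the fractional trace inequality \eqref{special_trace}, with the same scaling exponents at the end. The paper differs only in minor bookkeeping: it cites Adams directly for \eqref{general_trace_est} rather than rederiving it via extension plus a Fubini argument as you do; it maps to the fixed reference simplex $\hat K$ via the affine map $\varphi$ instead of the similarity $\hat\bx\mapsto h_T\hat\bx+\bar\bx_T$; and—most notably—it first extends $\hat\tv$ from $\hat K$ to $H^{3/2}(\mathbb{R}^n)$ and only then composes with the inverse flattening map, which in turn forces the paper to first extend $\Phi$ to a globally bi-$C^2$ map on $\mathbb{R}^n$ (via a cutoff blend with the identity) so that $\widetilde\Phi^{-1}$ is defined everywhere. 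You instead extend $\tilde\tv=\hat\tv\circ\hat\Phi^{-1}$ from $\hat\Phi(\hat T)$ after flattening, which neatly avoids the global extension of $\Phi$ at the cost of needing uniform control on the Lipschitz character of the image domain $\hat\Phi(\hat T)$; you correctly justify this through the bi-$C^2$ bounds on $\hat\Phi$ and shape regularity. Both orderings are valid and lead to the same constants.
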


\begin{proof}
The estimate $\eqref{general_trace_est}$ is a special case of \cite[Lemma 5.19]{Ad75}.  The scaled result \eqref{trace_est} follows by a standard scaling argument.  

To prove \eqref{curved_trace_est} and \eqref{H32_curved_trace} we employ a flattening argument.  First let $\hat{K}$ be the unit reference simplex in $\mathbb{R}^n$ with standard affine reference mapping $\varphi: \hat{K} \rightarrow T$ satisfying $\|\nabla \varphi\|_{L_\infty(\hat K)} \lesssim h_T$ and $\|(\nabla \varphi)^{-1}\|_{L_\infty(T)} \lesssim h_T^{-1}$.  Let now $\Phi$ be the flattening map in assumption \eqref{ass:flattening}.  It is possible to extend $\Phi$ to all of $\mathbb{R}^n$ so that the resulting extension is also $C^2$, still flattens $T \cap \gamma$, and has derivative bounded above and below away from 0.  To see this, take a smoothly weighted average of $\Phi$ and the identity with weight 1 for $\Phi$ on $B_{R/2}$ and weight 0 outside of $B_R$.  Having thus extended $\Phi$, we define $\widetilde\Phi:=\varphi^{-1} \circ \Phi \circ \varphi$.  It is easy to check that $\widetilde\Phi$ and $\widetilde\Phi^{-1}$ are uniformly bounded in $C^2$ and that $\widetilde\Phi(\varphi^{-1} (T \cap \gamma))$ lies in some $(n-1)$-dimensional hyperplane $\mathbb{P}$.

For $\tv \in H^1(T)$ with $T \in \mathcal T$ satisfying $|T\cap \gamma|>0$, let now $\widehat{\tv}=\tv \circ \varphi$. We first prove \eqref{H32_curved_trace} upon transforming to the reference element back and forth. We start with a simple scaling argument
\[
\frac{|\varphi^{-1}(T\cap\gamma)|}{|T\cap\gamma|} \approx h_T^{1-n},
\]
regardless of the actual size and orientation of $T\cap\gamma$ relative to $T\in\T$. Hence, applying a standard change of variables involving $\varphi$ yields
\[
h_T^{1-n} \Big( \|\tv\|_{L_2(T \cap \gamma)}^2  + h_T^2\|\nabla_\gamma \tv \|_{L_2(T \cap \gamma)}^2 \Big) \approx \|\widehat{\tv}\|_{H^1(\varphi^{-1}(T \cap \gamma))}^2.
\]
We next resort to the extension operator $E:H^{3/2}(\hat K)\to H^{3/2}(\mathbb{R}^n)$ in Lemma \ref{lem:standard_extension} ($H^{1+s}$ extension), the smoothness of $\widetilde\Phi^{-1}$, the fact that $\widetilde\Phi( \varphi^{-1}(T \cap \gamma))\subset\mathbb{P}$, the trace inequality \eqref{special_trace}, the smoothness of $\widetilde\Phi^{-1}$ again, and the boundedness \eqref{extension} of $E$ in $H^{3/2}(\hat K)$, in this order, to arrive at
\begin{align*}
\|\widehat{\tv}\|_{H^1(\varphi^{-1}(T \cap \gamma))} &=
\|E \widehat{\tv}\|_{H^1(\varphi^{-1} (T \cap \gamma))}
\\ & \lesssim \|E \widehat{\tv} \circ \widetilde\Phi^{-1}\|_{H^1\big(\widetilde\Phi( \varphi^{-1}(T \cap \gamma))\big)}
\\ & \lesssim \|E \widehat{\tv} \circ \widetilde\Phi^{-1}\|_{H^1(\mathbb{P})}
\\ & \lesssim \|E \widehat{\tv} \circ \widetilde\Phi^{-1}\|_{H^{3/2}(\mathbb{R}^n)}
\\ & \lesssim \|E \widehat{\tv}\|_{H^{3/2}(\mathbb{R}^n)}
\\ & \lesssim \|\widehat{\tv}\|_{H^{3/2}(\hat{K})}.
\end{align*}
The desired estimate \eqref{H32_curved_trace} finally follows from a scaling argument
from $\hat K$ to $T$ employing again the map $\varphi$:
\[
\|\widehat{\tv}\|_{H^{3/2}(\hat{K})}^2
\lesssim h_T^{-n} \Big(\|\tv\|_{L_2(T)}^2 + h_T^2 \|\nabla \tv\|_{L_2(T)}^2 + h_T^3 |\tv|_{H^{3/2}(T)}^2 \Big).
\]

To prove \eqref{curved_trace_est}, we argue similarly to above except that we now employ
$E:H^1(\hat K) \to H^1(\mathbb{R}^n)$ and \eqref{general_trace_est} instead of \eqref{special_trace}.  Doing so yields
$$\begin{aligned}
 h_T^{(1-n)/2} \|\tv\|_{L_2(T \cap \gamma)} & \lesssim \|\widehat{\tv}\|_{L_2(\varphi^{-1}(T \cap \gamma))}
\\ & = \|E \widehat{\tv}\|_{L_2(\varphi^{-1} (T \cap \gamma))}
\\ & \lesssim \|E \widehat{\tv} \circ \widetilde\Phi^{-1}\|_{L_2(\widetilde\Phi( \varphi^{-1}(T \cap \gamma)))}
\\ & \lesssim \|E \widehat{\tv} \circ \widetilde\Phi^{-1}\|_{L_2(\mathbb{P})}
\\ & \lesssim \|E \widehat{\tv} \circ \widetilde\Phi^{-1}\|_{H^1 (\mathbb{R}^n)}
\\ & \lesssim \|E \widehat{\tv}\|_{H^1(\mathbb{R}^n)}
\\ & \lesssim \|\widehat{\tv}\|_{H^1(\hat{K})}
\\ & \lesssim h_T^{-n/2} \|\tv\|_{L_2(T)} + h_T^{(2-n)/2} \|\nabla \tv\|_{L_2(T)}.
\end{aligned}$$
Multiplying both sides by $h_T^{(n-1)/2}$ gives the desired bound \eqref{curved_trace_est}.
\end{proof}

\subsection{A Priori Error Estimates}\label{S:trace-apriori}

We recall that we use the notation $ h := \max_{T \in \T} h_T$ and that we omit to mention the explicit dependence on the shape regularity constant of $\mathcal T$ in most estimates.

\medskip\noindent
{\bf Geometric resolution and extensions.}  
%
Given a surface $\gamma$ of class $C^2$ and $\wu \in H^2(\gamma)$, 
Proposition \ref{P:H2-extension} ($H^2$ extension) yields the existence of an extension $u$ of $\wu$ to a tubular neighborhood $\mathcal{N}(\delta)$ with $\delta$ sufficiently small with respect to $\frac{1}{2 K_\infty}$ lying in $H^2(\mathcal{N(\delta)})$ and satisfying
\begin{align}
\label{H2extend}
\|u\|_{H^2(\mathcal{N}(\delta))} \lesssim \delta^{1/2} |d|_{W_\infty^2(\mathcal{N})}\|\wu\|_{H^2(\gamma)}.
\end{align}

\begin{itemize}
\item {\bf First assumption on geometric resolution by the bulk mesh.} We assume
\begin{equation}\label{e:TGamma}
\bigcup \big\{\omega_\T^1(T): ~ T \in \T_{\Gamma}\big\} \subset \mathcal{N}(\delta)
\end{equation}
with $\delta \simeq h$ sufficiently small so that \eqref{H2extend} holds.
\item {\bf Second assumption on geometric resolution by the bulk mesh.} We assume that the layer $D_{\Gamma, \gamma}:=\{s\bx + (1-s) \bP_d(\bx): \, \bx \in \Gamma \hbox{ and } 0 \le s \le 1\}$ satisfies 
\begin{align}
\label{ass:skinlayer}
D_{\Gamma, \gamma} \subset \bigcup \big\{T: ~T\in\T_\Gamma\big\}.
\end{align}
This clearly holds for $h$ sufficiently small because the Hausdorff distance between $\gamma$ and $\Gamma$ satisfies $\dist_H(\gamma,\Gamma)\lesssim h^2$ according to \eqref{trace:geo_res_assumption}.

\item
{\bf Uniform Poincar\'e-Friedrichs estimate on $\Gamma$.} We assume that
\begin{align} \label{trace-poin-unif}
\|\tv\|_{L_2(\Gamma)} \lesssim \|\nabla \tv\|_{L_2(\Gamma)}
\quad\forall \tv \in H^1_\#(\Gamma)
\end{align}
holds with uniform constant. According to the discussion below \eqref{poin-unif} (uniform Poincar\'e-Friedrichs constant), this only requires that $\Gamma \subset \mathcal{N}(1/2K_\infty)$ and that $\bnu\cdot \bnu_\Gamma \ge c >0$ on $\Gamma$. These conditions are easily checkable and valid asymptotically.
\end{itemize}

\medskip\noindent
{\bf Approximation properties of trace finite element space.}
%
We next state a fundamental approximation bound for the trace FEM, which we prove under the regularity assumption that $\gamma$ is of class $C^2$. We emphasize that this assumption is less restrictive than the hypotheses of previous approximation bounds for trace estimates, which assume that $\gamma$ is of class $C^3$.  

\begin{lemma}[trace approximation]
\label{lem:trace_approx}
Let $\gamma$ be of class $C^2$ and the geometric resolution assumptions \eqref{trace:geo_res_assumption}, \eqref{ass:flattening}, \eqref{e:TGamma}, and \eqref{ass:skinlayer} hold.  Then
\begin{equation}\label{e:trace_approx}
  \inf_{\tV \in \V(\mathcal F)} \|\nabla_\Gamma (\wu\circ\bP_d-V)\|_{L_2(\Gamma)}
  \lesssim h\|\wu\|_{H^2(\gamma)}.
  \end{equation}
\end{lemma}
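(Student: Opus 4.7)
The strategy is to avoid the geometric extension $\wu \circ \bP_d$ (which is only $H^1$ in the bulk, since $\bP_d$ is merely $C^1$ when $\gamma$ is $C^2$), and to employ instead the $H^2$-regular extension $u \in H^2(\mathcal N(\delta))$ of $\wu$ from Proposition \ref{P:H2-extension}, with $\delta \simeq h$ chosen small enough that $\bigcup\{\omega_\T^1(T): T \in \T_\Gamma\} \subset \mathcal N(\delta)$; this is ensured by \eqref{e:TGamma}. The candidate will then be $V := (I_h u)|_\Gamma \in \V(\mathcal F)$, where $I_h : H^1(\Omega) \to \V(\T)$ is a bulk Scott-Zhang quasi-interpolant. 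Inserting and splitting via the triangle inequality yields
\[
\|\nabla_\Gamma(\wu \circ \bP_d - V)\|_{L_2(\Gamma)} \le \underbrace{\|\nabla_\Gamma(\wu \circ \bP_d - u)\|_{L_2(\Gamma)}}_{=:I} + \underbrace{\|\nabla_\Gamma(u - I_h u)\|_{L_2(\Gamma)}}_{=:II}.
\]

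The term $II$ will be handled by pulling the analysis back to the shape-regular bulk mesh $\T$. Since $\nabla_\Gamma$ acts as a tangential projection, $\|\nabla_\Gamma w\|_{L_2(F)} \le \|\nabla w\|_{L_2(F)}$ for each face $F \in \mathcal F$. The cut-element trace inequality \eqref{trace_est} applied on $T_F \supset F$, combined with $D^2(I_h u)|_{T_F} = 0$, gives
\[
\|\nabla(u - I_h u)\|_{L_2(F)}^2 \lesssim h_{T_F}^{-1}\|\nabla(u - I_h u)\|_{L_2(T_F)}^2 + h_{T_F}|u|_{H^2(T_F)}^2.
\]
Bulk Scott-Zhang approximation $\|\nabla(u - I_h u)\|_{L_2(T)} \lesssim h_T |u|_{H^2(\omega_\T^1(T))}$, summation over $\mathcal F$, the finite overlap of the patches $\omega_\T^1(T)$, and the extension bound \eqref{H2extend} with $\delta \simeq h$ then produce $II^2 \lesssim h \|u\|_{H^2(\mathcal N(\delta))}^2 \lesssim h^2 \|\wu\|_{H^2(\gamma)}^2$.

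For the term $I$, the key identity is $u|_\gamma = \wu$, so that for $\bx \in \Gamma$ we have $(\wu \circ \bP_d)(\bx) = u(\bP_d(\bx))$. Using formula \eqref{e:tang_exact_to_discrete} for $\nabla_\Gamma(\wu \circ \bP_d)$, the identities $\Pi(\bx) = \Pi(\bP_d(\bx))$ and $\bnu(\bx) = \bnu(\bP_d(\bx))$ from \eqref{property-d}, the relation $\nabla_\gamma\wu(\bP_d(\bx)) = \Pi(\bP_d(\bx)) \nabla u(\bP_d(\bx))$ from \eqref{grad-extension}, and $\nabla_\Gamma u = \Pi_\Gamma \nabla u$, the integrand of $I$ decomposes pointwise as
\[
\Pi_\Gamma\big[\nabla u(\bx) - \nabla u(\bP_d(\bx))\big] + \big(\bnu(\bx) \cdot \nabla u(\bP_d(\bx))\big)\, \Pi_\Gamma \bnu(\bx) + \Pi_\Gamma\, d(\bx)\, \bW(\bx)\, \nabla_\gamma \wu(\bP_d(\bx)).
\]
The first summand will be estimated by the fundamental theorem of calculus along the normal segment $\bx_t = \bx - t d(\bx) \bnu(\bx)$, which produces a factor $|d(\bx)| \lesssim h^2$ times $\int_0^1 |D^2 u(\bx_t)|\,dt$; switching the $\Gamma \times [0,1]$ integrals into bulk integrals over the thin strip $D_{\Gamma,\gamma}$ via the Jacobian $dt\,d\sigma \simeq |d(\bx)|^{-1} d\bz$, together with \eqref{H2extend}, gives a contribution of $O(h^{3/2})\|\wu\|_{H^2(\gamma)}$. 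The second summand uses $|\Pi_\Gamma \bnu| \lesssim \|\bnu - \bnu_\Gamma\|_{L_\infty(\Gamma)} \lesssim h$ together with the element-wise trace inequality \eqref{curved_trace_est}, which after summation over $\T_\Gamma$ yields $\|\nabla u\|_{L_2(\gamma)} \lesssim \|\wu\|_{H^2(\gamma)}$, producing $O(h)\|\wu\|_{H^2(\gamma)}$. The third summand is bounded directly by $\|d\|_{L_\infty(\Gamma)} K_\infty \|\nabla_\gamma \wu\|_{L_2(\gamma)} \lesssim h^2\|\wu\|_{H^1(\gamma)}$. Together these give $I \lesssim h\|\wu\|_{H^2(\gamma)}$, which combined with the bound on $II$ concludes the proof.

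The main obstacle is the analysis of term $I$: because the natural geometric extension $\wu \circ \bP_d$ fails to be $H^2$ in the bulk while the $H^2$-regular extension $u$ of Proposition \ref{P:H2-extension} only coincides with $\wu \circ \bP_d$ on $\gamma$ (not on $\Gamma$), one must carefully compare their tangential gradients on $\Gamma$, exploiting the $O(h^2)$ closeness of $\Gamma$ to $\gamma$, the bulk $H^2$ regularity of $u$, and the pointwise identities from Section \ref{sec:preliminaries}. All estimates must be routed through the shape-regular bulk mesh $\T$ via the cut-element trace inequalities of Lemma \ref{L:trace-est}, as the surface mesh $\mathcal F$ itself is not shape regular. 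This detour from the naive choice $V = (I_h(\wu \circ \bP_d))|_\Gamma$ is precisely what allows the $C^2$ assumption on $\gamma$ to suffice, in contrast to the classical $C^3$ requirement.
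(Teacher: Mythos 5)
Your proposal is correct, and it reaches the estimate by a route that is related to the paper's but not identical, so a comparison is warranted.

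Both you and the paper choose the same candidate $V=\interp^{\textrm{sz}}u$ with $u\in H^2(\mathcal N(\delta))$ the extension of Proposition \ref{P:H2-extension} and $\delta\simeq h$, and both route every estimate through the shape-regular bulk mesh $\T$ via the cut-element trace inequalities of Lemma \ref{L:trace-est} and through \eqref{H2extend}. The difference is in the intermediate decomposition. You do a two-term split $\|\nabla_\Gamma(\wu\circ\bP_d-u)\|_{L_2(\Gamma)}+\|\nabla_\Gamma(u-V)\|_{L_2(\Gamma)}$ and then dispose of the first piece by a pointwise algebraic identity separating it into a normal-shift term (handled by FTC $+$ a co-area/Jacobian switch), a term carrying $\Pi_\Gamma\bnu$ (so a factor $\nu_\infty\lesssim h$), and a term carrying $d\bW$ (so a factor $\|d\|_{L_\infty(\Gamma)}K_\infty\lesssim h^2$). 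The paper instead telescopes through seven intermediate quantities, inserting $V_d=V\circ\bP_d$ and — crucially — $\interp^{\textrm{sz}}\nabla u$, and isolates the normal-shift mismatch as the term $I_5=\|\Pi_\Gamma[(\interp^{\textrm{sz}}\nabla u)\circ\bP_d-\interp^{\textrm{sz}}\nabla u]\|_{L_2(\Gamma)}$. Your decomposition is shorter and more transparent about where the geometric orders ($h^{3/2}$, $h$, $h^2$) come from, and it folds the paper's $I_1,I_2,I_3,I_6$ into the two-term split. What the paper's detour through $\interp^{\textrm{sz}}\nabla u$ buys is that the FTC step in $I_5$ is applied to a globally continuous piecewise polynomial, so it is immediately rigorous pointwise; your FTC step is applied to $\nabla u\in H^1(\mathcal N(\delta))$, whose pointwise values are not defined in $n+1\ge3$. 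This is not a real gap — the $L_2$-averaged form of the estimate (what you actually integrate) is stable under $H^2$ perturbation and follows by a density argument, or equivalently by absolute continuity of $H^1$ functions along a.e. normal line — but it does deserve a sentence in a complete write-up, and the paper's insertion of the Scott--Zhang interpolant of $\nabla u$ is one tidy way of avoiding the issue entirely. Both approaches use the same ingredient for bounding the auxiliary quantity $\|\nabla\interp^{\textrm{sz}}\nabla u\|_{L_2(T)}\lesssim\|D^2u\|_{L_2(\omega_\T^1(T))}$, which you may need to spell out when quantifying your term involving $\|D^2 u\|_{L_2(D_{\Gamma,\gamma})}$ as well. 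Finally, you should verify that the constant in the co-area switch $dt\,d\sigma\simeq|d(\bx)|^{-1}\,d\bz$ is uniform, which it is here because $\Gamma\subset\mathcal{N}$ ensures the parallel-surface area ratios are bounded above and below; the paper avoids invoking this by parametrizing the normal segment in arclength rather than with the unit parameter $t$, but both are equivalent.
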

\begin{proof}
Let $\delta \simeq h$ be sufficiently small so that \eqref{e:TGamma} is valid.
Let $\interp^{\textrm{sz}}$ be the standard Scott-Zhang interpolation operator on $\T$, and take $V=\interp^{\textrm{sz}} u$ with $u\in H^2(\Nd)$ given by Proposition \ref{P:H2-extension} ($H^2$ extension) and satisfying \eqref{H2extend}. We then denote $u_d=\wu\circ\bP_d, V_d=V\circ\bP_d$, add and subtract multiple terms, and apply the triangle inequality to find that
\[
\| \nabla_\Gamma(u_d-V)\|_{L_2(\Gamma)}  \lesssim  \sum_{i=1}^7 I_i
\]
where
\begin{align*}
I_1 &:= \|\nabla_\Gamma(u_d-V_d)\|_{L_2(\Gamma)},
\\ I_2 &:=   \|\Pi_\Gamma [\nabla V_d-(\nabla V) \circ \bP_d]\|_{L_2(\Gamma)} ,
\\ I_3 &:= \|\Pi_\Gamma [\nabla V\circ \bP_d - \nabla u \circ \bP_d]\|_{L_2(\Gamma)} ,
\\ I_4 &:= \|\Pi_\Gamma [\nabla u \circ \bP_d - (\interp^{\textrm{sz}} \nabla u) \circ \bP_d]\|_{L_2(\Gamma)},
\\ I_5 &:= \|\Pi_\Gamma [(\interp^{\textrm{sz}} \nabla u) \circ \bP_d - \interp^{\textrm{sz}} \nabla u] \|_{L_2(\Gamma)} ,
\\ I_6 &:= \|\Pi_\Gamma [\interp^{\textrm{sz}} \nabla u - \nabla u]\|_{L_2(\Gamma)},
\\ I_7 &:= \|\Pi_\Gamma [ \nabla u -\nabla V]\|_{L_2(\Gamma)}.
\end{align*}
Here we have applied the interpolation operator $\interp^{\textrm{sz}}$ componentwise to the $(n+1)$-vector $\nabla u$ and used that $\nabla_\Gamma = \Pi_\Gamma \nabla$. We next estimate each term separately.

In order to bound terms $I_1$ and $I_3$, we employ Lemma \ref{L:norm-equiv} (norm equivalence) between $\gamma$ and $\Gamma$ and recall that $|\nabla_\gamma \tv| \le |\nabla \tv|$ pointwise to find that
\begin{align*}
  I_1+I_3 \lesssim \Big (\sum_{T \in \T_\Gamma} \|\nabla (u-V)\|_{L_2(T \cap \gamma)} ^2 \Big)^{1/2}.
\end{align*}  
We next apply the trace estimate \eqref{curved_trace_est}, utilize standard approximation properties of $\interp^{\textrm{sz}}$, and finally use the bound \eqref{H2extend} to obtain
\begin{align*}
I_1+I_3 & \lesssim \Big(\sum_{T\in \T_\Gamma} h_T^{-1} \|\nabla (u-V)\|_{L_2(T)}^2 + h_T\|D^2 u\|_{L_2(T)}^2 \Big)^{1/2}
\\ & \lesssim h^{1/2} \|u\|_{H^2(\mathcal{N}(\delta))} \lesssim h \|\wu\|_{H^2(\gamma)}.
\end{align*}
Here we have used that $\nabla \nabla V=0$ elementwise since $V$ is piecewise linear. Similar arguments lead to the following estimate for $I_4$
\[
I_4 \lesssim \Big(\sum_{T\in \T_\Gamma} h_T^{-1} \|\nabla u-\interp^{\textrm{sz}} \nabla u\|_{L_2(T)}^2 + h_T\|\nabla(\nabla u-\interp^{\textrm{sz}} \nabla u)\|_{L_2(T)}^2 \Big)^{1/2},
\]
as well as $I_4 \lesssim h \|\wu\|_{H^2(\gamma)}$ provided $\|\nabla \interp^{\textrm{sz}} \nabla u\|_{L_2(T)} \lesssim \|D^2 u\|_{L_2(\omega_\T^1(T))}$. To show this estimate we let $\overline{\nabla u}_T := |\omega_\T^1(T)|^{-1} \int_{\omega_\T^1(T)} \nabla u$ be the meanvalue of $\nabla u$ in $\omega_\T^1(T)$ and exploit the stability of $\interp^{\textrm{sz}}$ in $H^1(T)$
\begin{align*}
  \|\nabla \interp^{\textrm{sz}} \nabla u\|_{L_2(T)}
  &= \|\nabla \interp^{\textrm{sz}} [\nabla u - \overline{\nabla u}_T]\|_{L_2(T)}
  \lesssim h_T^{-1} \|\interp^{\textrm{sz}}[\nabla u - \overline{\nabla u}_T]\|_{L_2(T)}
  \\ &
  \lesssim h_T^{-1} \|\nabla u - \overline{\nabla u}_T\|_{L_2(\omega_\T^1(T))}
  \lesssim \|D^2 u \|_{L_2(\omega_\T^1(T))}.
\end{align*}
Moreover, applying the trace estimate \eqref{trace_est} directly to the terms $I_6$ and
$I_7$ yields
\begin{align*}
I_6 & \lesssim \Big(\sum_{F \in \mathcal{F}} \|\interp^{\textrm{sz}} \nabla u-\nabla u\|_{L_2(F)}^2 \Big)^{1/2} 
\\ & \lesssim \Big(\sum_{T \in \T_\Gamma} h_T^{-1} \|\interp^{\textrm{sz}} \nabla u -\nabla u\|_{L_2(T)}^2 + h_T \|\nabla[\interp^{\textrm{sz}} \nabla u -\nabla u]\|_{L_2(T)}^2 \Big)^{1/2}
\lesssim h \|\wu\|_{H^2(\gamma)},
\end{align*}
and
\begin{align*}
I_7 & \lesssim \Big(\sum_{F \in \mathcal{F}} \|\nabla(u-V)\|_{L_2(F)}\Big)^{1/2}
\\ &
\lesssim \Big(\sum_{T \in \T_\Gamma} h_T^{-1} \|\nabla (u-V)\|_{L_2(T)}^2 + h_T \|D^2 u\|_{L_2(T)}^2\Big)^{1/2}
\lesssim h \|\wu\|_{H^2(\gamma)}. 
\end{align*}

In order to bound term $I_2$, we first note that
\[
\Pi_\Gamma [\nabla V_d-(\nabla V) \circ \bP_d] = \Pi_\Gamma (\Pi-d D^2d-\bI) (\nabla V) \circ \bP_d.
\]
An easy computation using the assumption \eqref{trace:geo_res_assumption} yields
$$|\Pi_\Gamma(\Pi-d D^2d - \bI)|  \lesssim |\Pi_\Gamma \Pi-\Pi_\Gamma| + |d| =|(\bnu \cdot \bnu_\Gamma) \bnu_\Gamma \otimes \bnu -\bnu \otimes \bnu|+ h^2 \lesssim h.$$
Thus employing the equivalence of norms on $\gamma$ and $\Gamma$, the trace estimate \eqref{curved_trace_est}, the $H^1$ boundedness of $\interp^{\textrm{sz}}$, and the boundedness \eqref{H2extend} of the extension yields
$$
\begin{aligned}
I_2 & \lesssim h \|\nabla \tV \circ \bP_d\|_{L_2(\Gamma)} \lesssim h \|\nabla V\|_{L_2(\gamma)} 
\\& \lesssim h^{1/2} \|\nabla V\|_{L_2(\T_\Gamma)} 
 \lesssim h^{1/2} \|u\|_{H^1(\mathcal{N}(\delta))}
 \lesssim h \|\widetilde u\|_{H^2(\gamma)}.
\end{aligned}
$$
We finally bound term $I_5$.  Given $\bx=\bP_d(\bx) + d(\bx)\nabla d(\bP_d(\bx)) \in \Gamma$, we infer that
\[
|\interp^{\textrm{sz}} \nabla u(\bx)-\interp^{\textrm{sz}} \nabla u(\bP_d(\bx))| \le \int_0^{d(\bx)} \Big| \nabla \big[\interp^{\textrm{sz}} \nabla u\big( \bP_d(\bx)+s\nabla d(\bP_d(\bx))  \big) \big] \Big| ds
\]
and $|d(\bx)|\lesssim h^2$ according to \eqref{trace:geo_res_assumption}, whence 
\[
I_5^2 \lesssim h^2 \int_\Gamma \int_0^{d(\bx)} \Big| \nabla \big[\interp^{\textrm{sz}} \nabla u\big( \bP_d(\bx)+s\nabla d(\bP_d(\bx))  \big) \big]\Big|^2 ds d\sigma(\bx) \lesssim h^2 \int_{D_{\Gamma,\gamma}} |\nabla \interp^{\textrm{sz}} \nabla u|^2.
\]
In view of assumptions \eqref{ass:skinlayer} and \eqref{e:TGamma}, and the
bound $\|\nabla \interp^{\textrm{sz}}\nabla u\|_{L_2(T)} \lesssim \|D^2 u\|_{L_2(\omega_\T^1(T))}$, we deduce
\[
I_5^2 \lesssim h^2 \|D^2 u\|_{L_2(\Nd)}^2 \lesssim h^3 \|D^2 \wu\|_{H^2(\gamma)}^2,
\]
and conclude the proof.
\end{proof}

\begin{theorem}[a-priori error estimates]
Let $\gamma$ be of class $C^2$ and let $\Gamma$ be so that the geometric assumptions \eqref{trace:geo_res_assumption}, \eqref{ass:flattening}, \eqref{e:TGamma}, \eqref{ass:skinlayer}, and \eqref{trace-poin-unif} are satisfied. Let $\wf\in L_{2,\#}(\gamma)$ and $\wu\in H^2(\gamma)$ solve \eqref{e:weak_relax}. If $U \in \V_\#(\mathcal F)$ is the finite element solution of \eqref{def:trace_fem} with $F_\Gamma =\frac{q}{q_\Gamma} \wf \circ \bP_d $, then
$$
\|\wu\circ\bP_d-U \|_{L_2(\Gamma)} + h \|\nabla_\Gamma (\wu\circ\bP_d-U)\|_{L_2(\Gamma)} \lesssim h^2 \|\wf\|_{L_2(\gamma)}.
$$
\end{theorem}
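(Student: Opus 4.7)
The plan is to follow the standard two-step strategy, adapted to the trace setting: first prove the $H^1$ estimate via Céa's lemma with geometric variational crimes, then prove the $L_2$ estimate by Aubin--Nitsche duality. Throughout, set $u := \wu\circ\bP_d$ on $\Gamma$; because the data is chosen consistently as $F_\Gamma = \tfrac{q}{q_\Gamma}\wf\circ\bP_d$ with $q$ the area element associated to $\bchi = \bP_d\circ\bX$, the right-hand side consistency in a Galerkin quasi-orthogonality identity analogous to Lemma \ref{L:GO} vanishes identically, leaving only the geometric mismatch governed by the matrix $\bE_\Gamma$ of Lemma \ref{L:geom_consist_dist}.

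For the $H^1$ bound, I would derive the identity, valid for every $V\in\V_\#(\mathcal F)$,
\[
\int_\Gamma \nabla_\Gamma(u-U)\cdot\nabla_\Gamma V = \int_\Gamma \nabla_\Gamma u\cdot \bE_\Gamma\,\nabla_\Gamma V,
\]
combine it with the uniform Poincar\'e--Friedrichs estimate \eqref{trace-poin-unif}, the trace approximation bound of Lemma \ref{lem:trace_approx}, and the estimate $\|\bE_\Gamma\|_{L_\infty(\Gamma)}\lesssim \mu_\T(\Gamma)\lesssim h^2$ from Corollary \ref{C:lambda-2} (applicable because \eqref{trace:geo_res_assumption} gives $\beta_\T(\Gamma),\lambda_\T(\Gamma)^2\lesssim h^2$). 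The a priori bound $\|\nabla_\Gamma U\|_{L_2(\Gamma)}\lesssim \|F_\Gamma\|_{L_2(\Gamma)}\lesssim \|\wf\|_{L_2(\gamma)}$ (via Lemma \ref{L:norm-equiv}) then yields
\[
\|\nabla_\Gamma(u-U)\|_{L_2(\Gamma)}\lesssim h\,\|\wf\|_{L_2(\gamma)}.
\]

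For the $L_2$ estimate I would split $u-U = e_\# + \bar e$ where $\bar e = |\Gamma|^{-1}\int_\Gamma (u-U)$. Since $\int_\Gamma U = 0$ and $\int_\gamma \wu = 0$ give $\bar e |\Gamma| = \int_\gamma \wu(1 - q_\Gamma/q)$, Corollary \ref{C:lambda-2} furnishes $|\bar e|\,|\Gamma|^{1/2}\lesssim h^2\|\wf\|_{L_2(\gamma)}$. To bound $e_\#$, introduce $\wz\in H^1_\#(\gamma)$ solving
\[
\int_\gamma \nabla_\gamma \wz\cdot\nabla_\gamma \widetilde w = \int_\gamma \widetilde g\,\widetilde w,\qquad \widetilde g:= (e_\#\circ\bP_d^{-1})\cdot (q_\Gamma/q)\circ\bP_d^{-1},
\]
whose mean vanishes because $\int_\Gamma e_\# = 0$; Lemma \ref{L:regularity} gives $\|\wz\|_{H^2(\gamma)}\lesssim \|e_\#\|_{L_2(\Gamma)}$. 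Using Proposition \ref{P:H2-extension} to access an $H^2$ extension, let $Z\in\V_\#(\mathcal F)$ be the corresponding trace Galerkin approximation with data $G := e_\#$ and set $z:=\wz\circ\bP_d$. Writing $\|e_\#\|_{L_2(\Gamma)}^2 = \int_\gamma \widetilde g\,\widetilde e_\# = \int_\gamma \nabla_\gamma \wz\cdot\nabla_\gamma \widetilde e_\#$ and applying \eqref{consistency} gives
\[
\|e_\#\|_{L_2(\Gamma)}^2 = \int_\Gamma \nabla_\Gamma z\cdot\nabla_\Gamma(u-U) - \int_\Gamma \nabla_\Gamma z\cdot\bE_\Gamma\,\nabla_\Gamma e_\#,
\]
since $\nabla_\Gamma e_\# = \nabla_\Gamma(u-U)$. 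Inserting the quasi-orthogonality $\int_\Gamma\nabla_\Gamma(u-U)\cdot\nabla_\Gamma Z = \int_\Gamma\nabla_\Gamma u\cdot\bE_\Gamma\nabla_\Gamma Z$ splits the first term into $\int_\Gamma\nabla_\Gamma(z-Z)\cdot\nabla_\Gamma(u-U)$ plus a geometric remainder; Cauchy--Schwarz with the $H^1$ bound applied to both primal and dual, together with $\|\bE_\Gamma\|_{L_\infty(\Gamma)}\lesssim h^2$, controls everything by $h^2\|\wf\|_{L_2(\gamma)}\|e_\#\|_{L_2(\Gamma)}$. Dividing through and combining with the bound on $\bar e$ delivers the claim.

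The main technical obstacle will be ensuring that Lemma \ref{lem:trace_approx} applies to the dual solution $\wz$ under the $C^2$-only regularity hypothesis on $\gamma$: this is precisely where the extension construction of Proposition \ref{P:H2-extension} (rather than the naive $\wz\circ\bP_d$) is essential, since $\bP_d$ is only $C^1$. A secondary bookkeeping challenge is tracking the $q/q_\Gamma$ factors in $\widetilde g$ and in the various lifts so that all compatibility conditions ($\int_\gamma \widetilde g = 0$, $\int_\Gamma G = 0$) are satisfied and the norm equivalences of Lemma \ref{L:norm-equiv} can be invoked uniformly in $h$.
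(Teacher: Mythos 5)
Your proposal is correct and follows essentially the same strategy as the paper: a C\'ea-type argument with geometric consistency for the $H^1$ bound, then Aubin--Nitsche duality for the $L_2$ bound, with both primal and dual discrete problems set up so that the $H^1$ estimate and the geometric consistency bounds can be reused. The only cosmetic difference is in how mean-value compatibility is enforced --- you subtract the mean $\bar e$ from $u-U$ and verify $\widetilde g$ and $G$ have vanishing means, whereas the paper works with the weighted functions $\widetilde U_\# = \tfrac{q_\Gamma}{q}\widetilde U$ and $u_\# = \tfrac{q}{q_\Gamma}u$ and passes from $\widetilde U_\#$ to $\widetilde U$ at the end via the bound $\|\widetilde U - \widetilde U_\#\|_{L_2(\gamma)}\lesssim h^2\|\wf\|_{L_2(\gamma)}$ --- but the underlying estimates are identical.
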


\begin{proof}
With the geometric resolution estimate~\eqref{trace:geo_res_assumption} and Lemma \ref{lem:trace_approx} (trace approximation) in hand, the proof is nearly identical to those of Theorem \ref{t:H1error} ($H^1$ a-priori error estimate) and Theorem \ref{t:L2_apriori} ($L^2$ a-priori error estimate) for parametric surface FEM. We thus sketch the proof without details.

\medskip\noindent
{\it Step 1: $H^1$ error estimate.}
Let $V \in\V(\mathcal F)$ achieve the infimum in \eqref{e:trace_approx}, $W:=V-U$, $u=\wu\circ\bP_d$, and write the error representation formula as
\[
\|\nabla_\Gamma (V-U)\|_{L_2(\Gamma)}^2 = \int_\Gamma \nabla_\Gamma u\cdot\bE_\Gamma \nabla_\Gamma W + \int_\Gamma \nabla_\Gamma (V-u)\cdot\nabla_\Gamma W,
\]
because $F_\Gamma = \frac{q}{q_\Gamma} \wf \circ \bP_d$.
In view of Lemma \ref{L:geom_consist_dist} (geometric consistency) and the geometric resolution estimate~\eqref{trace:geo_res_assumption} we deduce $|\bE_\Gamma| \lesssim h^2 |d|_{W^2_\infty(\mathcal{N})}$ and
\[
\Big| \int_\Gamma \nabla_\Gamma u\cdot\bE_\Gamma \nabla_\Gamma W  \Big|
\lesssim h^2 |d|_{W^2_\infty(\mathcal{N})} \|\wu\|_{H^1(\gamma)} \|\nabla_\Gamma W\|_{L_2(\Gamma)} \lesssim h \|\widetilde f\|_{L_2(\gamma)} \|\nabla_\Gamma W\|_{L_2(\Gamma)}.
\]
On the other hand, Lemma \ref{lem:trace_approx} (trace approximation) yields
\[
\Big|  \int_\Gamma \nabla_\Gamma (V-u)\cdot\nabla_\Gamma W \Big|
\lesssim h \|\wu\|_{H^2(\gamma)} \|\nabla_\Gamma W\|_{L_2(\Gamma)}.
\]
The desired estimate follows from Lemma \ref{L:regularity} (regularity).

\medskip\noindent
{\it Step 2: $L_2$ error estimate.} Let $\bP_d^{-1}$ denotes the inverse of $\bP_d$ restricted to $\Gamma$. 
Let $\widetilde U:=U\circ\bP_d^{-1}:\gamma\to\mathbb{R}$ and
$\widetilde U_\# := \frac{q_\Gamma}{q}\widetilde U \in H^1_\#(\gamma)$; likewise,
let $u_\# := \frac{q}{q_\Gamma} u \in H^1_\#(\Gamma)$. We now solve dual problems on $\gamma$
\[
\wz\in H^1_\#(\gamma): \quad
\int_\gamma \nabla_\gamma \wz \cdot \nabla_\gamma w = \int_\gamma (\wu - \wU_\#) w
\quad \forall \, w\in H^1_\#(\gamma)
\]
and on $\Gamma$
\[
Z\in\V_\#(\mathcal F): \quad
\int_\Gamma \nabla_\Gamma Z \cdot \nabla_\Gamma W = \int_\Gamma (u_\#-U)W
\quad\forall \, W\in\V_\#(\mathcal F).
\]
Note that the right-hand sides $u_\#-U = \frac{q}{q_\Gamma}(\wu - \wU_\#)\circ\bP_d$ are compatible and Step 1 applies.
We set $\widetilde Z = Z \circ \bP_d$ and proceed as in Theorem \ref{t:L2_apriori} ($L_2$ a-priori error estimate) to deduce the error representation
\[
\|\wu-\wU_\#\|_{L_2(\gamma)}^2 =
\int_\gamma \nabla_\gamma (\wu-\widetilde U)\cdot\nabla_\gamma(\widetilde z-\widetilde Z)
+ \int_\gamma \nabla_\gamma \widetilde U \cdot \bE \, \nabla_\gamma \widetilde Z,
\]
because $F_\Gamma =\frac{q}{q_\Gamma} \wf \circ \bP_d $.
Applying Lemma \ref{L:regularity} (regularity) to $\wz$ yields $\|\wz\|_{H^2(\gamma)} \lesssim \|\wu-\wU_\#\|_{L_2(\gamma)}$. This together with Step 1 implies
\[
\Big| \int_\gamma \nabla_\gamma (\wu-\widetilde U)\cdot\nabla_\gamma(\wz-\widetilde Z) \Big|
\lesssim h^2 \|\wf\|_{L_2(\gamma)} \|\wu-\wU_\#\|_{L_2(\gamma)}.
\]
Making use again of Lemma \ref{L:geom_consist_dist} (geometric consistency) and the geometric resolution estimate~\eqref{trace:geo_res_assumption} we deduce $|\bE| \lesssim h^2 |d|_{W^2_\infty(\mathcal{N})}$, whence
\[
\Big| \int_\gamma \nabla_\gamma \widetilde U \cdot \bE \, \nabla_\gamma \widetilde Z  \Big|
\lesssim h^2 \|\wf\|_{L_2(\gamma)} \|u_\# -U\|_{L_2(\Gamma)}.
\]
Consequently,
\[
\|\wu-\wU_\#\|_{L_2(\gamma)}^2 \lesssim h^2 \|\wf\|_{L_2(\gamma)}
\Big(\|\wu-\wU_\#\|_{L_2(\gamma)} + \|u_\#-U\|_{L_2(\Gamma)} \Big)
\]
and the asserted bound follows from Lemma \ref{L:norm-equiv} (norm equivalence)
and the auxiliary estimate
\[
\|\widetilde{U}-\widetilde{U}_\#\|_{L_2(\gamma)}
\lesssim
h^2  \|\wf\|_{L_2(\gamma)}.
\]
The latter hinges on Corollary \ref{C:lambda-2} (geometric consistency errors for $C^2$ surfaces) and the geometric resolution estimate \eqref{trace:geo_res_assumption}, as in the proof of Theorem \ref{t:L2_apriori}. This completes the proof.
\end{proof}

\subsection{A Posteriori Error Estimates}

A posteriori error estimates for the trace FEM were first proved in \cite{DO12}, while a posteriori estimates for a trace FEM based on octree meshes were proved in \cite{CO15}.    The proof of the estimates given in \cite{DO12} is significantly different than that of the a priori estimates given above.  A main reason for the difference is that, in contrast to the framework above that deals with quasi-uniform meshes, we assume that the bulk mesh $\T$ is merely shape-regular.  This is necessary to allow for meaningful mesh grading in adaptive algorithms. Moreover, the extension used in Proposition \ref{P:H2-extension} ($H^2$ extension) is not immediately useful here because the parameter $\delta$ specifying the width of the tubular neighborhood about $\gamma$ is taken to be proportional to $h$ when $\T$ is quasi-uniform; such a global mesh size parameter is no longer meaningful on graded meshes.  A local counterpart of Proposition \ref{P:H2-extension} on graded meshes, that uses the normal extension instead of the regularized normal extension, is employed in \cite{CO15} to prove a posteriori bounds, but with the drawback that the constants in the estimates depend on the difference in refinement depth between the largest and smallest elements in the bulk mesh.  We thus present here the framework of \cite{DO12}, which relies on the harmonic extension of $\tv \in H^1(\gamma)$ into $H^{3/2}(\mathbb{R}^3)$ instead of either the normal extension $\tv_d$ or the extension of Proposition \ref{P:H2-extension}.

\medskip\noindent
{\bf Notation and surface resolution assumptions.} 
%
We make the following two assumptions concerning resolution of $\gamma$ by the bulk mesh $\T$:
\begin{itemize}
\item {\bf Resolution of skin layer between $\gamma$ and $\Gamma$.}  Given a discrete surface element $F \in \mathcal{F}$, let 
$$D_F=\{{\bf y}\in\Omega: ~{\bf y}=t\bx+(1-t) \bP_d(\bx) \hbox{ for some } 0 \le t \le 1 \hbox{ and some } \bx \in F\}.$$
The set $D_F$ is the collection of all points lying on line segments connecting points in $\bx \in F$ and their images $\bP_d(\bx) \in \gamma$.  We assume that
\begin{align}
\label{trace_apost_ass1}
D_F \subset \omega_\T^1(T_F),
\end{align}
that is, $D_F$ lies in the volume element patch $\omega_\T^1(T_F)$ (first ring) corresponding to the face element $F$, which is defined in section \ref{S:trace-prelim}.
\item{\bf Normal projections of elements have finite overlap.}
We assume that
\begin{align}
\label{trace_apost_ass2}
\bP_d \big(\omega_\T^1(T_F) \big) \subset \omega_\T^2(T_F)
\quad\forall \, F \in \mathcal{F},
\end{align}
where the second ring $\omega_\T^2(T_F)$ is also defined in  section \ref{S:trace-prelim}.
\end{itemize}

The above assumptions hold if $\gamma$ is sufficiently resolved by the bulk mesh $\T$.  To see this, note first that $\|d\|_{L_\infty(D_F)} \lesssim h_F^2$ by \eqref{trace:geo_res_assumption}, so that ${\rm dist}({\bf y},F) \lesssim h_F^2$ for all ${\bf y} \in D_F$.  On the other hand, ${\rm dist}(F, \partial \omega_\T^1(T_F)) \gtrsim h_F$.  Thus there is a constant $C$ such that the assumption \eqref{trace_apost_ass1} is satisfied when $h_F \le C$;  $C$ here depends on geometric properties of $\gamma$, the shape regularity constant of $\T$ and properties of the Lagrange interpolant.  In principle an upper bound for $C$ could be computed and this condition checked, but this has not been attempted in the literature and we do not do so here.  A similar but more involved argument holds for the assumption \eqref{trace_apost_ass2}.

\medskip\noindent
{\bf Extension for a posteriori error estimates.}
The next essential result states that a given a function $\wv \in H^1(\gamma)$
can be boundedly extended to $\tv \in H^{3/2}(\mathbb{R}^{n+1})$.
\begin{lemma}[harmonic extension]
\label{lem:harmonic_extension}
Let $\gamma$ be a closed surface of class $C^2$ and dimension $n$ embedded in $\mathbb{R}^{n+1}$ for $n\ge1$. Given $\wv \in H^1(\gamma)$, there is $\tv \in H^{3/2}(\mathbb{R}^n)$ such that ${\rm trace}(\tv)=\wv$ and
\begin{align}
\label{harmonic_ext_bound}
\|\tv\|_{H^{3/2}(\mathbb{R}^{n+1})} \lesssim \|\wv \|_{H^1(\gamma)}.
\end{align}
\end{lemma}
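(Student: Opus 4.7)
The plan is to realize $\tv$ as a global harmonic-type extension of $\wv$ across $\gamma$, followed by a globalization via a standard Euclidean extension. Since $\gamma$ is closed, compact and orientable of class $C^2$ in $\mathbb R^{n+1}$, it is the boundary of a bounded interior $C^2$ domain $\Omega_i \subset \mathbb R^{n+1}$. Fix a ball $B \subset \mathbb R^{n+1}$ with $\overline{\Omega_i}\subset B$, and let $\Omega_e^B := B \setminus \overline{\Omega_i}$; this is a bounded $C^2$ domain whose boundary is the disjoint union $\gamma \cup \partial B$.

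First I would invoke the classical lifting theorem for the trace operator on bounded $C^2$ domains, see e.g.\ \cite[Theorem~1.5.1.2]{G85}: the trace map $H^{3/2}(\Omega_i)\to H^1(\gamma)$ is surjective and admits a bounded right inverse. Applying this to $\wv$ yields $\tv_i \in H^{3/2}(\Omega_i)$ with $\tv_i|_\gamma = \wv$ and $\|\tv_i\|_{H^{3/2}(\Omega_i)} \lesssim \|\wv\|_{H^1(\gamma)}$. Equivalently, $\tv_i$ can be obtained as the unique weak solution of $-\Delta \tv_i + \tv_i = 0$ in $\Omega_i$ with Dirichlet data $\wv$, combined with the standard elliptic regularity pickup from $H^1(\gamma)$ to $H^{3/2}(\Omega_i)$ on the $C^2$ domain $\Omega_i$; this is the reason for the name \emph{harmonic extension}.

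Next I would apply the same lifting theorem in the bounded exterior chunk $\Omega_e^B$, prescribing the boundary values $\wv$ on $\gamma$ and $0$ on $\partial B$, obtaining $\tv_e \in H^{3/2}(\Omega_e^B)$ with $\|\tv_e\|_{H^{3/2}(\Omega_e^B)} \lesssim \|\wv\|_{H^1(\gamma)}$. I would then patch across $\gamma$ by setting $\tv := \tv_i$ on $\Omega_i$ and $\tv := \tv_e$ on $\Omega_e^B$. Since the two one-sided traces on $\gamma$ coincide (both equal $\wv \in H^1(\gamma) \hookrightarrow H^{1/2}(\gamma)$), and since matching of zero-order traces is the precise compatibility condition for two $H^{3/2}$ pieces to glue into a global $H^{3/2}$ function across a $C^2$ interface (cf.\ standard piecewise-Sobolev theory, e.g.\ \cite{G85}), we conclude $\tv \in H^{3/2}(B)$, with vanishing trace on $\partial B$. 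Finally, I would extend $\tv$ from $B$ to all of $\mathbb R^{n+1}$ by invoking Lemma~\ref{lem:standard_extension} (with $D = B$, ambient dimension $n+1$ and fractional index $s = 1/2$), producing $\tv \in H^{3/2}(\mathbb R^{n+1})$ with the required bound $\|\tv\|_{H^{3/2}(\mathbb R^{n+1})} \lesssim \|\wv\|_{H^1(\gamma)}$.

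The hard part will be the two closely related technicalities at the borderline Sobolev exponent $s = 3/2$: the existence of a bounded right inverse of the trace map into $H^{3/2}$ at exactly this critical index, and the justification that matching only zeroth-order traces is sufficient to glue two $H^{3/2}$ pieces into a global $H^{3/2}$ function. Both facts are classical for $C^2$ (equivalently $C^{1,1}$) boundaries and fail, in general, under merely Lipschitz regularity; they are what forces the hypothesis that $\gamma$ is of class $C^2$ and must be invoked by careful citation rather than rederived.
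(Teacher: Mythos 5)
Your approach diverges from the paper's in a way that introduces a genuine gap at exactly the borderline point you yourself flag. The paper solves $\Delta \tv = 0$ in the \emph{interior} domain $D$ of $\gamma$ only, obtains $\tv \in H^{3/2}(D)$ with the stated bound from \cite[Theorem 5.15]{JK95} (a Jerison--Kenig regularity result valid on bounded \emph{Lipschitz} domains, not requiring $C^{1,1}$ as Grisvard's trace lifting does), and then applies the Stein/Calder\'on extension operator of Lemma~\ref{lem:standard_extension} directly from $D$ to $\mathbb{R}^{n+1}$. There is no gluing across $\gamma$: the extended function is a single $H^{3/2}(\mathbb{R}^{n+1})$ function whose restriction to $D$ equals $\tv$, hence whose (single-valued) trace on $\gamma$ is $\wv$.

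The gap in your version is the gluing claim. It is \emph{not} true that two $H^{3/2}$ pieces on opposite sides of a $C^{1,1}$ interface with matching zeroth-order traces glue into an $H^{3/2}$ function; that compatibility condition is sufficient for $H^s$ only in the range $\tfrac12 < s < \tfrac32$, and fails at the endpoint. The one-dimensional function $f(y) = |y|$ is the gluing of two $C^\infty$ halves with matching trace at $y=0$, yet its derivative has a jump, so $f \in H^{3/2-\epsilon}_{\mathrm{loc}}$ but $f \notin H^{3/2}_{\mathrm{loc}}$. In your construction the two one-sided lifts $\tv_i$, $\tv_e$ have no reason to agree at the level of conormal derivatives on $\gamma$, so the glued function will generically exhibit exactly this kink and will \emph{not} lie in $H^{3/2}(B)$. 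Replacing the gluing by the paper's one-sided extension (interior solve plus Calder\'on extension) removes the problem: the extension operator itself produces global $H^{3/2}$ regularity across $\gamma$ without any matching condition to verify, and the trace on $\gamma$ is inherited automatically because $H^{3/2}(\mathbb{R}^{n+1})$ functions have a well-defined single-valued trace there.
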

\begin{proof}
First let $\tv\in H^1(D)$ solve $\Delta \tv=0$ on the bulk domain $D$ comprising the interior of $\gamma$, with $\tv=\wv$ on $\gamma$.  By \cite[Theorem 5.15]{JK95}, we have that $\tv \in H^{3/2}(D)$, ${\rm trace}(\tv)=\wv$, and $\|\tv\|_{H^{3/2}(D)} \lesssim \|\wv \|_{H^1(\gamma)}$.  Boundedly extending $\tv$ to $H^{3/2}(\mathbb{R}^{n+1})$ via the extension operator $E$ defined in Lemma \ref{lem:standard_extension} ($H^{1+s}$ extension) completes the proof.
\end{proof}

\medskip\noindent
{\bf Preliminary results.}
We now give a technical lemma that quantifies the evaluation mismatch between $\Gamma$ and $\gamma$ for a discrete function.
\begin{lemma}[evaluation mismatch between $\gamma$ and $\Gamma$]\label{L:eval-mismatch}
Let $\tV \in \V(\mathcal T)$, and let the conditions \eqref{trace:geo_res_assumption}, \eqref{ass:flattening}, and \eqref{trace_apost_ass1} hold. For all $F \in \mathcal{F}$, we have
\begin{align}\label{discrete_trace_dif}
\|\tV-\tV \circ \bP_d \|_{L_\infty(F)} \lesssim h_F^2 |d|_{W^2_\infty(\mathcal{N})}\|\nabla \tV \|_{L_\infty(\omega_\T^1(T_F))}.
\end{align}
\end{lemma}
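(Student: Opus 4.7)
The plan is to prove the bound pointwise on $F$ and then take the supremum. Fix $\bx\in F$ and consider the line segment $\ell(\bx) := \{s\bx + (1-s)\bP_d(\bx) : s\in[0,1]\}$. By the definition of $D_F$ and the geometric resolution assumption \eqref{trace_apost_ass1}, we have $\ell(\bx) \subset D_F \subset \omega_\T^1(T_F)$. The point is to integrate $\nabla \tV$ along $\ell(\bx)$, but since $\tV\in\V(\mathcal T)$ is only piecewise $C^1$, some care is needed at element interfaces.

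Since $\tV$ is continuous on $\overline\Omega$ and affine on each $T\in\T$, its restriction to the segment $\ell(\bx)$ is a continuous, piecewise affine function of $s$. The segment crosses only finitely many bulk elements $T_1,\dots,T_m\subset\omega_\T^1(T_F)$, and on each sub-segment $\ell(\bx)\cap T_j$ the directional derivative of $\tV$ is bounded by $\|\nabla \tV\|_{L_\infty(T_j)} \le \|\nabla \tV\|_{L_\infty(\omega_\T^1(T_F))}$. Summing the increments via the telescoping identity $\tV(\bx)-\tV(\bP_d(\bx)) = \sum_j (\tV(\bz_j)-\tV(\bz_{j-1}))$ with $\bz_j$ the successive crossing points and using the triangle inequality yields
\[
|\tV(\bx) - \tV(\bP_d(\bx))| \;\le\; \|\nabla \tV\|_{L_\infty(\omega_\T^1(T_F))} \,|\bx - \bP_d(\bx)|.
\]

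To finish, we invoke the identity $|\bx - \bP_d(\bx)| = |d(\bx)|$ from \eqref{e:lift_dist} together with the geometric resolution estimate \eqref{trace:geo_res_assumption}, which provides $\|d\|_{L_\infty(F)} \lesssim h_F^2\, |d|_{W^2_\infty(\mathcal{N})}$. Taking the supremum over $\bx\in F$ delivers the desired bound \eqref{discrete_trace_dif}.

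The only mild subtlety will be the piecewise nature of $\nabla \tV$ along $\ell(\bx)$; no harder step is needed because the crucial containment $\ell(\bx)\subset\omega_\T^1(T_F)$ is furnished directly by assumption \eqref{trace_apost_ass1}. Alternatively, one could observe that $\tV$ is globally Lipschitz on $\omega_\T^1(T_F)$ with Lipschitz constant at most $\|\nabla \tV\|_{L_\infty(\omega_\T^1(T_F))}$, which gives the same intermediate inequality in one line without the telescoping argument.
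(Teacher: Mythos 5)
Your proof is correct and follows essentially the same route as the paper: parametrize the segment from $\bP_d(\bx)$ to $\bx$, apply the fundamental theorem of calculus to bound $|\tV(\bx)-\tV(\bP_d(\bx))|$ by $|d(\bx)|$ times the gradient sup-norm over the patch, and conclude from the geometric resolution bound \eqref{trace:geo_res_assumption} and the containment \eqref{trace_apost_ass1}. Your added care about the piecewise-affine nature of $\tV$ along the segment (telescoping or global Lipschitz constant) is a harmless refinement of what the paper does implicitly when writing $\int_0^1 g'(t)\,dt$.
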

\begin{proof}
Fix $\bx \in F$, and let $g(t)=\tV (t\bx + (1-t) \bP_d(\bx))$, $0 \le t \le 1$.  Then $g(0)=\tV(\bP_d(\bx))$ and $g(1)=\tV(\bx)$. Since $\bP_d(\bx) = \bx - d(\bx) \nu(\bx)$, we see that
\[  
g'(t)=\nabla \tV(t\bx + (1-t) \bP_d(\bx))\cdot (\bx -\bP_d(\bx)) = d(\bx) \nabla \tV(t\bx + (1-t) \bP_d(\bx)) \cdot \nu(\bx),
\]
whence $\tV(\bx) -\tV(\bP_d(\bx))= g(0)-g(1) = \int_0^1 g'(t) dt$ and
\begin{align*}
\big| \tV(\bx) -\tV(\bP_d(\bx)) \big|
\lesssim |d(\bx)| \|\nabla \tV \|_{L_\infty(D_F)}.
\end{align*}
The assertion follows from assumptions \eqref{trace:geo_res_assumption} and \eqref{trace_apost_ass1}.
\end{proof}

\medskip\noindent
{\bf A posteriori upper bound.} 
First we define a residual error indicator
$$
\eta_\F(U,F) := h_F\|F_\Gamma+\Delta_\Gamma U\|_{L_2(F)} + h_F^{1/2}\|\llbracket \nabla_\Gamma U \rrbracket \|_{L_2(\partial F)} \quad F \in \mathcal{F},
$$
and corresponding estimator
\[
\eta_\F (U) := \left(\sum_{F\in\F} \eta_\F(U,F)^2 \right)^{1/2}.
\]
Here $\llbracket \cdot \rrbracket$ denotes the jump in the normal component of the argument over $\partial F$.  Because we have assumed access to the closest point projection $\bP_d$, we also employ a geometric indicator that directly accesses information from $\bP_d$
\begin{align*}
  \xi_F := \|d\|_{L_\infty(F)} \|K\|_{L_\infty(\bP_d(F))} + \|\bnu-\bnu_\Gamma\|_{L_\infty(F)}^2
  \quad F\in\F,
\end{align*}
and corresponding geometric estimator
\[
\xi_\F(\Gamma) :=\max_{F \in \mathcal{F}} \xi_F.
\]
\begin{theorem}[a-posteriori upper estimate]
Let $\gamma$ be of class $C^2$ and let $\Gamma$ be defined so that the geometric assumptions \eqref{trace:geo_res_assumption}, \eqref{ass:flattening}, \eqref{trace_apost_ass1}, and \eqref{trace_apost_ass2} hold. Let $\wf\in L_{2,\#}(\gamma)$ and $\wu\in H^1_\#(\gamma)$ solve \eqref{e:weak_relax}. If $U \in \V(\mathcal F)$ is the finite element solution of \eqref{def:trace_fem} with $F_\Gamma =\frac{q}{q_\Gamma} \wf \circ \bP_d $, and $U_d=U\circ\bP_d^{-1}$ where $\bP_d^{-1}$ is the inverse of $\bP_d$ restricted to $\Gamma$, then
\begin{align}
\label{trace_apost}
\|\nabla_\gamma(\wu-U_d)\|_{L_2(\gamma)} \lesssim \eta_\F(U) + \xi_\F(\Gamma) \|\nabla_\Gamma U\|_{L_2(\Gamma)}.
\end{align}
\end{theorem}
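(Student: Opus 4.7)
The plan is to adapt the classical residual a-posteriori framework to the trace setting, where the main obstacle is the absence of a well-behaved quasi-interpolation operator on the (possibly degenerate) surface mesh $\mathcal F$. I will circumvent this by working through a Scott--Zhang interpolant on the shape-regular bulk mesh $\T$ applied to a suitable bulk extension of the surface test function, and then transferring back to $\Gamma$ via the cut-element trace estimates of Lemma~\ref{L:trace-est}.

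First, given $\widetilde \tv\in H^1_\#(\gamma)$ with $\|\nabla_\gamma \widetilde\tv\|_{L_2(\gamma)}=1$, I set $\tv=\widetilde\tv\circ\bP_d$ on $\Gamma$ and exploit the consistency identity \eqref{consistency} with the error matrix $\bE$ from Lemma~\ref{L:geom_consist_dist} (geometric consistency), together with the definition $F_\Gamma = (q/q_\Gamma) \widetilde f\circ \bP_d$, to obtain an error representation of the form
\[
\int_\gamma \nabla_\gamma(\wu-U_d)\cdot\nabla_\gamma\widetilde\tv
= \Big[\int_\Gamma F_\Gamma \tv - \int_\Gamma \nabla_\Gamma U\cdot\nabla_\Gamma \tv\Big]
+ \int_\gamma \nabla_\gamma U_d \cdot \bE\,\nabla_\gamma \widetilde\tv,
\]
where the consistency term $\int_\gamma f\widetilde\tv-\int_\Gamma F_\Gamma \tv$ vanishes by the change of variables induced by $\bP_d$. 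Corollary~\ref{C:lambda-2} (geometric consistency errors for $C^2$ surfaces), in the sharper form given by Lemma~\ref{L:geom_consist_dist}, yields $\|\bE\|_{L_\infty(\gamma)}\lesssim \xi_\F(\Gamma)$ (the $d\bW$ and the $\bnu\otimes\bnu_\Gamma$ pieces produce exactly $\|d\|_\infty \|K\|_\infty$ and $\|\bnu-\bnu_\Gamma\|_\infty^2$, respectively), so that the geometric term is bounded by $\xi_\F(\Gamma)\|\nabla_\Gamma U\|_{L_2(\Gamma)}$ after invoking Lemma~\ref{L:norm-equiv} (norm equivalence).

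Next, for the Galerkin residual I introduce a discrete test function $V\in\V(\mathcal F)$ using Galerkin orthogonality \eqref{def:trace_fem}, which allows me to replace $\tv$ by $\tv-V$. The key construction is to first harmonically extend $\widetilde\tv$ to $\tv^{\rm ext}\in H^{3/2}(\mathbb R^{n+1})$ via Lemma~\ref{lem:harmonic_extension} (harmonic extension), so that $\|\tv^{\rm ext}\|_{H^{3/2}(\mathbb R^{n+1})}\lesssim \|\widetilde\tv\|_{H^1(\gamma)}\lesssim 1$ by the uniform Poincar\'e--Friedrichs inequality on $\gamma$. I then define $V^{\rm ext}:=\interp^{\rm sz}\tv^{\rm ext}\in\V(\T)$ to be the bulk Scott--Zhang interpolant and set $V:=V^{\rm ext}|_\Gamma\in\V(\mathcal F)$. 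However, since $\tv=\widetilde\tv\circ\bP_d$ need not coincide with $\tv^{\rm ext}|_\Gamma$, I further split
\[
\int_\Gamma F_\Gamma (\tv-V) - \int_\Gamma \nabla_\Gamma U\cdot\nabla_\Gamma(\tv-V)
= \int_\Gamma F_\Gamma (\tv-\tv^{\rm ext}|_\Gamma) + \textrm{(residual with $\tv^{\rm ext}|_\Gamma-V$)},
\]
and control $\|\tv-\tv^{\rm ext}|_\Gamma\|_{L_2(F)}$ by a variant of Proposition~\ref{l:convolution}/\ref{p:mol_bulk} (Lipschitz perturbation), since $\tv^{\rm ext}|_\gamma=\widetilde\tv$ and $\|d\|_{L_\infty(\Gamma)}\lesssim h^2$; the resulting contribution is absorbed into $\eta_\F(U)$ using $\|F_\Gamma\|_{L_2(F)}$ combined with the element residual term.

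Finally, for the main residual term with the test function $\tv^{\rm ext}|_\Gamma-V$ I integrate by parts face by face as in Lemma~\ref{L:GO}/Theorem~\ref{t:posteriori_generic}, producing the element residuals $F_\Gamma+\Delta_\Gamma U|_F$ and the face jumps $\llbracket \nabla_\Gamma U\rrbracket$. The crucial step is that the local $L_2$ norms of $\tv^{\rm ext}-V^{\rm ext}$ on $F\in\F$ and on $\partial F$ must be bounded by bulk quantities over $T_F$: here I apply the scaled cut trace estimate \eqref{trace_est} to a face and \eqref{H32_curved_trace} (which exploits the $H^{3/2}$ regularity of $\tv^{\rm ext}$ available only thanks to the harmonic extension) to the edges $\partial F$, followed by the standard bulk approximation/stability properties of $\interp^{\rm sz}$ on $\T$:
\[
h_F^{-1}\|\tv^{\rm ext}-V^{\rm ext}\|_{L_2(T_F)}+\|\nabla(\tv^{\rm ext}-V^{\rm ext})\|_{L_2(T_F)}+h_F^{1/2}|\tv^{\rm ext}|_{H^{3/2}(T_F)}
\lesssim \|\tv^{\rm ext}\|_{H^{3/2}(\omega^2_\T(T_F))}.
\]
Summing over $F$ with the Cauchy--Schwarz inequality, using the finite overlap of the bulk patches $\omega^2_\T(T_F)$ (guaranteed by \eqref{trace_apost_ass2} and the shape regularity of $\T$), and recalling $\|\tv^{\rm ext}\|_{H^{3/2}(\mathbb R^{n+1})}\lesssim 1$, yields the bound by $\eta_\F(U)$. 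Taking the supremum over $\widetilde\tv$ in the unit ball of $H^1_\#(\gamma)$ concludes the proof.

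The main obstacle is precisely this last point: the surface faces $F$ can cut their bulk parents $T_F$ arbitrarily, so neither a standard surface interpolation nor a naive scaling argument is available; the harmonic extension into $H^{3/2}$ combined with the cut trace estimate \eqref{H32_curved_trace} is what rescues the argument and is the reason the $C^2$ regularity of $\gamma$ enters in a non-negotiable way.
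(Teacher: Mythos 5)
Your skeleton is right and matches the paper's: vanishing consistency term because $F_\Gamma = (q/q_\Gamma)\wf\circ\bP_d$, geometric term bounded by $\|\bE\|_{L_\infty}\lesssim\xi_\F(\Gamma)$, the harmonic $H^{3/2}$ extension $\tv^{\mathrm{ext}}$ of the surface test function $\wv$, the bulk Scott--Zhang interpolant $V$ of $\tv^{\mathrm{ext}}$, and the cut-element trace estimates of Lemma~\ref{L:trace-est}. The gap is in how you treat the mismatch between the lift $\wv\circ\bP_d$ and the trace $\tv^{\mathrm{ext}}|_\Gamma$. Your decomposition $\wv\circ\bP_d - V = (\wv\circ\bP_d - \tv^{\mathrm{ext}}|_\Gamma) + (\tv^{\mathrm{ext}}|_\Gamma - V)$ places the evaluation mismatch between $\bx$ and $\bP_d(\bx)$ on the merely $H^{3/2}$-regular function $\tv^{\mathrm{ext}}$, which is \emph{not} locally Lipschitz. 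There is therefore no pointwise bound of order $\|d\|_{L_\infty}\lesssim h_F^2$, and Proposition~\ref{p:mol_bulk} supplies only a \emph{bulk} $L_2$ estimate; passing to a cut trace on $F$ costs a factor $h_F^{-1/2}$ and picks up a non-decaying gradient term, so $h_F^{-1}\|\wv\circ\bP_d - \tv^{\mathrm{ext}}|_\Gamma\|_{L_2(F)}$ cannot be summed to $\|\wv\|_{H^1(\gamma)}$. Moreover, your split retains only the forcing contribution $\int_\Gamma F_\Gamma(\wv\circ\bP_d - \tv^{\mathrm{ext}}|_\Gamma)$; the Dirichlet contribution $\int_\Gamma \nabla_\Gamma U\cdot\nabla_\Gamma(\wv\circ\bP_d - \tv^{\mathrm{ext}}|_\Gamma)$ and the corresponding jump-residual contribution on $\partial F$ after integration by parts are silently dropped. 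You also invoke \eqref{H32_curved_trace}, a trace onto the curved $n$-dimensional surface $T\cap\gamma$, to control the $(n-1)$-dimensional edge terms on $\partial F$, which is not what that estimate provides.

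The paper decomposes the other way: $\wv\circ\bP_d - V = (\wv\circ\bP_d - V\circ\bP_d) + (V\circ\bP_d - V)$. This puts the evaluation mismatch on the piecewise-affine bulk function $V$, which \emph{is} locally Lipschitz, so Lemma~\ref{L:eval-mismatch} gives $\|V - V\circ\bP_d\|_{L_\infty(F)}\lesssim h_F^2 |d|_{W^2_\infty(\mathcal{N})}\|\nabla V\|_{L_\infty(\omega_\T^1(T_F))}$, and an inverse estimate closes this piece at the right order on both faces and edges. The remaining piece $\wv\circ\bP_d - V\circ\bP_d = (\wv - V|_\gamma)\circ\bP_d$ is a pure pullback from $\gamma$: one changes variables via $\bP_d$ (for the edge integrals, through an $n$-dimensional ball in the hyperplane through $F$ inside $\omega_\T^1(T_F)$, followed by $\bP_d$ and assumption \eqref{trace_apost_ass2}) so that all remaining norms live on $\gamma\cap\omega_\T^2(T_F)$, and only then is the $H^{3/2}$ cut-trace estimate \eqref{H32_curved_trace} applied to $\tv^{\mathrm{ext}} - V$. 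Attempting to repair your split essentially forces you back to this decomposition.
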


\begin{proof}
We proceed in several steps.

\medskip\noindent
{\it Step 1:  Error representation via the residual equation.}
First we note that
$$
\|\nabla_\gamma(\wu-U_d)\|_{L_2(\gamma)} = \sup_{\wv \in H^1(\gamma), \|\nabla_\gamma \wv\|_{L_2(\gamma)}=1} \int_\gamma \nabla_\gamma(\wu-U_d)\cdot \nabla_\gamma \wv
$$
and then write as in \eqref{e:post_split} that
$$
\int_\gamma \nabla_\gamma(\wu-U_d)\cdot \nabla_\gamma \wv = I_1+I_2+I_3
$$
with
\begin{align*}
I_1& :=-\int_\Gamma \nabla_\Gamma U \cdot \nabla_\Gamma(\tv_d-\tV)+ \int_\Gamma F_\Gamma (\tv_d-\tV),
\\ I_2 & := \int_\gamma \nabla_\gamma U_d \cdot \bE \, \nabla_\gamma \wv,
\\ I_3 & := \int_\gamma \wf \, \wv - \int_\Gamma F_\Gamma \tv_d.
\end{align*}
Here $\bE$ is as in \eqref{error-matrix-gamma}, $\tv_d=\wv\circ\bP_d$, and $V\in\V(\mathcal T)$ is a suitable approximation of the $H^{3/2}$ extension $\tv$ of $\wv$ given by Lemma \ref{lem:harmonic_extension} (harmonic extension). Note that $I_3=0$ because of the definition $F_\Gamma=\frac{q}{q_\Gamma} \wf \circ \bP_d$.

\medskip\noindent
{\it Step 2:  Bounding the geometric error terms.}
Using \eqref{est-E} (or more accurately the corresponding pointwise bound from which it is derived) directly yields
$$\|{\bf E}\|_{L_\infty(F)} \lesssim \xi_F
\quad E\in\mathcal{F}.$$
Thus making use of Lemma \ref{L:norm-equiv} (norm equivalence) implies
$$
|I_2| \lesssim \xi_\F(\Gamma) \|\nabla_\gamma U_d\|_{L_2(\gamma)} \|\nabla_\gamma \wv\|_{L_2(\gamma)} \le \xi_\F(\Gamma) \|\wf \|_{L_2(\gamma)} \|\nabla_\gamma \wv\|_{L_2(\gamma)}.
$$

\noindent
{\it Step 3:  Bounding the residual term.}
In order to bound $I_1$, we first decompose the integrals over faces $F\in\F$
and then integrate by parts to arrive at
$$
|I_1| \lesssim \sum_{F \in \mathcal{F}} \eta_\F(U,F)\Big(h_F^{-1} \|\tv_d-\tV\|_{L_2(F)} + h_F^{-1/2} \|\tv_d-\tV\|_{L_2(\partial F)}\Big).
$$
We may thus complete the proof upon showing that
$$
\left ( \sum_{F \in \mathcal{F}} h_F^{-2} \|\tv_d-\tV\|_{L_2(F)}^2+ h_F^{-1} \|\tv_d-\tV\|_{L_2(\partial F)}^2 \right ) ^{1/2} \lesssim \|\wv \|_{H^1(\gamma)}.
$$

Given $F \in \mathcal{F}$, we begin by considering the quantity $h_F^{-1} \|\tv_d -\tV\|_{L_2(e)}$ for any edge $e \subset \partial F$.  We first use the triangle inequality to obtain
\[
h_F^{-1/2} \|\tv_d- \tV\|_{L_2(e)} \lesssim h_F^{-1/2} \|\tv_d-\tV_d\|_{L_2(e)}
+ h_F^{-1/2}\|\tV_d-\tV\|_{L_2(e)} 
\]
with $V_d = V\circ\bP_d$, 
and examine the last term first. Since $e$ is an $(n-1)$-dimensional edge with diam$(e) \le h_F$, combining H\"older inequality and Lemma \ref{L:eval-mismatch} (evaluation mismatch between $\gamma$ and $\Gamma$) with an inverse estimate over the $(n+1)$-dimensional patch $\omega_\T^1(T_F)$ yields
\begin{align*}
h_F^{-1/2}\|\tV_d-\tV\|_{L_2(e)} &\lesssim h_F^{(n-2)/2}\|\tV_d-\tV\|_{L_\infty(F)}
\\ &
\lesssim h_F^{(n+2)/2} |d|_{W^2_\infty(\mathcal{N})} \|\nabla V\|_{L_\infty(\omega_\T^1(T_F))}
\\&
\lesssim h_F^{1/2} |d|_{W^2_\infty(\mathcal{N})} \|\nabla V\|_{L_2(\omega_\T^1(T_F))}.
\end{align*}

For the first term $h_F^{-1/2} \|\tv_d-\tV_d\|_{L_2(e)}$ we argue as follows. Let $\mathbb{P}$ be the $n-$dimensional hyperplane containing $F$.  While it may be that ${\rm diam}(e) <\!<h_F$, the shape regularity of $\T$ implies that 
$${\rm dist}(e, \partial \omega_{\T}^1(T_F)) \ge {\rm dist}(T_F, \partial \omega_\T^1(T_F) \simeq h_F.$$  
Thus there exists an $n-$dimensional ball $B \subset \mathbb{P} \cap \omega_{\T}^1(T_F) \subset \mathbb{P}$ so that $e \subset B$ and ${\rm diam}(B) \simeq h_F$.  This ball $B$ is the candidate for applying the $h_F$-scaled version of \eqref{general_trace_est} of Lemma \ref{L:trace-est} (trace estimates for cut elements), namely
\[
h_F^{-1/2} \|\tv_d-\tV_d\|_{L_2(e)}\lesssim h_F^{-1} \|\tv_d-\tV_d\|_{L_2(B)} + \|\nabla_\mathbb{P} (\tv_d-\tV_d)\|_{L_2(B)}.
\]
Since $\tv_d-\tV_d = (\wv-V)\circ\bP_d$, we change variables from $B$ to $\gamma$ while employing Lemma \ref{L:norm-equiv} (norm equivalence) to get
\[
h_F^{-1/2} \|\tv_d-\tV_d\|_{L_2(e)}\lesssim
h_F^{-1} \|\wv-\tV\|_{L_2(\bP_d(B))} + \|\nabla_\gamma(\wv-\tV)\|_{L_2(\bP_d(B))}.
\]
We observe that $\bP_d(B)\subset \bP_d(\omega_\T^1(T_F)) \subset \omega_\T^2(\T_F)$ in light of \eqref{trace_apost_ass2}, whence
\[
h_F^{-1/2} \|\tv_d-\tV_d\|_{L_2(e)}\lesssim
h_F^{-1} \|\tv-\tV\|_{L_2(\omega_\T^2(T_F)\cap \gamma)} + \|\nabla_\gamma(\tv-\tV)\|_{L_2(\omega_\T^2(T_F) \cap \gamma)}.
\]

We now carry out a similar but more direct computation for the term $h_F^{-1} \|\tv_d-\tV\|_{L_2(F)}$ appearing in $I_1$.   Again using Lemma \ref{L:eval-mismatch} we obtain
\begin{align*}
  h_F^{-1} \|\tv_d-\tV_d\|_{L_2(F)} &\lesssim h_F^{-1} \|\wv-V\|_{L_2(T\cap\gamma)}
  \\
  h_F^{-1} \|\tV_d-\tV\|_{L_2(F)} &
  \lesssim h_F^{(n+2)/2} \|\nabla V\|_{L_\infty(\omega_\T^1(T_F))}
  \lesssim \|\nabla V\|_{L_2(\omega_\T^1(T_F))}.
\end{align*}
Combining the previous estimates we end up with
\begin{align*}
 h_F^{-1}&  \|\tv_d-\tV\|_{L_2(F)}   + h_F^{-1/2} \|\tv_d-\tV\|_{L_2(\partial F)} 
 \\ &  \lesssim h_F^{-1} \|\wv-\tV\|_{L_2(\omega_\T^2(T_F)) \cap \gamma)} + \|\nabla_\gamma(\wv-\tV)\|_{L_2(\omega_\T^2(T_F)) \cap \gamma)} + \|\nabla \tV\|_{L_2(\omega_\T^1(T_F))}.
\end{align*}
Summing over $F \in \mathcal{F}$ while using finite overlap of the patches $\omega_\T^2(T_F)$ yields
\begin{align*} 
&  \left ( \sum_{F \in \mathcal{F}} h_F^{-2} \|\tv_d-\tV\|_{L_2(F)}^2  + h_F^{-1} \|\tv_d-\tV\|_{L_2(\partial F)}^2 \right )^{1/2} 
 \\ &  \qquad \lesssim \left ( \sum_{T \in \T_\Gamma} h_T^{-2} \|\wv -\tV\|_{L_2(T \cap \gamma)}^2 + \|\nabla_\gamma (\wv -\tV)\|_{L_2(T \cap \gamma)}^2 \right ) ^{1/2} + \|\nabla \tV\|_{L_2(\Omega)}.
 \end{align*} 

{\it Step 4: Interpolation.} 
We next apply \eqref{H32_curved_trace} to the function $\tv-\tV$ while realizing that $|\tV|_{H^{3/2}(T)}=0$.  Doing so yields $|\tv-\tV|_{H^{3/2}(T)}=|\tv|_{H^{3/2}(T)}$ and
$$ \begin{aligned}
h_T^{-1} \|\wv -\tV\|_{L_2(T \cap \gamma)} & + \|\nabla_\gamma (\wv -\tV)\|_{L_2(T \cap \gamma)}
 \\ &  \lesssim h_T^{-3/2} \|\tv-\tV\|_{L_2(T)} + h_T^{-1/2} \|\nabla (\tv-\tV)\|_{L_2(T)} + |\tv|_{H^{3/2}(T)}.
\end{aligned}
$$ 
Next let $\tV=I^{\textrm{sz}}_\T \tv$, where $I^{\textrm{sz}}_\T$ is the Scott-Zhang interpolation operator on the bulk space $\V(\mathcal T)$.  Standard approximation theory in  $\V(\mathcal T)$ then yields
$$ h_T^{-3/2} \|\tv-\tV\|_{L_2(T)} + h_T^{-1/2} \|\nabla (\tv-\tV)\|_{L_2(T)} + |\tv|_{H^{3/2}(T)} \lesssim \|\tv\|_{H^{3/2}(\omega_\T^1(T))}$$
and
$$\|\nabla \tV\|_{L_2(T)} \lesssim \|\nabla \tv\|_{L_2(\omega_\T^1(T))}$$
for every $T\in\T_\Gamma$.
Using the finite overlap of the patches $\omega_\T^1(T)$ and the bound
$\|\tv\|_{H^{3/2}(\mathbb{R}^{n+1})} \lesssim \|\wv\|_{H^1(\gamma)}$ of Lemma \ref{lem:harmonic_extension} (harmonic extension), we finally obtain
\begin{align*}
\left ( \sum_{F \in \mathcal{F}} h_F^{-2} \|\tv_d-\tV\|_{L_2(F)}^2  + h_F^{-1} \|\tv_d-\tV\|_{L_2(\partial F)}^2 \right )^{1/2} & \lesssim \|\tv\|_{H^{3/2}(\mathbb{R}^3)} \lesssim \|\wv\|_{H^1(\gamma)}.
\end{align*}
This completes the proof.
\end{proof}

\begin{remark}[efficiency]
 In a posteriori error analysis it is standard to prove lower (efficiency) bounds such as those in Theorems \ref{T:apost-lower} and \ref{T:apost-lower-dist}.  For trace methods such estimates would ideally take the form
\[ 
\begin{aligned}
   \eta_{\mathcal{F}}(U,F) &  \lesssim \|u_d-U\|_{H^1(\omega_\F^1(F))} +
   \osc_{\mathcal{F}} (F_\Gamma, \omega_{\mathcal{F}}^1(F))
   \\ & ~~~~~+ \xi_{\mathcal{F}} (\omega_{\mathcal{F}}^1(F)) \|\nabla_\Gamma U\|_{L_2(\omega_\F^1(F))}.
   \end{aligned}
\]   
where $\omega_{\mathcal{F}}^1(F)$ is the patch of elements about $F \in \mathcal{F}$ and  $\osc_{\mathcal{F}} (F_\Gamma, \omega_{\mathcal{F}}^1(F))$ is a heuristically higher-order term measuring the deviation of $F_\Gamma$ from the piecewise constants.  However, the standard proof of this result does not work for trace methods due to the irregular structure of the surface mesh $\mathcal{F}$.  The paper \cite{DO12} contains partial efficiency results for the volume residual but none for the jump residual term.  Numerical experiments suggest that a local efficiency result may hold, but also show a slight degeneration of the constant as the mesh is refined.  Thus it is not clear whether the estimators we have studied for the trace method are efficient, and if so what form an efficiency estimate would take.  
\end{remark}

\section{Narrow Band Method}\label{sec:narrow}

In the narrow band approach, the partial differential equation \eqref{e:weak_relax}
on $\gamma$
$$
-\Delta_\gamma \wu = \wf
$$
is extended to the tubular neighborhood $\Nd$ of $\gamma$ defined in \eqref{e:delta-tube}
$$
\mathcal N(\delta):= \left \lbrace  \bx  \in \mathbb R^{n+1}  : \   |d(x)| < \delta \right\rbrace \subset \mathbb R^{n+1};
$$
we refer to the original papers \cite{MR1868103,MR2485787}. The finite element method is then posed over a discrete approximation to $\mathcal{N}(\delta)$.
We assume that $\gamma$ is of class $C^2$ and $0<\delta < \frac 1{2 K_\infty}$ so that \eqref{e:Ntilde} holds, namely $\Nd\subset{\mathcal{N}}_\eps(\delta_\eps)$, and all the properties of the distance function detailed in Section~\ref{sec:preliminaries} are valid in $\mathcal N(\delta)$.

A natural / standard way to extend $\wu$ and $\wf$ to $\mathcal N(\delta)$ is to use the constant extensions along the normal direction
\[
u = \wu \circ \bP_d,
\quad
f = \wf \circ \bP_d.
\]
We use the latter to design the FEM. However, we need $u \in H^2(\Nd)$ to derive optimal a-priori $H^1$ error estimates for the FEM, which entails $\gamma \in  C^3$ when using the closest point projection $\bP_d$. We circumvent this extra regularity on $\gamma$ via Proposition~\ref{P:H2-extension} ($H^2$ extension), which defines $u$ as a normal extension relative to a perturbation $\gae$ of $\gamma$ constructed as a zero level set of a regularized distance function $\de$. We will show below in Lemma \ref{t:narrow:geom_consistency} (narrow band PDE consistency) that such a function $u$ satisfies
\begin{equation}\label{narrow-band-eq}
\left| \int_{\mathcal N(\delta)} \nabla u \cdot \nabla v - \int_{\mathcal N(\delta)} f v \right| \lesssim  \delta^{3/2} \| \wf \|_{L_2(\gamma)} \|  v \|_{H^1(\mathcal N(\delta))}. 
\end{equation}
The specific choice of $u$ adds several technicalities to the proof of \eqref{narrow-band-eq} but reduces the regularity of $\gamma$ to $C^2$. This seems to be a new result in the literature consistent with the underlying regularity $\wu\in H^2(\gamma)$. This also motivates the narrow band FEM as a straightforward (bulk) finite element approximation of \eqref{narrow-band-eq} upon replacing $\Nd$ by a polygonal approximation $\mathcal N_h(\delta)$ dictated by $d_h$, the Lagrange interpolant of $d$ in the bulk. We discuss this next.
We refer to \cite{MR3471100} for higher order FEMs and \cite{MR2608464,MR3249369} for an algorithm based on a level-set function, rather that the less practical distance function. The essential ideas, however, are similar to those below but are more technical.

\subsection{The Narrow Band FEM}\label{S:FE-narrow-band}

We assume that $\mathcal{N}$ is enclosed in a $n+1$ dimensional polyhedral domain $D$ and denote by $\mathcal T$ a partition of $D$ made of simplices. We omit to mention the explicit dependence on the shape regularity constant of $\T$
$$
\sigma := \max_{T\in \mathcal T} \frac{\textrm{diam}(T)}{h_T}
$$
in most estimates below;
we use the notation $h_T=| T |^{\frac 1 {n+1}}$ and $h=\max_{T\in \mathcal T} h_T$.
Let $d_h$ stand for the Lagrange interpolant of the
distance function $d$ by continuous piecewise linear functions over $\T$. The discrete distance function $d_h$ induces the discrete narrow band
$$
\mathcal N_h(\delta) := \left\lbrace \bx \in D \ : \  |d_h(\bx)| < \delta \right\rbrace.
$$ 
Notice that standard interpolation estimates imply
\begin{equation}\label{e:narrow:interp}
  \| d - d_h \|_{L_\infty(\mathcal N)} + h \| \nabla (d-d_h)\|_{L_\infty(\mathcal N)} \leq c_I h^2
  | d |_{W^2_\infty(\mathcal N)},
\end{equation}
where $c_I$ is a constant only depending on $\sigma$.
This implies the {\it non-degeneracy} property
\begin{equation}\label{e:narrow:nondegen}
| \nabla d_h| \geq \big| |\nabla d| - |\nabla(d-d_h)| \big| \geq  \big| 1 - |\nabla(d-d_h)| \big|  \geq  \frac 1 2,
\end{equation}
provided $h$ is sufficiently small so that
$
c_I h | d |_{W^2_\infty(\mathcal N)} \leq \frac 1 2. 
$
Combining estimates \eqref{e:narrow:interp} and \eqref{e:narrow:nondegen}
we deduce that the Hausdorff distance between
$\Nd$ and $\mathcal{N}_h(\delta)$ satisfies 
\begin{equation}\label{hausdorff}
\dist_H(\Nd,\mathcal{N}_h(\delta)) \le 2 c_I h^2 | d |_{W^2_\infty(\mathcal N)}.
\end{equation}
Moreover, to guarantee that $\mathcal N_h(\delta) \subset \mathcal N$, we observe
$$
|d(\bx)| \leq |d_h(\bx)| + |(d-d_h)(\bx)| \leq \delta + c_I | d |_{W^2_\infty(\mathcal N)} h^2
\quad\forall \, \bx \in \mathcal N_h(\delta).
$$
In view of \eqref{N:def}, it thus suffices to restrict $\delta$ and $h$ so that
\begin{equation}\label{e:narrow:delta_and_h}
\delta + c_I | d |_{W^2_\infty(\mathcal N)} h^2 \leq \frac{1}{2K_\infty}.
\end{equation}
Hereafter we make the structural assumption
\begin{equation}\label{delta-h}
C_1 h \le \delta \le C_2 h
\end{equation}
with $c_I \le C_1 \le C_2$ so that \eqref{e:narrow:delta_and_h} holds for $h$ sufficiently small.

We denote by  $\mathcal T_\delta$ the restriction of $\mathcal T$ to $\mathcal N_h(\delta)$  in the sense that
$$
\mathcal T_\delta := \big\lbrace T\in\T  \ : \ T\cap \mathcal N_h(\delta) \not = \emptyset \big\rbrace.
$$
The finite element space associated with $\mathcal T_\delta$ is then constructed in the usual way
$$
\mathbb V(\mathcal T_\delta) := \left\lbrace V \in C^0(\overline{\mathcal N_h(\delta)}) \ : \ V|_T \in \mathcal P, \ T \in \mathcal T_\delta \right\rbrace, \quad
$$
where we recall that $\mathcal P$ stands for the space of polynomials of degree 1.
The subspace of functions with vanishing mean value is denoted $\mathbb V_{\#}(\mathcal T_\delta)$.

With this notation at hand and inspired by \eqref{narrow-band-eq},  we define the narrow band finite element solution $U \in \mathbb V_{\#}(\mathcal T_\delta)$ to satisfy
\begin{equation}\label{e:discrete_nb}
\int_{\mathcal N_h(\delta)} \nabla U \cdot \nabla V = \int_{\mathcal N_h(\delta)}F  V, \qquad \forall V \in \mathbb V_\#(\mathcal T_\delta),
\end{equation}
where $F$ is an approximation to $f=\widetilde f \circ \bP_d$ satisfying $\int_{\mathcal N_h(\delta)} F = 0$. In order to make a convenient choice of $F$, we first define ${\bf M}_h:\mathcal{N}_h(\delta) \rightarrow \mathcal{N}(\delta)$ by
\[
  {\bf M}_h(\bx)=\bP_d(\bx) + d_h(\bx) \nabla d(\bx);
\]
the properties of ${\bf M}_h$ are explored thoroughly later in this section.  With this definition in hand, we let
\begin{equation}\label{e:narrow:F}
F = f \circ {\bf M}_h-  \frac{1}{  |\mathcal N_h(\delta)|}  \int_{\mathcal N_h(\delta)} f \circ {\bf M}_h.
\end{equation}
%
%
This requires having access to $d, d_h$ and $\bP_d$, which we assume hereafter.
Since $F$ has vanishing meanvalue, \eqref{e:discrete_nb} is also valid for all
$V\in \mathbb V(\mathcal T_\delta)$.
The existence and uniqueness of $U \in \mathbb V_\#(\mathcal T_\delta)$ follows directly from the Lax-Milgram lemma. 

\subsection{PDE Geometric Consistency}\label{s:narrow:consistency}

We intend to prove \eqref{narrow-band-eq} for the extension $u \in H^2(\Nd)$ in Proposition \ref{P:H2-extension} ($H^2$ extension) of $\wu\in H^2(\gamma)$. We recall Proposition \ref{P:BVP} (PDE satisfied by $u$)
\[
-\div { \mue \bBe \nabla u } = \fe \mue,
\]
multiply by a test function $\tv \in H^1(\mathcal N(\delta))$ and integrate by parts in $\Nd$ to obtain
\begin{equation}\label{e:narrow_band_exact}
\int_{\mathcal N(\delta)} \bB_\varepsilon \nabla u \cdot \nabla \tv ~\mu_\varepsilon = \int_{\mathcal N(\delta)} f_\varepsilon~ \tv ~\mu_\varepsilon
+ \int_{\partial \mathcal N(\delta)} \bB_\varepsilon \nabla u \cdot \nabla d ~ \tv ~\mu_\varepsilon.
\end{equation}
Notice that we have used that $\bnu=\nabla d$ is the outward pointing normal to $\partial \Nd$.
We start by estimating geometric quantities appearing in \eqref{e:narrow_band_exact}.

%
\begin{lemma}[properties of $\mue$ and $\bBe$]\label{e:narrow:consistency_alg}
Let $\gamma$ be of class $C^2$ and $C\delta \le \eps\le\frac{\delta}{2}$ be sufficiently small. Then for all $\bx\in\Nd$ we have
\begin{equation}\label{nb:mu_estim}
\|1-\mu_\varepsilon\|_{L_\infty(\mathcal N(\delta))} \lesssim   \delta |d|_{W^2_\infty(\mathcal{N})}
\end{equation}
and
\begin{equation}\label{nb:matrix_estim}
\|  \Pi_\eps - \bB_\varepsilon \mu_\varepsilon \|_{L_\infty(\mathcal N(\delta))} \lesssim    \delta |d|_{W^2_\infty(\mathcal{N})}.
\end{equation}
\end{lemma}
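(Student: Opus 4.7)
The plan is to prove the two estimates separately, exploiting the single crucial fact that $\bPe(\bx)\in\gae$ so $\wde(\bPe(\bx))=0$. This identity collapses every factor of the form $(\bI-\wde D^2\wde)$ evaluated at $\bPe(\bx)$ into the identity, which drastically simplifies both $\wmue\circ\bPe$ and $\bAe=\wbAe\circ\bPe$. Throughout I will use Lemma \ref{L:properties-Pe} (properties of $\wde$) to control $\|\wde\|_{L_\infty(\Nd)}\lesssim\delta$, $\|D^2\wde\|_{L_\infty(\Nd)}\lesssim|d|_{W^2_\infty(\mathcal N)}$, $\|\nabla(d-\wde)\|_{L_\infty(\Nd)}\lesssim\delta|d|_{W^2_\infty(\mathcal N)}$, and $\|1-\nabla d\cdot\nabla\wde\|_{L_\infty(\Nd)}\lesssim\delta^2|d|^2_{W^2_\infty(\mathcal N)}$.

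For \eqref{nb:mu_estim}, the collapse gives $\wmue\circ\bPe(\bx)=(\nabla d\cdot\nabla\wde)(\bQe(\bPe(\bx)))$, whence $|\wmue\circ\bPe-1|\lesssim\delta^2|d|_{W^2_\infty(\mathcal N)}^2$ on $\Nd$. Since $|\wde(\bx)|\le\delta_\varepsilon\lesssim\delta$, a direct expansion yields $|\det(\bI-\wde(\bx) D^2\wde(\bx))-1|\lesssim\delta|d|_{W^2_\infty(\mathcal N)}$, and inserting both bounds into the quotient $\mue=\det(\bI-\wde D^2\wde)/(\wmue\circ\bPe)$ proves \eqref{nb:mu_estim}.

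For \eqref{nb:matrix_estim}, the vanishing of $\wde$ at $\bPe(\bx)$ reduces $\wbAe$ to the tangential projection on $\gamma$, so $\bAe(\bx)=\Pi(\bQe(\bPe(\bx)))$. A Neumann series combined with $\|\wde D^2\wde\|_{L_\infty(\Nd)}\lesssim\delta|d|_{W^2_\infty(\mathcal N)}$ gives $(\bI-\wde D^2\wde)^{-1}=\bI+O(\delta|d|_{W^2_\infty(\mathcal N)})$, so it suffices to show that $\Pi_\varepsilon\,\Pi(\bQe(\bPe(\bx)))\,\Pi_\varepsilon=\Pi_\varepsilon+O(\delta|d|_{W^2_\infty(\mathcal N)})$. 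Writing $\bnu_\star:=\nabla d(\bQe(\bPe(\bx)))$ and using $\Pi_\varepsilon^2=\Pi_\varepsilon$, one computes $\Pi_\varepsilon\,\Pi(\bQe(\bPe(\bx)))\,\Pi_\varepsilon-\Pi_\varepsilon=-(\Pi_\varepsilon\bnu_\star)\otimes(\Pi_\varepsilon\bnu_\star)$, and since $\Pi_\varepsilon\nabla\wde=0$ we have $|\Pi_\varepsilon\bnu_\star|\le|\bnu_\star-\nabla\wde(\bx)|$.

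The main technical step is bounding $|\bnu_\star-\nabla\wde(\bx)|\lesssim\delta|d|_{W^2_\infty(\mathcal N)}$. I would split it as $|\nabla d(\bQe(\bPe(\bx)))-\nabla d(\bx)|+|\nabla d(\bx)-\nabla\wde(\bx)|$; the second term is directly controlled by Lemma \ref{L:properties-Pe}. For the first term, Lipschitz continuity of $\nabla d$ reduces matters to showing $|\bQe(\bPe(\bx))-\bx|\lesssim\delta$. This follows from the triangle inequality together with two observations: $|\bPe(\bx)-\bx|=|\wde(\bx)|\le\delta_\varepsilon\lesssim\delta$, and $|\bQe(\bPe(\bx))-\bPe(\bx)|=|\wde(\bQe(\bPe(\bx)))|=|(d-\wde)(\bQe(\bPe(\bx)))|\lesssim\varepsilon^2|d|_{W^2_\infty(\mathcal N)}\lesssim\delta$ by Lemma \ref{L:properties-de} (properties of $\de$), since $\bQe(\bPe(\bx))\in\gamma$ forces $d=0$ there. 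Combining everything with the already established bound on $\mue$ finally yields $\|\Pi_\varepsilon-\bBe\mue\|_{L_\infty(\Nd)}\lesssim\delta|d|_{W^2_\infty(\mathcal N)}$.
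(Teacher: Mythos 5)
Your overall strategy—split the analysis between the interior factors $(\bI-\wde D^2\wde)$ evaluated through $\bQe$ and the outer factors $(\bI-\wde D^2\wde)^{-1}$ and $\Pi_\eps$ evaluated directly in $\Nd$, reduce the $\bBe$ estimate to $\Pi_\eps\Pi\Pi_\eps-\Pi_\eps = -(\Pi_\eps\bnu_\star)\otimes(\Pi_\eps\bnu_\star)$, and then bound $|\bnu_\star-\nabla\wde(\bx)|$ by splitting into $|\nabla(d-\wde)(\bx)|$ plus the Lipschitz increment of $\nabla d$ over $|\bx-\bQe(\bPe(\bx))|\lesssim\delta$—matches the paper's proof closely. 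That part is fine.

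But the ``single crucial fact'' you build everything on is wrong. In Lemma~\ref{L:PDE-ue} the quantities $\wmue$ and $\wbAe$ are defined on $\gae$ with the entire matrix expression composed with $\bQe$:
\begin{equation*}
\wmue(\by)=\Big[\det\big(\bI-\wde D^2\wde\big)\,(\nabla d\cdot\nabla\wde)\Big]\big(\bQe(\by)\big),
\qquad
\wbAe(\by)=\Big[\big(\bI-\wde D^2\wde\big)\Pi\big(\bI-\wde D^2\wde\big)\Big]\big(\bQe(\by)\big),
\end{equation*}
for $\by\in\gae$. Thus $\wmue\circ\bPe$ and $\wbAe\circ\bPe$ at $\bx\in\Nd$ evaluate these factors at $\bQe(\bPe(\bx))\in\gamma$, \emph{not} at $\bPe(\bx)\in\gae$. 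Since $\wde$ is the distance to $\gae$, it does not vanish on $\gamma$, so the factors do not collapse to the identity. The claimed identities $\wmue\circ\bPe=(\nabla d\cdot\nabla\wde)\circ\bQe\circ\bPe$ and $\bAe=\Pi\circ\bQe\circ\bPe$ are therefore false. Your conclusion survives only because $\wde$ restricted to $\gamma$ is controlled by the Hausdorff distance, $|\wde|\le\dist_H(\gamma,\gae)\lesssim\eps^2|d|_{W^2_\infty(\mathcal N)}$ from Lemma~\ref{L:properties-de}, which makes the neglected determinant and matrix factors differ from the identity by $O(\eps^2|d|^2_{W^2_\infty})\lesssim O(\delta^2|d|^2_{W^2_\infty})$---small enough, but this step has to be carried out and is missing. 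The paper side-steps the question entirely for $\wmue$ by applying the cruder uniform bound $|1-\det(\bI-\wde D^2\wde)|\lesssim\delta|d|_{W^2_\infty}$ valid throughout $\Nd$, and absorbs the analogous $\wbAe$ contribution into a $\wde\bG$ remainder. A minor related slip: you cite Lemma~\ref{L:properties-de} for a bound on $|(d-\wde)(\bQe(\bPe(\bx)))|$, but that lemma controls $d-\de$ (the mollified distance), not $d-\wde$ (the exact distance to $\gae$); the bound you need on $\gamma$ is the Hausdorff-distance estimate in that same lemma.
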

\begin{proof}
We recall the definitions of $\mue$ from Proposition \ref{P:BVP} (PDE satisfied by $u$)
and $\wmue$ from Lemma \ref{L:PDE-ue} (PDE satisfies by $\ue$)
\[
\mue = \frac{1}{\wmue\circ\bPe} \det\Big( \bI - \wde \, D^2\wde  \Big), \quad 
\wmue = \det\Big(\bI - \wde \, D^2\wde  \Big)
\big( \nabla d \cdot \nabla\wde  \big) \circ\bQe,
\]
where $\bP_\eps$ is the projection from $\mathcal N(\delta)$ onto $\gamma_\eps =\{ \wde (\bx) = 0\}$ and $\bQ_\eps$ is its inverse when restricted to $\gamma$. 
Note that in $\Nd$
\[
1-\mue = \Big(1 - \frac{1}{\wmue\circ\bPe}  \Big)
+ \frac{1}{\wmue\circ\bPe} \Big(1 - \det \big(\bI - \wde D^2 \wde \big) \Big).
\]
We thus need to examine the eigenvalues $(\zeta_i(\bx))_{i=0}^n$ of 
$$
\bI - \wde(\bx)D^2 \wde(\bx) \quad\forall \, \bx\in\Nd,
$$
with $\zeta_0(\bx)=1$ corresponding to the eigenvector $\nabla \wde$.
We infer that
$$
\zeta_i(\bx) = 1 - \eta_i(\bx)
$$
where 
\begin{equation*}\label{e:narrow:zeta}
  | \eta_i(\bx)| \lesssim  | \wde(\bx)| ~ | \wde |_{W^2_\infty(\Nd)} \lesssim
  \delta | d |_{W^2_\infty({\mathcal N})}
\end{equation*}
according to Lemma \ref{L:properties-Pe} (properties of $\wde$) and \eqref{e:Ntilde}
with $\delta_\varepsilon \le \frac32 \delta$. Hence
\begin{equation*}\label{e:narrow:estimr}
\Big| 1 - \det \big( \bI - \wde(\bx)D^2 \wde(\bx) \big) \Big|
= \left| 1- \prod_{i=1}^{n} \zeta_i(\bx)\right|
\lesssim \delta |d|_{W^2_\infty({\mathcal N})}
\end{equation*}
for all $\bx\in\Nd$. This takes care of the second term in the equation for $1-\mue$.
It remains to estimate $1-\wmue\circ\bPe$. Since $1-\wmue\circ\bPe$ reads as follows
on $\gamma$
\[
1-\wmue = \Big(1 - \det \big( \bI - \wde D^2 \wde \big) \Big)
+ \det \big( \bI - \wde D^2 \wde \big) \big(1 - \nabla d\cdot\nabla\wde \big),
\]
combining the previous estimate with Lemma \ref{L:properties-Pe} (properties
of $\wde$) yields
\[
\big| 1-\wmue(\bPe(\bx)) \big| \lesssim \delta  |d|_{W^2_\infty({\mathcal N})}
\quad\forall \, \bx\in\Nd.
\]
This implies $|\wmue(\bPe(\bx))|\ge\frac12$ for $\delta$ suficiently small and
thus leads to \eqref{nb:mu_estim}.

We now prove \eqref{nb:matrix_estim} which, in light of \eqref{nb:mu_estim}, reduces to
the estimate $\| \Pi_\eps - \bB_\varepsilon\|_{L_\infty(\mathcal N(\delta))} \lesssim \delta |d|_{W^2_\infty({\mathcal N})}$.
We recall from Proposition \ref{P:BVP} that in $\Nd$
\[
\bBe = \big(\bI - \wde \, D^2\wde \big)^{-1} \Pi_\eps \wbAe\circ\bPe \Pi_\eps
\big( \bI - \wde \, D^2\wde \big)^{-1}.
\]
Since $\wde(\bx) \le \widetilde \delta \leq \frac{3}{2}\delta$ for $\bx \in \Nd$ and $\| D^2 \wde \|_{L_\infty(\mathcal N(\delta))} \lesssim |d|_{W^2_\infty(\mathcal N)}$ thanks to Lemma~\ref{L:properties-Pe} (properties of $\wde$),  
the Taylor expansion of  $(\bI - t \wde D^2\wde)^{-1}$ centered at $t=0$ and computed
at $t=1$ converges for $\delta$ sufficiently small. It reads
$$
(\bI - \wde D^2 \wde)^{-1} = \bI +  \wde  (\bI- \xi \wde D^2 \wde)^{-2} D^2 \wde
$$  
for some $0< \xi <1$. The definition of $\wbAe$ given in Lemma~\ref{L:PDE-ue} yields
$$
\bBe =\Pi_\eps (\Pi\circ \bQ_\eps \circ \bP_\eps) \Pi_\eps +  \wde \bG,
$$
where $\bG:\mathcal N(\delta) \rightarrow \mathbb R^{(n+1)\times (n+1)}$ satisfies $\| \bG \|_{L_\infty(\mathcal N(\delta))} \lesssim 1$. 
Moreover,
\[
\Pi_\eps - \Pi_\eps (\Pi \circ \bQ_\eps \circ \bP_\eps) \Pi_\eps = \Pi_\eps \nabla d \circ (\bQ_\eps \circ \bP_\eps)  \otimes \Pi_\eps \nabla  d \circ (\bQ_\eps \circ \bP_\eps)
\]
whence for all $\bx\in\Nd$ we see that
\[
\Pi_\eps(\bx) \nabla d(\bQ_\eps(\bP_\eps(\bx))) =
\nabla d(\bQ_\eps(\bP_\eps(\bx))) -
\nabla\wde(\bx) \big(\nabla d(\bQ_\eps(\bP_\eps(\bx))) \cdot\nabla\wde(\bx)\big).
\]
Since
\begin{align*}
  \big| \nabla \wde(\bx) - \nabla d(\bQ_\eps(\bP_\eps(\bx))) \big| &\leq
  \big | \nabla (\wde(\bx) -d(\bx)) \big| + \big| \nabla d(\bx) - \nabla d(\bQ_\eps(\bP_\eps(\bx))) \big| \\
  &\lesssim \big(\delta+|\bx -\bQ_\eps(\bP_\eps(\bx))| \big) |d|_{W^2_\infty(\mathcal{N})}
  \lesssim \delta |d|_{W^2_\infty(\mathcal{N})}
\end{align*}
thanks to Lemma~\ref{L:properties-Pe} (properties of $\wde$), we get
\begin{equation*}\label{e:narrow_matrix_A}
|| \Pi_\eps - \bBe||_{L_\infty(\Nd)} \lesssim \delta |d|_{W^2_\infty(\mathcal{N})}
\end{equation*}
as asserted. This concludes the proof.
\end{proof}
\begin{remark}[estimate of $\mu$]\label{r:narrow:mu}
Lemma~\ref{L:area_ratio_distance} (relation between $q$ and $q_\Gamma$) gives
the expression $\mu(\bx):= \det(\bI-d(\bx) D^2 d(\bx))$ for the change of infinitesimal
area between $\gamma_s:=\{ d^{-1}(s)\}$ and $\gamma := \{ d^{-1}(0)\}$.
Proceeding as in the proof of the above lemma, we get
\begin{equation}\label{nb:mu_estim_d}
\|1-\mu\|_{L_\infty(\mathcal N(\delta))} \lesssim  \delta \, |d|_{W^2_\infty(\mathcal{N})}
\end{equation}
provided $\delta$ is sufficiently small so that
$\mathcal N(\delta) \subset \mathcal N$.
\end{remark}
We are now in position to prove a consistency estimate measuring the discrepancy between $f$ and $\Delta u$ in $\mathcal{N}(\delta)$.
\begin{lemma}[narrow band PDE consistency]\label{t:narrow:geom_consistency}
Let $\gamma$ be of class $C^2$ and $u$ be the extension of Proposition \ref{P:H2-extension} ($H^2$ extension) with $C\delta \le \eps \le \frac{\delta}{2}$
sufficiently small. If $\wf \in L_2(\gamma)$,
then for all $\tv \in H^1(\mathcal N(\delta))$, we have
\begin{equation}\label{e:narrow:geom_consistency}
\left| \int_{\mathcal N(\delta)} \nabla u \cdot \nabla \tv - \int_{\mathcal N(\delta)} f \tv \, \right| \lesssim  \delta^{3/2} |d|_{W^2_\infty(\mathcal{N})}^2 \| \wf \|_{L_2(\gamma)} \|  \tv \|_{H^1(\mathcal N(\delta))}. 	
\end{equation}
\end{lemma}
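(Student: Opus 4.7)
The plan is to start from Proposition \ref{P:BVP} (PDE satisfied by $u$), which furnishes the strong-form equation $-\div(\mue \bBe \nabla u) = \mue \fe$ in $\mathcal N(\delta)$. Testing against $\tv\in H^1(\Nd)$ and integrating by parts yields \eqref{e:narrow_band_exact}. Adding and subtracting $\int_\Nd \nabla u \cdot \nabla \tv$ and $\int_\Nd f\tv$, and using the key observation that $\Pi_\varepsilon \nabla u = \nabla u$ (which follows from $u=\ue\circ\bPe$ being constant along $\nabla\wde$, hence $\nabla u\cdot\nabla\wde=0$), I would decompose
\[
\int_\Nd \nabla u \cdot \nabla \tv - \int_\Nd f \tv \;=\; I_1 + I_2 + I_3,
\]
with $I_1 = \int_\Nd (\Pi_\varepsilon - \mue\bBe)\nabla u \cdot \nabla \tv$, $I_2 = \int_\Nd(\mue\fe - f)\tv$, and $I_3 = -\int_{\partial\Nd}\mue\bBe\nabla u\cdot\nabla d\,\tv$.

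For the first bulk term $I_1$, Lemma \ref{e:narrow:consistency_alg} gives $\|\Pi_\varepsilon-\mue\bBe\|_{L_\infty(\Nd)}\lesssim \delta|d|_{W^2_\infty(\mathcal N)}$. Combined with the $H^1$-type bound $\|\nabla u\|_{L_2(\Nd)}\lesssim \delta^{1/2}|d|_{W^2_\infty(\mathcal N)}\|\wu\|_{H^1(\gamma)}$ (obtained by the same coarea and $\bPe$ change of variables used in Proposition \ref{P:H2-extension}) and the a priori estimate $\|\wu\|_{H^1(\gamma)}\lesssim\|\wf\|_{L_2(\gamma)}$, Cauchy--Schwarz gives the target $\delta^{3/2}|d|_{W^2_\infty(\mathcal N)}^2\|\wf\|_{L_2(\gamma)}\|\nabla\tv\|_{L_2(\Nd)}$. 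For $I_2$, I split $\mue\fe-f=(\mue-1)\fe+(\fe-f)$. The first piece is controlled by $\|\mue-1\|_{L_\infty}\lesssim\delta|d|_{W^2_\infty(\mathcal N)}$ together with $\|\fe\|_{L_2(\Nd)}\lesssim\delta^{1/2}\|\wf\|_{L_2(\gamma)}$ (coarea on $\wde$ combined with the change of variables $\bQe\circ\bPe$ mapping parallel surfaces of $\gae$ into $\gamma$). The second piece uses that $\bP_d$ and $\bQe\circ\bPe$ are both projections $\Nd\to\gamma$ and differ pointwise by $\lesssim\varepsilon^2\lesssim\delta^2$ (as in Lemma \ref{L:properties-de} and the proof of Proposition \ref{P:H2-extension}); this lets me invoke Proposition \ref{p:mol_bulk} (Lipschitz perturbation) on a bi-Lipschitz map $\bL:\Nd\to\Nd$ chosen so that $\fe = f\circ\bL$, giving a higher-order $\delta^2$-contribution.

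The boundary term $I_3$ is the main obstacle and what I would spend most of the effort on. The clean algebraic input is $\bBe\nabla\wde = 0$: the identity $|\nabla\wde|^2=1$ implies $D^2\wde\,\nabla\wde=0$, hence $(\bI-\wde D^2\wde)^{-1}\nabla\wde=\nabla\wde$, and $\Pi_\varepsilon\nabla\wde=0$ finishes the assertion. Writing $\bBe\nabla d = \bBe(\nabla d-\nabla\wde)$ and invoking $\|\nabla d-\nabla\wde\|_{L_\infty(\Nd)}\lesssim\delta|d|_{W^2_\infty(\mathcal N)}$ from Lemma \ref{L:properties-Pe} gives the pointwise bound $|\bBe\nabla u\cdot\nabla d|\lesssim\delta|d|_{W^2_\infty(\mathcal N)}|\nabla u|$. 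The change-of-variables estimate $\|\nabla u\|_{L_2(\partial\Nd)}\lesssim|d|_{W^2_\infty(\mathcal N)}\|\wf\|_{L_2(\gamma)}$ (via \eqref{e:rel_dist_grad} for $\wde$ and Lemma \ref{L:norm-equiv} applied between $\gae$ and $\gamma$) together with the scaled trace inequality $\|\tv\|_{L_2(\partial\Nd)}\lesssim\delta^{-1/2}\|\tv\|_{H^1(\Nd)}$ only yield $|I_3|\lesssim\delta^{1/2}|d|_{W^2_\infty(\mathcal N)}^2\|\wf\|_{L_2(\gamma)}\|\tv\|_{H^1(\Nd)}$, which is a full power of $\delta$ short of the target. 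To recover the missing $\delta$, I would rewrite $I_3$ as a volume integral using the coarea formula with $d$,
\[
I_3 = -\int_{-\delta}^{\delta} \frac{d}{ds}\!\left(\int_{\gamma_s} \mue\bBe\nabla u\cdot\nabla d\,\tv\,d\sigma_s\right) ds,
\]
and exploit the fact that the integrand vanishes to first order in $s$ on $\gae$ (where $\wde=0$ so $\nabla d-\nabla\wde$ and $\wde D^2\wde$ both contribute cancellations) to gain the extra $\delta$. Alternatively, I would first regularize $\tv$ along normals to $\gae$ and estimate the resulting consistency error via Proposition \ref{p:mol_bulk}; either route should produce the missing factor, and identifying the cleaner of the two is the main technical work of the proof.
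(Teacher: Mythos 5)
Your decomposition of $\int_\Nd \nabla u\cdot\nabla\tv - \int_\Nd f\tv = I_1+I_2+I_3$ is exactly the paper's, and your treatment of $I_1$ (via $\nabla u=\Pi_\varepsilon\nabla u$ and Lemma~\ref{e:narrow:consistency_alg}) and $I_2$ (the split $\mue\fe-f=(\mue-1)\fe+(\fe-f)$ together with Proposition~\ref{p:mol_bulk}) matches what the paper does in Steps 1--2, with your $I_2$ split being somewhat more direct than the paper's three-way coarea decomposition but landing in the same place. You also correctly identify the key algebraic fact $\bBe\nabla\wde=0$; the paper exploits the same structure by writing $\nabla d^T\bBe\mue = \nabla d^T(\bBe\mue-\Pi_\eps) + \nabla(d-\wde)^T + \nabla\wde^T(1-\nabla d\cdot\nabla\wde)$ and applying Lemmas~\ref{e:narrow:consistency_alg} and~\ref{L:properties-Pe}, which gives the same pointwise $O(\delta)$ bound on $\nabla d^T\bBe\mue$ that you get.

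For $I_3$ your worry is legitimate, and your proposal is genuinely incomplete here. The paper's written proof asserts the trace inequality $\|\nabla u\|_{L_2(\partial\Nd)}\,\|\tv\|_{L_2(\partial\Nd)}\lesssim\|u\|_{H^2(\Nd)}\|\tv\|_{H^1(\Nd)}$ with a $\delta$-independent constant, but for a band of width $2\delta$ the scaled trace inequality is $\|w\|_{L_2(\partial\Nd)}\lesssim\delta^{-1/2}\|w\|_{L_2(\Nd)}+\delta^{1/2}\|\nabla w\|_{L_2(\Nd)}$ (testable with $w\equiv1$), and using this together with $\|u\|_{H^2(\Nd)}\lesssim\delta^{1/2}|d|_{W^2_\infty}\|\wf\|_{L_2(\gamma)}$ gives $\|\nabla u\|_{L_2(\partial\Nd)}\simeq\delta^{-1/2}\|u\|_{H^2(\Nd)}$, so the two trace factors together produce a $\delta^{-1}$ that the paper does not account for. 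This is exactly the full power of $\delta$ you found yourself short by, so you have in effect rediscovered a real gap in the paper's Step~3 rather than a flaw only in your own argument. However, neither of your two proposed fixes obviously closes it. The coarea/FTC rewrite $I_3=\int_{-\delta}^\delta\frac{d}{ds}(\int_{\gamma_s}\mue\bBe\nabla u\cdot\nabla d\,\tv)\,ds$ is simply the divergence theorem applied on $\Nd$, i.e., it undoes the very integration by parts that created $I_3$, and the hoped-for first-order vanishing of the integrand at $s=0$ is not present: Lemma~\ref{L:properties-Pe} gives $\|\nabla(d-\wde)\|_{L_\infty(\Nd)}\lesssim\delta|d|_{W^2_\infty(\mathcal{N})}$ \emph{uniformly} over $\Nd$, with no improvement as $d\to0$, so the coefficient is $O(\delta)$ everywhere rather than $O(|s|)$. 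The mollification-of-$\tv$ idea targets a Lipschitz-perturbation error, not a trace constant, and does not change the $\delta^{-1/2}$ scaling of $\|\tv\|_{L_2(\partial\Nd)}$. A more promising route is to convert $I_3$ back into a bulk integral using the normal field rather than $\bG$ itself: writing $\bH:=(\bG\cdot\nabla d)\nabla d$, one has $\bH\cdot\bnu_{\partial\Nd}=\bG\cdot\bnu_{\partial\Nd}$ on $\partial\Nd$, so $I_3 = \int_\Nd\div\bH\,\tv + \int_\Nd\bH\cdot\nabla\tv$ with $\|\bH\|_{L_2(\Nd)}\lesssim\delta|d|_{W^2_\infty}\|\nabla u\|_{L_2(\Nd)}\lesssim\delta^{3/2}|d|_{W^2_\infty}^2\|\wf\|_{L_2(\gamma)}$; the identity $D^2d\,\nabla d=0$ kills the worst term in $\div\bH$, and the remaining terms can be estimated using the symmetry of $\bBe$ and $\|\bBe\nabla d\|_{L_\infty}\lesssim\delta|d|_{W^2_\infty}$. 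That is the kind of calculation one would need to actually carry out; as it stands, your Step~3 (and the paper's terse version of it) is the unresolved point of the proof.
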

\begin{proof}
In view of \eqref{e:narrow_band_exact}, we deduce
$$
I(\tv) := \int_{\mathcal N(\delta)} \nabla u \cdot \nabla \tv - \int_{\mathcal N(\delta)} f \tv
= I_1(\tv) + I_2(\tv) + I_3(\tv)
\quad\forall \, \tv \in H^1(\Nd),
$$
where
\begin{align*}
 I_1(v) &:= \int_{\mathcal N(\delta)} (\bI - \bB_\varepsilon \mu_\varepsilon) \nabla u \cdot \nabla \tv,
  \\
  I_2(v) &:= \int_{\mathcal N(\delta)} (f_\varepsilon \mu_\varepsilon - f)  \tv,
  \\
  I_3(v) &:= \int_{\partial \mathcal N(\delta)} \bB_\varepsilon \nabla u \cdot \nabla d ~ \tv ~\mu_\varepsilon
\end{align*}
with $f_\varepsilon=\wf \circ \bQ_\varepsilon \circ \bP_\varepsilon$.
We now examine these three terms separately.

\medskip\noindent
{\it Step 1: Term $I_1(v)$.}
Since $u$ is constant along the direction $\nabla\wde$, we realize that
$\nabla u = \Pi_\eps\nabla u$ and
Lemma~\ref{e:narrow:consistency_alg} (properties of $\mue$ and $\bBe$) directly yields
$$
\big| I_1(\tv) \big| \lesssim \delta  |d|_{W^2_\infty(\mathcal{N})}  \| \nabla u \|_{L_2(\mathcal N(\delta))} \| \nabla \tv \|_{L_2(\mathcal N(\delta))}.
$$
\medskip\noindent
{\it Step 2: Term $I_2(\tv)$.} Let $-\delta<s<\delta$ and consider the isomorphisms
$$
\bR_s:=\bQ_\eps \circ \bP_{\eps} \circ \bQ_s: \gamma \to \gamma,
\quad
\bR^{-1}_s = \bP_d \circ \bQ_{\eps,s} \circ \bP_\eps : \gamma \to \gamma,
$$
where $\bQ_s:\gamma\to\gamma_s$ is the inverse of $\bP_d$ on $\gamma_s$ and $\bQ_{\eps,s}:\gae\to\gamma_s$ is the inverse of $\bP_\eps$ on $\gamma_s$. Using the coarea formula \eqref{e:coarea} together with $ |\nabla d|=1$ we write
\[
I_2(\tv) = \int_{-\delta}^\delta \int_{\gamma_s} (\fe\mue-f) \tv,
\]
and combining with Lemma \ref{L:area_ratio_distance} (relation between $q$ and
$q_\Gamma$), we obtain
\begin{align*}
  I_2(\tv) &= \int_{-\delta}^\delta \int_\gamma (\wf \circ \bRs)
  (\tv\circ\bQs) (\mue \circ \bQs) (\mu^{-1} \circ \bQs)
  \\
  & - \int_{-\delta}^\delta \int_\gamma \wf (\tv\circ\bQs) (\mu^{-1} \circ \bQs)
  = II_1(\tv) + II_2(\tv) + II_3(\tv),
\end{align*}
where
\begin{align*}
  II_1(\tv) &:= \int_{-\delta}^\delta \int_\gamma (\wf \circ \bRs) (\tv\circ\bQs)
  - \wf (\tv\circ\bQs),
  \\
  II_2(\tv) &:= \int_{-\delta}^\delta \int_\gamma \wf (\tv\circ\bQs) \big(1 - \mu^{-1}\circ\bQs  \big)
  \\
  II_3(\tv) &:= \int_{-\delta}^\delta \int_\gamma (\wf \circ \bRs) (\tv\circ\bQs)
  \Big( (\mue \circ \bQs) (\mu^{-1} \circ \bQs) - 1 \Big).
\end{align*}
We proceed to estimate each term separately. To manipulate $II_1(\tv)$ we first observe that
changing variables from $\gamma$ to $\gamma_s$, $\gamma_s$ to $\gae$,
and $\gae$ to $\gamma$ and invoking Lemma \ref{L:area_ratio_distance} (relation between $q$ and $q_\Gamma$) yields
\begin{equation}
\begin{split}
  \int_\gamma (\wf \circ \bRs) (\tv\circ\bQs) &=
  \int_\gamma (\wf \circ \bQe \circ \bPe \circ \bQs) (\tv\circ\bQs)
  \\ &=
  \int_{\gamma_s} (\wf \circ \bQe \circ \bPe) \tv \mu
  \\ &=
  \int_{\gae} (\wf \circ \bQe) (\tv \, \mu \, \mue^{-1}) \circ \bQes
  \\ &=
  \int_\gamma \wf \, (\tv \, \mu \, \mue^{-1} ) \circ \bQes\circ \bPe \, \mue,
\end{split}
\end{equation}
where
\[
\mu = \det \big(\bI-d  D^2 d \big),
\qquad \mu_{\eps} = \det \big(\bI-\wde  D^2 \wde \big) \, (\nabla d \cdot \nabla\wde).
\]
Therefore, denoting by $\mu_\bR$ the infinitesimal change in area induced by
$\bR_s^{-1}$ on $\gamma$
\[
\mu_\bR := (\mu \, \mue^{-1}) \circ \bQes\circ\bPe \, \mue,
\]
we infer again from the coarea formula \eqref{e:coarea} that
\begin{align*}
II_1(\tv) &= \int_{-\delta}^\delta \int_\gamma \wf \, \Big( (\tv \circ \bQes\circ \bPe) \mu_\bR - \tv\circ\bQs \Big)
\\ & =
\int_{-\delta}^\delta \int_{\gamma_s} \Big( f \, (\tv \circ \bQes\circ \bPe \circ \bP_d) \mu_\bR \, \mu - f  \tv \mu \Big) |\nabla d|
\\ &=
\int_\Nd f \, \big(\tv \circ \bL - \tv \big) \mu_\bR\mu
+ \int_\Nd f \, \tv \big(\mu_\bR -1 \big) \mu,
\end{align*}
where $\bL$ is defined on each $\gamma_s$ by $\bL|_{\gamma_s}  := \bQes\circ \bPe \circ \bP_d: \gamma_s \rightarrow \gamma_s$ .
Notice that the map $\bL :\Nd\to\Nd$ is a
bi-Lipschitz perturbation of the identity with perturbation constant
\[
r = \|\bI - \bL\|_{L_\infty(\Nd)} \lesssim \delta |d|_{W^2_\infty(\Nd)},
\]
because
\begin{align*}
\| \bI -\bP_d\|_{L_\infty(\mathcal N(\delta))} &+ \| \bI - \bQ_{s} \|_{L_\infty(\mathcal N(\delta))}\\& + 
\| \bI -\bP_\eps\|_{L_\infty(\mathcal N(\delta))} + \| \bI -\bQ_{\eps,s}\|_{L_\infty(\mathcal N(\delta))} \lesssim \delta  |d|_{W^2_\infty(\Nd)}.
\end{align*}
Moreover, since $\mu_\bR - 1 = (\mu\mue^{-1}-1) \circ \bQes\circ\bPe \, \mue +
(\mue-1)$, \eqref{nb:mu_estim} and \eqref{nb:mu_estim_d} imply
\[
\| \mu_\bR -1 \|_{L_\infty(\gamma)} \lesssim \delta  |d|_{W^2_\infty(\Nd)}.
\]
These estimates in conjunction in Proposition \ref{p:mol_bulk} (Lipschitz perturbation)
give
\[
\big| II_1(\tv) \big| 
\lesssim \delta  |d|_{W^2_\infty(\Nd)} \|f\|_{L_2(\Nd)} \|\tv\|_{H^1(\Nd)};
\]
we observe that to apply Proposition \ref{p:mol_bulk} we take
$\Omega_1=\Omega_2=\Nd$,
which are Lipschitz domains, and extend $\tv$ to $\Omega=\mathcal{N}$
so that $\|\tv\|_{H^1(\mathcal{N})} \lesssim \|\tv\|_{H^1(\Nd)}$.

Upon utilizing the coarea formula \eqref{e:coarea} once more,  we obtain for $II_2(v)$
\[
II_2(\tv) = \int_\Nd f \tv (1-\mu^{-1})\mu,
\]
so that
\eqref{nb:mu_estim_d} yields
\[
\big| II_2(\tv)  \big| \lesssim
\delta  |d|_{W^2_\infty(\Nd)} \|f\|_{L_2(\Nd)} \|\tv\|_{H^1(\Nd)}.
\]
We proceed similarly for $II_3(v)$ but using in addition that
$$
\int_{-\delta}^\delta \|  \wf \circ \bRs \|_{L_2(\gamma)}^2 ds
\lesssim \int_{-\delta}^\delta \| \wf \|_{L_2(\gamma)} ds \lesssim
\|f\|_{L_2(\Nd)}^2,
$$
and
$$
\|  (\mue \circ \bQs) (\mu^{-1} \circ \bQs) - 1\|_{L_\infty(\gamma)} \lesssim \delta  |d|_{W^2_\infty(\Nd)}
$$
thanks to \eqref{nb:mu_estim} and \eqref{nb:mu_estim_d} again. We thus obtain for
$II_3(v)$ an estimate similar to those for $II_1(\tv)$ and $II_2(\tv)$, whence
\[
\big| I_2(\tv)  \big| \lesssim
\delta  |d|_{W^2_\infty(\Nd)} \|f\|_{L_2(\Nd)} \|\tv\|_{H^1(\Nd)}.
\]

\noindent
 {\it Step 3: Term $I_3(\tv)$.} In view of $\Pi_\eps=\bI-\nabla\wde\otimes\nabla\wde$,
 we first note that
\[
\nabla d^T \bB_\varepsilon \mu_\eps = \nabla d^T \big( \bB_\varepsilon \mue - \Pi_\eps\big)
+ \nabla(d-\wde)^T + \nabla\wde^T \big( 1 - \nabla d \cdot \nabla\wde  \big).
\]
Invoking Lemma~\ref{e:narrow:consistency_alg} (properties of $\mue$ and $\bBe$)
and then Lemma~\ref{L:properties-Pe} (properties of $\wde$) yields
$$
\| \nabla d^T \bB_\varepsilon \mu_\eps \|_{L_\infty(\mathcal N(\delta))} \lesssim
\delta |d|_{W^2_\infty(\Nd)}.
$$
It remains to use trace inequalities  to obtain
\begin{align*}
I_3(\tv) &\lesssim \delta |d|_{W^2_\infty(\Nd)} \| \nabla u \|_{L_2(\partial N(\delta))} \| \tv \|_{L_2(\partial N(\delta))}
  \\&
\lesssim \delta |d|_{W^2_\infty(\Nd)} \| u \|_{H^2(\mathcal N(\delta))} \| \tv \|_{H^1(\mathcal N(\delta))}.
\end{align*}
\medskip\noindent
 {\it Step 4: Normal extension.} Gathering the above estimates we find that
$$
I(\tv) \lesssim \delta |d|_{W^2_\infty(\Nd)} \left(\| u \|_{H^2(\mathcal N(\delta))}+ \| f \|_{L_2(\mathcal N(\delta))}\right) \| \tv \|_{H^1(\mathcal N(\delta))}.
$$
We finally deduce 
$\|f\|_{L_2(\Nd)} \lesssim \delta^{\frac12} |d|_{W^2_\infty(\Nd)}\|\wf\|_{L_2(\gamma)}$
because $f$ is the normal extension of $\wf$ to $\gamma$, and
\[
\|\nabla u\|_{L_2(\Nd)} \lesssim \delta^{\frac12} |d|_{W^2_\infty(\mathcal{N})}
\|\wf\|_{L_2(\gamma)},
\]
upon combining Proposition \ref{P:H2-extension} ($H^2$ extension) with
Lemma \ref{L:regularity} (regularity). This leads to the desired estimate.
\end{proof}

\subsection{Properties of the Narrow Band FEM}

To begin with, we recall the definition of $\bM_{h}: \mathcal N_h(\delta) \rightarrow \mathcal N(\delta)$, that accounts for the mismatch between $\mathcal N_h(\delta)$ and $\mathcal N(\delta)$:
\begin{equation}\label{e:narrow:rewrite_M}
\bM_{h}(\bx) = \bP_d(\bx) + d_h(\bx) \nabla d(\bx) = \bx +(d_h(\bx)-d(\bx)) \nabla d(\bx).
\end{equation}
Note that if $\bx\in\Nhd\subset\mathcal{N}$ then
$\bP_d(\bM_h(\bx))=\bP_d(\bx)$, because this is what happens with all points in the line
$s\mapsto\bx+s\nabla d(\bx)$ within $\mathcal{N}$. Since $|d_h(\bx)|<\delta$,
\[
|d(\bM_h(\bx))| = | \bM_h(\bx) - \bP_d(\bM_h(\bx))| = |d_h(\bx)| \, |\nabla d(\bx)|
< \delta
\]
implies that $\bM_h(\bx)\in\Nd$ and the map $M_h$ is well defined.
Further properties of $\bM_h$ are discussed next.
Before doing so, we mention that the results provided below are not optimal (to avoid technicalities) but are sufficient for our analysis.
We refer to \cite{MR3249369,MR3471100} for higher order estimates.

\begin{lemma}[properties of $\bM_h$]\label{l:narrow:prop_Mh}
Let $\gamma$ be of class $C^2$ and $h$ be sufficiently small.
Then, the map $\bM_h: \mathcal N_h(\delta) \rightarrow \mathcal N(\delta)$ is bi-Lipschitz with
\begin{equation} \label{e:narrow:lipschitz}
\| D\bM_h \|_{L_\infty(\mathcal N_h(\delta))} + \| D\bM_h^{-1} \|_{L_\infty(\mathcal N(\delta))} \leq L
\end{equation}
for some constant $L$ independent of $h$ and $\delta$.
Moreover, there holds
\begin{equation}\label{e:narrow:estim_Mh}
\| \bI - \bM_h \|_{L_\infty(\mathcal N_h(\delta))} + h  \| \bI - D\bM_h \|_{L_\infty(\mathcal N_h(\delta))} \lesssim h^2 | d |_{W^2_\infty(\mathcal N)}
\end{equation}
and
\begin{equation}\label{e:narrow:det}
\| \det(D \bM_h) - 1 \|_{L_\infty(\mathcal N_h(\delta))}  \lesssim h | d |_{W^2_\infty(\mathcal N)}.
\end{equation}
\end{lemma}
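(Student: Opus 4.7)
The plan is to prove the three estimates in order, starting from the explicit representation of $\bM_h$ in \eqref{e:narrow:rewrite_M}, which makes $\bM_h$ a small perturbation of the identity that can be handled by direct calculation combined with the interpolation estimate \eqref{e:narrow:interp}.

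First I would establish \eqref{e:narrow:estim_Mh} by direct differentiation. From \eqref{e:narrow:rewrite_M}, we have $\bM_h(\bx) - \bx = (d_h(\bx) - d(\bx)) \nabla d(\bx)$, and applying the product rule yields
\[
D\bM_h(\bx) - \bI = \nabla d(\bx) \otimes \nabla(d_h - d)(\bx) + (d_h - d)(\bx) \, D^2 d(\bx).
\]
Since $|\nabla d| = 1$ in $\mathcal N$ and $\|D^2 d\|_{L_\infty(\mathcal N)} \le |d|_{W^2_\infty(\mathcal N)}$, the interpolation estimate \eqref{e:narrow:interp} immediately gives $\|\bI - \bM_h\|_{L_\infty(\mathcal N_h(\delta))} \lesssim h^2 |d|_{W^2_\infty(\mathcal N)}$ and $\|\bI - D\bM_h\|_{L_\infty(\mathcal N_h(\delta))} \lesssim h |d|_{W^2_\infty(\mathcal N)}$, as asserted.

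Next I would turn to the bi-Lipschitz property \eqref{e:narrow:lipschitz}. The bound $\|D\bM_h - \bI\|_{L_\infty} \lesssim h|d|_{W^2_\infty(\mathcal N)}$ just proved ensures that, for $h$ small enough so that this quantity is at most $1/2$, the matrix $D\bM_h(\bx)$ is invertible at every $\bx \in \Nhd$ with $\|D\bM_h\|_{L_\infty}, \|(D\bM_h)^{-1}\|_{L_\infty} \le 2$, giving the pointwise derivative bound \eqref{e:narrow:lipschitz}. To upgrade this to a global bi-Lipschitz statement I need injectivity and surjectivity onto $\Nd$. Injectivity follows from a Banach fixed-point argument: the equation $\bM_h(\bx) = \by$ rewrites as $\bx = \by - (d_h(\bx) - d(\bx))\nabla d(\bx)$, and the right-hand side is a contraction in $\bx$ with constant $\lesssim h$. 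To identify the image with $\Nd$, I would exploit the geometric identity
\[
d\bigl(\bM_h(\bx)\bigr) = d_h(\bx) \qquad \forall \bx \in \Nhd,
\]
which follows from $\bM_h(\bx) = \bP_d(\bx) + d_h(\bx) \nabla d(\bx)$ and the fact that points on the normal ray through $\bP_d(\bx)$ at signed height $s$ have signed distance $s$ to $\gamma$ (using $|d_h| < \delta \le 1/(2K_\infty)$). This identity shows directly that $\bM_h(\Nhd) \subset \Nd$; conversely, given $\by \in \Nd$, the fixed point $\bx$ produced above satisfies $d_h(\bx) = d(\by)$, so $|d_h(\bx)| < \delta$ and $\bx \in \Nhd$.

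Finally, for the determinant estimate \eqref{e:narrow:det}, I would write $D\bM_h = \bI + \bA$ with $\|\bA\|_{L_\infty(\Nhd)} \lesssim h |d|_{W^2_\infty(\mathcal N)}$ from the previous step, and expand $\det(\bI + \bA) = 1 + \operatorname{tr}(\bA) + R(\bA)$ where the remainder $R(\bA)$ is a sum of minors of $\bA$ of order at least two and therefore satisfies $|R(\bA)| \lesssim \|\bA\|^2$. For $h$ small this yields $|\det(D\bM_h) - 1| \lesssim \|\bA\|_{L_\infty} \lesssim h |d|_{W^2_\infty(\mathcal N)}$. No step here is especially subtle; the only mild technicality is the surjectivity argument in the second paragraph, which is where I would spend the most care to make sure the image of $\bM_h$ is identified with the whole of $\Nd$ and not just a subset.
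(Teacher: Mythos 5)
Your proof is correct and follows essentially the same route as the paper: compute $D\bM_h - \bI$ directly from \eqref{e:narrow:rewrite_M}, bound it via the interpolation estimate \eqref{e:narrow:interp}, and deduce the remaining claims. Two small differences are worth noting. For the bi-Lipschitz statement, the paper simply asserts that \eqref{e:narrow:lipschitz} follows from \eqref{e:narrow:estim_Mh}; you fill in the gap with a contraction-mapping argument for injectivity and, crucially, the geometric identity $d(\bM_h(\bx)) = d_h(\bx)$ (equivalently $\bP_d(\bM_h(\bx)) = \bP_d(\bx)$, already observed in the preamble to the lemma) to identify the image with $\Nd$ exactly. Since $\Nhd$ is not convex, the pointwise bound $\|D\bM_h - \bI\| < 1/2$ alone does not give global injectivity, so your explicit argument is genuinely needed; an equivalent alternative would be to use the same identity to show $\bM_h$ is fiber-preserving along normal lines and that $\nabla d_h \cdot \nabla d \ge 1/2$, so $d_h$ is strictly monotone on each fiber. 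For the determinant estimate, you expand $\det(\bI + \bA) = 1 + \operatorname{tr}\bA + R(\bA)$ with $|R(\bA)| \lesssim \|\bA\|^2$, while the paper uses the Jacobi formula $D\det\bA = (\det\bA)\bA^{-1}$ and a first-order Taylor expansion of a scalar auxiliary $\psi(t)$; both are elementary and yield the same bound, and yours is if anything a touch more direct. (Minor typographical point: the correct derivative is $D\bM_h = \bI + \nabla d \otimes \nabla(d_h - d) + (d_h - d)D^2 d$, as you have it; the paper writes the rank-one term with the factors transposed, which does not affect the operator-norm estimate.)
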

\begin{proof}
From the definition~\eqref{e:narrow:rewrite_M} of $\bM_h$ and the interpolation estimate~\eqref{e:narrow:interp}, we find that
$$
| \bx - \bM_h(\bx) | \leq  | d(\bx) - d_h(\bx) | \leq c_I   h^2 | d |_{W^2_\infty(\mathcal N)}.
$$
Furthermore, we compute
$$
D\bM_h(\bx) = \bI + \nabla (d_h(\bx)-d(\bx)) \otimes \nabla d(\bx)
+ (d_h(\bx) - d(\bx)) D^2d(\bx)
$$
to deduce
$$
\| \bI - D\bM_h \|_{L_\infty(\mathcal N_h(\delta))} \leq c_I h | d |_{W^2_\infty(\mathcal N)}
+ c_I h^2 | d |_{W^2_\infty(\mathcal N)}^2.
$$
The above two estimates yield~\eqref{e:narrow:estim_Mh} because $c_Ih| d |_{W^2_\infty(\mathcal N)}\le\frac12$ for $h$ sufficiently small. Exploiting \eqref{e:narrow:estim_Mh}, we also deduce that $\bM_h$ is invertible, bi-Lipschitz and that~\eqref{e:narrow:lipschitz} holds for $h$ sufficiently small.

We are left to show~\eqref{e:narrow:det}.
This follows from $D\det\bA = (\det \bA) \bA^{-1}$ for any invertible matrix
$\bA$ and the first order Taylor expansion of 
$$
\psi(t) := \det\left(\bI -t \left( \nabla (d(\bx)-d_h(\bx)) \otimes \nabla d(\bx) - (d(\bx) - d_h(\bx)) D^2d(\bx)\right)\right)
$$ 
about $t=0$ and evaluated at $t=1$, along with \eqref{e:narrow:estim_Mh} and
the fact that $\psi(1)=\det(D\bM_h(\bx))$. This concludes the proof.
\end{proof}

The previous lemma is instrumental to estimate the consistency error 
\begin{equation}\label{e:narrow:Ih}
  E_h(V):=\int_{\mathcal N_h(\delta)} \nabla u \cdot \nabla V - \int_{\mathcal N_h(\delta)} F V \quad\forall \, V \in \mathbb V(\mathcal T_\delta),
\end{equation}
due to the approximation of the narrow band $\mathcal N(\delta)$ by $\mathcal N_h(\delta)$ and to the use of $F$ in the discrete formulation~\eqref{e:discrete_nb}. Since $\Nd\subset\mathcal{N}$ is of class $C^2$, we assume without loss of generality that the function $u:\Nd\to\mathbb{R}$ constructed in Proposition \ref{P:H2-extension} ($H^2$ extension) extends to $\mathcal{N}$ and satisfies $\|u\|_{H^2(\mathcal{N})} \lesssim \|u\|_{H^2(\Nd)}$. In light of $\Nhd\subset\mathcal{N}$, the consistency error \eqref{e:narrow:Ih} is well defined.

\begin{lemma}[narrow band geometric consistency]\label{l:narrow:consistency_Ih}
Let $\gamma$ be of class $C^2$ and $\delta$ and $h$ satisfy the structural
condition \eqref{delta-h} and be sufficiently small. Let $\wf \in L_{2,\#}(\gamma)$,
$\wu \in H^2(\gamma)$ solve~\eqref{e:weak}, and $u\in H^2(\Nd)$ be the $H^2$ extension of $\wu$ given by~\eqref{e:non_const_ext} with $C\delta \le \eps \le \frac{\delta}{2}$. Let also $F$ be given by \eqref{e:narrow:F}.
Then the consistency error \eqref{e:narrow:Ih} satisfies for all $V \in \mathbb V(\mathcal T_\delta)$
$$
\left| \int_{\mathcal N_h(\delta)} \nabla u  \cdot \nabla V - \int_{\mathcal N_h(\delta)} F V \,\right| \lesssim 
\delta^{3/2} |d|_{W^2_\infty(\mathcal N)} \| \wf \|_{L_2(\gamma)} \|  V \|_{H^1(\mathcal N_h(\delta))}.
$$
\end{lemma}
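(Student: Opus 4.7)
The plan is to reduce the discrete consistency error $E_h(V)$ to the continuous consistency estimate of Lemma \ref{t:narrow:geom_consistency} by transporting integrals from $\mathcal{N}_h(\delta)$ onto $\mathcal{N}(\delta)$ via the bi-Lipschitz map $\bM_h$ of Lemma \ref{l:narrow:prop_Mh}, and then bounding the resulting perturbation terms using the closeness of $\bM_h$ to the identity. Since $F$ has vanishing mean on $\mathcal{N}_h(\delta)$, I may first replace $V$ by $V-\overline{V}$ and assume $V$ has vanishing mean as well, so that $\int_{\mathcal{N}_h(\delta)} FV = \int_{\mathcal{N}_h(\delta)} (f\circ \bM_h)\, V$ by definition \eqref{e:narrow:F}.

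Next I introduce the pushforward $\widetilde V := V\circ \bM_h^{-1}\in H^1(\mathcal{N}(\delta))$ and change variables $y=\bM_h(x)$. Using the chain rule $\nabla V(x) = (D\bM_h(x))^T\nabla\widetilde V(\bM_h(x))$, this yields
$$
\int_{\mathcal{N}_h(\delta)}\!\!\!\nabla u\cdot\nabla V\,dx = \int_{\mathcal{N}(\delta)}\!\!\bB_h(y)\,\nabla u(\bM_h^{-1}(y))\cdot \nabla\widetilde V(y)\, J_h(y)\,dy,\qquad \int_{\mathcal{N}_h(\delta)}\!\!(f\circ\bM_h)V = \int_{\mathcal{N}(\delta)} f\,\widetilde V\, J_h,
$$
where $\bB_h(y):=D\bM_h(\bM_h^{-1}(y))$ and $J_h(y):=|\det D\bM_h^{-1}(y)|$. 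Adding and subtracting the continuous bilinear form evaluated at $\widetilde V$ gives the decomposition $E_h(V)=I(\widetilde V)+R_1+R_2$, where $I(\widetilde V)$ is precisely the quantity controlled in Lemma \ref{t:narrow:geom_consistency} and
$$
R_1 = \int_{\mathcal{N}(\delta)}\bigl[\bB_h J_h \,\nabla u\circ\bM_h^{-1} - \nabla u\bigr]\cdot \nabla\widetilde V,\qquad R_2 = -\int_{\mathcal{N}(\delta)} f\,\widetilde V\,(J_h-1).
$$

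I will bound the continuous part by $\delta^{3/2}|d|_{W^2_\infty}^2 \|\widetilde f\|_{L_2(\gamma)}\|\widetilde V\|_{H^1(\mathcal{N}(\delta))}$ thanks to Lemma \ref{t:narrow:geom_consistency}, and then use $\|\widetilde V\|_{H^1(\mathcal{N}(\delta))}\lesssim \|V\|_{H^1(\mathcal{N}_h(\delta))}$ from the bi-Lipschitz bound \eqref{e:narrow:lipschitz}. For $R_2$, the bound $\|J_h-1\|_{L_\infty}\lesssim h\,|d|_{W^2_\infty}$ from \eqref{e:narrow:det} combined with $\|f\|_{L_2(\mathcal{N}(\delta))}\lesssim \delta^{1/2}|d|_{W^2_\infty}\|\widetilde f\|_{L_2(\gamma)}$ (since $f$ is the normal extension of $\widetilde f$) yields $|R_2|\lesssim h\delta^{1/2}|d|_{W^2_\infty}^2\|\widetilde f\|_{L_2(\gamma)}\|V\|_{H^1(\mathcal{N}_h(\delta))}$. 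For $R_1$, I will split the integrand as $\bB_h J_h[\nabla u\circ\bM_h^{-1}-\nabla u] + (\bB_h J_h-\bI)\nabla u$; the first piece is the hard one since $u\in H^2$ but is not $W^1_\infty$, and here Proposition \ref{p:mol_bulk} (Lipschitz perturbation) applied with $\Omega_1=\Omega_2=\mathcal{N}(\delta)$, $\bL=\bM_h^{-1}$, $g=\partial_i u\in H^1(\mathcal{N})$, and perturbation radius $r\lesssim h^2|d|_{W^2_\infty}$ from \eqref{e:narrow:estim_Mh} gives
$$
\|\nabla u-\nabla u\circ \bM_h^{-1}\|_{L_2(\mathcal{N}(\delta))}\lesssim h^2|d|_{W^2_\infty}\|u\|_{H^2(\mathcal{N})}\lesssim h^2\delta^{1/2}|d|_{W^2_\infty}^2\|\widetilde f\|_{L_2(\gamma)},
$$
by Proposition \ref{P:H2-extension} and Lemma \ref{L:regularity}. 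The second piece is controlled by $\|\bB_h J_h-\bI\|_{L_\infty}\lesssim h|d|_{W^2_\infty}$ and $\|\nabla u\|_{L_2(\mathcal{N}(\delta))}\lesssim \delta^{1/2}|d|_{W^2_\infty}\|\widetilde f\|_{L_2(\gamma)}$. Collecting terms and invoking the structural assumption $h\simeq\delta$ from \eqref{delta-h} turns every geometric error of order $h\delta^{1/2}$ into $\delta^{3/2}$, matching the asserted bound. The main technical obstacle is precisely the estimate for $\|\nabla u-\nabla u\circ \bM_h^{-1}\|_{L_2}$, which cannot be obtained by a Taylor expansion because $u$ is only $H^2$; the regularization argument embodied in Proposition \ref{p:mol_bulk} is the key device that bridges this regularity gap.
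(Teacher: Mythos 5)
Your proof is correct and follows essentially the same strategy as the paper: transport the discrete bilinear form to $\Nd$ via the bi-Lipschitz map $\bM_h$, reduce to the continuous consistency estimate of Lemma~\ref{t:narrow:geom_consistency}, and control the remaining perturbation terms via the bounds $\|\bM_h-\bI\|_\infty\lesssim h^2|d|_{W_\infty^2}$, $\|D\bM_h-\bI\|_\infty\lesssim h|d|_{W_\infty^2}$, $\|\det D\bM_h-1\|_\infty\lesssim h|d|_{W_\infty^2}$ from Lemma~\ref{l:narrow:prop_Mh}, with Proposition~\ref{p:mol_bulk} supplying the key $L_2$-estimate for the shift $\nabla u-\nabla u\circ\bM_h^{-1}$ that cannot be obtained by Taylor expansion since $u$ is only $H^2$; your $R_1$ is precisely the paper's $I_1+I_2+I_3$ in a single change-of-variables package, and your $R_2$ is their $II_1$. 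The one genuine deviation is your treatment of the mean-correction constant in the definition~\eqref{e:narrow:F} of $F$: by first replacing $V$ with $V-\overline V$ — legitimate since $F$ has vanishing mean over $\Nhd$, so $E_h(V)=E_h(V-\overline V)$, and $\|V-\overline V\|_{H^1(\Nhd)}\lesssim\|V\|_{H^1(\Nhd)}$ — the constant in $F$ integrates away and you never need to estimate it. The paper instead keeps $V$ general and must bound the term $II_2(V)=-\frac{1}{|\Nhd|}\int_{\Nhd}f\circ\bM_h\int_{\Nhd}V$ by a separate argument showing $\big|\int_\Nd f\big|\lesssim\delta^2|d|_{W_\infty^2(\mathcal N)}\|\wf\|_{L_2(\gamma)}$, which relies crucially on the vanishing mean of $\wf$ over $\gamma$ and the coarea formula together with Remark~\ref{r:narrow:mu}. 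Your normalization of $V$ is cleaner and sidesteps that estimate entirely; the paper's route explicitly exhibits the superconvergent smallness of $\int_\Nd f$, which is useful information in its own right. Both arguments reach the asserted bound once the structural relation $h\simeq\delta$ is invoked.
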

\begin{proof}
We compare the consistency errors \eqref{e:narrow:Ih} and \eqref{e:narrow:geom_consistency} term by term.

\medskip\noindent
{\it Step 1: Dirichlet integrals.}
Utilizing the change of variables induced by the map $\bM_h:\Nhd\to\Nd$ we end up with
\begin{align*}
  \int_{\Nhd} \nabla u  \cdot \nabla V -
  \int_{\Nd} \nabla u  \cdot \nabla (V\circ\bM_h^{-1}) = I_1(V) + I_2(V) + I_3(V),
\end{align*}    
where
\begin{align*}
I_1(V) &:= \int_{\Nhd} \big(\nabla u - \nabla u\circ\bM_h \big)  \cdot \nabla V
  \, \det\big( D\bM_h  \big)
\\
I_2(V) &:= \int_{\Nhd} \nabla u \cdot \nabla V \, \big( 1- \det(D\bM_h) \big)\\
I_3(V) &:= \int_{\Nd} \nabla u \cdot \left( \nabla V \circ \bM_h^{-1} - \nabla ( V \circ \bM_h^{-1})\right).
\end{align*}
In view of Proposition \ref{p:mol_bulk} (Lipschitz perturbation) and Lemma \ref{l:narrow:prop_Mh} (properties of $\bM_h$), we infer that
\[
\big| I_1(V) \big| , \, \big| I_2(V) \big|
\lesssim h |d|_{W^2_\infty(\mathcal{N})} \|u\|_{H^2(\Nd)} \|V\|_{H^1(\Nhd)}.
\]
Similarly for $I_3(V)$, we observe that
\[
\nabla V \circ \bM_h^{-1} - \nabla \big( V \circ \bM_h^{-1} \big)
= \big(\bI - D \bM_h^{-1} \big) \nabla V \circ \bM_h^{-1},
\]
so that employing Lemma \ref{l:narrow:prop_Mh} (properties of $\bM_h$) 
yields
\[
\big| I_3(V)|
\lesssim h |d|_{W^2_\infty(\mathcal{N})}
\|\nabla u\|_{L_2(\Nd)} \|\nabla V\|_{L_2(\Nhd)}.
\]
Recalling the structural assumption $C_1 h\leq \delta$,  Lemma \ref{L:regularity} (regularity) as well as $\|f\|_{L_2(\Nd)} \lesssim \delta^{\frac12} \|\wf\|_{L_2(\gamma)}$, the estimates for $I_1(V)$, $I_2(V)$ and $I_3(V)$ guarantee
$$
\left| \int_{\Nhd} \nabla u  \cdot \nabla V -
  \int_{\Nd} \nabla u  \cdot \nabla (V\circ\bM_h^{-1}) \right| \lesssim \delta^{3/2} |d|_{W^2_\infty(\mathcal N)} \| \wf \|_{L_2(\gamma)} \|  V \|_{H^1(\mathcal N_h(\delta))}.
$$

\noindent  
{\it Step 2: Forcing.}
Recalling \eqref{e:narrow:F}, we rewrite the forcing term in \eqref{e:narrow:Ih} as 
\begin{align*}
\int_\Nhd F \, V - \int_\Nd f \, V\circ\bM_h^{-1} = II_1(V) + II_2(V),
\end{align*}
where
\begin{align*}
II_1(V) & := \int_\Nd f\, V \circ \bM_h^{-1} \big(\det(D \bM_h)^{-1}-1 \big), 
\\  
II_2(V) & := -\frac{1}{|\mathcal{N}_h(\delta)|} \int_{\mathcal{N}_h(\delta)}  f \circ \bM_h \int_{\mathcal{N}_h(\delta)} V. 
\end{align*}
We make use of \eqref{e:narrow:lipschitz} and \eqref{e:narrow:det}, along with a change of variables, to compute
$$\begin{aligned}
II_1(V) &  \lesssim h|d|_{W_\infty^2(\mathcal{N})} \|f\|_{L_2(\Nd)} \|V \circ \bM_h^{-1} \|_{L_2(\Nd)}
  \\ & \lesssim h|d|_{W_\infty^2(\mathcal{N})} \|f\|_{L_2(\Nd)} \|V\|_{H^1(\Nd)}.
  \end{aligned}
  $$
Since $|\mathcal{N}_h(\delta)| \simeq |\Nd| \simeq \delta$, the first equivalence resulting from \eqref{e:narrow:lipschitz} and the second from the coarea formula, using \eqref{e:narrow:det} again we obtain
  $$
  \begin{aligned}
  II_2(V ) &\lesssim \delta^{-1/2} \|V\|_{L_2(\mathcal{N}_h(\delta))} \Big | \int_{\mathcal{N}_h(\delta)} f \circ \bM_h \big({\rm det} (D\bM_h)-1 \big) - \int_{\Nd} f \Big | 
  \\ & \lesssim \|V\|_{L_2(\mathcal{N}_h(\delta))} \Big ( h|d|_{W_\infty^2(\mathcal{N})} \|f\|_{L_2(\Nd)} + \delta^{-1/2}\Big |\int_{\Nd} f \Big| \Big) .
  \end{aligned}
  $$
To estimate the rightmost term we exploit the fact that $\wf$ has a vanishing
mean on $\gamma$. Using the coarea formula \eqref{e:coarea}, we see that
\[\begin{aligned}
\int_\Nd f = \int_{-\delta}^\delta \int_{\gamma_s} f &= \int_{-\delta}^\delta \int_\gamma \wf
\mu_s = \int_{-\delta}^\delta \int_\gamma \wf \big(\mu_s - 1  \big) 
\\ & \le 2 \delta \|\mu_s-1\|_{L_\infty(\gamma \times [-\delta, \delta])} \|\wf\|_{L_2(\gamma)},
\end{aligned}
\]
where $\mu_s = \det \big( \bI - d D^2 d \big)^{-1} \circ \bQs$ according to
Lemma \ref{L:area_ratio_distance} (relation between $q$ and $q_\Gamma$).
Remark \ref{r:narrow:mu} (estimate of $\mu$) in turn leads to
\[
\Big| \int_\Nd f \, \Big| \lesssim \delta^{2} |d|_{W^2_\infty(\mathcal{N})} \|\wf\|_{L_2(\gamma)}.  
\]
%
%
Consequently, collecting the previous estimates and using the structural assumption $C_1h \le \delta$ again readily gives
\[
\big| II_2(V) \big| \lesssim \delta^{\frac32} |d|_{W^2_\infty(\mathcal{N})}
\|\wf\|_{L_2(\gamma)} \|V\|_{L_2(\Nhd)}.
\]
%

Gathering the bounds for $II_i(V)$ for $i=1,2$, we discover
$$
\left| \int_\Nhd F \, V - \int_\Nd f \, V\circ\bM_h^{-1}\right| \lesssim  \delta^{\frac32}
|d|_{W^2_\infty(\mathcal{N})} \|\wf\|_{L_2(\gamma)} \|V\|_{H^1(\Nhd)}.
$$
{\it Step 3: Conclusion.}
The assertion follows from the bounds derived in Steps~1 and~2 together with the estimate \eqref{e:narrow:geom_consistency} of Lemma \ref{t:narrow:geom_consistency} (narrow band PDE consistency) with $\tv =V\circ \bM_h^{-1} \in H^1(\mathcal N(\delta))$. The proof is complete.
\end{proof}

\subsection{A Priori Error Estimates}

All of the ingredients for a-priori error analysis in the narrow band norm are now in place. We recall that the extension $u:\Nd\to\mathbb{R}$ constructed in Proposition \ref{P:H2-extension} ($H^2$ extension) is further extended to $\mathcal{N}$ and satisfies
\begin{equation}\label{extension-N}
\|u\|_{H^2(\mathcal{N})} \lesssim \|u\|_{H^2(\Nd)} \lesssim \delta^{\frac12}
|d|_{W^2_\infty(\mathcal{N})}  \|\wu\|_{H^2(\gamma)}.
\end{equation}
\begin{theorem}[a-priori error estimate]\label{t:narrow:aprior}
Let $\gamma$ be of class $C^2$ and $\delta$ and $h$ satisfy the structural condition
\eqref{delta-h} and be sufficiently small.
Let  $\wu \in H^1_\#(\gamma)$ be defined by~\eqref{e:weak} with $\wf \in L_{2,\#}(\gamma)$ and $u$ be its extension given by~\eqref{e:non_const_ext} with $C\delta \le \eps \le \frac{\delta}{2}$.
Let $U \in \mathbb V_\#(\mathcal T_\delta)$ be the finite element solution to~\eqref{e:discrete_nb} with $F$ given in~\eqref{e:narrow:F}.
Then, the following error estimate is valid
\begin{equation*}\label{e:narrow:best_apprx}
\| \nabla (u - U) \|_{L_2(\mathcal N_h(\delta))} \lesssim
\inf_{V \in \mathbb V(\mathcal T_\delta)} \| \nabla (u - V)\|_{L_2(\mathcal N_h(\delta))} + h^{\frac32} |d|_{W^2_\infty(\mathcal{N})} \| \wf \|_{L_2(\gamma)},
\end{equation*}
with hidden constant independent of $h$ and $\delta$.
\end{theorem}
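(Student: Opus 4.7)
The plan is to run a Cea-type argument that combines the discrete Galerkin relation \eqref{e:discrete_nb} with the geometric consistency bound furnished by Lemma \ref{l:narrow:consistency_Ih} (narrow band geometric consistency). Let $V \in \mathbb V(\mathcal T_\delta)$ be arbitrary. Setting $W := U - V \in \mathbb V(\mathcal T_\delta)$, I would first write the usual decomposition
\[
\| \nabla(u-U) \|_{L_2(\Nhd)}^2
= \int_{\Nhd} \nabla(u-U)\cdot\nabla(u-V)
- \int_{\Nhd} \nabla(u-U)\cdot\nabla W.
\]
The first term is bounded via Cauchy--Schwarz by $\|\nabla(u-V)\|_{L_2(\Nhd)}\,\|\nabla(u-U)\|_{L_2(\Nhd)}$. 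For the second term, using the discrete equation \eqref{e:discrete_nb} (valid for all $W\in\mathbb V(\mathcal T_\delta)$ because $F$ has vanishing mean), we get
\[
\int_{\Nhd} \nabla(u-U)\cdot\nabla W
= \int_{\Nhd}\nabla u\cdot\nabla W - \int_{\Nhd} F\, W = E_h(W),
\]
which is exactly the consistency functional appearing in Lemma \ref{l:narrow:consistency_Ih}.

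The next step is to apply that lemma, but its right-hand side features the full $H^1(\Nhd)$-norm of $W$ rather than just $\|\nabla W\|_{L_2(\Nhd)}$. To reduce to the seminorm, I would replace $W$ by $W-\overline{W}$, where $\overline{W}$ is the mean of $W$ over $\Nhd$. Since $F$ has vanishing mean and gradients annihilate constants, $E_h(W)=E_h(W-\overline{W})$. A Poincar\'e--Friedrichs inequality on the Lipschitz domain $\Nhd$ for zero-mean functions --- whose constant can be shown to be uniform in $h$ and $\delta$ because $\Nhd$ is a Hausdorff-$O(h^2)$ perturbation of the fixed tubular domain $\Nd$ via Lemma \ref{l:narrow:prop_Mh} (properties of $\bM_h$) --- then yields
\[
\|W-\overline{W}\|_{H^1(\Nhd)} \lesssim \|\nabla W\|_{L_2(\Nhd)}.
\]
Combining with Lemma \ref{l:narrow:consistency_Ih} gives
\[
|E_h(W)| \lesssim \delta^{3/2}\,|d|_{W^2_\infty(\mathcal N)}\,\|\wf\|_{L_2(\gamma)}\,\|\nabla W\|_{L_2(\Nhd)}.
\]
A triangle inequality $\|\nabla W\|_{L_2(\Nhd)} \le \|\nabla(u-V)\|_{L_2(\Nhd)}+\|\nabla(u-U)\|_{L_2(\Nhd)}$ followed by Young's inequality allows me to absorb $\|\nabla(u-U)\|_{L_2(\Nhd)}$ into the left-hand side, leaving
\[
\|\nabla(u-U)\|_{L_2(\Nhd)} \lesssim \|\nabla(u-V)\|_{L_2(\Nhd)} + \delta^{3/2}\,\|\wf\|_{L_2(\gamma)}.
\]
Since $V$ was arbitrary and $\delta\simeq h$ by the structural hypothesis \eqref{delta-h}, replacing $\delta^{3/2}$ by $h^{3/2}$ and taking the infimum over $V\in\mathbb V(\mathcal T_\delta)$ yields the claim.

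The main obstacle, in my view, is justifying the Poincar\'e--Friedrichs inequality on $\Nhd$ with a constant independent of both $h$ and $\delta$. The diameter of $\Nhd$ stays bounded away from zero but its thickness is $O(\delta)$; one cannot naively invoke a generic Lipschitz Poincar\'e constant. I would argue by pulling back to $\Nd$ via the bi-Lipschitz map $\bM_h$ of Lemma \ref{l:narrow:prop_Mh}, whose derivative bounds are uniform in $h$, thereby reducing everything to a Poincar\'e--Friedrichs inequality on the fixed smooth tubular domain $\Nd$. The other points --- consistency, Cea decomposition, Young absorption --- are then essentially bookkeeping once Lemma \ref{l:narrow:consistency_Ih} is in place.
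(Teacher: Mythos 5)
Your proposal is the paper's Strang argument in mildly rearranged form: the paper expands $\|\nabla(V-U)\|_{L_2(\Nhd)}^2$ and closes with a final triangle inequality, while you expand $\|\nabla(u-U)\|_{L_2(\Nhd)}^2$ and absorb via Young's inequality; both use the same decomposition into a best-approximation term and the consistency functional $E_h$, and both apply Lemma~\ref{l:narrow:consistency_Ih} in exactly the same way. The concern you flag --- that the lemma bounds $|E_h(W)|$ by $\|W\|_{H^1(\Nhd)}$ whereas the absorption requires $\|\nabla W\|_{L_2(\Nhd)}$, hence a Poincar\'e--Friedrichs inequality on $\Nhd$ with constant uniform in $h$ and $\delta$ --- is genuine and is in fact made silently in the paper's proof as well. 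Your proposed fix (replace $W$ by $W-\overline W$ using that $E_h$ annihilates constants, then pull back to $\Nd$ via $\bM_h$) is the right first move but does not by itself close the gap, because $\Nd$ shrinks with $\delta\simeq h$ and so no appeal to ``a fixed reference domain'' is available. The uniformity instead follows by decomposing $W-\overline W$ via the coarea formula \eqref{e:coarea} into a fiberwise fluctuation, controlled by the one-dimensional Poincar\'e estimate on normal segments of length $O(\delta)$, and a fiberwise-constant part whose $L_2$-norm is controlled by the surface Poincar\'e--Friedrichs inequality on $\gamma$ (Lemma~\ref{L:Poincare}); these two contributions combine to give a $\delta$-independent constant, completing the step you correctly identified as the crux.
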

\begin{proof}
The proof consists of a Strang type argument.
For any $V \in \mathbb V(\mathcal T_\delta)$ the equation~\eqref{e:discrete_nb} satisfied by $U$ and the definition~\eqref{e:narrow:Ih} of $E_h(.)$ give
$$
\| \nabla (V-U)\|_{L_2(\mathcal N_h(\delta))}^2 = \int_{\mathcal N_h(\delta)} \nabla (V-u) \cdot \nabla (V-U) + E_h(V-U).
$$
Invoking Lemma~\ref{l:narrow:consistency_Ih} (narrow band geometric consistency), together with the structural assumption \eqref{delta-h}, yields
$$
\| \nabla (V-U)\|_{L_2(\mathcal N_h(\delta))} \leq \|  \nabla (V-u ) \|_{L_2(\mathcal N_h(\delta))} + c h^{\frac32} |d|_{W^2_\infty(\mathcal{N})} \| \wf \|_{L_2(\gamma)}.
$$
The desired error estimate follows from a triangle inequality.
\end{proof}
\begin{corollary}[rate of convergence in $\Nhd$]\label{c:narrow:error} 
Under the assumptions of Theorem~\ref{t:narrow:aprior} we have
$$
\| \nabla (u  - U) \|_{L_2(\mathcal N_h(\delta))} \lesssim h^{\frac32}
|d|_{W^2_\infty(\mathcal{N})}  \| \wf \|_{L_2(\gamma)}.
$$
\end{corollary}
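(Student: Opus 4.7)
The plan is to invoke Theorem~\ref{t:narrow:aprior} (a-priori error estimate) and control the best-approximation term on its right-hand side by a standard quasi-interpolation argument. The consistency contribution $h^{3/2}|d|_{W^2_\infty(\mathcal N)}\|\wf\|_{L_2(\gamma)}$ already has the desired order, so the only work is to show that the best approximation in $\V(\mathcal T_\delta)$ is also of order $h^{3/2}\|\wf\|_{L_2(\gamma)}$.

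First I would choose $V=I_h u\in\V(\mathcal T_\delta)$, where $I_h$ is the Scott--Zhang (or Cl\'ement) quasi-interpolant on the bulk mesh $\mathcal T_\delta$. Because $u$ has been extended from $\Nd$ to the larger tubular neighborhood $\mathcal N\supset\Nhd$ (using $\mathcal T_\delta$ is the restriction of $\mathcal T$ to $\Nhd$ and all macro-patches remain inside $\mathcal N$ for $h$ small enough in view of the structural assumption \eqref{delta-h}), the usual elementwise bound yields
\begin{equation*}
\|\nabla(u-I_h u)\|_{L_2(\Nhd)}\lesssim h\,|u|_{H^2(\mathcal N)}.
\end{equation*}

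Next I would combine this with the $H^2$ extension bound \eqref{extension-N}, namely
\begin{equation*}
|u|_{H^2(\mathcal N)}\lesssim \delta^{1/2}\,|d|_{W^2_\infty(\mathcal N)}\,\|\wu\|_{H^2(\gamma)},
\end{equation*}
together with the regularity estimate from Lemma~\ref{L:regularity}, $\|\wu\|_{H^2(\gamma)}\lesssim\|\wf\|_{L_2(\gamma)}$. Using the structural assumption $\delta\le C_2 h$ from \eqref{delta-h}, these chain together to give
\begin{equation*}
\inf_{V\in\V(\mathcal T_\delta)}\|\nabla(u-V)\|_{L_2(\Nhd)}
\lesssim h\,\delta^{1/2}\,|d|_{W^2_\infty(\mathcal N)}\,\|\wf\|_{L_2(\gamma)}
\lesssim h^{3/2}\,|d|_{W^2_\infty(\mathcal N)}\,\|\wf\|_{L_2(\gamma)}.
\end{equation*}

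Substituting this bound into Theorem~\ref{t:narrow:aprior} yields the stated estimate. There is no genuine obstacle here; the only subtlety worth double-checking is that the Scott--Zhang interpolant is well defined on the narrow-band patches $\mathcal T_\delta$ — this relies on the inclusion of the first-ring patches of elements in $\mathcal T_\delta$ inside $\mathcal N$, which follows from \eqref{delta-h} and the smallness of $h$ that guarantee $\Nhd\subset\mathcal N$ via \eqref{e:narrow:delta_and_h} and \eqref{hausdorff}. The superconvergence factor $\delta^{1/2}\simeq h^{1/2}$ coming from the thin-slab nature of $\Nd$ is precisely what upgrades the naive rate $h$ to the asserted $h^{3/2}$.
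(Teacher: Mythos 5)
Your argument is correct and follows the paper's own proof almost verbatim: both choose the Scott--Zhang interpolant $V = I_h^{\mathrm{sz}} u$, bound $\|\nabla(u - I_h^{\mathrm{sz}} u)\|_{L_2(\Nhd)} \lesssim h\|u\|_{H^2(\mathcal N)}$, chain this with Proposition~\ref{P:H2-extension}, Lemma~\ref{L:regularity}, and the structural relation $\delta\simeq h$ to reach $h^{3/2}|d|_{W^2_\infty(\mathcal N)}\|\wf\|_{L_2(\gamma)}$, and then insert this into Theorem~\ref{t:narrow:aprior}. The only cosmetic difference is that you make the intermediate factor $h\delta^{1/2}$ explicit before invoking $\delta\le C_2h$, which the paper absorbs implicitly.
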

\begin{proof}
In view of \eqref{extension-N} standard polynomial interpolation theory gives
$$
\| \nabla (u  - I_h^{\textrm{sz}}u) \|_{L_2(\mathcal N_h(\delta))}
\lesssim h \| u \|_{H^2(\mathcal N)}
\lesssim h \| u \|_{H^2(\mathcal N(\delta))},
$$
where $I_h^{\textrm{sz}}u$ is the Scott-Zhang interpolant of $u$.
It remains to use Proposition \ref{P:H2-extension} ($H^2$ extension)
and Lemma \ref{L:regularity} (regularity) to arrive at
$$
\| \nabla (u  -I_h^{\textrm{sz}} u)\|_{L_2(\mathcal N_h(\delta))} \lesssim h^{\frac32}
|d|_{W^2_\infty(\mathcal{N})}  \| \wf \|_{L_2(\gamma)}.
$$
The asserted estimate follows from Theorem~\ref{t:narrow:aprior} (a-priori estimate).
\end{proof}

In addition, we follow \cite{MR3471100} to deduce a rate of convergence for
$\| \nabla_\gamma (\wu -U)\|_{L_2(\gamma)}$.
\begin{corollary}[rate of convergence on $\gamma$]\label{C:rate-gamma}
Under the assumptions of Theorem~\ref{t:narrow:aprior} we have  
$$
\| \nabla_\gamma ( \widetilde u - U ) \|_{L_2(\gamma)} \lesssim h \| \wf \|_{L_2(\gamma)}.
$$
\end{corollary}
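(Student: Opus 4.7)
The plan is to reduce the surface estimate to the bulk estimate of Corollary \ref{c:narrow:error} via the scaled trace inequality \eqref{curved_trace_est} applied elementwise. Since $u$ is an $H^2$-extension of $\widetilde u$ with $u = \widetilde u$ on $\gamma$ (Proposition \ref{P:H2-extension}), and $U \in H^1(\Nhd) \supset H^1(\gamma)$ because $\gamma \subset \Nhd$ for $h$ sufficiently small, the identity \eqref{grad-extension} gives
\[
\nabla_\gamma(\widetilde u - U) = \Pi\,\nabla(u - U) \qquad \text{a.e. on } \gamma,
\]
where $\nabla U$ is understood elementwise (this is harmless since only finitely many $(n-1)$-dimensional interfaces are excluded). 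Consequently
\[
\|\nabla_\gamma(\widetilde u - U)\|_{L_2(\gamma)} \le \|\nabla(u-U)\|_{L_2(\gamma)}.
\]

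Next, I would apply the curved trace estimate \eqref{curved_trace_est} of Lemma \ref{L:trace-est} (which holds in the present setting since $\gamma$ is $C^2$ and the bulk mesh $\mathcal T$ is shape regular) componentwise to $\nabla(u-U)$ on each element $T \in \mathcal T_\delta$ with $T \cap \gamma \ne \emptyset$. Using that $U|_T$ is affine so $D^2 U|_T = 0$, this yields
\[
\|\nabla(u-U)\|_{L_2(T\cap \gamma)}^2 \lesssim h_T^{-1}\|\nabla(u-U)\|_{L_2(T)}^2 + h_T \|D^2 u\|_{L_2(T)}^2.
\]
Summing over the elements touching $\gamma$ (all of which lie in $\mathcal T_\delta$ and hence have $T \subset \Nhd \cup \mathcal{N}$, with $h_T \simeq h$) gives
\[
\|\nabla(u-U)\|_{L_2(\gamma)}^2 \lesssim h^{-1}\|\nabla(u-U)\|_{L_2(\Nhd)}^2 + h\,\|D^2 u\|_{L_2(\mathcal{N})}^2.
\]

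To finish, invoke the three key ingredients already established: Corollary \ref{c:narrow:error} provides $\|\nabla(u-U)\|_{L_2(\Nhd)}^2 \lesssim h^3 |d|_{W^2_\infty(\mathcal N)}^2\|\widetilde f\|_{L_2(\gamma)}^2$; the $H^2$-extension Proposition \ref{P:H2-extension} (applied with $\delta \simeq h$ from the structural condition \eqref{delta-h}) combined with the elliptic regularity Lemma \ref{L:regularity} gives $\|D^2 u\|_{L_2(\mathcal{N})}^2 \lesssim \|u\|_{H^2(\Nd)}^2 \lesssim \delta\,|d|_{W^2_\infty(\mathcal{N})}^2\|\widetilde f\|_{L_2(\gamma)}^2 \simeq h\,\|\widetilde f\|_{L_2(\gamma)}^2$. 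Plugging these in,
\[
\|\nabla(u-U)\|_{L_2(\gamma)}^2 \lesssim h^{-1}\cdot h^{3}\|\widetilde f\|_{L_2(\gamma)}^2 + h\cdot h\,\|\widetilde f\|_{L_2(\gamma)}^2 \lesssim h^2\|\widetilde f\|_{L_2(\gamma)}^2,
\]
which gives the claimed estimate.

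The only mildly delicate point is the bookkeeping of $\delta \simeq h$ so that the loss of half a power of $h$ in the trace inequality is exactly compensated by the half-power gain in the bulk bound of Corollary \ref{c:narrow:error}; no new geometric or approximation results are needed beyond what has already been proved. I would expect no serious obstacle, as the structural assumption \eqref{delta-h} was set up precisely to make this balancing work, and the elementwise application of \eqref{curved_trace_est} is justified directly by shape regularity of $\mathcal T$ plus the $C^2$ flattening assumption on $\gamma$.
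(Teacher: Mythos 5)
Your proposal is correct and follows essentially the same route as the paper's proof: apply the scaled curved trace estimate \eqref{curved_trace_est} with $\tv = \nabla(u-U)$, exploit that $U$ is piecewise affine so only $|u|_{H^2(T)}$ survives, sum over elements meeting $\gamma$, and invoke Corollary \ref{c:narrow:error}, Proposition \ref{P:H2-extension}, and Lemma \ref{L:regularity}, with the structural assumption $\delta \simeq h$ balancing the half-power loss from the trace inequality against the half-power gain in the bulk rate. The only cosmetic addition is your explicit identity $\nabla_\gamma(\widetilde u - U) = \Pi\,\nabla(u-U)$, which the paper compresses into the inequality $\|\nabla_\gamma(\widetilde u - U)\|_{L_2(\gamma)} \le \|\nabla(u-U)\|_{L_2(\gamma)}$ without comment.
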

\begin{proof}
We recall the scaled trace inequality \eqref{curved_trace_est}: for a bulk triangulation $\mathcal T$  there exists a constant $C$ only depending on the mesh shape regularity constant of $\mathcal T$ such that for $T \in \mathcal T_\delta$ and $\tv \in H^1(T)$, one has
\begin{equation*}\label{e:bulk_trace}
  \| \tv \|_{L_2(T \cap \gamma)}^2 \leq C \left( h_T^{-1} \| \tv \|_{L_2(T)}^2
  + h_T \| \nabla \tv \|_{L_2(T)}^2 \right),
\end{equation*}
where $h_T = \mathrm{diam}(T)$.
We apply this inequality with $\tv = \nabla(u-U)$, and $h_T\approx h$ sufficiently small,
to obtain
$$
\| \nabla (u -U) \|_{L_2(T\cap \gamma)}^2 \lesssim \left( h^{-1}
\| \nabla(u-U) \|_{L_2(T)}^2 + h | u |_{H^2(T)}^2 \right).
$$
Summing up over all $T \in \mathcal T_\delta$ with non-empty intersection with $\gamma$, Proposition~\ref{P:H2-extension} ($H^2$ extension) and Corollary~\ref{c:narrow:error} (rate of convergence in $\Nhd$) give
$$
\| \nabla_\gamma (\widetilde u-U) \|_{L_2(\gamma)} \leq \| \nabla (u-U) \|_{L_2(\gamma)}
\lesssim h |d|_{W^2_\infty(\mathcal{N})} \Big( \| \wf \|_{L_2(\gamma)} +
| \wu |_{H^2(\gamma)} \Big).
$$
The desired estimate follows from Lemma \ref{L:regularity} (regularity).
\end{proof}

\bibliographystyle{alpha}
\newcommand{\etalchar}[1]{$^{#1}$}

\end{document}